\newtheorem{thm}{Theorem}[section]
\newtheorem{theorem}[thm]{Theorem}
\newtheorem{definition}[thm]{Definition}
\newtheorem{remark}[thm]{Remark}
\newtheorem{question}[thm]{Question}
\newtheorem{lemma}[thm]{Lemma}
\newtheorem{corollary}[thm]{Corollary}
\newtheorem{discussion}[thm]{Discussion}
\newtheorem{historical background}[thm]{Historical Background}
\newtheorem{conventions-notation-definition}[thm]{Conventions, Notation, Definitions}
\newtheorem{remarks}[thm]{Remarks}
\newtheorem{notation}[thm]{Notation}
\numberwithin{equation}{section}
\def\cc {{\mathfrak c}}
\def\CC {{\mathbb C}}
\def\MM {{\mathbb M}}
\def\VV {{\mathbb V}}
\def\sA {{\mathcal A}}
\def\sB {{\mathcal B}}
\def\sC {{\mathcal C}}
\def\sP {{\mathcal P}}
\def\sT {{\mathcal T}}
\def\sU {{\mathcal U}}
\def\sV {{\mathcal V}}
\def\cf {\mathrm{cf}}
\def\dom {\mathrm{dom}}
\def\< {{\langle}}
\def\> {{\rangle}}
\def\cf {\mathrm{cf}}
\def\cf {\mathrm{cf}}
\def\two {\mbox{\bf 2}}
\begin{document}

\title{Cardinal invariants for $\kappa$-box products}
\author{W. W. Comfort\\
Department of Mathematics and Computer Science,\\
Wesleyan University,\\ 
Middletown, CT 06459, USA\\
E-mail: wcomfort@wesleyan.edu
\and
Ivan S. Gotchev$^1$\\
Department of Mathematical Sciences,\\
Central Connecticut State University,\\
New Britain, CT 06050, USA\\
E-mail: gotchevi@ccsu.edu}

\date{}

\maketitle

%% Classification and key words; note that the 2010 classification is used:

\renewcommand{\thefootnote}{}

\footnote{2010 \emph{Mathematics Subject Classification}: Primary 54A25; 
54A10; Secondary 54A35; 54D65.}

\footnote{\emph{Key words and phrases}: box topology, $\kappa$-box topology, 
weight, density character, Hewitt-Marczewski-Pondiczery theorem, cellular 
family, Souslin number.}

\footnote{$^1$ The author expresses his gratitude to Wesleyan University for 
hospitality and support during his sabbatical in the spring semester, 2008.}

\renewcommand{\thefootnote}{\arabic{footnote}}
\setcounter{footnote}{0}

\begin{abstract}
The symbol $(X_I)_\kappa$ (with $\kappa\geq\omega$) denotes 
the space $X_I:=\Pi_{i\in I}\,X_i$ with the
$\kappa$-box topology; this has as base all sets of the form
$U=\Pi_{i\in I}\,U_i$ with $U_i$ open in $X_i$ and with
$|\{i\in I:U_i\neq X_i\}|<\kappa$. The symbols $w$, $d$ and $S$ denote
respectively weight, density character, and Souslin number. Generalizing
familiar results for the usual product space (the case
$\kappa=\omega$), the authors show {\it inter alia}:

{\bf Theorem~\ref{T2.9}(b).} If $\kappa\leq\alpha^+$, $|I|=\alpha$
and each $X_i$ contains the discrete space $\{0,1\}$ and satisfies
$w(X_i)\leq\alpha$, then $w((X_I)_\kappa)=\alpha^{<\kappa}$.

{\bf Theorem~\ref{HMPkappa}.} 
If $\kappa\leq\beta\leq 2^\alpha$ and
$X=(D(\alpha))^\beta$ with $D(\alpha)$ discrete, $|D(\alpha)|=\alpha$,
then $d((X_I)_\kappa)=\alpha^{<\kappa}$.

{\bf Theorem~\ref{T4.15}.}
Let $\alpha\geq3$ and $\kappa\geq\omega$ be cardinals, and let
$\{X_i:i\in I\}$ be a set of spaces such that $|I|^+\geq\kappa$ and  
$d(X_i)\leq\alpha\leq S(X_i)$ for each $i\in I$. Consider these conditions: {\rm (i)}~$\alpha$ is
regular; {\rm (ii)}~$\alpha=\alpha^{<\kappa}$; {\rm (iii)}
$\kappa\ll\alpha$; {\rm (iv)}~$S((X_J)_\kappa)=\alpha$ for all nonempty
$J\in[I]^{<\kappa}$. Then:

{\rm (a)} if conditions {\rm (i), (ii), (iii)} and {\rm (iv)} hold, then
$S((X_I)_\kappa)=\alpha^{<\kappa}=\alpha$; and

{\rm (b)} if one (or more) of conditions {\rm (i), (ii), (iii)} or
{\rm (iv)} fails, then
$S((X_I)_\kappa)=(\alpha^{<\kappa})^+$.

{\bf Corollaries \ref{C4.37}(a) and \ref{C4.40}.}
Let $\alpha\geq3$ and $\kappa\geq\omega$ be cardinals, and let
$\{X_i:i\in I\}$ be a set of spaces such that $|I|^+\geq\kappa$. 

{\rm (a)} If $\alpha^+\ge\kappa$ and $\alpha\le S(X_i)\le \alpha^+$ for each 
$i\in I$ then $\alpha^{<\kappa}\le S((X_I)_\kappa)\le(2^\alpha)^+$; and

{\rm (b)} if $\alpha^+\leq \kappa$ and $3\le S(X_i)\le \alpha^+$ for each 
$i\in I$ then $S((X_I)_\kappa)=(2^{<\kappa})^+$.
\end{abstract}

\section{Historical context}

The most prominent, most useful, and most-studied cardinal invariants
associated with topological spaces are the weight, density character,
and Souslin number. Countless papers and monographs over the decades
have given estimates, in some cases even precise evaluations, of the
value of these invariants for the usual Tychonoff product
$X_I=\Pi_{i\in I}\,X_i$ of a set of spaces $(X_i)_{i\in I}$ in terms of the
values for the initial spaces $X_i$. But in the case of $\kappa$-box
topologies (defined in Section~1 below) on spaces of the form $X_I$,
very little is known, and that is fragmentary and nowhere systematically assembled.

In this paper we study with considerable thoroughness those three cardinal
invariants for these modified box products, in each case seeking (as usual)
estimates for the product in terms of the values for the initial spaces.
Our methods are largely topological and set-theoretic, although as
expected certain computations are made precise only when ZFC is
enhanced with appropriate additional (consistent)  axioms.

Our work draws upon, and in some cases extends, published theorems of
R.~Engelking and M.~Kar\l owicz \cite{EnKa},
W.~W. Comfort and S.~Negrepontis \cite{comfneg72},
\cite{comfneg74}, \cite{comfneg82},
F.~S.~Cater, P.~Erd{\H{o}}s and F.~Galvin \cite{ceg},
W.~W. Comfort and L.~C. Robertson \cite{comfrobii} and 
M.~Gitik and S.~Shelah \cite{gitikshelah}.
We give details at appropriate points in the paper.

We acknowledge with thanks helpful e-mail correspondence from (a)~Ist\-v\'an
Juh\'asz, (b)~Stevo Todor\v{c}evi\'c, and (c) Santi Spadaro.

\section{Introduction}

Hypothesized topological spaces
here are not subjected to standing separation properties.
Special hypotheses are imposed locally, as required. 

$\alpha$, $\beta$, $\gamma$ and $\lambda$ are cardinals,
$\kappa$ is an infinite cardinal, $\omega$ is the least infinite cardinal, 
and $\cc$ is the cardinality of the interval $[0,1]$. As usual, for 
$\alpha\geq\omega$ we write $\alpha^+:=\min\{\beta:\beta>\alpha\}$. 

$\eta$ and $\xi$ are ordinals.

The symbols $w(X)$ and $d(X)$ denote respectively the weight and density
character of the space $X$. A \emph{cellular} 
family in a space $X$ is a family of pairwise disjoint nonempty open
subsets of $X$ and $S(X)$, the {\it Souslin number} of $X$, is the
cardinal number 
\begin{equation}\nonumber
\min\{\lambda~\mbox{: no cellular family}~\sA~\mbox{in}~X~\mbox{satisfies}~|\sA|=\lambda\}.
\end{equation}

We here follow many authors \cite{comfneg74}, \cite{comfneg82},
though not all \cite{E1}, \cite{juh71}, \cite{juh}, in allowing
$w$, $d$ and $S$ to assume finite values. If for example $X$ is a discrete
space of cardinality $17$, then $w(X)=d(X)=17$ and $S(X)=17^+=18$.

For $I$ a set, we write $[I]^\lambda:=\{J\subseteq I:|J|=\lambda\}$; the 
notations $[I]^{<\lambda}$, $[I]^{\leq\lambda}$ are defined analogously.
It is clear that if $\lambda>|I|$ then (a)~$[I]^\lambda=\emptyset$ and
(b)~$[I]^{<\lambda}$ is the full power set $\sP(I)$.

For a set $\{X_i:i\in I\}$ of sets we write
$X_I:=\Pi_{i\in I}\,X_i$. 
For $A=\Pi_{i\in I}\,A_i\subseteq X_I$ the
{\it restriction set of} $A$ is the set $R(A):=\{i\in I:A_i\neq X_i\}$.
When each $X_i=(X_i,\sT_i)$ is a space, we use
the symbol $(X_I)_\kappa$
to denote $X_I$ with the $\kappa$-{\it box topology}; this is
the topology for which the set
$$\sU:=\{\Pi_{i\in I}\,U_i:U_i\in\sT_i,|R(U)|<\kappa\}$$
is a base. (The $\omega$-box topology on $X_I$, then,
is the usual product topology.) We refer to $\sU$ as {\it the canonical
base} for $(X_I)_\kappa$, and to the elements of $\sU$ as {\it canonical
open sets}. By way of caution to the reader,
we note that even when $\kappa$ is regular, the
intersection of fewer than $\kappa$-many sets, each open in
$(X_I)_\kappa$, may fail to be open in $(X_I)_\kappa$. (Indeed each
space $X_i$ embeds homeomorphically as a (closed) subspace of
$(X_I)_\kappa$, so if some $X_i$ lacks that intersection property then
so does $(X_I)_\kappa$.)

For simplicity we denote by the symbol $\two$ the discrete space of
cardinality $2$, and for cardinals $\alpha\geq2$
we denote by $D(\alpha)$ the discrete space of cardinality $\alpha$.

For spaces $X$ and $Y$, the symbol $Y=_h X$ means that $Y$ and $X$ are
homeomorphic; the symbol
$Y\subseteq_h X$ means that $X$ contains a 
homeomorphic copy of the space $Y$.
\begin{definition}\label{DefSLC}
{\rm A cardinal $\kappa$ is a {\it strong limit cardinal} if
$\lambda<\kappa\Rightarrow2^\lambda<\kappa$.
}
\end{definition}
In \ref{beth}--\ref{expon} we cite the basic tools and facts we
need from the elementary theory of cardinal arithmetic. For motivation,
discussion and proofs where appropriate, see
\cite[\S1]{comfneg74}, \cite[Appendix~A]{comfneg82} or \cite{jech}.

The familiar {\it beth} cardinals $\beth_\xi(\alpha)$  are defined
recursively as follows.

\begin{definition}\label{beth}
{\rm
Let $\alpha\geq2$ be a cardinal. Then

(a) $\beth_0(\alpha):=\alpha$;

(b) $\beth_{\xi+1}(\alpha):=2^{\beth_\xi(\alpha)}$ for each ordinal
$\xi$; and

(c) $\beth_\xi(\alpha):=\Sigma_{\eta<\xi}\,2^{\beth_\eta(\alpha)}$ for
limit ordinals $\xi>0$.
}
\end{definition}

\begin{remarks}\label{bethprops}
{\rm
Let $\xi$ be a limit ordinal and let
$\alpha\geq2$ and $\lambda\geq\omega$ be cardinals. Then

(a) a set $S\subseteq\xi$ is cofinal in $\xi$ if and only if
$\{\beth_\eta(\alpha):\eta\in S\}$ is cofinal in $\beth_\xi(\alpha)$;
hence

(b) $\cf(\beth_\lambda(\alpha))=\cf(\lambda)$.
}
\end{remarks}

\begin{definition}\label{deflog}
{\rm
For $\alpha\geq\omega$, $\log(\alpha)$ is the cardinal number
$$\log(\alpha):=\min\{\beta:2^\beta\geq\alpha\}.$$
}
\end{definition}

\begin{notation}\label{defweakexp}
{\rm
Let $\kappa\ge\omega$ and $\alpha\ge 2$. Then
$\alpha^{<\kappa}:=\Sigma_{\lambda<\kappa}\,\alpha^\lambda$.
}
\end{notation}
It is well known and easy to prove that
$|[\alpha]^{\lambda}|=\alpha^\lambda$ when $\lambda\leq\alpha$, so
$|[\alpha]^{<\kappa}|=\Sigma_{\lambda<\kappa}\,\alpha^\lambda$ when
$\kappa\leq\alpha^+$. For ease of reference later, we build some
redundancy into the statement of Theorem~\ref{expon}.

\begin{theorem}\label{expon}
Let $\alpha\geq2$ and $\kappa\geq\omega$. Then

{\rm (a)} $\kappa\leq2^{<\kappa}\leq\alpha^{<\kappa}$;

{\rm (b)} if $\kappa$ is regular then
$\alpha^{<\kappa}=(\alpha^{<\kappa})^{<\kappa}$;

{\rm (c)} if $\kappa$ is singular,
then $(\alpha^{<\kappa})^{<\kappa}=\alpha{^\kappa}$; 

{\rm (d)} $((\alpha^{<\kappa})^{<\kappa})^{<\kappa}=(\alpha^{<\kappa})^{<\kappa}$.
\end{theorem}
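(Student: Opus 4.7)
The plan is to dispatch the four parts as pure cardinal arithmetic, in the order stated. Part (a) is immediate from the definition: $2^{<\kappa}=\sum_{\lambda<\kappa}2^\lambda$ is a sum of $\kappa$ many positive terms, so $2^{<\kappa}\geq\kappa$; and $2\leq\alpha$ gives $2^\lambda\leq\alpha^\lambda$ termwise, so $2^{<\kappa}\leq\alpha^{<\kappa}$.

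For (b), the inequality $\alpha^{<\kappa}\leq(\alpha^{<\kappa})^{<\kappa}$ is trivial, so the content is the reverse bound when $\kappa$ is regular. I fix $\mu<\kappa$ and identify $\alpha^{<\kappa}$ with $|A|$, where $A=\bigcup_{\lambda<\kappa}{}^\lambda\alpha$. Any $f\colon\mu\to A$ has, by regularity of $\kappa$ applied to the at most $\mu$ many indices $\lambda$ actually hit, its range contained in $\bigcup_{\lambda\leq\lambda_0}{}^\lambda\alpha$ for some $\lambda_0<\kappa$. The count of such $f$ is then bounded by $\sum_{\lambda_0<\kappa}(\alpha^{\lambda_0})^\mu=\sum_{\lambda_0<\kappa}\alpha^{\lambda_0\cdot\mu}\leq\kappa\cdot\alpha^{<\kappa}=\alpha^{<\kappa}$, using that $\lambda_0\cdot\mu<\kappa$ (closure of infinite cardinal multiplication below a regular $\kappa$) and that $\alpha^{<\kappa}\geq\kappa$ by (a). Summing over $\mu<\kappa$ preserves the bound.

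For (c), the upper bound uses $\alpha^{<\kappa}\leq\alpha^\kappa$ together with the identity $(\alpha^\kappa)^\mu=\alpha^{\kappa\cdot\mu}=\alpha^\kappa$ for $0<\mu<\kappa$, whence $(\alpha^{<\kappa})^{<\kappa}\leq(\alpha^\kappa)^{<\kappa}=\alpha^\kappa$. Singularity enters only in the lower bound: writing $\kappa=\sum_{\eta<\cf(\kappa)}\kappa_\eta$ with each $\kappa_\eta<\kappa$ yields $\alpha^\kappa=\prod_{\eta<\cf(\kappa)}\alpha^{\kappa_\eta}\leq(\alpha^{<\kappa})^{\cf(\kappa)}\leq(\alpha^{<\kappa})^{<\kappa}$, since $\cf(\kappa)<\kappa$. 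Part (d) then falls out by cases: if $\kappa$ is regular, apply (b) twice; if $\kappa$ is singular, (c) reduces the left-hand side to $(\alpha^\kappa)^{<\kappa}=\alpha^\kappa=(\alpha^{<\kappa})^{<\kappa}$ via the same identity used in (c).

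The one step that really needs care is the range-bounding argument in (b); it requires both the regularity of $\kappa$ and the closure of infinite cardinal multiplication below $\kappa$, and it is precisely the failure of this step for singular $\kappa$ that forces the jump from $\alpha^{<\kappa}$ to $\alpha^\kappa$ in (c). Everything else is book-keeping, though I would sanity-check the small edge cases ($\kappa=\omega$ with $\alpha$ either finite or infinite) to make sure no sum collapses unexpectedly.
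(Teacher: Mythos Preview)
Your proof is correct. The paper does not actually supply a proof of this theorem; it is cited there as a standard fact of cardinal arithmetic, with a pointer to \cite[\S1]{comfneg74}, \cite[Appendix~A]{comfneg82}, and \cite{jech} for details. The paper's only explicit comment is the remark following the theorem that (d) is immediate from (b) and (c), which is precisely your case split.

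One small imprecision worth flagging: in (a) you write that $2^{<\kappa}=\sum_{\lambda<\kappa}2^\lambda$ is ``a sum of $\kappa$ many positive terms.'' Under the paper's convention that $\lambda$ ranges over \emph{cardinals}, the number of summands need not be $\kappa$ (for $\kappa=\aleph_1$ there are only countably many cardinals below $\kappa$). The conclusion $2^{<\kappa}\geq\kappa$ is of course still correct, but the cleaner justification is that for every cardinal $\mu<\kappa$ one has $2^{<\kappa}\geq2^\mu>\mu$, whence $2^{<\kappa}\geq\kappa$. The rest of your argument---the regularity-based range-bounding in (b), the cofinal decomposition of $\kappa$ in (c), and the reduction of (d) to (b) and (c)---is standard and goes through without issue.
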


\begin{remark}
{\rm
It is clear that the useful relation given in part (d) of
Theorem~\ref{expon}
is immediate from parts (b) and (c).
The authors are not acquainted with other examples in mathematics of
operators which, as in Theorem~\ref{expon}(d),
first stabilize at the third iteration. Responding to a request from one
of us (speaking in a seminar) for terminology
suitable for this phenomenon, Peter
Johnstone promptly proposed the expression ``sesquipotent".
}
\end{remark}

The condition
$(\alpha^{<\kappa})^{<\kappa}=\alpha^{<\kappa}$, satisfied by many pairs
of cardinals $\alpha$ and $\kappa$, will play a role frequently in this
paper. An alternate characterization is often useful.

\begin{theorem}\label{weak<kappa}
Let $\alpha\geq2$ and $\kappa\geq\omega$. Then 
\begin{itemize}
\item[{\rm (a)}] these conditions are equivalent:
\begin{itemize}
\item[{\rm (i)}] $\alpha^{<\kappa}=(\alpha^{<\kappa})^{<\kappa}$; and
\item[{\rm (ii)}] either $\kappa$ is regular or there is $\nu<\kappa$ such that
$\alpha^\nu=\alpha^{<\kappa}$.
\end{itemize}
\item[{\rm (b)}] If the conditions in (a) fail, then $\kappa$ and 
$\alpha^{<\kappa}$ are singular cardinals and 
$\cf(\alpha^{<\kappa})=\cf(\kappa)$.
\end{itemize}
\end{theorem}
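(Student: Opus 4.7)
The plan is to split on whether $\kappa$ is regular and reduce everything to Theorem~\ref{expon}(b),(c), one Hausdorff-style absorption, and König's theorem on cofinality. If $\kappa$ is regular then (ii) is automatic and (i) is exactly Theorem~\ref{expon}(b), so both sides of (a) hold. Hence I may assume $\kappa$ is singular for the remainder; Theorem~\ref{expon}(c) then rewrites (i) as $\alpha^{<\kappa}=\alpha^\kappa$, while (ii) becomes the existence of some $\nu<\kappa$ with $\alpha^\nu=\alpha^{<\kappa}$.

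For (ii)$\Rightarrow$(i) in the singular case, I would replace $\nu$ with $\nu':=\max\{\nu,\cf(\kappa)\}<\kappa$ (still satisfying $\alpha^{\nu'}=\alpha^{<\kappa}$) and combine the Bukovsk\'y--Hechler identity $\alpha^\kappa=(\alpha^{<\kappa})^{\cf(\kappa)}$ with the absorption $(\alpha^{\nu'})^{\cf(\kappa)}=\alpha^{\nu'\cdot\cf(\kappa)}=\alpha^{\nu'}$ to conclude $\alpha^\kappa=\alpha^{<\kappa}$. For the contrapositive of (i)$\Rightarrow$(ii), suppose no $\nu<\kappa$ satisfies $\alpha^\nu=\alpha^{<\kappa}$. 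Then any cofinal $\cf(\kappa)$-sequence in $\kappa$ pushes forward under $\nu\mapsto\alpha^\nu$ to a cofinal sequence in $\alpha^{<\kappa}$, giving $\cf(\alpha^{<\kappa})\le\cf(\kappa)$; conversely any cofinal sequence in $\alpha^{<\kappa}$ can be dominated by a sequence $(\alpha^{\nu_i})_{i}$ with $\nu_i<\kappa$, and since $\sup_i \alpha^{\nu_i}=\alpha^{<\kappa}$ forces $\sup_i\nu_i=\kappa$, we get $\cf(\kappa)\le\cf(\alpha^{<\kappa})$. Hence $\cf(\alpha^{<\kappa})=\cf(\kappa)$, and König's theorem applied to the infinite cardinal $\alpha^{<\kappa}\ge\kappa\ge\omega$ yields
\[
\alpha^{<\kappa}<(\alpha^{<\kappa})^{\cf(\alpha^{<\kappa})}=(\alpha^{<\kappa})^{\cf(\kappa)}\le(\alpha^{<\kappa})^{<\kappa},
\]
where the last inequality uses $\cf(\kappa)<\kappa$; so (i) fails.

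Part (b) then emerges as a by-product. If the conditions of (a) fail, then $\kappa$ must already be singular (otherwise (ii) holds trivially) and no witnessing $\nu$ exists, so the contrapositive above has already supplied $\cf(\alpha^{<\kappa})=\cf(\kappa)$. Since $\cf(\kappa)<\kappa\le\alpha^{<\kappa}$ by Theorem~\ref{expon}(a), the cardinal $\alpha^{<\kappa}$ is singular as well. The only step that is not purely mechanical is the cofinality identification $\cf(\alpha^{<\kappa})=\cf(\kappa)$ in the non-stabilising case; once that is in hand, König's inequality and Hausdorff-style absorption do the rest.
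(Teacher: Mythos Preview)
Your argument is correct and follows essentially the same architecture as the paper's: both directions of (a) are handled identically in spirit (regular case via Theorem~\ref{expon}(b); for (i)$\Rightarrow$(ii) both prove the contrapositive by establishing $\cf(\alpha^{<\kappa})=\cf(\kappa)$ and invoking K\"onig), and (b) falls out the same way. The only cosmetic difference is in the singular case of (ii)$\Rightarrow$(i): the paper computes $(\alpha^{<\kappa})^{<\kappa}=(\alpha^\nu)^{<\kappa}=\Sigma_{\nu<\lambda<\kappa}\,\alpha^{\nu\cdot\lambda}=\Sigma_{\nu<\lambda<\kappa}\,\alpha^\lambda=\alpha^{<\kappa}$ directly, whereas you pass through Theorem~\ref{expon}(c) and the identity $\alpha^\kappa=(\alpha^{<\kappa})^{\cf(\kappa)}$ together with the absorption $(\alpha^{\nu'})^{\cf(\kappa)}=\alpha^{\nu'}$---an equally short route.
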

\begin{proof} (a) ((i)~$\Rightarrow$~(ii)). If (ii) fails then $\kappa$ is a
limit cardinal and for every $\nu<\kappa$ there is a cardinal
$\lambda<\kappa$ such that $\alpha^\nu<\alpha^\lambda$, so also
$\alpha^{<\kappa}$ is a limit cardinal. It is easily checked that
$\cf(\kappa)=\cf(\alpha^{<\kappa})$, so
$$(\alpha^{<\kappa})^{<\kappa}\geq(\alpha^{<\kappa})^{\cf(\kappa)}
=(\alpha^{<\kappa})^{\cf(\alpha^{<\kappa}))}>\alpha^{<\kappa}.$$

((ii)~$\Rightarrow$~(i)). If $\kappa$ is regular we have
$(\alpha^{<\kappa})^{<\kappa}=\alpha^{<\kappa}$
by Theorem~\ref{expon}(b). Suppose then that $\kappa$ is singular, hence a
limit cardinal, and that there is $\nu<\kappa$ such that
$\alpha^\nu=\alpha^{<\kappa}$. Then
$$\begin{array}
{r@{=}l}
(\alpha^{<\kappa})^{<\kappa}&(\alpha^\nu)^{<\kappa}
=\Sigma_{\lambda<\kappa}\,(\alpha^\nu)^\lambda
=\Sigma_{\nu<\lambda<\kappa}\,(\alpha^\nu)^\lambda\\
&\Sigma_{\nu<\lambda<\kappa}\,\alpha^\lambda
=\alpha^{<\kappa}.
\end{array}$$

(b) Clearly $\kappa$ is singular, $\alpha^{<\kappa}$ is limit, and 
$$\cf(\alpha^{<\kappa})=\cf(\kappa)<\kappa\le \alpha^{<\kappa}.$$
Hence $\alpha^{<\kappa}$ is singular. 
\end{proof}

\begin{remarks}\label{R1.2}
{\rm
(a) As our title and our Abstract indicate,
we are concerned here with the weight, density character, and Souslin number 
of (sometimes specialized) products of the form
$(X_I)_\kappa$; the corresponding results are contained in 
Sections 2, 3 and 4, respectively.

(b) As the reader knows well, the ``functions" $w$, $d$ and $S$
enjoy specific useful monotonicity properties; we have in mind these
familiar phenomena:

\begin{itemize}
\item[({\it i})] If $X$ and $Y$ are spaces and $Y\subseteq_h X$,
then $w(Y)\leq w(X)$;
\item[({\it ii})] If $\sT_1$ and $\sT_2$ are topologies on a set $X$ with
$\sT_1\subseteq\sT_2$, then $d(X,\sT_1)\leq d(X,\sT_2)$ and
$S(X,\sT_1)\leq S(X,\sT_2)$.
\end{itemize}

On the other hand, both the analogue of ({\it i}) for $d$ and $S$ and of
({\it ii}) for $w$ can fail. For example, with $X=\two^\cc$ and
$$Y:=\{x\in X:|\{i\in I:x_i\neq0\}|=1\},$$
one has $Y$ discrete in $X$ with $|Y|=\cc$ and $d(X)=\omega<\cc=d(Y)$,
also $S(X)=\omega^+<\cc^+=S(Y)$.
And with $X=\two^\cc$ and $Y'$ a countable dense subset of $X$ one has
$w(Y')=w(X)=\cc$ when the usual product topology $\sT_1$ is considered,
but $w(Y',\sT_2)=\omega<\cc$ when $Y'$ is given the discrete topology
$\sT_2\supseteq\sT_1$. 

We use the indicated monotonicity properties ({\it i}) and ({\it ii})
frequently in this paper, without warning or comment. We use also the
fact that if $X$ is a space and $Y$ is dense in $X$, then
necessarily $S(Y)=S(X)$.}
\end{remarks}

\section{On the weight of $\kappa$-box products}

\begin{discussion}\label{oldweight}
{\rm
It is well known \cite[2.3.F(a)]{E1} for each set $\{X_i:i\in I\}$ of
$T_1$-spaces with $w(X_i)\geq2$ that $X_I:=\Pi_{i\in I}\,X_i$ satisfies
$$w(X_I)=\max\{\sup_{i\in I}\,w(X_i),|I|\}.$$
In particular,
\begin{equation}\label{Eq1}
w(X_I)=|I|~\mbox{if each}~X_i~\mbox{satisfies}~w(X_i)\leq|I|.
\end{equation}

In Theorem~\ref{T2.9} we give the
correct analogue of (\ref{Eq1}) for $\kappa$-box
topologies. 
}
\end{discussion}

\begin{lemma}\label{LC}
Let $\alpha\geq\omega$ and $\kappa\geq\omega$. Then
$$w((\two^\alpha)_\kappa)\ge\alpha.$$
\end{lemma}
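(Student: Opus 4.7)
The plan is to establish $w((\two^\alpha)_\kappa)\ge\alpha$ via the standard inequality $w(X)\ge\chi(x,X)$, applied at the constant-$0$ point $\mathbf{0}\in\two^\alpha$. One trivial case must be disposed of first: if $\kappa>\alpha$, then every singleton in $\two^\alpha$ is a canonical open set (its restriction set is all of $\alpha$, of size $<\kappa$), so $(\two^\alpha)_\kappa$ is discrete and $w((\two^\alpha)_\kappa)=2^\alpha\ge\alpha$.

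In the main case $\kappa\le\alpha$, I fix a local base $\mathcal{L}$ at $\mathbf{0}$ and, for each $B\in\mathcal{L}$, choose a canonical neighborhood $V_{S_B}:=\{x\in\two^\alpha:x_i=0\text{ for all }i\in S_B\}\subseteq B$ with $|S_B|<\kappa$. My goal is to derive $|\mathcal{L}|\ge\alpha$ from two complementary counting arguments whose combination is the crux of the proof. The first uses the family $\{V_{\{i\}}:i\in\alpha\}$ of neighborhoods of $\mathbf{0}$: for each $i$, some $B\in\mathcal{L}$ satisfies $V_{S_B}\subseteq B\subseteq V_{\{i\}}$, which forces $i\in S_B$, so any fixed $B$ can be chosen for at most $|S_B|<\kappa$ indices $i$; this gives $|\mathcal{L}|\cdot\kappa\ge\alpha$. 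The second is a diagonalization: since $|S_B|<\kappa\le\alpha$, I pick $x_B\in\alpha\setminus S_B$ for each $B\in\mathcal{L}$, and were $|\mathcal{L}|<\kappa$, then $R:=\{x_B:B\in\mathcal{L}\}$ would lie in $[\alpha]^{<\kappa}$, whence $V_R$ would be a neighborhood of $\mathbf{0}$ that no $B\in\mathcal{L}$ can sit inside (otherwise $x_B\in R\subseteq S_B$, a contradiction). Hence $|\mathcal{L}|\ge\kappa$.

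The main obstacle is that neither counting argument alone gives $\alpha$: the first is vacuous precisely when $\kappa\ge\alpha$, while the second yields only $|\mathcal{L}|\ge\kappa$. They cooperate, however: once $|\mathcal{L}|\ge\kappa$ is known, $|\mathcal{L}|\cdot\kappa=\max(|\mathcal{L}|,\kappa)=|\mathcal{L}|$, so the bound $|\mathcal{L}|\cdot\kappa\ge\alpha$ yields $|\mathcal{L}|\ge\alpha$, as wanted. What the combined argument really computes is the cofinality of the poset $([\alpha]^{<\kappa},\subseteq)$, which coincides with $\chi(\mathbf{0},(\two^\alpha)_\kappa)$; the most delicate point is the ``saturated'' case $\kappa=\alpha$, where only the diagonalization prevents a collapse.
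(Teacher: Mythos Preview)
Your argument is correct, but it takes a route quite different from the paper's. The paper observes that the set $Y:=\{x\in\two^\alpha:|\{i:x_i\neq 0\}|=1\}$ of ``unit vectors'' is a discrete subspace of $\two^\alpha$ (hence of $(\two^\alpha)_\kappa$) of cardinality $\alpha$, so by hereditariness of weight, $w((\two^\alpha)_\kappa)\ge w(Y)=\alpha$; this is a one-line proof. Your approach instead bounds the \emph{character} $\chi(\mathbf{0},(\two^\alpha)_\kappa)$ from below, which requires the two-step counting you describe (and, as you note, is really computing the cofinality of the poset $([\alpha]^{<\kappa},\subseteq)$). The paper's method is considerably shorter and avoids the delicate case $\kappa=\alpha$ altogether; your method, on the other hand, yields strictly more information---it shows that already the local base at a single point must have size $\ge\alpha$, not just the global base---and the machinery you develop is close in spirit to the restriction-set covering argument used later in the paper to compute $w((\two^\alpha)_\kappa)$ exactly.
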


\begin{proof}
Let $Y\subseteq X=\two^\alpha$ be as in Remarks~\ref{R1.2}(b). Then $Y$
is discrete in $\two^\alpha$, hence is discrete in $(\two^\alpha)_\kappa$,
so $$w((\two^\alpha)_\kappa)\ge w(Y)=\alpha.$$
\vskip-18pt
\end{proof}

\begin{theorem}\label{T1}
Let $\alpha\geq\omega$ and $\kappa\geq\omega$
and let $\{X_i:i\in I\}$ be a set of spaces
such that $w(X_i)\le\alpha$ for each $i\in I$. 
Then

{\rm (a)} $\sup_{i\in I}\,w(X_i)\le
w((X_I)_\kappa)\le\alpha^{<\kappa}\cdot|I|^{<\kappa}$; and

{\rm (b)} if in addition $\two\subseteq_h X_i$ for each $i\in I$, then
also $w((X_I)_\kappa)\geq|I|$.
\end{theorem}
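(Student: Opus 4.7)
For part (a), the lower bound is immediate: as noted in the introduction, each $X_i$ embeds homeomorphically as a (closed) subspace of $(X_I)_\kappa$ (fix $p_j\in X_j$ for $j\ne i$ and take the slice), so by the monotonicity property Remarks~\ref{R1.2}(b)(i) we have $w(X_i)\le w((X_I)_\kappa)$ for every $i\in I$, hence $\sup_{i\in I}w(X_i)\le w((X_I)_\kappa)$.

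For the upper bound in (a), I would exhibit an explicit base. For each $i\in I$ fix a base $\sB_i$ of $X_i$ with $|\sB_i|\le\alpha$ and set
$$\sB:=\bigl\{\Pi_{i\in I}\,V_i : V_i\in\sB_i\cup\{X_i\},\ |R(V)|<\kappa\bigr\}.$$
The verification that $\sB$ is a base for $(X_I)_\kappa$ is routine: given $x\in U=\Pi U_i$ canonical, for each $i\in R(U)$ pick $V_i\in\sB_i$ with $x_i\in V_i\subseteq U_i$, and set $V_i=X_i$ otherwise; then $x\in\Pi V_i\subseteq U$ and $R(V)\subseteq R(U)$. For the cardinality estimate, partition $\sB$ by the value of $J:=R(V)\in[I]^{<\kappa}$. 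There are at most $|I|^{<\kappa}$ such $J$, and for each fixed $J$ there are at most $\alpha^{|J|}\le\alpha^{<\kappa}$ choices of $(V_i)_{i\in J}$; summing gives $|\sB|\le|I|^{<\kappa}\cdot\alpha^{<\kappa}$.

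For part (b), the plan is to mimic Lemma~\ref{LC}. Since $\two\subseteq_h X_i$ for each $i\in I$, we have $\two^I\subseteq X_I$ in the obvious way, and one checks directly on the canonical bases that the subspace topology inherited from $(X_I)_\kappa$ coincides with the $\kappa$-box topology on $\two^I$; in particular $(\two^I)_\kappa\subseteq_h(X_I)_\kappa$, so $w((X_I)_\kappa)\ge w((\two^I)_\kappa)$. When $|I|\ge\omega$, the ``axis'' $Y:=\{x\in\two^I:|\{i\in I:x_i\ne 0\}|=1\}$ is discrete in $(\two^I)_\kappa$ (the box topology is finer than the product topology, in which $Y$ is already discrete) and $|Y|=|I|$, so $w((\two^I)_\kappa)\ge w(Y)=|I|$. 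The finite case $|I|<\omega$ is trivial, since then $\two^I$ is discrete of cardinality $2^{|I|}\ge|I|$.

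\textbf{Anticipated obstacle.} None of the steps involves genuine difficulty; the only care required is (i) the bookkeeping in the cardinality estimate for $\sB$ in part (a), which one handles by stratifying over $R(V)\in[I]^{<\kappa}$ as above, and (ii) the observation (which we use without separation hypotheses on the $X_i$) that forming a $\kappa$-box product commutes with passing to subspaces, so that an embedding $\two\hookrightarrow X_i$ at every coordinate yields an honest topological embedding $(\two^I)_\kappa\hookrightarrow(X_I)_\kappa$.
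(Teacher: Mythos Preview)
Your proposal is correct and follows essentially the same approach as the paper: the paper too builds the base $\sB$ from products of basic sets with small restriction set (stratifying by the cardinal $|R(B)|=\lambda<\kappa$ rather than by the specific set $J=R(B)$, which is an immaterial difference in the bookkeeping), and for (b) it invokes Lemma~\ref{LC}, whose proof is exactly the ``axis'' argument you spell out inline. Your explicit treatment of the case $|I|<\omega$ is a small bonus, since Lemma~\ref{LC} as stated assumes $\alpha\ge\omega$.
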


\begin{proof}
(a) Let $\sB_i$ be a base for $X_i$ with $|\sB_i|\leq\alpha$ and with
$X_i\in\sB_i$, and for $\lambda<\kappa$ let
$$\sB(\lambda):=\{B=\Pi_{i\in I}\,B_i:B_i\in\sB_i,|R(B)|=\lambda\}.$$
Then $|\sB(\lambda)|\leq|[I]|^\lambda\cdot\alpha^\lambda$, and since
$\sB:=\bigcup_{\lambda<\kappa}\,\sB(\lambda)$ is a base for $(X_I)_\kappa$
we have
$$w((X_I)_\kappa)\leq|\sB|\leq\Sigma_{\lambda<\kappa}\,|[I]^\lambda|\cdot\alpha^\lambda
=|I|^{<\kappa}\cdot\alpha^{<\kappa}.$$
Since $X_i\subseteq_h (X_I)_\kappa$, 
we have $w((X_I)_\kappa)\ge w(X_i)$ for each $i\in I$.
Hence $w((X_I)_\kappa)\ge \sup_{i\in I}w(X_i)$. 

(b) It follows from $\two\subseteq_h X_i$ that $\two^I\subseteq_h X$ and 
from Lemma \ref{LC} we have $w((X_I)_\kappa)\ge w((\two^I)_\kappa)\ge |I|.$
\end{proof}

For future reference we re-state this portion of Theorem~\ref{T1}.

\begin{corollary}\label{C2.4}
Let $\alpha$ and $\kappa$ be infinite cardinals
and let $\{X_i:i\in I\}$ be a set
of spaces such that $|I|\leq\alpha$
and $w(X_i)\leq\alpha$ for each
$i\in I$. Then
$w((X_I)_\kappa)\leq\alpha^{<\kappa}$.
\end{corollary}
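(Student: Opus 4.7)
The plan is to derive this directly from Theorem~\ref{T1}(a), which has already furnished the upper bound $w((X_I)_\kappa)\le \alpha^{<\kappa}\cdot|I|^{<\kappa}$. So the work reduces to showing that, under the hypothesis $|I|\le\alpha$, the factor $|I|^{<\kappa}$ can be absorbed into $\alpha^{<\kappa}$.

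First I would observe that since $|I|\le\alpha$, for every $\lambda<\kappa$ one has $|I|^\lambda\le\alpha^\lambda$; summing over $\lambda<\kappa$ (by Notation~\ref{defweakexp}) yields
$$|I|^{<\kappa}=\Sigma_{\lambda<\kappa}\,|I|^\lambda\le\Sigma_{\lambda<\kappa}\,\alpha^\lambda=\alpha^{<\kappa}.$$
Next I would note that $\alpha^{<\kappa}$ is an infinite cardinal: by Theorem~\ref{expon}(a), $\alpha^{<\kappa}\ge\kappa\ge\omega$. Hence $\alpha^{<\kappa}\cdot\alpha^{<\kappa}=\alpha^{<\kappa}$ by elementary cardinal arithmetic.

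Combining these facts with Theorem~\ref{T1}(a),
$$w((X_I)_\kappa)\le \alpha^{<\kappa}\cdot|I|^{<\kappa}\le \alpha^{<\kappa}\cdot\alpha^{<\kappa}=\alpha^{<\kappa},$$
which is the desired bound.

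There is essentially no obstacle; the corollary is a bookkeeping consequence of Theorem~\ref{T1}(a). The only point worth flagging is that the hypothesis $|I|\le\alpha$ is used \emph{only} to swallow the $|I|^{<\kappa}$ term, and that one relies on $\alpha^{<\kappa}\ge\omega$ (guaranteed by Theorem~\ref{expon}(a), since $\kappa\ge\omega$) to justify the idempotence of multiplication.
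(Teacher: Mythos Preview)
Your proof is correct and is exactly the argument the paper has in mind: the paper simply states that Corollary~\ref{C2.4} is a restatement of (a portion of) Theorem~\ref{T1}, and the cardinal arithmetic you spell out---namely $|I|^{<\kappa}\le\alpha^{<\kappa}$ from $|I|\le\alpha$, together with $\alpha^{<\kappa}\cdot\alpha^{<\kappa}=\alpha^{<\kappa}$---is precisely what makes that restatement valid.
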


\begin{discussion}\label{D2.5}
{\rm
If $\omega\leq\alpha<\alpha^+<\kappa$, then
$w((\two^\alpha)_\kappa)=2^\alpha$, while
$\alpha^{<\kappa}\geq\alpha^{(\alpha^+)}=2^{(\alpha^+)}$. In many
models of set theory and for many cardinals $\alpha$ one has
$2^{(\alpha^+)}>2^\alpha$, and in such cases the inequality
$w((\two^\alpha)_\kappa)\leq\alpha^{<\kappa}$ of Corollary~\ref{C2.4}
becomes strict.
That explains why the formula $w(X_\kappa)=\alpha^{<\kappa}$ cannot
be asserted without restraint in Corollary~\ref{C2.4}, even
when $|I|=\alpha$. Our next goal in
this section is to show that, subject only to the simple restrictions
$\kappa\leq\alpha^+$ and $|I|=\alpha$, the inequality
$w((X_I)_\kappa)\leq\alpha^{<\kappa}$ of Corollary~\ref{C2.4}
becomes an equality (Theorem~\ref{T2.9}).
}
\end{discussion}

\begin{lemma}\label{L14}
Let $\alpha$ and $\kappa$ be infinite cardinals such that
$\kappa\leq\alpha^+$. Then

{\rm (a)} 
if $\lambda<\kappa$ and $\lambda\leq\alpha$,
then $w((\two^\alpha)_\kappa)\ge 2^\lambda$; and

{\rm (b)} $w((\two^\alpha)_\kappa)\geq2^{<\kappa}$.
\end{lemma}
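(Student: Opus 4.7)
The plan is to prove (a) by exhibiting an explicit discrete subspace of $(\two^\alpha)_\kappa$ of cardinality $2^\lambda$, and then deduce (b) by taking a supremum over the admissible values of $\lambda$.

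For part (a), since $\lambda\le\alpha$, fix a subset $J\subseteq\alpha$ with $|J|=\lambda$, and consider
$$Y:=\{x\in\two^\alpha:x_i=0\text{ for every }i\in\alpha\setminus J\}.$$
As a subset of $\two^\alpha$ with the usual product topology, $Y$ is homeomorphic to $\two^\lambda$ and so $|Y|=2^\lambda$. The essential point is that in the $\kappa$-box topology, $Y$ becomes discrete: for each $y\in Y$, the set
$$U_y:=\prod_{i\in\alpha}U_i,\qquad U_i=\{y_i\}\text{ if }i\in J,\ U_i=\two\text{ if }i\notin J,$$
has restriction set $R(U_y)=J$ of cardinality $\lambda<\kappa$, so $U_y$ is a canonical open set of $(\two^\alpha)_\kappa$, and clearly $U_y\cap Y=\{y\}$. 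Hence $Y$ is a discrete subspace of $(\two^\alpha)_\kappa$ of cardinality $2^\lambda$, so by the monotonicity of $w$ under embeddings (Remarks~\ref{R1.2}(b)(i)),
$$w((\two^\alpha)_\kappa)\ge w(Y)=2^\lambda.$$
This is essentially the same embedding idea used to prove Lemma~\ref{LC}, now with $\lambda$ coordinates rather than one.

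For part (b), the hypothesis $\kappa\le\alpha^+$ is exactly what is needed to guarantee that every cardinal $\lambda<\kappa$ satisfies $\lambda\le\alpha$, so part (a) applies uniformly to all such $\lambda$. Thus
$$w((\two^\alpha)_\kappa)\ge 2^\lambda\quad\text{for every }\lambda<\kappa.$$
Since $\kappa$ is infinite, we have $2^{<\kappa}=\Sigma_{\lambda<\kappa}\,2^\lambda=\sup_{\lambda<\kappa}2^\lambda$ (using that $2^\lambda\ge\lambda^+$ for every infinite $\lambda<\kappa$, which forces the sup to dominate $\kappa$ and swallow the sum). Taking the supremum on the right, we conclude $w((\two^\alpha)_\kappa)\ge 2^{<\kappa}$.

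There is no serious obstacle; the only thing to watch is the interplay between $\lambda<\kappa$ and $\lambda\le\alpha$ in part (a), which is why the hypothesis $\kappa\le\alpha^+$ in part (b) is listed explicitly. The conceptual content is the observation, already implicit in Lemma~\ref{LC}, that any ``small'' face (of dimension strictly less than $\kappa$) of $\two^\alpha$ sits as a discrete subspace in the $\kappa$-box topology, because singletons on the small index set are already legitimately permitted restrictions in a canonical basic open set.
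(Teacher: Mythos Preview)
Your proof is correct and follows essentially the same approach as the paper: exhibit the $\lambda$-dimensional face of $\two^\alpha$ as a discrete subspace of $(\two^\alpha)_\kappa$ (since $\lambda<\kappa$ makes the singleton restrictions legal), then take the supremum over $\lambda<\kappa$ for part (b). Your version is slightly cleaner in that it handles part (a) uniformly, whereas the paper first disposes of the case $\kappa=\alpha^+$ separately before invoking the same discrete-subspace argument for $\kappa\le\alpha$.
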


\begin{proof}
(a) If $\kappa=\alpha^+$ then $(\two^\alpha)_\kappa$ is the discrete space
$D(2^\alpha)$, which has weight $2^\alpha=2^{<\kappa}\geq2^\lambda$. We
assume therefore that $\kappa\leq\alpha$.
The space $(\two^\lambda)_{\kappa}$ is then homeomorphic to a discrete subspace of
$(\two^\alpha)_\kappa$, so 
$w((\two^\alpha)_\kappa)\geq w((\two^\lambda)_{\kappa})=|\two^\lambda|=2^\lambda$.

(b) is immediate from (a).
\end{proof}

\begin{theorem}\label{T2.7}
Let $\kappa$ and $\alpha$ be infinite cardinals. Then

{\rm (a)} if $\kappa\geq\alpha^+$ then
$w((\two^\alpha)_\kappa)=2^\alpha$;

{\rm (b)} if $\kappa\leq\alpha^+$ then
$w((\two^\alpha)_\kappa)=\alpha^{<\kappa}$.
\end{theorem}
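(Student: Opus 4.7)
The statement splits cleanly into two cases. For part (a), the hypothesis $\kappa\ge\alpha^+$ gives $\kappa>\alpha$, so every restriction set $R\subseteq\alpha$ satisfies $|R|\le\alpha<\kappa$. In particular, each singleton $\{x\}=\prod_{i<\alpha}\{x_i\}$ in $\two^\alpha$ is itself a canonical open set, so $(\two^\alpha)_\kappa$ is the discrete space of cardinality $2^\alpha$, whose weight is $2^\alpha$.

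For part (b), the upper bound $w((\two^\alpha)_\kappa)\le\alpha^{<\kappa}$ is immediate from Corollary~\ref{C2.4}, applied with $X_i=\two$ and $|I|=\alpha$. For the matching lower bound, the plan is to exhibit, for each infinite cardinal $\lambda<\kappa$, a discrete subspace of $(\two^\alpha)_\kappa$ of cardinality $\alpha^\lambda$; the finite-$\lambda$ contribution $w\ge\alpha$ is already provided by Lemma~\ref{LC}. Given infinite $\lambda<\kappa$, the hypothesis $\kappa\le\alpha^+$ ensures $\lambda\le\alpha$, so we may reindex $\alpha$ as $\lambda\times\alpha$ and identify $\two^\alpha$ with $\two^{\lambda\times\alpha}$. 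To each $f\in\alpha^\lambda$ associate the point $x_f$ with $x_f(j,\beta)=1$ iff $\beta=f(j)$. The set $U_f:=\{y:y(j,f(j))=1\text{ for all }j<\lambda\}$ has restriction set of size $\lambda<\kappa$, hence is canonical open; it contains $x_f$, and it excludes $x_{f'}$ for $f'\ne f$ since $x_{f'}(j,f(j))=1$ forces $f'(j)=f(j)$. Thus $\{x_f:f\in\alpha^\lambda\}$ is discrete in $(\two^\alpha)_\kappa$ of cardinality $\alpha^\lambda$, and $w((\two^\alpha)_\kappa)\ge\alpha^\lambda$.

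A brief arithmetic check completes the argument: under $\kappa\le\alpha^+$ one has $\sup_{\lambda<\kappa}\alpha^\lambda\ge\kappa$---because either $\alpha\ge\kappa$ and $\alpha^1\ge\kappa$, or $\kappa=\alpha^+$ and $\alpha^\alpha=2^\alpha\ge\alpha^+$---so $\alpha^{<\kappa}=\sum_{\lambda<\kappa}\alpha^\lambda=\sup_{\lambda<\kappa}\alpha^\lambda$, yielding $w((\two^\alpha)_\kappa)\ge\alpha^{<\kappa}$. The main subtlety lies in the construction of $\{x_f\}$: naive choices (e.g.\ the one used in Lemma~\ref{LC}) produce at most $\alpha$ points, while Lemma~\ref{L14}(a) yields only $2^\lambda$, which can be strictly less than $\alpha^\lambda$ (for instance $\alpha=\beth_\omega$, $\lambda=\omega$, when $2^\omega<\beth_\omega$). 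The device of encoding $f\in\alpha^\lambda$ as a single $0/1$ pattern on $\lambda\times\alpha$, read off by one $\lambda$-sized canonical neighborhood, is precisely what recovers the full $\alpha^\lambda$.
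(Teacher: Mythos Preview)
Your proof is correct and takes a genuinely different route from the paper's own argument for the lower bound in~(b).

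The paper proceeds by a case split on whether $2^{<\kappa}\le\alpha$. In the first case it argues by contradiction on an arbitrary base $\sB$: every $A\in[\alpha]^{<\kappa}$ must be contained in some restriction set $R(B)$ with $B\in\sB$, so a counting estimate $|[\alpha]^{<\kappa}|\le 2^{<\kappa}\cdot|\sB|$ forces $|\sB|\ge\alpha^{<\kappa}$. In the second case it falls back on the discrete cubes $(\two^\delta)_\kappa$ for $\delta<\kappa$ (essentially Lemma~\ref{L14}) to get $w\ge 2^\delta\ge\alpha^\delta$ once $2^\delta>\alpha$.

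Your argument bypasses the case split entirely by constructing, for every infinite $\lambda<\kappa$, an explicit discrete subspace of size $\alpha^\lambda$: encode each $f\in\alpha^\lambda$ as the characteristic function of its graph on $\lambda\times\alpha\cong\alpha$, and separate points by canonical neighborhoods of restriction size $\lambda$. This is more elementary and more constructive; it also explains transparently \emph{why} the bound $\alpha^\lambda$ (and not merely $2^\lambda$ as in Lemma~\ref{L14}) is the right one, a point you emphasize with the $\alpha=\beth_\omega$ example. The paper's covering argument, on the other hand, yields a statement about \emph{every} base rather than just exhibiting a large subspace---conceptually a slightly different (though for this theorem equivalent) piece of information. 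Either approach suffices, and yours has the virtue of uniformity.
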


\begin{proof}
(a) is obvious, since $(\two^\alpha)_\kappa$ is discrete.

(b) The inequality $\leq$ is given by Corollary~\ref{C2.4}. We
show $\geq$.

If $\kappa=\alpha^+$ then $(\two^\alpha)_\kappa$ is the discrete space
$D(2^\alpha)$, which has weight
$2^\alpha=\alpha^\alpha=\alpha^{<\kappa}$. We assume in what follows
that $\kappa\leq\alpha$ and we consider two cases.

\underline{Case 1}. $2^{<\kappa}\leq\alpha$.

If $w((\two^\alpha)_\kappa)\geq\alpha^{<\kappa}$ fails then there is
$\lambda<\kappa$ such that $w((\two^\alpha)_\kappa)<\alpha^{\lambda}$; we
fix such
$\lambda$ and we choose in $(\two^\alpha)_\kappa$
a base $\sB$ of canonical open sets such that $|\sB|<\alpha^{\lambda}$.
From Lemma~\ref{L14}(b) we have $|\sB|\geq2^{<\kappa}$.

For every $A\in[\alpha]^{<\kappa}$ there
is $B\in\sB$ such that $A\subseteq R(B)$. (To check that, it is enough
to choose in $(\two^\alpha)_\kappa$ a
canonical open set $U=\Pi_{i\in I}\,U_i$ with $R(U)=A$ and $x\in U$,
and then to find $B\in\sB$ such that $x\in B\subseteq U$. Then $B$ is as
required.) Thus
\begin{equation}\label{Eq2}
[\alpha]^{<\kappa}\subseteq\bigcup\{[R(B)]^{<\kappa}:B\in\sB\}.
\end{equation}
For each $B\in\sB$ we have $|R(B)|<\kappa$ and hence
$[R(B)]^{<\kappa}=\sP(R(B))$. Therefore $|[R(B)]^{<\kappa}|=2^{|R(B)|}\leq2^{<\kappa}$.
From (\ref{Eq2}), then, we have the contradiction
$$\alpha^{<\kappa}=|[\alpha]^{<\kappa}|\leq\Sigma\{[R(B)]^{<\kappa}|:B\in\sB\}\leq$$
$$2^{<\kappa}\cdot|\sB|=|\sB|<\alpha^{\lambda}\leq\alpha^{<\kappa}.$$

\underline{Case 2}. Case 1 fails. 

Then there is $\lambda<\kappa$ such
that $2^\lambda>\alpha$. If the desired inequality
$w((\two^\alpha)_\kappa)\geq\alpha^{<\kappa}$ fails then there is
$\mu<\kappa$ such that  $w((\two^\alpha)_\kappa)<\alpha^\mu$, and
then with $\delta:=\max\{\lambda,\mu\}$ we have the contradiction
$$w((\two^\alpha)_\kappa)<\alpha^\delta\leq(2^\delta)^\delta=2^\delta=w((\two^\delta)_\kappa)\leq
w((\two^\alpha)_\kappa).$$
\vskip-20pt
\end{proof}

\begin{remark}
{\rm
When $\kappa=\alpha^+$ in Theorem~\ref{T2.7}, parts (a) and (b) are
compatible since
$\alpha^{<\kappa}=\alpha^\alpha=2^\alpha$ in that case.
}
\end{remark}

\begin{corollary}\label{C2.8}
Let $\kappa$ and $\alpha$ be infinite cardinals. Then
$$w((\two^{(\alpha^{<\kappa})})_\kappa)=(\alpha^{<\kappa})^{<\kappa}=
w((\two^{((\alpha^{<\kappa})^{<\kappa})})_\kappa).$$
\end{corollary}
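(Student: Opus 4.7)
The plan is to read off both equalities directly from Theorem~\ref{T2.7}(b) applied to the two cardinals $\beta_1:=\alpha^{<\kappa}$ and $\beta_2:=(\alpha^{<\kappa})^{<\kappa}$ in the role of the exponent, then collapse the resulting iterated $<\kappa$-exponents using Theorem~\ref{expon}(d).

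First, I would verify the hypothesis ``$\kappa\leq\beta^+$'' of Theorem~\ref{T2.7}(b) for $\beta=\beta_1$ and $\beta=\beta_2$. By Theorem~\ref{expon}(a) we have $\kappa\leq 2^{<\kappa}\leq\alpha^{<\kappa}=\beta_1$, so in particular $\kappa\leq\beta_1^+$; and then $\beta_2\geq\beta_1\geq\kappa$, so also $\kappa\leq\beta_2^+$. (These are the only facts about $\beta_1,\beta_2$ that I would need to check; the Abstract/preliminaries cover everything else.) Theorem~\ref{T2.7}(b) therefore gives
\[
w\bigl((\two^{\beta_1})_\kappa\bigr)=\beta_1^{<\kappa}=(\alpha^{<\kappa})^{<\kappa}
\qquad\text{and}\qquad
w\bigl((\two^{\beta_2})_\kappa\bigr)=\beta_2^{<\kappa}=\bigl((\alpha^{<\kappa})^{<\kappa}\bigr)^{<\kappa}.
\]

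Finally, Theorem~\ref{expon}(d) asserts the ``sesquipotent'' identity
$\bigl((\alpha^{<\kappa})^{<\kappa}\bigr)^{<\kappa}=(\alpha^{<\kappa})^{<\kappa}$,
so the right-hand expression above reduces to $(\alpha^{<\kappa})^{<\kappa}$, matching the middle term of the claimed chain of equalities. This completes the sketch.

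There is essentially no obstacle here: the entire argument is a bookkeeping exercise that juxtaposes Theorem~\ref{T2.7}(b) with Theorem~\ref{expon}(a) and (d). The only point that warrants a line of verification is the hypothesis $\kappa\leq\beta_i^+$ for $i=1,2$, which is why I would explicitly invoke Theorem~\ref{expon}(a) rather than leaving it tacit.
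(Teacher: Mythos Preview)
Your proposal is correct and matches the paper's own proof essentially line for line: the paper also invokes Theorem~\ref{expon}(a) to secure $\kappa\leq\alpha^{<\kappa}\leq(\alpha^{<\kappa})^{<\kappa}$, then applies Theorem~\ref{T2.7}(b) with $\alpha$ replaced successively by $\alpha^{<\kappa}$ and $(\alpha^{<\kappa})^{<\kappa}$, with Theorem~\ref{expon}(d) (left tacit in the paper) collapsing the triple exponent. Your version is slightly more explicit about the use of \ref{expon}(d), but there is no substantive difference.
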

\begin{proof}
From Theorem~\ref{expon}(a) we have
$\kappa\leq\alpha^{<\kappa}\leq(\alpha^{<\kappa})^{<\kappa}$.
The first equality then results by replacing $\alpha$ by
$\alpha^{<\kappa}$ in 
Theorem~\ref{T2.7}(b), the second equality results by making the same
substitution one more time.
\end{proof}

\begin{theorem}\label{T2.9}
Let $\alpha$ and $\kappa$ be infinite cardinals, and let
$\{X_i:i\in I\}$ be a set of spaces
such that $|I|=\alpha$, and $\two\subseteq_h X_i$ and
$w(X_i)\leq\alpha$ for each
$i\in I$. Then

{\rm (a)} if $\kappa\leq\alpha^+$ then $w((X_I)_\kappa)=\alpha^{<\kappa}$;

{\rm (b)} if $\kappa\geq\alpha^+$ then $w((X_I)_\kappa)=2^\alpha$.
\end{theorem}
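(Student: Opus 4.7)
The plan is to reduce both parts to Theorem~\ref{T2.7} and Corollary~\ref{C2.4}, with a common strategy: the upper bounds come from counting a suitable canonical base of $(X_I)_\kappa$, while the lower bounds all flow from a single observation — that the hypothesis $\two\subseteq_h X_i$ yields a homeomorphic embedding $(\two^\alpha)_\kappa\subseteq_h(X_I)_\kappa$ (via the identification $\two^I=_h\two^\alpha$ since $|I|=\alpha$).

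For (a), the upper bound $w((X_I)_\kappa)\le\alpha^{<\kappa}$ is exactly Corollary~\ref{C2.4} applied with $|I|=\alpha$ and $w(X_i)\le\alpha$. For the lower bound, I would verify that if $e_i:\two\hookrightarrow X_i$ is a topological embedding for each $i$, then the product map $\Pi_i\,e_i:\two^I\to X_I$ is an embedding with respect to the $\kappa$-box topology on both sides. This is because the canonical base of $(X_I)_\kappa$, intersected with the image of $\Pi_i\,e_i$, yields precisely the canonical base of $(\two^I)_\kappa$: one uses that for $U_i$ open in $X_i$ the set $e_i^{-1}(U_i)$ is open in $\two$, and conversely any open set of $\two$ extends to an open subset of $X_i$. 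With this embedding in hand, Theorem~\ref{T2.7}(b) gives $w((X_I)_\kappa)\ge w((\two^\alpha)_\kappa)=\alpha^{<\kappa}$.

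For (b), the same embedding together with Theorem~\ref{T2.7}(a) delivers $w((X_I)_\kappa)\ge w((\two^\alpha)_\kappa)=2^\alpha$. For the matching upper bound, I would observe that $\kappa\ge\alpha^+>\alpha=|I|$ makes the cardinality constraint $|R(B)|<\kappa$ on canonical open sets automatic, so the $\kappa$-box topology on $X_I$ coincides with the full box topology. Fixing bases $\sB_i$ for $X_i$ with $|\sB_i|\le\alpha$ and $X_i\in\sB_i$, the family $\{\Pi_{i\in I}\,B_i:B_i\in\sB_i\}$ is then a base for $(X_I)_\kappa$, and it has cardinality at most $\alpha^\alpha=2^\alpha$.

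The main (minor) obstacle is the embedding step in the lower bound — one must be convinced that taking $\kappa$-box topologies is compatible with coordinatewise embeddings, but this is transparent once the canonical bases are written down. It is also worth noting that the two parts agree at the boundary case $\kappa=\alpha^+$, since then $\alpha^{<\kappa}=\alpha^\alpha=2^\alpha$, matching the remark following Theorem~\ref{T2.7}.
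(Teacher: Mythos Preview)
Your proposal is correct and follows essentially the same approach as the paper: both obtain the lower bounds from the embedding $(\two^\alpha)_\kappa\subseteq_h(X_I)_\kappa$ together with Theorem~\ref{T2.7}, and the upper bounds from Corollary~\ref{C2.4}. Your upper bound in (b) via the observation that the $\kappa$-box topology coincides with the full box topology when $\kappa\geq\alpha^+$ is in fact a bit cleaner than the paper's chain $w((X_I)_\kappa)\leq\alpha^{<\kappa}\leq\alpha^\alpha$, whose last inequality is only literally correct at $\kappa=\alpha^+$.
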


\begin{proof}
We have $\two^\alpha\subseteq_h X$ and hence
$(\two^\alpha)_\kappa\subseteq_h (X_I)_\kappa$. Then from
Theorem~\ref{T2.7} and Corollary~\ref{C2.4} it follows that
$$\alpha^{<\kappa}=w((\two^\alpha)_\kappa)\leq
w((X_I)_\kappa)\leq\alpha^{<\kappa}$$ 
in (a), and
$$2^\alpha=w((\two^\alpha)_\kappa)\leq
w((X_I)_\kappa)\leq\alpha^{<\kappa}\leq\alpha^\alpha=2^\alpha$$
in (b).
\end{proof}

\begin{corollary}\label{wXIkappa}
Let $\alpha$ and $\kappa$ be infinite cardinals, and let 
$\{X_i:i\in I\}$ be a set of spaces such that $\two\subseteq_h X_i$ for
each $i\in I$. If
$w(X_i)\leq(\alpha^{<\kappa})^{<\kappa}$ for each $i\in I$ and
$\alpha^{<\kappa}\leq|I|\leq(\alpha^{<\kappa})^{<\kappa}$, then
$w((X_I)_\kappa)=(\alpha^{<\kappa})^{<\kappa}$.
\end{corollary}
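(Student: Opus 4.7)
The plan is to sandwich $w((X_I)_\kappa)$ between the single value $\gamma:=(\alpha^{<\kappa})^{<\kappa}$ from above and from below. Introduce also the shorthand $\beta:=\alpha^{<\kappa}$, so that $\gamma=\beta^{<\kappa}$, and observe that Theorem~\ref{expon}(a) gives $\kappa\le\beta\le\gamma$, while Theorem~\ref{expon}(d) gives the crucial idempotence
$$\gamma^{<\kappa}=\bigl((\alpha^{<\kappa})^{<\kappa}\bigr)^{<\kappa}=(\alpha^{<\kappa})^{<\kappa}=\gamma.$$
This idempotence is really the only nontrivial piece of cardinal arithmetic driving the result; everything else is a direct invocation of the theorems already proved.

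For the upper bound, I would apply Theorem~\ref{T1}(a) with the common weight bound $\gamma$ for each $X_i$ (valid since $w(X_i)\le\gamma$). Using the hypothesis $|I|\le\gamma$ and the idempotence above, this yields
$$w((X_I)_\kappa)\le\gamma^{<\kappa}\cdot|I|^{<\kappa}\le\gamma^{<\kappa}\cdot\gamma^{<\kappa}=\gamma.$$

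For the lower bound, since $\two\subseteq_h X_i$ for each $i\in I$, we obtain $\two^I\subseteq_h X_I$ and hence $(\two^I)_\kappa\subseteq_h(X_I)_\kappa$, so $w((X_I)_\kappa)\ge w((\two^I)_\kappa)$. From $|I|\ge\beta\ge\kappa$ we have $\kappa\le|I|^+$, so Theorem~\ref{T2.7}(b) applied with $|I|$ in place of $\alpha$ yields $w((\two^I)_\kappa)=|I|^{<\kappa}$. Finally, monotonicity of $^{<\kappa}$ in the base together with $|I|\ge\beta$ gives $|I|^{<\kappa}\ge\beta^{<\kappa}=\gamma$, completing the lower bound.

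There is no real obstacle here: the main point is simply to recognize that because $\gamma$ is fixed by the operation $(\cdot)^{<\kappa}$, both the upper estimate from Theorem~\ref{T1}(a) and the lower estimate coming from the embedded $(\two^I)_\kappa$ collapse to the same value $\gamma$. The one place where care is required is in verifying the hypothesis $\kappa\le|I|^+$ needed to apply Theorem~\ref{T2.7}(b); this follows from the lower hypothesis $|I|\ge\alpha^{<\kappa}\ge\kappa$ and is essentially the reason the condition $\alpha^{<\kappa}\le|I|$ appears in the statement.
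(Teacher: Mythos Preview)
Your proof is correct and follows essentially the same approach as the paper: the upper bound via Theorem~\ref{T1}(a) together with the idempotence $\gamma^{<\kappa}=\gamma$ from Theorem~\ref{expon}(d), and the lower bound via the embedded copy of $(\two^I)_\kappa$. The only cosmetic difference is that for the lower bound you apply Theorem~\ref{T2.7}(b) directly to $(\two^I)_\kappa$ to obtain $w((\two^I)_\kappa)=|I|^{<\kappa}\ge\beta^{<\kappa}=\gamma$, whereas the paper passes first to the smaller product $(\two^{\alpha^{<\kappa}})_\kappa$ and then invokes Corollary~\ref{C2.8}; these are equivalent routes through the same computation.
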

\begin{proof} 
For $\leq$, replace $\alpha$ by $(\alpha^{<\kappa})^{<\kappa}$ in
Theorem~\ref{T1}(a) and use Theorem~\ref{expon}(d).

For $\geq$, it is enough to note from Corollary~\ref{C2.8} that
$$w((X_I)_\kappa)\geq w((\two^I)_\kappa)\geq
w((\two^{(\alpha^{<\kappa})})_\kappa)=(\alpha^{<\kappa})^{<\kappa}.$$
\vskip-20pt
\end{proof}

Like the authors, the reader will have noted already at this stage a
distinction in kind between the pleasing, clear-cut result given in
Discussion~\ref{oldweight} concerning the weight of a product in the
usual product topology and the less satisfactory statement given
in Corollary~\ref{wXIkappa}; in this latter, the weight of spaces of the
form $(\two^I)_\kappa$ is determined by $|I|$, but unexpectedly such
products which differ in size may have the same weight.

\begin{corollary}\label{betaleqalpha}
Let $\alpha$, $\kappa$ and $\lambda$ be infinite cardinals such that
$\lambda\leq\kappa$, and let $\{X_i:i\in I\}$ be a set
of spaces such that
$|I|=\alpha$, and $\two\subseteq_h X_i$ and $w(X_i)\leq\alpha$ for each
$i\in I$. Then
$w((X_I)_\lambda)\leq w((X_I)_\kappa)$.
\end{corollary}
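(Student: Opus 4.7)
The plan is to reduce the inequality to a comparison of the explicit cardinal values furnished by Theorem~\ref{T2.9}, and then to verify that inequality by a short case analysis based on the position of $\lambda$ and $\kappa$ relative to $\alpha^+$. Note that the standard ``finer topology'' reasoning from Remarks~\ref{R1.2}(b) is unavailable here, since weight need not behave monotonically under refinement; however, Theorem~\ref{T2.9} applies to $(X_I)_\lambda$ and $(X_I)_\kappa$ alike, because the hypotheses on $I$ and on the spaces $X_i$ do not involve the box parameter.

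First, observe that for any two infinite cardinals $\mu_1\leq\mu_2$, the inequality $\alpha^{<\mu_1}\leq\alpha^{<\mu_2}$ is immediate from the definition $\alpha^{<\mu}=\Sigma_{\nu<\mu}\,\alpha^\nu$. I would then split into three cases.

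Case 1: $\kappa\leq\alpha^+$. Then also $\lambda\leq\alpha^+$, and Theorem~\ref{T2.9}(a) gives $w((X_I)_\lambda)=\alpha^{<\lambda}\leq\alpha^{<\kappa}=w((X_I)_\kappa)$. Case 2: $\lambda\leq\alpha^+\leq\kappa$. Then Theorem~\ref{T2.9}(b) gives $w((X_I)_\kappa)=2^\alpha$, while Theorem~\ref{T2.9}(a) gives $w((X_I)_\lambda)=\alpha^{<\lambda}\leq\alpha^{<\alpha^+}=2^\alpha$ (the last equality because every exponent $\mu<\alpha^+$ satisfies $\mu\leq\alpha$, so $\alpha^\mu\leq\alpha^\alpha=2^\alpha$, and the summation has $\alpha^+\leq 2^\alpha$ terms). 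Case 3: $\alpha^+\leq\lambda$. Then $\alpha^+\leq\lambda\leq\kappa$ and Theorem~\ref{T2.9}(b) yields $w((X_I)_\lambda)=2^\alpha=w((X_I)_\kappa)$.

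None of the three cases contains a genuine obstacle: each reduces to a one-line comparison once Theorem~\ref{T2.9} is invoked. The only mild subtlety is the boundary case $\lambda\leq\alpha^+\leq\kappa$, where one must check $\alpha^{<\alpha^+}=2^\alpha$; this is a standard fact of cardinal arithmetic and follows immediately from Notation~\ref{defweakexp}. Assembling the three cases gives $w((X_I)_\lambda)\leq w((X_I)_\kappa)$ under the stated hypotheses.
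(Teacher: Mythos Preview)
Your proof is correct and follows essentially the same approach as the paper: both arguments invoke Theorem~\ref{T2.9} and perform the identical three-case split according to whether $\lambda\leq\kappa\leq\alpha^+$, $\lambda\leq\alpha^+\leq\kappa$, or $\alpha^+\leq\lambda\leq\kappa$. Your additional remark that the na\"{\i}ve ``finer topology'' monotonicity argument is unavailable here matches the paper's own observation in Remarks~\ref{R2.9}(a).
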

\begin{proof}
Necessarily we have $\lambda\leq\kappa\leq\alpha^+$, or
$\lambda\leq\alpha^+\leq\kappa$, or $\alpha^+\leq\lambda\leq\kappa$. In
those three cases,
Theorem~\ref{T2.9} gives respectively

$w((X_I)_\lambda)=\alpha^{<\lambda}\leq\alpha^{<\kappa}=w((X_I)_\kappa)$,

$w((X_I)_\lambda)=\alpha^{<\lambda}\leq\alpha^\alpha=2^\alpha=w((X_I)_\kappa)$,
and

$w((X_I)_\lambda)=2^\alpha=w((X_I)_\kappa)$. 
\end{proof}

\begin{remarks}\label{R2.9}
{\rm
(a) Surely Corollary \ref{betaleqalpha} is as expected.
Presumably a short, direct proof is available but
the authors' search for that was
unsuccessful. We note however that, as the
simple example in Remark~\ref{R1.2}(b)({\it ii}) shows,
a larger topology (for example, the discrete topology) on a given set
may have a strictly smaller weight than does a smaller Tychonoff topology.

(b) The authors find surprising both the extent of validity of the formula
given in Theorem~\ref{T2.9} and the simplicity of its proof. We had
anticipated finding an explicit formula for
$w((X_I)_\kappa)$ only under
special axioms and assumptions (perhaps GCH, for example), and we had
anticipated the necessity to consider, at the least, such cardinals as
$\cf(\kappa)$, $\cf(\alpha)$ and $\log(\alpha)$, as well as the least
cardinal $\gamma$ such that $\alpha^\gamma>\alpha$.
}
\end{remarks}

\section{On the density character of $\kappa$-box products}

In this section we continue to investigate spaces of the form
$(X_I)_\kappa$, focusing now on the
invariant $d$ rather than on $w$.
Our point of departure and motivation is the paradigmatic trilogy of
Theorems~\ref{HMP}, \ref{HMPconv} and \ref{dexact}, which for the usual
product topology give respectively upper bounds, lower bounds, and
conditions of equality for (certain) numbers of the form
$d((X_I)_\kappa)$.
To avoid unnecessary restrictions, we state these three familiar results
in considerable generality. Standard treatments often impose stronger
separation properties according to authors' conventions, but the
published proofs (of Theorems~\ref{HMPconv} and \ref{dexact} in
\cite[3.19 and 3.20]{comfneg74}, for example) suffice to establish
Theorems~\ref{HMP}--\ref{dexact} in the form we have chosen.
Theorem~\ref{HMP} is, of course,
the classic theorem of Hewitt, Marczewski and
Pondiczery~\cite{hewitt46a},
\cite{marc47}, \cite{pondi}, stated here in two useful equivalent forms;
and Theorem~\ref{HMPconv} is its converse.

Our $\kappa$-box analogues to Theorems~\ref{HMP} and \ref{HMPconv} are
given in \ref{maindlemma}--\ref{HMPgen} and
\ref{d(Xkappa)}--\ref{optimal|I|}, respectively. The quest for the
exact $\kappa$-box analogue of Theorem~\ref{dexact}---that is, the
search for a specific cardinal number $\delta$ depending on the
variables $|I|$, $d(X_i)$ ($i\in I$) and $\kappa$ so that
$d((X_I)_\kappa)=\delta$---is elusive, perhaps
unattainable. For example, answering a question from 
\cite{comfneg72}, \cite{comfneg74}, Cater, Erd\"os, and Galvin 
\cite{ceg} have shown that in some models for $\beta = \aleph_\omega$
the inequalities 
$$d((\two^{(\beta^+)})_{\omega^+})=\cc<\beta=\log(2^\beta)<d((\two^{(2^\beta)})_{\omega^+})$$
occur. Furthermore, it has been known for some time~\cite{ceg},
\cite{comfrobii} that consistently 
$d((\two^\beta)_{\omega^+})=(\log(\beta))^\omega$ for every
infinite cardinal $\beta$. The question whether that equality holds in 
(all models of) ZFC, raised in ~\cite{ceg}, was answered 
in the negative by Gitik and Shelah \cite{gitikshelah}; we discuss 
their models in \ref{gitik-shelah}(d)--(g).

The foregoing paragraph explains why we are able for $\kappa>\omega$ to
offer exact computations of the form 
$d((X_I)_\kappa)=\delta$, in parallel with
Theorem~\ref{dexact}, only for spaces $X_i$ ($i\in I$) and $\kappa$
subject to severe constraints. Our (few) contributions of this sort are
given in Corollary~\ref{cortomain}(a) and
Theorems~\ref{HMPkappa}---\ref{T3.17} below.

\begin{theorem}\label{HMP}
{\rm [Version 1]} Let $\alpha\geq\omega$, $X_I=\Pi_{i\in I}\,X_i$ with
$d(X_i)\leq\alpha$ for each $i\in I$ and with
$|I|\leq2^\alpha$. Then
$d(X_I)\leq\alpha$.

{\rm [Version 2]} Let $I$ be an infinite set and $\{X_i:i\in I\}$ a set
of spaces. Then
$d(X_I)\leq\max\{\sup\{d(X_i):i\in I\},\log|I|\}$.
\end{theorem}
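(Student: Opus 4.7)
The plan is to prove Version~1 by the classical Hewitt--Marczewski--Pondiczery construction and to derive Version~2 as an easy consequence.

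For Version~1, for each $i\in I$ fix an enumeration $\{d_{i,\xi}:\xi<\alpha\}$ of a dense subset of $X_i$, and using $|I|\le 2^\alpha$ fix an injection $I\hookrightarrow\two^\alpha$, so that each $i\in I$ is identified with a distinct function $i:\alpha\to\{0,1\}$. For every pair $(F,\psi)$ with $F\in[\alpha]^{<\omega}$ and $\psi:\two^F\to\alpha$, define $y_{(F,\psi)}\in X_I$ by setting $y_{(F,\psi)}(i):=d_{i,\psi(i|_F)}$ for each $i\in I$. The resulting set $D$ of candidate dense points satisfies $|D|\le |[\alpha]^{<\omega}|\cdot\alpha=\alpha$, since for each fixed $F$ the number of admissible $\psi$ is at most $\alpha^{2^{|F|}}=\alpha$.

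To verify that $D$ is dense in $X_I$, take a nonempty canonical basic open set $U=\Pi_{i\in I}U_i$ with $R(U)=\{i_1,\ldots,i_n\}$. Since the $i_k$ are pairwise distinct elements of $\two^\alpha$, for each pair $k\ne k'$ there is $\beta_{k,k'}<\alpha$ with $i_k(\beta_{k,k'})\ne i_{k'}(\beta_{k,k'})$; hence $F:=\{\beta_{k,k'}:k\ne k'\}\in[\alpha]^{<\omega}$ has the property that the restrictions $i_1|_F,\ldots,i_n|_F$ are pairwise distinct in $\two^F$. By density of each $D_{i_k}$ choose $\xi_k<\alpha$ with $d_{i_k,\xi_k}\in U_{i_k}$, and define $\psi:\two^F\to\alpha$ by $\psi(i_k|_F):=\xi_k$ (well-defined by distinctness) and $\psi\equiv 0$ elsewhere. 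Then $y_{(F,\psi)}(i_k)=d_{i_k,\xi_k}\in U_{i_k}$ for $1\le k\le n$, while no constraint is imposed on coordinates $i\notin R(U)$; hence $y_{(F,\psi)}\in U$, as required.

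For Version~2, set $\alpha:=\max\{\sup_{i\in I}d(X_i),\log|I|\}$. Since $I$ is infinite, $\log|I|\ge\omega$, so $\alpha\ge\omega$; moreover $d(X_i)\le\alpha$ for every $i\in I$ and $|I|\le 2^{\log|I|}\le 2^\alpha$, so Version~1 yields $d(X_I)\le\alpha$. The only mildly delicate point in the argument is the combinatorial separation step---locating a single finite $F\subseteq\alpha$ whose restriction distinguishes all of the finitely many $i_k$---but this is immediate from the pairwise distinctness of the $i_k$ together with the finiteness of the set of pairs $\{(k,k'):k\ne k'\}$, and the remainder of the verification is routine cardinal bookkeeping.
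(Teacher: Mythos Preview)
Your proof is correct. The paper itself does not supply a proof of Theorem~\ref{HMP}; it is cited there as the classical Hewitt--Marczewski--Pondiczery theorem, with references to \cite{hewitt46a}, \cite{marc47}, \cite{pondi}. The only proof indication the paper offers is the sketch in Discussion~\ref{D3.10}: first show that $E:=(D(\alpha))^{2^\alpha}$ has a dense set of size $\alpha$, then push it forward through a continuous map from $E$ onto a dense subset of $X_I$. Your argument is the same idea with the intermediate space $E$ suppressed---the coding $i\mapsto i|_F\in\two^F$ and the map $\psi:\two^F\to\alpha$ are exactly what the continuous map $E\to X_I$ would do coordinatewise---so the two routes are essentially identical. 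One cosmetic point: your phrase ``enumeration $\{d_{i,\xi}:\xi<\alpha\}$'' should be read as a surjection from $\alpha$ onto a dense set (repetitions allowed), since $d(X_i)$ may be strictly less than $\alpha$; and the case where some $X_i$ is empty is trivial.
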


\begin{theorem}\label{HMPconv}
Let $\alpha\geq\omega$ and let $X_I=\Pi_{i\in I}\,X_i$ with $S(X_i)\geq3$
for each $i\in I$. If $d(X_i)>\alpha$ for some $i\in
I$, or if $|I|>2^\alpha$, then $d(X_I)>\alpha$.
\end{theorem}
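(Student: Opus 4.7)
The plan is to treat the two hypotheses separately; each gives a contradiction under the assumption $d(X_I)\le\alpha$.

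First, suppose $d(X_i)>\alpha$ for some $i\in I$. I would invoke the fact that a continuous surjection cannot increase density: if $D\subseteq X_I$ is dense with $|D|\le\alpha$, then for every nonempty open $V\subseteq X_i$ the preimage $\pi_i\inv(V)$ is nonempty and open in $X_I$, so $D$ meets it and hence $\pi_i(D)$ meets $V$. Thus $\pi_i(D)$ is dense in $X_i$ with $|\pi_i(D)|\le\alpha$, contradicting $d(X_i)>\alpha$. (I deliberately use projection rather than the embedding of $X_i$ in $X_I$, since as Remarks~\ref{R1.2}(b) warn, density need not pass to subspaces.)

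Second, suppose $|I|>2^\alpha$ but $d(X_I)\le\alpha$; fix $D\subseteq X_I$ dense with $|D|\le\alpha$. The hypothesis $S(X_i)\ge 3$ gives for each $i\in I$ a cellular family of size $2$, i.e., nonempty disjoint open sets $U_i,V_i\subseteq X_i$. The key step is to assign to each coordinate $i$ the ``trace'' $A_i:=\{d\in D:d_i\in U_i\}\subseteq D$, and to argue that $i\mapsto A_i$ is injective. Indeed, for $i\ne j$ in $I$, the set $\pi_i\inv(U_i)\cap\pi_j\inv(V_j)$ is nonempty and open in $X_I$, so by density it contains some $d\in D$; this $d$ lies in $A_i$ but, since $U_j\cap V_j=\emptyset$, not in $A_j$. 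Hence $A_i\ne A_j$, so $|I|\le 2^{|D|}\le 2^\alpha$, contradicting $|I|>2^\alpha$.

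I do not expect a serious obstacle: the first case is a one-line projection argument, and the second case rests on the standard ``distinguish coordinates by a chosen cellular pair'' trick, which works without separation axioms because the only properties used are nonemptiness and disjointness of $U_i,V_i$ and continuity of the projections. The only points requiring a little care are to use projection (not subspace inclusion) for density, and to note that $S(X_i)\ge 3$ forces $X_i\ne\emptyset$, so $\pi_i\inv(U_i)$ and $\pi_i\inv(V_i)$ are genuinely nonempty as subsets of $X_I$.
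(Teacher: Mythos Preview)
Your proof is correct. The paper does not give its own proof of this theorem but cites \cite[3.19]{comfneg74}; your argument is the standard one found there, and it is also the $\kappa=\omega$, $\beta=2$ specialization of the trace argument the paper itself uses later in Theorem~\ref{d(Xkappa)}.
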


\begin{theorem}\label{dexact}
If $\{X_i:i\in I\}$ is a family of spaces such that
$S(X_i)\ge 3$ for each $i\in I$ and
$|I|\ge\omega$, then
$$d(X_I)=\max\{\sup\{d(X_i):i\in I\},\log|I|\}.$$
\end{theorem}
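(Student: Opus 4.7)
The plan is to derive both inequalities from Theorems~\ref{HMP} and \ref{HMPconv}. For the upper bound, Theorem~\ref{HMP} (Version 2) applied to the hypothesized product gives at once $d(X_I)\leq\max\{\sup\{d(X_i):i\in I\},\log|I|\}$, using $|I|\geq\omega$.

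For the reverse inequality, set $\alpha:=d(X_I)$ and read Theorem~\ref{HMPconv} contrapositively: since $d(X_I)$ is not strictly greater than $\alpha$, that theorem forces both $d(X_i)\leq\alpha$ for every $i\in I$ and $|I|\leq 2^\alpha$. The first conclusion gives $\sup\{d(X_i):i\in I\}\leq\alpha$; the second, by definition of $\log$, gives $\log|I|\leq\alpha$. Together these yield $\max\{\sup\{d(X_i):i\in I\},\log|I|\}\leq d(X_I)$, matching the upper bound.

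The one genuine obstacle is that Theorem~\ref{HMPconv} requires $\alpha\geq\omega$, so one must first verify $d(X_I)\geq\omega$. I would dispatch this with a standard cellular-family argument: since $S(X_i)\geq 3$, for each $i\in I$ choose disjoint nonempty open sets $U_{i,0},U_{i,1}\subseteq X_i$; then for every finite $F\subseteq I$ and every $f\in\two^F$ the open set $V_f:=\bigcap_{i\in F}\pi_i^{-1}(U_{i,f(i)})$ is nonempty, and $\{V_f:f\in\two^F\}$ is a cellular family of size $2^{|F|}$ in $X_I$. Any dense subset of $X_I$ must meet each $V_f$, so $d(X_I)\geq 2^{|F|}$ for arbitrarily large finite $F$, i.e.\ $d(X_I)\geq\omega$. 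With that in hand Theorem~\ref{HMPconv} applies as above, and combining the two inequalities completes the proof.
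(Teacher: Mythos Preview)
Your proof is correct. The paper does not prove Theorem~\ref{dexact} itself but merely cites \cite[3.20]{comfneg74}; your derivation from Theorems~\ref{HMP} and \ref{HMPconv}, together with the cellular-family argument securing $d(X_I)\geq\omega$ so that Theorem~\ref{HMPconv} applies, is exactly the standard route.
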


\centerline{{\sc Part A}. {\sc Upper Bounds for } $d((X_I)_\kappa)$.}

We say that a subset $A$ of a space $X$ is {\it strongly
discrete} (in $X$) if there is a family $\{U(a):a\in A\}$ of pairwise
disjoint open subsets of $X$ such that $a\in U(a)$ for each $a\in A$.
Simple examples show that a strongly discrete set need not be closed. It
is clear, however, that if $\kappa$ is fixed and every
$A\in[X]^{<\kappa}$ is strongly discrete, then also every
$A\in[X]^{<\kappa}$ is closed in $X$. That motivates the
following terminology.

\begin{definition}\label{strongdis}
{\rm
Let $\kappa\geq\omega$ and let $X$ be a space. Then $X$ is
{\it strongly} $\kappa$-{\it discrete} if every
$A\in[X]^{<\kappa}$ is strongly discrete.
}
\end{definition}

\begin{remarks}
{\rm
(a) The terminology in Definition~\ref{strongdis} is not 
in universal usage. Note that the separation requirement applies only to sets
$A\in[X]^{<\kappa}$, not to all $A\in[X]^{\leq\kappa}$. Note also that
since in a strongly $\kappa$-discrete space $X$ each set $A\in[X]^{<\kappa}$
is both closed and discrete,  the condition is strictly stronger than the
condition that each discrete set
$A\in[X]^{<\kappa}$ is strongly discrete.

(b) We note that a space which for some $\kappa\geq\omega$ is strongly
$\kappa$-discrete is a Hausdorff space.

(c) We give the following lemma in the generality it warrants, but in
fact we will use it only when each of the spaces $E_i$
is discrete.
}
\end{remarks}

\begin{lemma}\label{L3.3}
Let $\kappa\geq\omega$ and let $E=E_I=\Pi_{i\in I}\,E_i$
with each space
$E_i$ strongly $\kappa$-discrete. Then $(E_I)_\kappa$ is strongly
$\kappa$-discrete.
\end{lemma}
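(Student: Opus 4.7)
The plan is to construct, for an arbitrary $A \in [E_I]^{<\kappa}$, an explicit family $\{U(a) : a \in A\}$ of pairwise disjoint canonical open subsets of $(E_I)_\kappa$ witnessing that $A$ is strongly discrete. The construction proceeds by first locating a small set of ``separating coordinates,'' then invoking the strong $\kappa$-discreteness of each factor, and finally assembling the neighborhoods as box sets supported on that small set.

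For the first step, I would for each pair of distinct points $a,b \in A$ fix a coordinate $i(a,b) \in I$ at which $a_{i(a,b)} \neq b_{i(a,b)}$, and put $J := \{i(a,b) : a,b \in A,\ a \neq b\}$. A quick cardinal count gives $|J| \leq |A|^2 < \kappa$: if $A$ is finite this is obvious, and if $A$ is infinite then $\kappa > \omega$ and $|A|^2 = |A| < \kappa$. So any box set whose restriction set lies inside $J$ will automatically be a canonical basic open set of $(E_I)_\kappa$.

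Next I would push the hypothesis down to each relevant factor. For every $i \in J$ the projection $A_i := \{a_i : a \in A\} \subseteq E_i$ satisfies $|A_i| \leq |A| < \kappa$, so since $E_i$ is strongly $\kappa$-discrete there is a pairwise disjoint family $\{V_i(e) : e \in A_i\}$ of open subsets of $E_i$ with $e \in V_i(e)$. I then set
\[
U(a) := \prod_{i \in I} U(a)_i, \quad \text{where } U(a)_i := V_i(a_i) \text{ for } i \in J \text{ and } U(a)_i := E_i \text{ for } i \notin J.
\]

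Finally I would verify the three required properties. Since $R(U(a)) \subseteq J$ and $|J| < \kappa$, each $U(a)$ lies in the canonical base of $(E_I)_\kappa$; the containment $a \in U(a)$ is immediate from $a_i \in V_i(a_i)$ for $i \in J$; and given distinct $a,b \in A$, the coordinate $i := i(a,b) \in J$ satisfies $V_i(a_i) \cap V_i(b_i) = \emptyset$ by construction, forcing $U(a) \cap U(b) = \emptyset$. There is no real obstacle in the argument; the only bookkeeping worth any attention is the bound on $|J|$, which is the reason the argument works uniformly for both finite and infinite $A$.
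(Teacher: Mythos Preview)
Your proof is correct and follows essentially the same approach as the paper: locate a set $J\in[I]^{<\kappa}$ of coordinates on which the projection separates the points of $A$, invoke strong $\kappa$-discreteness of each factor on the projected sets, and assemble the resulting neighborhoods into canonical box sets with restriction set contained in $J$. The only cosmetic difference is that you construct $J$ explicitly as the set of chosen separating coordinates and verify the bound $|J|\le|A|^2<\kappa$, whereas the paper simply asserts the existence of such a $J$ (noting that one may take $J=I$ when $|I|<\kappa$).
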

\begin{proof}
Given $A\in[E]^{<\kappa}$ there is $J\in[I]^{<\kappa}$ such that the
projection
$\pi_J:E\twoheadrightarrow\Pi_{i\in J}\,E_i$, when restricted to $A$,
is an injection.
(If $\kappa>|I|$ we may take $J=I\in[I]^{<\kappa}$.) Now for
$i\in I$ and $a\in A$ we choose a neighborhood $U_i(a)$ of $a_i$ in
$X_i$ such that

(a) $U_i(a_i)=U_i(b_i)$ if $a,b\in A$ and $a_i=b_i$, and

(b) $U_i(a_i)\cap U_i(b_i)=\emptyset$ if $a,b\in A$ and $a_i\neq b_i$.

\noindent [Such a family $\{U_i(a_i):a\in A\}$ exists in $X_i$ since
$\pi_i[A]\in[X_i]^{<\kappa}$.] Then the sets
$U(a):=(\Pi_{i\in J}\,U_i(a_i))\times\Pi_{i\in I\backslash J}\,E_i$
are open in $(E_I)_\kappa$ and are pairwise disjoint with $a\in
U(a)$ for each $a\in A$.
\end{proof}

The principal result of this section is given in
Theorem~ \ref{maindtheorem}.
The following lemma does most of the work.

\begin{lemma}\label{maindlemma}
Let $\alpha\ge 2$, $\beta\ge\omega$, and $\kappa\ge\omega$ be cardinals and 
let $E:=(D(\alpha))^{2^{\beta}}$.
Then $d(E_\kappa)\leq\alpha^{<\kappa}\cdot(\beta^{<\kappa})^{<\kappa}$.
\end{lemma}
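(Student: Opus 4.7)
The plan is to carry over the classical Hewitt--Marczewski--Pondiczery dense-set construction to the $\kappa$-box setting. Identify the index set $2^\beta$ with $\{0,1\}^\beta$, so that a point of $E$ becomes a function $x:\{0,1\}^\beta\to\alpha$. Fix an element $a_0\in\alpha$, and for each triple $(F,H,\phi)$ with $F\in[\beta]^{<\kappa}$, $H\in[\{0,1\}^F]^{<\kappa}$, and $\phi:H\to\alpha$, define $x_{F,H,\phi}\in E$ by setting $x_{F,H,\phi}(t):=\phi(t\upharpoonright F)$ when $t\upharpoonright F\in H$, and $x_{F,H,\phi}(t):=a_0$ otherwise. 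The candidate dense set $D$ will be the collection of all such $x_{F,H,\phi}$.

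To verify density, I would start from an arbitrary canonical basic open set $V$ of $E_\kappa$. Since $D(\alpha)$ is discrete, $V$ has the form $\{x\in E:x\upharpoonright J=h\}$ for some $J\in[\{0,1\}^\beta]^{<\kappa}$ and some $h:J\to\alpha$. The elements of $J$ being pairwise distinct functions $\beta\to\{0,1\}$, for each pair $t\neq t'$ in $J$ I would pick $\xi(t,t')\in\beta$ with $t(\xi(t,t'))\neq t'(\xi(t,t'))$, and set $F:=\{\xi(t,t'):t,t'\in J,\,t\neq t'\}$. Then $|F|\leq|J|^2<\kappa$, the restrictions $t\upharpoonright F$ $(t\in J)$ are pairwise distinct, $H:=\{t\upharpoonright F:t\in J\}$ satisfies $|H|=|J|<\kappa$, and the rule $\phi(t\upharpoonright F):=h(t)$ yields an unambiguous function $\phi:H\to\alpha$. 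By construction $x_{F,H,\phi}\in V$.

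Counting $D$ is where the one genuine difficulty lies. There are at most $\beta^{<\kappa}$ admissible $F$, and for each $H$ there are at most $\alpha^{<\kappa}$ functions $\phi:H\to\alpha$. The main obstacle is the intermediate estimate $|[\{0,1\}^F]^{<\kappa}|\leq 2^{<\kappa}$ for $|F|=\lambda<\kappa$: since $\lambda\cdot\mu<\kappa$ whenever $\mu<\kappa$, one has $(2^\lambda)^\mu=2^{\lambda\mu}\leq 2^{<\kappa}$ for each such $\mu$, and summing over $\mu<\kappa$ together with $\kappa\leq 2^{<\kappa}$ (Theorem~\ref{expon}(a)) gives the claim. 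Multiplying the three factors and using $2^{<\kappa}\leq\alpha^{<\kappa}$ (Theorem~\ref{expon}(a) again) then produces $|D|\leq\beta^{<\kappa}\cdot\alpha^{<\kappa}\leq\alpha^{<\kappa}\cdot(\beta^{<\kappa})^{<\kappa}$, as required.
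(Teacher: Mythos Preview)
Your argument is correct, but it differs from the paper's in a way worth noting. The paper parametrizes its dense set by pairs $(\sC,f)$ where $\sC$ is a cellular family of size $<\kappa$ drawn from a fixed base $\sB$ for $(\two^\beta)_\kappa$ with $|\sB|=\beta^{<\kappa}$; counting such $\sC$ gives $|[\sB]^{<\kappa}|=(\beta^{<\kappa})^{<\kappa}$, whence the stated bound $\alpha^{<\kappa}\cdot(\beta^{<\kappa})^{<\kappa}$. You instead parametrize by triples $(F,H,\phi)$ with $F\in[\beta]^{<\kappa}$, and your key observation that $\lambda\cdot\mu<\kappa$ whenever $\lambda,\mu<\kappa$ yields $|[\{0,1\}^F]^{<\kappa}|\leq 2^{<\kappa}\leq\alpha^{<\kappa}$, so your count collapses to $\beta^{<\kappa}\cdot\alpha^{<\kappa}=(\alpha\cdot\beta)^{<\kappa}$. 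This is strictly sharper than the lemma's bound and is in fact exactly the conclusion of Theorem~\ref{maindtheorem}(a), which the paper reaches only after a further case split (regular $\kappa$ via Theorem~\ref{expon}(b), singular $\kappa$ via a union-of-dense-sets trick). So your route is both correct and more direct: it bypasses the $(\beta^{<\kappa})^{<\kappa}$ detour entirely.
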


\begin{proof}
We set $X=\two^{\beta}$.
Since $w(\two)=2\leq\beta$ there is (by
Corollary~\ref{C2.4}) a base $\sB$ for
$(\two^{\beta})_\kappa$ such
that $|\sB|=\beta^{<\kappa}$. We assume without loss
of generality that
the elements of $\sB$ are drawn from the canonical base for
$(\two^{\beta})_\kappa$ (see~\cite[1.1.15]{E1}). Let
$\CC:=\{\sC\subseteq\sP(\sB):\sC~\mbox{is cellular in}~(\two^{\beta})_\kappa~\mbox{and}~~|\sC|<\kappa\},$
and for each $\sC\in\CC$ and $f:\sC\rightarrow D(\alpha)$ define
$p(\sC,f)\in E=(D(\alpha))^{\two^{\beta}}$ by
$$(p(\sC,f))_x= \left\{
\begin{array}
{r@{~\mathrm{if}~}l}
f(x)&\mbox{there is~} C\in\sC \mbox{~such that~} x\in C\\
0&x\in\two^{\beta}\backslash\bigcup\sC
\end{array} \right\}.$$

We set $A:=\{p(\sC,f):\sC\in\CC,f:\sC\rightarrow D(\alpha)\}$.

Since $\kappa\leq\beta^{<\kappa}=|\sB|$ we have 
$|[\sB]^{<\kappa}|=(\beta^{<\kappa})^{<\kappa}$.
 
Then since
$|\CC|\leq|[\sB]^{<\kappa}|=(\beta^{<\kappa})^{<\kappa}$ 
and for
$\sC\in\CC$ we have
$|\alpha^\sC|=\alpha^{|\sC|}\leq\alpha^{<\kappa}$-many
functions $f:\sC\rightarrow D(\alpha)$,
it follows that 
$$|A|\leq\alpha^{<\kappa}\cdot(\beta^{<\kappa})^{<\kappa}.$$
It suffices then to show that $A$ is dense in
$E_\kappa=((D(\alpha))^{2^{\beta}})_\kappa$.

Let $U=\Pi_{x\in\two^{\beta}}\,U_x$
be a canonical open subset of $E_\kappa$.
Without loss of generality we take $|U_x|=1$
when $x\in R(U)\in[\two^{\beta}]^{<\kappa}$ (and necessarily
$U_x=D(\alpha)$ when
$x\in\two^{\beta}\backslash R(U)$). We define
$f:R(U)\rightarrow D(\alpha)$ so
that $U_x=\{f(x)\}$. Since $(\two^{\beta})_\kappa$ is
strongly $\kappa$-discrete
(by Lemma~\ref{L3.3}) and $R(U)\in[\two^{\beta}]^{<\kappa}$,
there is a family
$\sC=\{C(x):x\in R(U)\}\in\CC$ of pairwise disjoint open subsets
of $(\two^{\beta})_\kappa$ such that $x\in C(x)$
for each $x\in R(U)$.
Then for $x\in R(U)$ we have
$x\in C(x)\in\sC\in\CC$,
and $(p(\sC,f))_x=f(x)\in U_x$; it follows that
$p(\sC,f)\in A\cap U$, as required.
\end{proof}

The proof of Lemma~\ref{maindlemma} seemed so natural that for some time
we considered its statement to be optimal. However, a stronger statement
is available. This is the principal result of this section,
given now in two equivalent formulations. 

\begin{theorem}\label{maindtheorem}
Let $\alpha\ge 2$, $\beta\ge\omega$, and $\kappa\ge\omega$ be cardinals.

{\rm (a)} 
Let $E:=(D(\alpha))^{2^{\beta}}$.
Then $d(E_\kappa)\leq(\alpha\cdot\beta)^{<\kappa}$.

{\rm (b)} 
Let $E:=(D(\alpha))^\beta$.
Then $d(E_\kappa)\leq(\alpha\cdot\log(\beta))^{<\kappa}$.
\end{theorem}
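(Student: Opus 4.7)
The plan is to refine the parametrization of the dense set from Lemma~\ref{maindlemma} so that the bound $\alpha^{<\kappa}\cdot(\beta^{<\kappa})^{<\kappa}$ collapses to $(\alpha\cdot\beta)^{<\kappa}$. The wasteful step in Lemma~\ref{maindlemma} is to allow arbitrary cellular families $\sC\in[\sB]^{<\kappa}$ drawn from a base $\sB$ of size $\beta^{<\kappa}$; our approach is to restrict attention to cellular families in $(\two^\beta)_\kappa$ of a very specific cylindrical shape, indexed by small coordinate sets in $\beta$ itself.

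For part (a), the key preliminary observation is: for every $F\in[\two^\beta]^{<\kappa}$ there exists $J\in[\beta]^{<\kappa}$ such that the projection $\pi_J\colon\two^\beta\to\two^J$ is injective on $F$. Indeed, for each pair $x\ne y$ in $F$ pick a coordinate distinguishing them; the union of such coordinates has size at most $|F|^2<\kappa$, since infinite cardinals are closed under products of strictly smaller cardinals. This lets us parametrize a candidate dense set $A\subseteq E=(D(\alpha))^{2^\beta}$ by triples $(J,Q,f)$ with $J\in[\beta]^{<\kappa}$, $Q\in[\two^J]^{<\kappa}$ and $f\colon Q\to D(\alpha)$, via the step-function rule
\[
\phi(J,Q,f)(y):=\begin{cases} f(y|_J) & \text{if } y|_J\in Q,\\ 0 & \text{otherwise.}\end{cases}
\]
Given a canonical open $U$ in $E_\kappa$ with $R(U)=F$ and $U_x=\{f_0(x)\}$ for $x\in F$, choose a separating $J$ for $F$, set $Q:=\pi_J[F]$, and define $f$ on $Q$ by $f(\pi_J(x)):=f_0(x)$ (well-defined by the injectivity of $\pi_J|_F$). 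Then $\phi(J,Q,f)\in U$, so $A$ is dense in $E_\kappa$.

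For the cardinality count: there are at most $\beta^{<\kappa}$ choices of $J$; for each fixed $J$ with $|J|<\kappa$ one verifies $|[\two^J]^{<\kappa}|\le 2^{<\kappa}$ (each $\mu$-sized subset contributes at most $2^{|J|\cdot\mu}\le 2^{<\kappa}$, summed over $\mu<\kappa$); and for each $Q$ there are $\le\alpha^{<\kappa}$ functions $f\colon Q\to D(\alpha)$. The product is $\beta^{<\kappa}\cdot 2^{<\kappa}\cdot\alpha^{<\kappa}=(\alpha\cdot\beta)^{<\kappa}$, establishing (a). Part (b) then follows from (a) by a projection argument: since $\beta\le 2^{\log\beta}$, fix an injection $\beta\hookrightarrow 2^{\log\beta}$ and consider the coordinate projection $((D(\alpha))^{2^{\log\beta}})_\kappa\twoheadrightarrow((D(\alpha))^\beta)_\kappa$, which is continuous and surjective and hence cannot increase density; applying (a) with $\log\beta$ in place of $\beta$ yields $d(E_\kappa)\le(\alpha\cdot\log\beta)^{<\kappa}$.

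The principal obstacle is ensuring that the tightly restricted class $\{\phi(J,Q,f)\}$ of step-function candidates is still dense; the savings in cardinality are moot if the construction misses some canonical open set. This reduces entirely to the separation observation above, namely that small subsets of $\two^\beta$ sit faithfully inside $\two^J$ for some $J\in[\beta]^{<\kappa}$, which is an unconditional cardinal-arithmetic fact. Once that is secured, the compact $(J,Q,f)$ parametrization replaces the redundant outer $(\cdot)^{<\kappa}$ iteration on $\beta^{<\kappa}$ in Lemma~\ref{maindlemma} by the single exponent $<\kappa$ in $(\alpha\cdot\beta)^{<\kappa}$.
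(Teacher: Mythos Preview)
Your proof is correct but takes a genuinely different route from the paper's. The paper does \emph{not} refine the dense-set construction of Lemma~\ref{maindlemma}; instead it observes that for regular $\kappa$ the bound $\alpha^{<\kappa}\cdot(\beta^{<\kappa})^{<\kappa}$ already equals $(\alpha\cdot\beta)^{<\kappa}$ by Theorem~\ref{expon}(b), and for singular $\kappa$ it invokes a trick from~\cite{ceg}: for each $\lambda<\kappa$ apply the regular case to $\lambda^+$ to get a dense $A(\lambda)\subseteq E_{\lambda^+}$ of size $\le(\alpha\cdot\beta)^\lambda$, and take $A:=\bigcup_{\lambda<\kappa}A(\lambda)$, which is dense in $E_\kappa$ with $|A|\le(\alpha\cdot\beta)^{<\kappa}$. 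Your direct parametrization by triples $(J,Q,f)$ with $J\in[\beta]^{<\kappa}$ eliminates the regular/singular case split entirely and makes the saving over Lemma~\ref{maindlemma} visible at the level of the construction (restricting from arbitrary $<\kappa$-cellular families in a base of size $\beta^{<\kappa}$ to cylindrical ones indexed by $[\beta]^{<\kappa}$); the paper's approach is more modular, treating Lemma~\ref{maindlemma} as a black box and leaning on cardinal arithmetic plus the union trick. The derivation of (b) from (a) is the same in both.
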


\begin{proof}
\,[When (a) is known, (b) follows upon replacing $2^\beta$ in (a) by
$\beta$ and using the inequality $\beta\leq2^{\log(\beta)}$. To derive
(a) from (b), replace $\beta$ in (b) by $2^\beta$ and use that 
$\log(2^\beta)\le\beta$.]

To prove (a), we consider two cases.

\underline{Case 1}. $\kappa$ is regular.
Then it follows from Lemma \ref{maindlemma} and Theorem \ref{expon}(b)
that
$$d(E_\kappa)\leq\alpha^{<\kappa}\cdot(\beta^{<\kappa})^{<\kappa}=
\alpha^{<\kappa}\cdot\beta^{<\kappa}=(\alpha\cdot\beta)^{<\kappa}.$$

\underline{Case 2}. $\kappa$ is singular (hence, a limit cardinal). 

[Here we use a trick taken from ~\cite[p.~308]{ceg}.]
For $\lambda<\kappa$ there is, by
Case~1 applied to the regular cardinal $\lambda^+$, a dense set
$A(\lambda)\subseteq E_{\lambda^+}$ such that
$|A(\lambda)|\leq(\alpha\cdot\beta)^{<\lambda^+}=(\alpha\cdot\beta)^\lambda$.
The set $A:=\bigcup_{\lambda<\kappa}\,A(\lambda)$
is clearly dense in $E_\kappa$, so
$$d(E_\kappa)\leq|A|\leq\Sigma_{\lambda<\kappa}\,(\alpha\cdot\beta)^\lambda=(\alpha\cdot\beta)^{<\kappa}.$$
\vskip-18pt
\end{proof}

\begin{corollary}\label{cortomain}
Let $\alpha\geq2$ and $\kappa\geq\omega$ be cardinals, and let
$1\leq\lambda\leq(\alpha^{<\kappa})^{<\kappa}$ and
$1\le\mu\leq2^{((\alpha^{<\kappa})^{<\kappa})}$. Then

{\rm (a)}
$d(((D((\alpha^{<\kappa})^{<\kappa}))^{2^{((\alpha^{<\kappa})^{<\kappa})}})_\kappa)=(\alpha^{<\kappa})^{<\kappa}$; and
 
{\rm (b)} $d(((D(\lambda))^\mu)_\kappa)\leq(\alpha^{<\kappa})^{<\kappa}$.
\end{corollary}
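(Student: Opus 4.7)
Set $\gamma := (\alpha^{<\kappa})^{<\kappa}$ throughout. The plan for the upper bound in (a) is to invoke Theorem~\ref{maindtheorem}(a) with both of its free parameters taken equal to $\gamma$: this immediately yields
$$d\bigl(((D(\gamma))^{2^\gamma})_\kappa\bigr) \le (\gamma\cdot\gamma)^{<\kappa} = \gamma^{<\kappa},$$
and then Theorem~\ref{expon}(d) collapses $\gamma^{<\kappa} = ((\alpha^{<\kappa})^{<\kappa})^{<\kappa}$ back to $(\alpha^{<\kappa})^{<\kappa} = \gamma$.

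For the matching lower bound in (a), I would exploit the coordinate projection $\pi\colon ((D(\gamma))^{2^\gamma})_\kappa \to D(\gamma)$, which is continuous (projections remain continuous when the domain carries the finer $\kappa$-box topology) and surjective. Any dense $A$ in the domain projects to a dense subset of $D(\gamma)$, which, being discrete of cardinality $\gamma$, forces $|\pi[A]| = \gamma$ and hence $|A| \ge \gamma$. Combined with the upper bound this gives the equality in (a).

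For part (b), the idea is to exhibit $((D(\lambda))^\mu)_\kappa$ as a continuous image of the space handled in (a). Since $\mu \le 2^\gamma$, projecting onto any $\mu$ of the $2^\gamma$ coordinates is a continuous surjection $((D(\gamma))^{2^\gamma})_\kappa \to ((D(\gamma))^\mu)_\kappa$. Since $\lambda \le \gamma$ there is a surjection $D(\gamma) \to D(\lambda)$ (automatically continuous between discrete spaces); applied coordinatewise it gives a continuous surjection $((D(\gamma))^\mu)_\kappa \to ((D(\lambda))^\mu)_\kappa$, because coordinatewise maps pull canonical $\kappa$-box basic opens back to canonical $\kappa$-box basic opens with the same restriction set. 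Composing and using that $d$ is nonincreasing under continuous surjections, the bound from (a) transfers to give $d(((D(\lambda))^\mu)_\kappa) \le \gamma$.

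I do not anticipate a genuine obstacle here: the only thing to verify carefully is that projections and coordinatewise maps behave as expected with respect to the $\kappa$-box topology, and this is routine since that topology is finer than the Tychonoff topology (so continuity out of it is only easier). The substantive content of the corollary is already carried by Theorem~\ref{maindtheorem}(a) together with the ``sesquipotent'' identity of Theorem~\ref{expon}(d).
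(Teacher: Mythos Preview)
Your proof is correct and follows the same approach as the paper: apply Theorem~\ref{maindtheorem}(a) with both parameters equal to $(\alpha^{<\kappa})^{<\kappa}$ and invoke Theorem~\ref{expon}(d) for the upper bound, then derive (b) from (a) via the obvious continuous surjection. You are in fact more explicit than the paper, which does not spell out the lower bound in (a) (your projection-to-a-factor argument) or the ``immediate'' derivation of (b).
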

\begin{proof}
Clearly (b) is immediate from (a). To prove (a), it is enough
to replace $\alpha$ and $\beta$ in Theorem~\ref{maindtheorem} by
$(\alpha^{<\kappa})^{<\kappa}$ and then to use
Theorem~\ref{expon}(d).
\end{proof}

\begin{discussion}\label{D3.10}
{\rm
A convenient method of proof of the Hewitt-Marczewski-Pondiczery theorem
(Theorem~\ref{HMP}), adopted by many
expositors, is to prove first that the tractable
space $E:=(D(\alpha))^{2^\alpha}$
has a dense subset $A$ with $|A|=\alpha$; since evidently
there is a continuous
function $f$ from $E$ onto a dense subset of $X$, the set $f[A]$ is
dense in $X$, with $|f[A]|\leq|A|=\alpha$. The identical argument
suffices to derive Corollary~\ref{HMPgen}
from Theorem~\ref{maindtheorem} and Corollary~\ref{cortomain}.
}
\end{discussion}

\begin{corollary}\label{HMPgen}
Let $\alpha\ge 2$, $\beta\geq\omega$ and $\kappa\ge\omega$ be cardinals and
let $\{X_i:i\in I\}$ be a set of spaces.
\begin{itemize}
\item[(a)] If $d(X_i)\leq\alpha$ for each $i\in I$ and
$|I|\leq2^\beta$, then
$d((X_I)_\kappa)\leq(\alpha\cdot\beta)^{<\kappa}$;
\item[(b)] if $d(X_i)\leq\alpha$ for each
$i\in I$ and $|I|\leq\beta$,
then
$d((X_I)_\kappa)\leq(\alpha\cdot\log(\beta))^{<\kappa}$; and
\item[(c)] if $d(X_i)\leq(\alpha^{<\kappa})^{<\kappa}$ for each $i\in I$
and $|I|\leq2^{((\alpha^{<\kappa})^{<\kappa})}$,
then $d((X_I)_\kappa)\leq(\alpha^{<\kappa})^{<\kappa}$.
\end{itemize}
\end{corollary}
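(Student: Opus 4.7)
The plan is to implement precisely the strategy outlined in Discussion~\ref{D3.10}: transfer the already-established upper bound for $d(E_\kappa)$, where $E$ is a power of a discrete space, to $d((X_I)_\kappa)$ via a suitable continuous map with dense image.

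For part (a), I would proceed as follows. For each $i\in I$, since $d(X_i)\le\alpha$, fix a dense subset $D_i\subseteq X_i$ with $|D_i|\le\alpha$ and a surjection $g_i\colon D(\alpha)\twoheadrightarrow D_i$; each $g_i$ is automatically continuous because its domain is discrete. Next, since $|I|\le 2^{\beta}$, fix an injection $\phi\colon I\hookrightarrow 2^{\beta}$, and set $E:=(D(\alpha))^{2^{\beta}}$. Define $f\colon E\to X_I$ coordinatewise by $f(e)_i:=g_i(e_{\phi(i)})$. The key verification is that $f\colon E_\kappa\to (X_I)_\kappa$ is continuous: if $U=\prod_{i\in I}U_i$ is canonical in $(X_I)_\kappa$ with $|R(U)|<\kappa$, then $f^{-1}[U]=\prod_{x\in 2^{\beta}}V_x$, where $V_x=g_i^{-1}[U_i]$ if $x=\phi(i)$ for some $i\in R(U)$ and $V_x=D(\alpha)$ otherwise; the set of restricted coordinates has cardinality $|R(U)|<\kappa$, so $f^{-1}[U]$ is canonical open in $E_\kappa$. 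Since each $D_i$ is dense in $X_i$, the image $f[E]=\prod_{i\in I}D_i$ is dense in $X_I$, hence in $(X_I)_\kappa$. Therefore if $A\subseteq E_\kappa$ is dense with $|A|=d(E_\kappa)$, then $f[A]$ is dense in $(X_I)_\kappa$, yielding
\[
d((X_I)_\kappa)\le d(E_\kappa)\le(\alpha\cdot\beta)^{<\kappa}
\]
by Theorem~\ref{maindtheorem}(a).

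Part (b) is obtained by the identical argument, replacing $2^{\beta}$ by $\beta$ in the construction of $E$ (which is legitimate since $|I|\le\beta$) and invoking Theorem~\ref{maindtheorem}(b) in the final step to get the bound $(\alpha\cdot\log(\beta))^{<\kappa}$. Part (c) follows immediately from (a) by substituting $\alpha':=(\alpha^{<\kappa})^{<\kappa}$ for both $\alpha$ and $\beta$: the hypothesis $d(X_i)\le\alpha'$ and $|I|\le 2^{\alpha'}$ is exactly what is assumed, and the resulting bound $(\alpha'\cdot\alpha')^{<\kappa}=(\alpha')^{<\kappa}=\alpha'$ collapses to $(\alpha^{<\kappa})^{<\kappa}$ by the sesquipotence identity Theorem~\ref{expon}(d).

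There is no substantive obstacle here; the only step requiring any care is the routine verification that $f$ is continuous between the $\kappa$-box topologies, and even that is essentially immediate from the fact that a $\kappa$-box basic open set restricts strictly fewer than $\kappa$ coordinates and $f$ acts coordinatewise. The whole point of the corollary is that Theorem~\ref{maindtheorem} has already done the real work.
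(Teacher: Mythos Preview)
Your proof is correct and follows exactly the approach the paper indicates in Discussion~\ref{D3.10}: push a dense set through a coordinatewise continuous map from the discrete power $E_\kappa$ onto the dense sub-product $\prod_i D_i$, invoking Theorem~\ref{maindtheorem} for (a) and (b) and Theorem~\ref{expon}(d) for (c). One tiny phrasing slip: density of $\prod_i D_i$ in the usual product $X_I$ does not by itself imply density in the finer space $(X_I)_\kappa$; rather, density in $(X_I)_\kappa$ follows directly from the fact that each $D_i$ meets every nonempty open $U_i$, so $\prod_i D_i$ meets every nonempty canonical $\kappa$-box open set.
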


\begin{remark}
{\rm
For cardinals $\alpha$ and $\kappa$ such that $\alpha^{<\kappa}=\alpha=\beta$,  
Corollary \ref{HMPgen}(a) is \cite[3.18]{comfneg74} and was also
mentioned in \cite[p.~76]{comfneg82}.
}
\end{remark}

We restate Corollary~\ref{HMPgen}(b) in the form most easily comparable
with Version~2 of Theorem~\ref{HMP}.

\begin{theorem}\label{moreHMP}
Let $\{X_i:i\in I\}$ be a set of spaces with each $d(X_i)=\alpha_i$,
and let $\alpha:=\sup_{i\in I}\,\alpha_i$. Then
$$d((X_I)_\kappa)\leq\max\{\alpha^{<\kappa},(\log(|I|))^{<\kappa}\}.$$
\end{theorem}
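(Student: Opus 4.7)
The plan is to read off Theorem~\ref{moreHMP} directly from Corollary~\ref{HMPgen}(b), the only work being a cosmetic rewriting of the right-hand side via elementary cardinal arithmetic. The corollary is formulated with independent parameters $\alpha$ and $\beta$, so the natural move is to specialize $\alpha:=\sup_{i\in I}\alpha_i$ and $\beta:=|I|$.

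First I would dispose of the degenerate case: when $|I|$ is finite, $\log(|I|)$ is not defined by Definition~\ref{deflog}, but then $X_I$ is a finite product and the bound $d((X_I)_\kappa)\leq\alpha^{<\kappa}$ follows by iterating Corollary~\ref{HMPgen}(a) (or is vacuous, since we are interested in infinite index sets as in Version~2 of Theorem~\ref{HMP}). Hence I may assume $|I|\geq\omega$, so that $\log(|I|)$ is a well-defined infinite cardinal. Applying Corollary~\ref{HMPgen}(b) to $\{X_i:i\in I\}$ with $\alpha$ and $\beta$ as chosen, the hypotheses $d(X_i)\leq\alpha$ and $|I|\leq\beta$ are satisfied, and we obtain
$$d((X_I)_\kappa)\leq(\alpha\cdot\log(|I|))^{<\kappa}.$$

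To bring the right-hand side into the required $\max$ form, I would use that $\log(|I|)\geq\omega$, so that $\alpha\cdot\log(|I|)=\max\{\alpha,\log(|I|)\}$ regardless of whether $\alpha$ is finite or infinite. Combining this with the straightforward monotonicity of $x\mapsto x^{<\kappa}$ in the variable $x$ (immediate from $x^{<\kappa}=\Sigma_{\lambda<\kappa}x^\lambda$), I conclude
$$(\alpha\cdot\log(|I|))^{<\kappa}=(\max\{\alpha,\log(|I|)\})^{<\kappa}=\max\{\alpha^{<\kappa},(\log(|I|))^{<\kappa}\},$$
which is the asserted bound.

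There is no substantive obstacle: the entire theorem is a two-step rewriting of Corollary~\ref{HMPgen}(b), which already carries all of the topological content (the strongly $\kappa$-discrete argument of Lemma~\ref{maindlemma} and the regular/singular case split of Theorem~\ref{maindtheorem}). The only thing one has to be mildly careful about is the (harmless) finite/infinite bookkeeping above.
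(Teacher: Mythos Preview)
Your proposal is correct and is exactly what the paper intends: the paper presents Theorem~\ref{moreHMP} without proof, introducing it simply as a restatement of Corollary~\ref{HMPgen}(b) ``in the form most easily comparable with Version~2 of Theorem~\ref{HMP}.'' Your two-step derivation (specialize $\beta:=|I|$ in Corollary~\ref{HMPgen}(b), then rewrite $(\alpha\cdot\log|I|)^{<\kappa}$ as the $\max$) is the intended reading, and your caveat about the finite-$|I|$ case is appropriate since the paper, like Theorem~\ref{HMP} Version~2, tacitly has infinite $I$ in mind.
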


As we see in Discussion~\ref{gitik-shelah}(d), however, the
inequality in Theorem~\ref{moreHMP}
can be strict. Thus consistently the obvious $\kappa$-box analogue of
Theorem~\ref{dexact} can fail.

\begin{discussion}\label{gitik-shelah}
{\rm
The two results
$$d(((D(\alpha))^{2^\alpha})_\kappa)\leq\alpha^{<\kappa}$$
and
$$d(((D((\alpha^{<\kappa})^{<\kappa}))^{2^{((\alpha^{<\kappa})^{<\kappa})}})_\kappa)=(\alpha^{<\kappa})^{<\kappa},$$
valid for $\alpha\geq2$ and $\kappa\geq\omega$ and
given by Theorem~\ref{maindtheorem}(a) and Corollary~\ref{cortomain}(a)
respectively,
suggest the attractive ``intermediate" speculation
\begin{equation}\label{Eq3}
d(((D(\alpha^{<\kappa}))^{2^{(\alpha^{<\kappa})}})_\kappa)\leq\alpha^{<\kappa}
\end{equation}
which, if valid, would yield these two weaker statements:
\begin{equation}\label{Eq4}
d(((D(\alpha))^{2^{(\alpha^{<\kappa})}})_\kappa)\leq\alpha^{<\kappa}
\end{equation}
and
\begin{equation}\label{Eq5}
d(((D(\alpha^{<\kappa}))^{2^\alpha})_\kappa)\leq\alpha^{<\kappa}.
\end{equation}

We discuss what we do and do not know about the truth value of 
(\ref{Eq3}), (\ref{Eq4}), and (\ref{Eq5}).

(a) (\ref{Eq3}) (\emph{hence also} (\ref{Eq4}) \emph{and} (\ref{Eq5})) 
\emph{holds for all $\alpha$ and $\kappa$ satisfying 
$(\alpha^{<\kappa})^{<\kappa}=\alpha^{<\kappa}$}. 
This is obvious from Corollary~\ref{cortomain}(a).
Thus by Theorem~\ref{weak<kappa}(a) the conditions (\ref{Eq3}), (\ref{Eq4}) and 
(\ref{Eq5}) hold (for all $\alpha\geq2$) when $\kappa$ is regular or there is
$\nu<\kappa$ such that $\alpha^\nu=\alpha^{<\kappa}$.

(b) (\ref{Eq5}) \emph{holds in} ZFC, \emph{for all} $\kappa\geq\omega$, 
\emph{when} $2\leq\alpha<\omega$. This is obvious, since
$|D(\alpha^{<\kappa})^{2^\alpha}|=\alpha^{<\kappa}$ in that case.

(c) \emph{for all $\alpha\geq\kappa$,} (\ref{Eq5}) \emph{fails} (\emph{hence} 
(\ref{Eq3}) \emph{fails}) \emph{in} ZFC \emph{for certain $\kappa$.}
In fact, we prove this statement:

{\it Let $\alpha\geq\omega$.
There are arbitrarily large cardinals $\kappa$ such that the space
$E:=(D(\alpha^{<\kappa}))^\alpha$
satisfies $d(E_\kappa)>\alpha^{<\kappa}$.}

To prove that, choose $\lambda\geq\omega$ such that
$\cf(\lambda)\leq\alpha$
(for example, set $\lambda:=\omega$). Then,
set $\kappa:=\beth_\lambda(\alpha)$. Since $\kappa>\alpha$
the space $E_\kappa$ is discrete, so from Remarks~\ref{bethprops}(b) we have
$$d(E_\kappa)=|E|=\kappa^\alpha\geq\kappa^{\cf(\lambda)}
=\kappa^{\cf(\kappa)}>\kappa=\alpha^{<\kappa}.$$

(d) \emph{Consistently,} (\ref{Eq4}) \emph{fails} (\emph{hence} (\ref{Eq3}) 
\emph{fails}) \emph{when $\alpha=2$ and $\kappa=\aleph_1$.}
Indeed, Gitik and Shelah~\cite{gitikshelah}, answering a question
left unresolved in \cite{ceg} and \cite{comfrobii}, have constructed models 
$\VV_1$ and $\VV_2$ of {\rm ZFC} such that
$$d((\two^{\aleph_\omega})_{\aleph_1})= \left\{
\begin{array}
{r@{\quad \mathrm{in}\quad}l}
\aleph_{\omega+1}            &  \VV_1  \\
\aleph_{\omega+2}   & \VV_2
\end{array} \right. ,$$
with $2^{\aleph_\omega}=\aleph_\omega^\omega=\aleph_{\omega+2}$  
in each case and with ``GCH below $\aleph_\omega$", so that
$2^{<\aleph_\omega}=\aleph_\omega$. Then, taking $\alpha=2$ and
$\kappa=\aleph_\omega$ in Theorem~\ref{maindtheorem}(a) we have
$$2^{\aleph_\omega}\ge d((\two^{(2^{(2^{<\aleph_\omega})})})_{\aleph_\omega})\ge
d((\two^{\aleph_\omega})_{\aleph_1})=\aleph_{\omega+2}=2^{\aleph_\omega}>\aleph_\omega=2^{<\aleph_\omega}$$
in the Gitik-Shelah model 
$\VV_2$, while in $\VV_1$ we have
$$2^{\aleph_\omega}\ge d((\two^{(2^{(2^{<\aleph_\omega})})})_{\aleph_\omega})\ge
d((\two^{\aleph_\omega})_{\aleph_1})=\aleph_{\omega+1}>\aleph_\omega=2^{<\aleph_\omega}.$$

Thus in both $\VV_1$ and $\VV_2$ we have 
$$2^{\aleph_\omega}\ge
d((\two^{(2^{(2^{<\aleph_\omega})})})_{\aleph_\omega})>2^{<\aleph_\omega},$$
\noindent so (\ref{Eq4}) (hence (\ref{Eq3})) fails there.

(e) We interpret the cited results of Gitik and Shelah, where the
density character of so simple a space as
$(\two^{\aleph_\omega})_{\aleph_1}$ is not determined by the axioms of
ZFC (even when $2^{\aleph_n}=\aleph_{n+1}$ for all $n<\omega$ and
$2^{\aleph_\omega}=\aleph_{\omega+2}$) as indicating the difficulty,
perhaps even the futility, of finding a pleasing and definitive
$\kappa$-box analogue of Theorem~\ref{dexact}.

(f) It is clear from the relations
$$|\two^{\aleph_\omega}|\geq d((\two^{\aleph_\omega})_{\aleph_\omega})\geq
d((\two^{\aleph_\omega})_{\aleph_1})
=\aleph_{\omega+2}=|\two^{\aleph_\omega}|$$
in $\VV_2$ that
$d((\two^{\aleph_\omega})_{\aleph_\omega})=\aleph_{\omega+2}$ there. 
For the value of $d((\two^{\aleph_\omega})_{\aleph_\omega})$
in the model $\VV_1$ we have 
$$\aleph_{\omega+2}=|2^{\aleph_\omega}|\geq
d((\two^{\aleph_\omega})_{\aleph_\omega})\geq
d((\two^{\aleph_\omega})_{\aleph_1})
=\aleph_{\omega+1}$$
there, i.e., 
$$\aleph_{\omega+1}\leq
d((\two^{\aleph_\omega})_{\aleph_\omega})\leq\aleph_{\omega+2}.$$
As it was noted in \cite[p. 236]{gitikshelah} there exist models of 
{\rm ZFC} such that 
$$d((D(\alpha)^{\aleph_\omega})_{\kappa})=\aleph_{\omega+1}$$
for every $\alpha,\kappa<\aleph_{\omega}$ and therefore in such models 
$d((\two^{\aleph_\omega})_{\aleph_\omega})=\aleph_{\omega+1}$.

We do not know if there exist models of {\rm ZFC} such that 
$d((\two^{\aleph_\omega})_{\aleph_1})=\aleph_{\omega+1}$ and 
$d((\two^{\aleph_\omega})_{\aleph_\omega})=\aleph_{\omega+2}$.

(g) For an exact computation of the weight and Souslin number of the
spaces $(\two^{\aleph_\omega})_{\aleph_1}$ and
$(\two^{\aleph_\omega})_{\aleph_\omega}$ in the Gitik-Shelah models
$\VV_1$ and $\VV_2$, see Remark~{\ref{S(G-S)} below.

(h) While we do not pretend to follow every detail of the arguments
from \cite{gitikshelah}, nor to frame maximal generalizations, we note
that the consistent failure of (\ref{Eq3}) and (\ref{Eq4}) is not restricted 
to the case $\alpha=2<\omega$. In both $\VV_1$ and $\VV_2$ one evidently has
$\aleph_n^{<\aleph_\omega}=\aleph_\omega$ for $0<n<\omega$, so (\ref{Eq3})
and (\ref{Eq4}) fail in those models (with $\kappa=\aleph_\omega$) for every 
$\alpha$ such that $2\leq\alpha<\aleph_\omega$.

(i) In passing we note the existence of two misprints in \cite{gitikshelah}
which have confused at least two
readers: Reference in Theorem~1.1(c) should be
only to {\it uncountable} cardinals $\gamma$, and in Theorem~4.2(4) the symbol
$<\aleph_0$ should be $<\aleph_1$.
}}
\end{discussion}

\begin{remark}\label{R3.18}
{\rm
The arguments developed to prove \ref{maindlemma}--\ref{moreHMP}
follow the general pattern of
classical arguments used to prove the original
Hewitt-Mar\-czewski-Pondiczery Theorem~\ref{HMP}, albeit with
combinatorial modifications necessary to accommodate to the $\kappa$-box
topology. (When $\kappa=\omega$, Lemma~\ref{L3.3} reduces to the simple
observations that
(1)~the product of Hausdorff spaces is a Hausdorff space, and
(2)~in a Hausdorff space, the points of any finite set can be separated
by disjoint open sets.)
Quite likely, it was reasoning similar to ours which over 40 years ago
provoked from Engelking and Kar{\l}owicz \cite[p.~285]{EnKa}, after they
had completed their own proof of the Hewitt-Mar\-czewski-Pondiczery
theorem, the cavalier sta\-te\-ment (here we quote faithfully, but using
the notation of the present paper) ``We can also derive theorems analogous to
those above for $\kappa$-box topologies$\,\ldots$. We shall not formulate
these theorems since they are less interesting, but the reader, if he
wishes, will be able to do so without the least difficulty." OK, fair
enough. We do note, however, that in the several treatments known to us of
the Hewitt-Marczewski-Pondiczery theorem, we have
found no mention of the cardinal number $(\alpha^{<\kappa})^{<\kappa}$
which figures prominently and naturally in our development.
(This is hardly surprising
with respect to the paper \cite{EnKa}, since those authors
restrict attention to
box products of the form $(X_I)_{\kappa^+}$.) Nor have we found an
indication, as in Theorem~\ref{HMPkappa} below, that the upper bound
$\alpha^{<\kappa}\geq d(((D(\alpha))^{\beta})_\kappa)$
given in
Theorem~\ref{maindtheorem}(a) is in fact assumed in every case with
$\kappa\leq\beta\leq2^\alpha$.
}
\end{remark}

\centerline{{\sc Part B}. {\sc Lower Bounds for $d((X_I)_\kappa)$}}

In Part A, seeking $\kappa$-box analogues and generalizations of
Theorem~\ref{HMP}, for specific function pairs $f$ and $g$ of two
variables we have sought a function $h$ so that
$$d(((D(f(\alpha,\kappa)))^{g(\alpha,\kappa)})_\kappa)\leq h(\alpha,\kappa)$$
holds. Now in Part~B, again for hand-picked $f$ and $g$, we seek $h'$ so
that
\begin{equation}\label{Eq6}
d(((D(f(\alpha,\kappa)))^{g(\alpha,\kappa)})_\kappa)\geq
h'(\alpha,\kappa).
\end{equation}

In some cases the choice $h=h'$ is accessible, so
$$d(((D(f(\alpha,\kappa)))^{g(\alpha,\kappa)})_\kappa)$$
is computed exactly. In other cases,
in parallel with Theorem~\ref{HMPconv}, we find several
conditions sufficient to ensure that the inequality (\ref{Eq6}) is strict.

\begin{lemma}\label{lowerbound}
Let $\alpha\geq2$ and $\kappa\ge\omega$ be cardinals and let
$E:=(D(\alpha))^\kappa$. Then 
$d(E_\kappa)\ge\alpha^{<\kappa}$.
\end{lemma}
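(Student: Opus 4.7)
The plan is to produce, for each $\lambda<\kappa$, a pairwise disjoint family of $\alpha^\lambda$ nonempty canonical open sets in $E_\kappa$; any dense subset must then have cardinality at least $\alpha^\lambda$, and taking the supremum over $\lambda<\kappa$ will give the desired lower bound.

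Concretely, fix $\lambda<\kappa$ and choose any $J\subseteq\kappa$ with $|J|=\lambda$. For each $\sigma\in (D(\alpha))^J$, set
$$U_\sigma := \prod_{i\in\kappa}V_i,\qquad V_i:=\{\sigma(i)\}\ \text{for }i\in J,\quad V_i:=D(\alpha)\ \text{for }i\in\kappa\setminus J.$$
Since $|R(U_\sigma)|=\lambda<\kappa$, each $U_\sigma$ is a nonempty canonical open subset of $E_\kappa$, and the sets $\{U_\sigma:\sigma\in (D(\alpha))^J\}$ are pairwise disjoint (distinct $\sigma$ disagree on some $i\in J$, forcing disjointness). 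Thus any dense $A\subseteq E_\kappa$ meets each $U_\sigma$ in a distinct point, giving $|A|\geq\alpha^\lambda$, and hence $d(E_\kappa)\geq\alpha^\lambda$.

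Letting $\lambda$ range over all cardinals below $\kappa$ we obtain $d(E_\kappa)\geq\sup_{\lambda<\kappa}\alpha^\lambda$. It remains to observe that this supremum equals $\alpha^{<\kappa}$. By definition $\alpha^{<\kappa}=\Sigma_{\lambda<\kappa}\alpha^\lambda$; since $\lambda\mapsto\alpha^\lambda$ is nondecreasing this sum equals $\kappa\cdot\sup_{\lambda<\kappa}\alpha^\lambda$, and Theorem~\ref{expon}(a) gives $\sup_{\lambda<\kappa}\alpha^\lambda\geq 2^{<\kappa}\geq\kappa$, so the sum reduces to the supremum. Therefore $d(E_\kappa)\geq\alpha^{<\kappa}$.

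There is really no obstacle here beyond the small bookkeeping of converting $\Sigma_{\lambda<\kappa}\alpha^\lambda$ to $\sup_{\lambda<\kappa}\alpha^\lambda$; the topological content is just that fixing finitely-many-in-the-$\kappa$-sense coordinates yields disjoint basic open sets, and distinct choices of values on those coordinates are mutually incompatible.
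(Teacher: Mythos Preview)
Your argument is correct and is essentially the paper's own proof: the paper observes that the projection $E_\kappa\to((D(\alpha))^\lambda)_\kappa$ is continuous with discrete range of cardinality $\alpha^\lambda$, which is precisely your cellular family $\{U_\sigma:\sigma\in(D(\alpha))^J\}$ viewed from the other end. One minor quibble: Theorem~\ref{expon}(a) bounds the \emph{sum} $2^{<\kappa}$, not the supremum, so invoking it for $\sup_{\lambda<\kappa}\alpha^\lambda\geq 2^{<\kappa}$ is circular; what you actually need, namely $\sup_{\lambda<\kappa}\alpha^\lambda\geq\kappa$, follows directly from $2^\lambda>\lambda$ for infinite $\lambda<\kappa$ (and is trivial when $\kappa=\omega$).
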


\begin{proof}
If the inequality fails there is $\lambda<\kappa$ such that
$d(E_\kappa)<\alpha^\lambda$. The space
$((D(\alpha))^\lambda)_\kappa$ is then discrete, and since the
projection from $E_\kappa$ onto
$((D(\alpha))^{\lambda})_{\kappa}$ is continuous we have the contradiction
$$\alpha^\lambda=d(((D(\alpha))^\lambda)_\kappa)\leq d(E_\kappa)<\alpha^\lambda.$$
\vskip-18pt
\end{proof}

As we noted in Discussion~\ref{D3.10},
the Hewitt-Marczewski-Pondiczery theorem may
be regarded as a routine generalization of this startling special case:
$d((D(\alpha))^\beta)=\alpha$ when $\alpha\geq\omega$ and 
$1\le\beta\le 2^\alpha$. We draw specific attention therefore to the 
correct $\kappa$-box analogue of that result. We note that no 
regularity hypothesis is imposed here on the cardinal number $\kappa$.

\begin{theorem}\label{HMPkappa}
Let 
$\omega\leq\kappa\leq \beta\le 2^\alpha$. Then 
$d((D(\alpha))^\beta)_\kappa)=\alpha^{<\kappa}$.
\end{theorem}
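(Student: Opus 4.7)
The plan is to sandwich $d(((D(\alpha))^\beta)_\kappa)$ between $\alpha^{<\kappa}$ and $\alpha^{<\kappa}$ by combining previously established tools.

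First, I would observe that the hypotheses force $\alpha\ge\omega$: since $\omega\le\kappa\le\beta\le 2^\alpha$, the cardinal $2^\alpha$ is infinite, so $\alpha$ itself must be infinite. In particular $\log(\beta)\le\alpha$ and so $\alpha\cdot\log(\beta)=\alpha$.

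For the upper bound $d(((D(\alpha))^\beta)_\kappa)\le\alpha^{<\kappa}$, I would apply Theorem~\ref{maindtheorem}(b) directly with this $\alpha$ and this $\beta$, obtaining
\[
d(((D(\alpha))^\beta)_\kappa)\le(\alpha\cdot\log(\beta))^{<\kappa}=\alpha^{<\kappa}.
\]

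For the lower bound $d(((D(\alpha))^\beta)_\kappa)\ge\alpha^{<\kappa}$, I would use the hypothesis $\beta\ge\kappa$ to choose a subset $J\subseteq\beta$ with $|J|=\kappa$; then the coordinate projection
\[
\pi_J\colon((D(\alpha))^\beta)_\kappa\twoheadrightarrow ((D(\alpha))^\kappa)_\kappa
\]
is a continuous surjection, so the image of a dense set is dense and therefore
\[
d(((D(\alpha))^\beta)_\kappa)\ge d(((D(\alpha))^\kappa)_\kappa)\ge\alpha^{<\kappa},
\]
the final inequality being Lemma~\ref{lowerbound}. Combining the two bounds yields equality.

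There is no real obstacle here; the statement is essentially a matching of the general upper bound from Theorem~\ref{maindtheorem}(b) against the general lower bound from Lemma~\ref{lowerbound}, with the condition $\kappa\le\beta\le 2^\alpha$ tailored precisely so that the $\log(\beta)$ factor is absorbed into $\alpha$ on the upper side and the projection onto $\kappa$ coordinates is available on the lower side. The mild subtlety worth flagging is that no regularity assumption on $\kappa$ is needed, because Theorem~\ref{maindtheorem}(b) has already handled the singular case via the Cater--Erd\H{o}s--Galvin trick, while the lower bound argument via projection is indifferent to the cofinality of $\kappa$.
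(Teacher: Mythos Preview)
Your proof is correct and follows essentially the same route as the paper's: the paper's one-line proof also cites Lemma~\ref{lowerbound} for the lower bound and Theorem~\ref{maindtheorem} for the upper bound, the only cosmetic difference being that the paper invokes part~(a) (with $\alpha$ playing the role of $\beta$ there) rather than part~(b), which the paper has already shown to be equivalent. Your explicit remarks that $\alpha\ge\omega$ and that the projection onto $\kappa$ coordinates supplies the lower bound simply unpack what the paper leaves implicit.
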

\begin{proof}
The inequalities $\geq$ and $\leq$ are immediate from
Lemma \ref{lowerbound} and Theorem \ref{maindtheorem}(a), respectively.
\end{proof}

In the following theorems we compute the density character of
certain specific spaces.

\begin{theorem}\label{regularSLC}
Let $\omega\leq\kappa$ and $2\leq\alpha\leq\kappa$.
If either $\log(\kappa)<\kappa$ or $\kappa$ is
a regular strong limit cardinal, then

{\rm (a)} $2^{<\kappa}=\alpha^{<\kappa}=\kappa^{<\kappa}$; and

{\rm (b)} $d(((D(\alpha))^\kappa)_\kappa)=
2^{<\kappa}=\alpha^{<\kappa}=\kappa^{<\kappa}$.
\end{theorem}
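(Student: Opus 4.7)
The plan is to prove (a) first, whose only nontrivial direction is $\kappa^{<\kappa}\leq 2^{<\kappa}$ since the chain $2^{<\kappa}\leq\alpha^{<\kappa}\leq\kappa^{<\kappa}$ is immediate from $2\leq\alpha\leq\kappa$. Once (a) is in hand, part (b) follows by sandwiching the density character between the lower bound $\alpha^{<\kappa}$ supplied by Lemma \ref{lowerbound} and the upper bound $(\alpha\cdot\log(\kappa))^{<\kappa}$ obtained from Theorem \ref{maindtheorem}(b) with $\beta=\kappa$, and then invoking (a) to collapse both sides to a single value.

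For the inequality $\kappa^{<\kappa}\leq 2^{<\kappa}$ under the first hypothesis ($\log(\kappa)<\kappa$), I would fix $\mu<\kappa$ with $2^\mu\geq\kappa$. Then for every $\lambda<\kappa$ the calculation
\[
\kappa^\lambda\leq (2^\mu)^\lambda = 2^{\mu\cdot\lambda} = 2^{\max(\mu,\lambda)}\leq 2^{<\kappa}
\]
goes through since $\max(\mu,\lambda)<\kappa$. Summing over $\lambda<\kappa$ and using $\kappa\leq 2^{<\kappa}$ from Theorem~\ref{expon}(a) yields $\kappa^{<\kappa}\leq\kappa\cdot 2^{<\kappa}=2^{<\kappa}$.

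Under the second hypothesis ($\kappa$ a regular strong limit cardinal), I would invoke standard cardinal arithmetic. Strong limit gives $2^\lambda<\kappa$ for every $\lambda<\kappa$, so $2^{<\kappa}=\kappa$ (the inequality $\geq\kappa$ being Theorem~\ref{expon}(a)). For $\kappa^\lambda$ with $\lambda<\kappa$, regularity forces the image of any map $\lambda\to\kappa$ to be bounded by some $\eta<\kappa$, while strong limit gives $\eta^\lambda\leq 2^{\max(\eta,\lambda)}<\kappa$ on each such piece; summing over $\eta<\kappa$ produces $\kappa^\lambda=\kappa$. Hence $\kappa^{<\kappa}=\kappa=2^{<\kappa}$, completing (a) in this case.

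Part (b) is then immediate. The upper bound $d(((D(\alpha))^\kappa)_\kappa)\leq(\alpha\cdot\log(\kappa))^{<\kappa}$ from Theorem~\ref{maindtheorem}(b) is at most $\kappa^{<\kappa}$ in the first case (since $\alpha\cdot\log(\kappa)<\kappa$) and equals $\kappa^{<\kappa}$ in the second (since strong limit forces $\log(\kappa)=\kappa$); by (a) this matches the lower bound $\alpha^{<\kappa}$ supplied by Lemma~\ref{lowerbound}. The only mild obstacle is bookkeeping: the two hypotheses feed part (a) through genuinely different routes of cardinal arithmetic, so one really does have to run a case split rather than find a uniform argument. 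Since each individual computation is textbook, I do not expect any deeper difficulty beyond this.
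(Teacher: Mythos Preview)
Your proof is correct and follows essentially the same route as the paper: the cardinal arithmetic in (a) is identical in both cases, and (b) is obtained by sandwiching the density between the lower bound from Lemma~\ref{lowerbound} and an upper bound furnished by Theorem~\ref{maindtheorem}. One cosmetic slip: in the first case of (b) you assert $\alpha\cdot\log(\kappa)<\kappa$, which fails when $\alpha=\kappa$; you only need (and only use) $\alpha\cdot\log(\kappa)\leq\kappa$, and the paper sidesteps this bookkeeping by sandwiching $d(((D(\alpha))^\kappa)_\kappa)$ between $d(((D(\kappa))^\kappa)_\kappa)=\kappa^{<\kappa}$ (from Theorem~\ref{HMPkappa}) and $d((\two^\kappa)_\kappa)\geq 2^{<\kappa}$, thereby avoiding any reference to $\log(\kappa)$ in part~(b).
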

\begin{proof} (a) That
$2^{<\kappa}\leq\alpha^{<\kappa}\leq\kappa^{<\kappa}$ is clear,
since $2\leq\alpha\leq\kappa$. Now if $\log(\kappa)<\kappa$ then
$$\kappa^{<\kappa}\leq(2^{\log(\kappa)})^{<\kappa}
=\Sigma_{\lambda<\kappa}\,(2^{\log(\kappa)})^\lambda=\Sigma_{\lambda<\kappa}\,2^\lambda=2^{<\kappa};$$
and if $\kappa$ is regular then since no set in $[\kappa]^{<\kappa}$ is
cofinal in $\kappa$ we have
$[\kappa]^{<\kappa}\subseteq\bigcup_{\eta<\kappa}\,\sP(\eta)$, so if in
addition $\kappa$ is a strong limit cardinal then
$$\kappa^{<\kappa}=|[\kappa]^{<\kappa}|\leq\Sigma_{\eta<\kappa}\,|\sP(\eta)|
=\Sigma_{\eta<\kappa}\,2^{|\eta|}\leq\Sigma_{\eta<\kappa}\,\kappa
=\kappa\leq2^{<\kappa}.$$

(b) From Theorem~\ref{HMPkappa} (with $\alpha=\beta=\kappa$ there) and
Lemma~\ref{lowerbound} (with $\alpha=2$ there) we have
$$\kappa^{<\kappa}=d(((D(\kappa))^\kappa)_\kappa)\geq
d(((D(\alpha))^\kappa)_\kappa)\geq d((\two^\kappa)_\kappa)\geq2^{<\kappa},$$
so the asserted equations follow from (a).
\end{proof}

Here is our most comprehensive result for numbers of the form 
$d((X_I)_\kappa)$.

\begin{theorem}\label{upper-lower}
Let $\alpha\geq2$ and $\kappa\geq\omega$ be cardinals, and let
$\alpha\leq\lambda\leq(\alpha^{<\kappa})^{<\kappa}$ and
$\kappa\le\mu\leq2^{((\alpha^{<\kappa})^{<\kappa})}$.
Then

{\rm (a)} $\alpha^{<\kappa}
\leq d(((D(\lambda))^\mu)_\kappa)\leq(\alpha^{<\kappa})^{<\kappa}$;

{\rm (b)} if $\kappa$ is regular
or some $\nu<\kappa$ satisfies $\alpha^\nu=\alpha^{<\kappa}$, 
then $d(((D(\lambda))^\mu)_\kappa)=\alpha^{<\kappa}$.
\end{theorem}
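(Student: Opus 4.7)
The plan is to derive both parts from results already established, so the write-up is essentially a matter of assembling the right ingredients in the right order.

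For part (a), the upper bound is immediate: the hypotheses on $\lambda$ and $\mu$ match exactly those of Corollary~\ref{cortomain}(b), which gives $d(((D(\lambda))^\mu)_\kappa)\leq(\alpha^{<\kappa})^{<\kappa}$. For the lower bound, I would argue by monotonicity under continuous surjections. Since $\lambda\geq\alpha$, there is a surjection $D(\lambda)\twoheadrightarrow D(\alpha)$; since $\mu\geq\kappa$, there is a projection $(D(\alpha))^\mu\twoheadrightarrow(D(\alpha))^\kappa$. Both of these induce continuous surjections in the $\kappa$-box topology (any coordinatewise-continuous map remains continuous when each factor is given the $\kappa$-box topology, because preimages of canonical open sets are canonical). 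Since continuous surjections cannot increase the density character, chaining the two reductions gives
\[
d\bigl(((D(\lambda))^\mu)_\kappa\bigr)\;\geq\;d\bigl(((D(\alpha))^\mu)_\kappa\bigr)\;\geq\;d\bigl(((D(\alpha))^\kappa)_\kappa\bigr),
\]
and Lemma~\ref{lowerbound} bounds the right-hand side from below by $\alpha^{<\kappa}$.

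For part (b), the trick is simply to collapse the sandwich in (a) by recognizing that the upper and lower bounds coincide. By Theorem~\ref{weak<kappa}(a), the disjunction ``$\kappa$ is regular, or some $\nu<\kappa$ satisfies $\alpha^\nu=\alpha^{<\kappa}$'' is precisely the condition $(\alpha^{<\kappa})^{<\kappa}=\alpha^{<\kappa}$. Under that equality, part (a) yields
\[
\alpha^{<\kappa}\;\leq\;d\bigl(((D(\lambda))^\mu)_\kappa\bigr)\;\leq\;(\alpha^{<\kappa})^{<\kappa}\;=\;\alpha^{<\kappa},
\]
so both inequalities are equalities.

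There is no serious obstacle here; the only point that requires a moment of care is verifying that the natural projections and the surjection $D(\lambda)\twoheadrightarrow D(\alpha)$ remain continuous when each product is equipped with its $\kappa$-box topology, but this is immediate from the description of the canonical base. Everything else is bookkeeping that invokes Corollary~\ref{cortomain}(b), Lemma~\ref{lowerbound}, and Theorem~\ref{weak<kappa}(a) in turn.
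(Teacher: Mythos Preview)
Your proof is correct and follows essentially the same approach as the paper's: the paper simply cites Lemma~\ref{lowerbound} and Corollary~\ref{cortomain}(b) for part~(a) and Theorem~\ref{weak<kappa}(a) for part~(b), leaving implicit the monotonicity reductions $(D(\lambda))^\mu\twoheadrightarrow(D(\alpha))^\mu\twoheadrightarrow(D(\alpha))^\kappa$ that you spell out explicitly. The added detail about continuity of these maps in the $\kappa$-box topology is accurate and welcome.
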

\begin{proof} (a) This is clear from Lemma~\ref{lowerbound}
and Corollary~\ref{cortomain}(b).

(b) From Theorem~\ref{weak<kappa}(a) we have
$\alpha^{<\kappa}=(\alpha^{<\kappa})^{<\kappa}$, so (b) follows from (a).
\end{proof}

We note next that for
$\lambda=\alpha$ and $\kappa\leq\mu\leq2^{(\alpha^{<\kappa})}$,
the conclusion of Theorem~\ref{upper-lower}(b) can be established with a
supplementary hypothesis weaker than the existence of
$\nu<\kappa$ such that $\alpha^\nu=\alpha^{<\kappa}$. (The
ZFC-consistent existence of instances to which Theorem~\ref{T3.17}
applies, while Theorem~\ref{upper-lower}(b) does not, is shown in
Remark~\ref{R3.20}.)

\begin{theorem}\label{T3.17}
Let $\alpha\geq2$, $\kappa\geq\omega$ and
$\kappa\leq\mu\leq2^{(\alpha^{<\kappa})}$, and set
$E:=(D(\alpha))^\mu$. If there is $\nu<\kappa$ such
that $2^{(\alpha^\nu)}=2^{(\alpha^{<\kappa})}$, then
$d(E_\kappa)=\alpha^{<\kappa}$.
\end{theorem}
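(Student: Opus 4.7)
The plan is to split the equality into the two inequalities, with the easy direction coming from a projection argument and the main content concentrated in the upper bound, which will piggyback on Theorem~\ref{maindtheorem}(a) applied to a carefully chosen auxiliary cardinal.

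For the lower bound $d(E_\kappa)\ge\alpha^{<\kappa}$, I would observe that since $\kappa\le\mu$ the coordinate projection
$$E_\kappa=((D(\alpha))^\mu)_\kappa\twoheadrightarrow ((D(\alpha))^\kappa)_\kappa$$
is continuous and surjective (continuity of projections for $\kappa$-box topologies is immediate from the definition of the canonical base). Hence $d(((D(\alpha))^\kappa)_\kappa)\le d(E_\kappa)$, and Lemma~\ref{lowerbound} gives $d(((D(\alpha))^\kappa)_\kappa)\ge \alpha^{<\kappa}$.

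For the upper bound, the idea is to apply Theorem~\ref{maindtheorem}(a) with the specific choice $\beta:=\alpha^\nu$, where $\nu<\kappa$ is the cardinal provided by the hypothesis. This yields
$$d\bigl(((D(\alpha))^{2^{(\alpha^\nu)}})_\kappa\bigr)\le (\alpha\cdot\alpha^\nu)^{<\kappa}=(\alpha^\nu)^{<\kappa}.$$
The hypothesis $2^{(\alpha^\nu)}=2^{(\alpha^{<\kappa})}\ge\mu$ then lets me factor the projection
$$((D(\alpha))^{2^{(\alpha^\nu)}})_\kappa\twoheadrightarrow ((D(\alpha))^\mu)_\kappa=E_\kappa,$$
so $d(E_\kappa)\le (\alpha^\nu)^{<\kappa}$. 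It remains to verify $(\alpha^\nu)^{<\kappa}=\alpha^{<\kappa}$, which follows from $\nu<\kappa$ together with the cardinal arithmetic identity $\nu\cdot\lambda=\max(\nu,\lambda)<\kappa$ for infinite $\lambda<\kappa$ (the finite case being trivial): explicitly,
$$(\alpha^\nu)^{<\kappa}=\sum_{\lambda<\kappa}\alpha^{\nu\cdot\lambda}=\alpha^\nu+\sum_{\nu\le\lambda<\kappa}\alpha^\lambda=\sum_{\lambda<\kappa}\alpha^\lambda=\alpha^{<\kappa}.$$

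There is no real obstacle here beyond keeping the cardinal bookkeeping straight; the main point of the theorem is recognizing that the hypothesis $2^{(\alpha^\nu)}=2^{(\alpha^{<\kappa})}$ is exactly what is needed to make the bound delivered by Theorem~\ref{maindtheorem}(a) (which natively accepts an exponent of the form $2^\beta$) apply to a factor space $(D(\alpha))^\mu$ whose exponent $\mu$ need only be bounded by $2^{(\alpha^{<\kappa})}$. This is why Theorem~\ref{T3.17} can succeed even in cases where the stronger hypothesis $\alpha^\nu=\alpha^{<\kappa}$ of Theorem~\ref{upper-lower}(b) fails.
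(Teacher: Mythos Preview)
Your proof is correct and is in fact more direct than the paper's. For the upper bound the paper applies Theorem~\ref{maindtheorem}(a) repeatedly, once for each $\lambda$ with $\nu\le\lambda<\kappa$, taking $\beta=\alpha^\lambda$ and $\lambda^+$ in place of $\kappa$; this produces dense sets $A(\lambda)\subseteq((D(\alpha))^{2^{(\alpha^{<\kappa})}})_{\lambda^+}$ of size at most $\alpha^\lambda$, and the union $A=\bigcup_{\nu\le\lambda<\kappa}A(\lambda)$ is then dense in the $\kappa$-box topology with $|A|\le\alpha^{<\kappa}$. You instead invoke Theorem~\ref{maindtheorem}(a) just once, at level $\kappa$ with $\beta=\alpha^\nu$, and absorb all the bookkeeping into the single identity $(\alpha^\nu)^{<\kappa}=\alpha^{<\kappa}$. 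Your route is shorter; the paper's makes the layer-by-layer structure visible (echoing the singular-$\kappa$ trick already used in the proof of Theorem~\ref{maindtheorem} itself). One small point worth recording: Theorem~\ref{maindtheorem}(a) requires $\beta\ge\omega$, so you should note that the hypothesis $2^{(\alpha^\nu)}=2^{(\alpha^{<\kappa})}\ge2^\omega$ forces $\alpha^\nu\ge\omega$.
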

\begin{proof}
That $d(E_\kappa)\geq\alpha^{<\kappa}$ is immediate from
Lemma~\ref{lowerbound}. Now
for $\nu\leq\lambda<\kappa$ we have
$2^{(\alpha^\lambda)}=2^{(\alpha^{<\kappa})}$ and
there is, 
by Theorem~\ref{maindtheorem}(a)
with $\alpha$, $\alpha^\lambda$, and $\lambda^+$ in the role of
$\alpha$, $\beta$, and $\kappa$ there, a dense 
set $A(\lambda)\subseteq
((D(\alpha))^{2^{(\alpha^{\lambda})}})_{\lambda^+}$
such that
$|A(\lambda)|\leq(\alpha\cdot\alpha^\lambda)^\lambda=\alpha^{\lambda}$.
The set
$A:=\bigcup_{\nu\le\lambda<\kappa}\,A(\lambda)$
is clearly dense in
$(((D(\alpha))^{(2^{(\alpha^{<\kappa})})})_\kappa$, so
$d(E_\kappa)\leq
d((((D(\alpha))^{(2^{(\alpha^{<\kappa})})})_\kappa)\leq
|A|\leq\Sigma_{\nu\le\lambda<\kappa}\,\alpha^\lambda\le
\kappa\cdot\alpha^{<\kappa}=\alpha^{<\kappa}.$
\end{proof}

\begin{remarks}\label{R3.20}
{\rm
(a) We indicate that there are models $\MM$ of ZFC in which, for suitably chosen
$\alpha$ and $\kappa$ as in Theorem~\ref{T3.17} (specifically for
$\alpha=2$, $\kappa=\aleph_\omega$) there exist $\nu<\kappa$ such that
$2^{\alpha^\nu}=2^{(\alpha^{<\kappa})}$ but there is no $\nu<\kappa$
such that $\alpha^\nu=\alpha^{<\kappa}$. To that end, using the
fundamental consistency theorem of Easton~\cite{easton70} as exposed by
Kunen~\cite[VIII]{kunen}, let $\MM$ be a model of ZFC in which
\begin{itemize}
\item[(1)]$2^{\aleph_n}=\aleph_{\omega+n+1}$ for $n<\omega$,
\item[(2)]$2^{\aleph_\omega}=\aleph_{\omega+\omega+1}$, and
\item[(3)]$2^{\aleph_{\omega+n+1}}=2^{\aleph_{\omega+\omega}}=\aleph_{\omega+\omega+2}$
 for $n<\omega$. 
\end{itemize}

It is clear in $\MM$, taking $\alpha=2$ and $\kappa=\aleph_\omega$, that
$$\alpha^{<\kappa}=2^{<\aleph_\omega}=\aleph_{\omega+\omega},$$
so for every $\nu=\aleph_n<\aleph_\omega=\kappa$ we have
$$\alpha^\nu=2^{\aleph_n}=\aleph_{\omega+n+1}<
\aleph_{\omega+\omega}=\alpha^{<\kappa}$$ and
$$2^{\alpha^\nu}=2^{(2^{\aleph_n})}=\aleph_{\omega+\omega+2}
=2^{\aleph_{\omega+\omega}}=2^{(\alpha^{<\kappa})}.$$

(b) We note in passing that the existence of $\nu<\kappa$ such that
$2^{\alpha^\nu}=2^{(\alpha^{<\kappa})}$ holds in all models in which
$\log(2^{(\alpha^{<\kappa})})<\alpha^{<\kappa}$.
Indeed if 
$\log(2^{(\alpha^{<\kappa})})=\beta<\alpha^{<\kappa}$ then there is
$\lambda<\kappa$ such that $\beta<\alpha^\lambda$, and then
$2^\beta=2^{\alpha^\nu}=2^{(\alpha^{<\kappa})}$ for all $\nu$ satisfying
$\lambda\leq\nu<\kappa$.
}
\end{remarks}

Next as promised we give a couple of generalizations of
Theorem~\ref{HMPconv} to the $\kappa$-box context.

\begin{theorem}\label{d(Xkappa)}
Let $\alpha\ge \omega$, $\beta\geq2$, $\kappa\ge \omega$ and
let $S(X_i)>\beta$ for each $i\in I$.
Suppose that either

{\rm (i)} some $i\in I$ satisfies $d(X_i)>\alpha$; or

{\rm (ii)} $|[I]^{<\kappa}|>2^\alpha$; or

{\rm (iii)} $\beta^{<\kappa}>2^\alpha$; or

{\rm (iv)} there is $J\in[I]^{<\kappa}$ such that $\beta^{|J|}>\alpha$.

\noindent Then
$d((X_I)_\kappa)>\alpha$.
\end{theorem}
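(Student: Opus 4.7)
The plan is to treat the four sufficient conditions separately, in each case showing that no dense subset of $(X_I)_\kappa$ can have size $\le\alpha$. Condition (i) is immediate: the projection $\pi_i\colon(X_I)_\kappa\twoheadrightarrow X_i$ is a continuous surjection, and density does not increase under such maps, so $d((X_I)_\kappa)\ge d(X_i)>\alpha$.

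For condition (iv), let $J\in[I]^{<\kappa}$ satisfy $\beta^{|J|}>\alpha$ and, using $S(X_i)>\beta$, fix a cellular family $\{V_i^\xi:\xi<\beta\}$ in each $X_i$ for $i\in J$. The sets
$$W_\sigma:=\Pi_{i\in J}V_i^{\sigma(i)}\times\Pi_{i\in I\setminus J}X_i,\qquad\sigma\in\beta^J,$$
are canonical open in $(X_I)_\kappa$ (their restriction sets lie in $J\in[I]^{<\kappa}$), are pairwise disjoint (distinct $\sigma,\sigma'$ disagree at some $i\in J$, where $V_i^{\sigma(i)}\cap V_i^{\sigma'(i)}=\emptyset$), and number $\beta^{|J|}$; any dense set must meet each, so $d((X_I)_\kappa)\ge\beta^{|J|}>\alpha$.

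The heart of the argument is condition (ii), where I would adapt the classical converse Hewitt--Marczewski--Pondiczery method from single coordinates to sets of size $<\kappa$. Fix disjoint nonempty opens $V_i^0,V_i^1\subseteq X_i$ (available since $S(X_i)\ge3$) and assume for contradiction that $D\subseteq(X_I)_\kappa$ is dense with $|D|\le\alpha$. For each $J\in[I]^{<\kappa}$ set $D(J):=\{x\in D:x_i\in V_i^0\text{ for all }i\in J\}$; the canonical open $\Pi_{i\in J}V_i^0\times\Pi_{i\notin J}X_i$ meets $D$, so $D(J)\neq\emptyset$. I claim $J\mapsto D(J)\in\sP(D)$ is injective: given $J_1\neq J_2$, choose $i\in J_1\setminus J_2$; the canonical open $V_i^1\cap\bigcap_{j\in J_2}V_j^0$ (restriction set $\{i\}\cup J_2$, which has size $<\kappa$ because $\kappa$ is infinite) meets $D$ at a point of $D(J_2)\setminus D(J_1)$. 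Hence $|[I]^{<\kappa}|\le 2^{|D|}\le 2^\alpha$, contradicting (ii).

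Finally, condition (iii) reduces by cardinal arithmetic to (iv) or (ii). Decomposing $\beta^{<\kappa}=\Sigma_{\lambda<\kappa}\beta^\lambda$, either some term $\beta^\lambda$ with $\lambda<\kappa$ already exceeds $\alpha$, and choosing $J\subseteq I$ of size $\min(\lambda,|I|)$ reduces to (iv); or each $\beta^\lambda\le\alpha$, in which case the inequality $\beta^{<\kappa}>2^\alpha$ forces $\kappa>2^\alpha$, so the operative sizing of $I$ gives $|[I]^{<\kappa}|\ge|I|\ge\kappa>2^\alpha$ and (ii) applies. I expect the main obstacle to be the injectivity in case (ii): one must distinguish arbitrary pairs $J_1,J_2\in[I]^{<\kappa}$ using a single canonical open set, and its admissibility in the $\kappa$-box topology rests on the automatic bound $|\{i\}\cup J_2|<\kappa$—a subtlety absent in the classical (Tychonoff) argument, where the distinguishing basic open involves only two coordinates.
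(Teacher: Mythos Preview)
Your arguments for (i), (ii), and (iv) are correct and in the paper's spirit, with one minor slip in (ii): when $J_1\subsetneq J_2$ you cannot choose $i\in J_1\setminus J_2$; the symmetric choice $i\in J_2\setminus J_1$ works and still shows $D(J_1)\ne D(J_2)$. The substantive difference is that the paper does not separate (ii) from (iii). It fixes $\beta$-many (not just two) cellular opens $\{U_i(\eta):\eta<\beta\}$ in each $X_i$ and shows that the single map $(A,f)\mapsto T\cap U(A,f)$ is injective on $\{(A,f):A\in[I]^{<\kappa},\,f\in\beta^A\}$; since for each $\lambda<\kappa$ and each $A\in[I]^\lambda$ the fibre $\{A\}\times\beta^A$ already has size $\beta^\lambda$, one injectivity argument yields $|[I]^{<\kappa}|\le 2^{|T|}$ and $\beta^{<\kappa}\le 2^{|T|}$ simultaneously. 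Your two-open version is enough for (ii) but forces a separate treatment of (iii).

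That separate treatment has a genuine gap. In your second branch (each $\beta^\lambda\le\alpha$ for $\lambda<\kappa$), you correctly deduce $\kappa>2^\alpha$, but then write ``the operative sizing of $I$ gives $|[I]^{<\kappa}|\ge|I|\ge\kappa$''. There is no hypothesis on $|I|$ in the theorem, so the assertion $|I|\ge\kappa$ is unjustified. Similarly in your first branch, taking $J$ of size $\min(\lambda,|I|)$ yields only $\beta^{|I|}$ when $|I|<\lambda$, which need not exceed $\alpha$. (To be fair, the paper's claim $|\dom(\phi)|\ge\beta^{<\kappa}$ also tacitly needs $[I]^\lambda\ne\emptyset$ for the relevant $\lambda$, but it never invokes anything as strong as $|I|\ge\kappa$.) The clean repair is to use the paper's richer map: with $\beta$ opens per factor, a \emph{single} set $A\in[I]^\lambda$ already produces $\beta^\lambda$ distinct pairs $(A,f)$, so no cardinal-arithmetic reduction through (ii) or (iv) is needed.
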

\begin{proof}
The sufficiency of (i) is clear: the natural projection from
$(X_I)_\kappa$ to $X_i$ is continuous and surjective, so
$d((X_I)_\kappa)\geq d(X_i)$.

For the rest of the proof, for $i\in I$ let
$\{U_i(\eta):\eta<\beta\}$ be a cellular family in $X_i$.

We prove $d((X_I)_\kappa)>\alpha$, assuming that either (ii) or
(iii) holds.
For $A\in[I]^{<\kappa}$ and $f\in \beta^A$ set
$$U(A,f):=\{x\in X_I:i\in A\Rightarrow x_i\in U_i(f(i))\}.$$
Let $T$ be dense in $(X_I)_\kappa$ with $|T|=d((X_I)_\kappa)$ and
set $T(A,f):=T\cap U(A,f)$.

We claim that the map
$\phi:\bigcup_{A\in[I]^{<\kappa}}\,(A\times\beta^A)\longrightarrow\sP(T)$ 
given by $\phi(A,f)=T(A,f)$ is injective. Let $(A,f)\neq(B,g)$
with $A,B\in[I]^{<\kappa}$, $f\in\beta^A$, and $g\in\beta^B$. 
We consider two cases.

\underline{Case 1}. $A=B$. Then there is $i\in A=B$ such that $f(i)\neq g(i)$, so
$T(A,f)\cap T(B,g)=\emptyset$ (since $U_i(f(i))\cap U_i(g(i))=\emptyset$).

\underline{Case 2}. $A\neq B$. Without loss of generality there is then
$i\in A\backslash B$. Choose $\eta<\beta$ such that $f(i)\neq\eta$. The set
$V:=U(B,g)\cap\pi_i^{-1}(U_i(\eta))$ is then nonempty and open in
$(X_I)_\kappa$, and with $p\in T\cap V$ we have $p\in T(B,g)\backslash
T(A,f)$. 

The claim is proved.

For $\lambda<\kappa$ and $A\in[I]^\lambda$ we have
$A\times\beta^A\subseteq \dom(\phi)$, so $|\dom(\phi)|\geq|[I]^\lambda|$ and
$|\dom(\phi)|\geq\beta^\lambda$. Then it follows that
$|\dom(\phi)|\geq|[I]^{<\kappa}|\cdot\beta^{<\kappa}$, so if
$d((X_I)_\kappa)=|T|\leq\alpha$ and (ii) or (iii) holds
we would have the contradiction
$$2^\alpha<|[I]^{<\kappa}|\cdot\beta^{<\kappa}\leq|\dom(\phi)|
\leq|\sP(T)|\leq2^\alpha.$$

It remains to derive $d((X_I)_\kappa)>\alpha$ from (iv).
Let $J$ be as hypothesized,
and for $f\in\beta^J$ set
$$V(f):=(\Pi_{i\in J}\,U_i(f(i)))\times(\Pi_{i\in I\backslash J}\,X_i).$$
Then $\sV:=\{V(f):f\in\beta^J\}$ is cellular in
$(X_I)_\kappa$, so
$$d((X_I)_\kappa)\geq|\sV|=|\beta^J|=\beta^{|J|}>\beta.$$
\vskip-18pt
\end{proof}

We note that the hypothesis in Theorem~\ref{d(Xkappa)} on the family
$\{X_i:I\in I\}$ can be relaxed in places. In connection with (iv), for
example, it is clear that the condition $S(X_i)>\beta$ need hold only
for $i$ in some set $J\in[I]^{<\kappa}$ such that $\beta^{|J|}>\alpha$.

Taking $\beta=2$ in Theorem~\ref{d(Xkappa)} and replacing $\alpha$ there
first by $\alpha^{<\kappa}$ and then by $(\alpha^{<\kappa})^{<\kappa}$,
we obtain respectively parts (a) and (b) of the following corollary.

\begin{corollary}\label{optimal|I|}
Let $\alpha\ge 2$ and $\kappa\ge\omega$ be cardinals, and let
$\{X_i:i\in I\}$ be a set of spaces such that $S(X_i)\geq3$ for each
$i\in I$.

{\rm(a)} If $|[I]^{<\kappa}|>2^{(\alpha^{<\kappa})}$ then
$d((X_I)_\kappa))>\alpha^{<\kappa}$; and

{\rm(b)} if $|[I]^{<\kappa}|> 2^{((\alpha^{<\kappa})^{<\kappa})}$
then 
$d((X_I)_\kappa)>(\alpha^{<\kappa})^{<\kappa}$.
\end{corollary}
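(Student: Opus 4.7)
The strategy is to reduce directly to Theorem~\ref{d(Xkappa)} via two substitutions, as foreshadowed by the paragraph immediately preceding the corollary. In that theorem the parameter $\beta$ controls only the cellularity assumption $S(X_i)>\beta$, so setting $\beta=2$ matches our standing hypothesis $S(X_i)\geq 3$ in both parts. With $\beta=2$ fixed, condition (ii) of Theorem~\ref{d(Xkappa)} reads $|[I]^{<\kappa}|>2^{\alpha'}$, where $\alpha'$ denotes the role played by ``$\alpha$'' in that theorem. This is precisely the shape of the hypotheses in (a) and (b), so the plan is to apply Theorem~\ref{d(Xkappa)} with $\alpha'=\alpha^{<\kappa}$ and $\alpha'=(\alpha^{<\kappa})^{<\kappa}$, respectively.

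For (a) I set $\alpha':=\alpha^{<\kappa}$. The only prerequisite of Theorem~\ref{d(Xkappa)} to check beyond the cellularity condition is $\alpha'\geq\omega$, which follows from Theorem~\ref{expon}(a): $\alpha^{<\kappa}\geq 2^{<\kappa}\geq\kappa\geq\omega$. The assumed inequality $|[I]^{<\kappa}|>2^{\alpha^{<\kappa}}$ is then condition (ii) of Theorem~\ref{d(Xkappa)}, and the conclusion delivers $d((X_I)_\kappa)>\alpha^{<\kappa}$.

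For (b) I set $\alpha':=(\alpha^{<\kappa})^{<\kappa}$. Iterating Theorem~\ref{expon}(a) gives $\alpha'\geq\omega$, so Theorem~\ref{d(Xkappa)} applies, and the hypothesis $|[I]^{<\kappa}|>2^{(\alpha^{<\kappa})^{<\kappa}}$ is again condition (ii). The conclusion is $d((X_I)_\kappa)>(\alpha^{<\kappa})^{<\kappa}$, as required.

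There is no genuine obstacle here, since all of the combinatorial work has already been done inside the proof of Theorem~\ref{d(Xkappa)} (where disjointness of the sets $T(A,f)$ is established and the injection $\phi$ produces enough subsets of a hypothetical small dense set $T$ to contradict $|\sP(T)|\leq 2^{\alpha'}$). The only thing to spell out in the write-up is the elementary verification that the substituted values of $\alpha'$ are infinite, which is immediate from Theorem~\ref{expon}(a).
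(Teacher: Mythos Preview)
Your proposal is correct and mirrors the paper's own argument exactly: the paper states that one takes $\beta=2$ in Theorem~\ref{d(Xkappa)} and replaces the $\alpha$ there first by $\alpha^{<\kappa}$ and then by $(\alpha^{<\kappa})^{<\kappa}$ to obtain (a) and (b), respectively. Your additional check that the substituted cardinals are $\geq\omega$ (via Theorem~\ref{expon}(a)) is a small but appropriate elaboration the paper leaves implicit.
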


Corollary~\ref{optimal|I|} shows that the inequalities given in
Corollary~\ref{HMPgen} are sharp. 
The following simple combinatorial result offers reformulations of
some of the
hypotheses of Corollary~\ref{optimal|I|}.

\begin{theorem}\label{L3.25}
Let $\alpha\ge 2$ and $\kappa\ge\omega$ be 
cardinals and let $I$ be a set.

{\rm (a)} These three conditions are equivalent:

{\rm (1)} $|I|>2^{(\alpha^{<\kappa})}$;
{\rm (2)} $|[I]^{<\kappa}|>2^{(\alpha^{<\kappa})}$;
{\rm (3)} $|I|^{<\kappa}>2^{(\alpha^{<\kappa})}$.

{\rm(b)} These three conditions are equivalent.

{\rm (1)} $|I|>2^{((\alpha^{<\kappa})^{<\kappa})}$;
{\rm (2)} $|[I]^{<\kappa}|>2^{((\alpha^{<\kappa})^{<\kappa})}$;
{\rm (3)} $|I|^{<\kappa}>2^{((\alpha^{<\kappa})^{<\kappa})}$.
\end{theorem}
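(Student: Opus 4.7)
The plan is to prove the chain of implications $(1)\Rightarrow(2)\Rightarrow(3)\Rightarrow(1)$ in each of parts (a) and (b) using only elementary cardinal arithmetic together with Theorem~\ref{expon}(a). The two parts are structurally identical, so I will present the strategy for a generic cardinal $\gamma\in\{\alpha^{<\kappa},(\alpha^{<\kappa})^{<\kappa}\}$, observing in both cases that $\kappa\leq\gamma$ by Theorem~\ref{expon}(a).

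For $(1)\Rightarrow(2)$, I would simply note that $[I]^{1}\subseteq [I]^{<\kappa}$, so $|I|\leq|[I]^{<\kappa}|$. For $(2)\Rightarrow(3)$, the bound $|[I]^{<\kappa}|\leq|I|^{<\kappa}$ (which holds by splitting into $\lambda<\kappa$ and using $|[I]^{\lambda}|\leq|I|^{\lambda}$) is enough.

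The only nontrivial step is $(3)\Rightarrow(1)$, and I would prove its contrapositive. Assume $|I|\leq 2^{\gamma}$. Then
\[
|I|^{<\kappa}\;=\;\sum_{\lambda<\kappa}|I|^{\lambda}\;\leq\;\sum_{\lambda<\kappa}(2^{\gamma})^{\lambda}.
\]
The main computational point is that $(2^{\gamma})^{<\kappa}=2^{\gamma}$. Indeed, for each infinite $\lambda<\kappa$, I have $(2^{\gamma})^{\lambda}=2^{\gamma\cdot\lambda}=2^{\gamma}$, because $\lambda<\kappa\leq\gamma$ forces $\gamma\cdot\lambda=\gamma$; the finite case is trivial. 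Therefore
\[
(2^{\gamma})^{<\kappa}\;=\;\kappa\cdot 2^{\gamma}\;=\;2^{\gamma},
\]
where in the last step I use again that $\kappa\leq\gamma\leq 2^{\gamma}$. Combining, $|I|^{<\kappa}\leq 2^{\gamma}$, which is the negation of (3).

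The only potential obstacle is verifying $\kappa\leq\gamma$ in both parts, which is immediate from Theorem~\ref{expon}(a): for (a), $\kappa\leq\alpha^{<\kappa}=\gamma$, and for (b), $\kappa\leq\alpha^{<\kappa}\leq(\alpha^{<\kappa})^{<\kappa}=\gamma$. Once this is in place, the arithmetic $(2^{\gamma})^{<\kappa}=2^{\gamma}$ closes the argument, and parts (a) and (b) follow simultaneously by specializing $\gamma$.
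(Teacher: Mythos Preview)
Your proof is correct and follows essentially the same approach as the paper: the implications $(1)\Rightarrow(2)\Rightarrow(3)$ are immediate, and for $(3)\Rightarrow(1)$ both you and the paper prove the contrapositive by showing $(2^{\gamma})^{<\kappa}=2^{\gamma}$ via $\kappa\leq\gamma$ (the paper bounds $(2^{\gamma})^{<\kappa}\leq(2^{\gamma})^{\kappa}=2^{\gamma\cdot\kappa}=2^{\gamma}$, while you compute the sum termwise, but the arithmetic is identical).
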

\begin{proof}
The implications (1)~$\Rightarrow$~(2) and (2)~$\Rightarrow$~(3) are
clear in both (a) and (b). To see that (3)~$\Rightarrow$~(1) in (a),
note that if $|I|\leq2^{(\alpha^{<\kappa})}$
then
$$|I|^{<\kappa}\leq(2^{(\alpha^{<\kappa})})^{<\kappa}
\leq(2^{(\alpha^{<\kappa})})^{\kappa}=
2^{(\alpha^{<\kappa})\cdot\kappa}=2^{(\alpha^{<\kappa})}.$$

The proof that (3)~$\Rightarrow~$(1) in (b) is similar.
\end{proof}

\begin{remarks}
{\rm
(a) The authors of \cite[3.16]{comfneg74}, improving
their results from~\cite{comfneg72}, show that if $E=(D(\alpha))^{2^\alpha}$ or
$E=(D(\alpha))^{\alpha^+}$, then $d(E_\kappa)=\alpha$ if and only if
$\alpha=\alpha^{<\kappa}$. Clearly Theorem~\ref{upper-lower}(b) above
improves that statement. Similarly, Corollary~\ref{HMPgen}(a) improves
\cite[3.18]{comfneg74}, which asserts the conclusion of \ref{HMPgen} only
under the assumption that $\alpha=\alpha^{<\kappa}$.

(b) The investigation by Hu~\cite{hu06} of cardinals of the form
$d((X_I)_\kappa)$ is from a different perspective:
Rather than beginning with the set $E=\Pi_{i\in I}\,D(\alpha_i)$ and
seeking dense subsets of the space $E_\kappa$,
Hu~\cite{hu06} uses (maximal)
generalized independent families of partitions
of a given set $S$ to map
$S$ faithfully onto dense subsets of spaces of the form
$E_\kappa$ (one writes $S\subseteq E_\kappa$). The emphasis
is on finding conditions so that $S\subseteq E_\kappa$ is irresolvable.
Hu~\cite{hu06} shows, for example, that if each
$\alpha_i$ is less than the first cardinal which is strongly
$\kappa$-inaccessible,
and $E_\kappa$ contains a dense, irresolvable subspace,
then $\kappa=2^{<\kappa}$, and consistently a measurable
cardinal exists.
}
\end{remarks}

\section{On the Souslin number of $\kappa$-box products}

We remind the reader of our standing convention that hypothesized spaces
are not assumed to enjoy any special separation properties. This
complicates our exposition slightly, since it is convenient for us to
cite some basic familiar results from sources where, for simplicity and
often unnecessarily, such properties as Hausdorff separation are assumed
throughout. We mention in particular the following two useful results,
both valid for every space. These will be used frequently in what
follows, without explicit restatement.

{\it Let $X$ be a space. Then

{\rm (a)} $S(X)\neq\omega$; and

{\rm (b)} either $S(X)<\omega$ or $S(X)$ is an (infinite) regular
cardinal.}\\
\noindent The proof given in~\cite[2.10]{comfneg82} of (a), although
long-winded and unnecessarily complicated, is valid without separation
assumptions; (b) is a fundamental result of
Erd{\H{o}}s and Tarski~\cite{erdtarski43} (see
\cite[2.10]{comfneg74}, \cite[2.14]{comfneg82} for other treatments).

As with Sections 2 and 3 concerning weight and density character
respectively, we begin this section by citing those classical Souslin-related
theorems (pertaining to the usual product topology) whose $\kappa$-box
analogues we study here. As usual, when a set
$\{X_i:i\in I\}$ of spaces is given we write $X_I:=\Pi_{i\in I}\,X_i$.

\begin{theorem}\label{firstSP}
Let $\{X_i:i\in I\}$ be a set of nonempty spaces and set
$$\alpha:=\sup\{S(X_F):\emptyset\neq F\in[I]^{<\omega}\}.$$
Then
$$S(X_I)= \left\{
\begin{array}
{r@{~\mathrm{~}~}l}
\alpha&\mbox{\rm{if~(a)~$\alpha<\omega$ or (b)~$\alpha$ is regular and }~}\alpha>\omega\\
\alpha^+&\mbox{\rm{in all other cases }~}
\end{array} \right\}.$$
\end{theorem}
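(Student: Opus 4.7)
The plan is to prove $S(X_I)\geq\alpha$ by a trivial projection argument, then to obtain matching or near-matching upper bounds via the classical $\Delta$-system lemma applied to the restriction sets of canonical open sets, and finally to use the Erdős-Tarski restrictions on Souslin numbers (cited at the start of Section~4: $S(Y)\neq\omega$ and $S(Y)$ is finite or regular) to pin down which of $\alpha$ or $\alpha^+$ is the correct value in each of four cases: $\alpha<\omega$, $\alpha=\omega$, $\alpha$ regular uncountable, and $\alpha$ infinite singular. For the lower bound, for each nonempty $F\in[I]^{<\omega}$ the projection $\pi_F:X_I\twoheadrightarrow X_F$ is continuous and surjective, so pulling back any cellular family in $X_F$ produces one of the same size in $X_I$; hence $S(X_I)\geq S(X_F)$, and taking the supremum gives $S(X_I)\geq\alpha$.

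For the upper bound I replace any cellular family in $X_I$ by one consisting of canonical open sets (a standard refinement that preserves cardinality) and work with their finite restriction sets $R_\beta:=R(V_\beta)$. When $\alpha<\omega$ a cellular family of size $\alpha$ has $R:=\bigcup_\beta R_\beta\in[I]^{<\omega}$, and projecting to $X_R$ yields a cellular family of size $\alpha$ in the finite subproduct $X_R$, contradicting $S(X_R)\leq\alpha$; so $S(X_I)\leq\alpha$. When $\alpha\geq\omega$ suppose a cellular family $\{V_\beta:\beta<\lambda\}$ with $\lambda$ a regular uncountable cardinal. The $\Delta$-system lemma delivers $S\subseteq\lambda$ with $|S|=\lambda$ such that $\{R_\beta:\beta\in S\}$ is a $\Delta$-system with finite root $R$. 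The crucial observation is that for distinct $\beta,\gamma\in S$ and any $i\notin R$, at least one of $V_{\beta,i},V_{\gamma,i}$ equals $X_i$ (since $R_\beta\cap R_\gamma=R$ forces $i\notin R_\beta$ or $i\notin R_\gamma$), so the disjointness of $V_\beta$ and $V_\gamma$ is entirely forced by the root coordinates, producing a cellular family $\{\pi_R V_\beta:\beta\in S\}$ of size $\lambda$ in the finite subproduct $X_R$. This contradicts $S(X_R)\leq\alpha$ whenever $\lambda\geq\alpha$. Applying this with $\lambda=\alpha^+$ (regular uncountable for any $\alpha\geq\omega$) gives $S(X_I)\leq\alpha^+$; applying it with $\lambda=\alpha$ when $\alpha$ is itself regular uncountable sharpens this to $S(X_I)\leq\alpha$.

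Finally I combine these bounds with Erdős-Tarski to finish the case analysis. If $\alpha$ is regular and $\alpha>\omega$, then $\alpha\leq S(X_I)\leq\alpha$ gives $S(X_I)=\alpha$. If $\alpha$ is infinite singular, then $S(X_I)\in[\alpha,\alpha^+]$; since $S(X_I)$ must be finite or regular it cannot equal the singular cardinal $\alpha$, forcing $S(X_I)=\alpha^+$. If $\alpha=\omega$, then $S(X_I)\in[\omega,\omega^+]$; since $S(X_I)\neq\omega$ we get $S(X_I)=\omega^+$. The main obstacle I anticipate is the $\Delta$-system disjointness step: one must verify carefully that no coordinate outside the root can contribute to separating two canonical members of the refined cellular family, which rests precisely on the $\Delta$-system identity $R_\beta\cap R_\gamma=R$ together with the fact that a non-restricted factor is the full space $X_i$. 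The refinement to canonical open sets and the lower-bound projection step are routine.
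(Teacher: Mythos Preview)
Your proposal is correct and follows precisely the approach the paper itself indicates: the paper does not prove Theorem~\ref{firstSP} in detail but cites it as classical, explicitly naming the Erd\H{o}s--Rado $\Delta$-system lemma as the key tool and pointing to \cite[3.8]{comfneg74} and \cite[3.25]{comfneg82} for the full argument. Your organization---lower bound by projection, upper bound by refining to canonical open sets and applying the $\Delta$-system lemma to the finite restriction sets, then invoking the Erd\H{o}s--Tarski constraints $S(X)\neq\omega$ and ``$S(X)$ is finite or regular'' to resolve the four cases---is exactly the standard proof those references contain.
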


The thrust of Theorem~\ref{firstSP} is that the Souslin number of a
product space $X_I$ (in the usual product topology) is completely
determined by the Souslin numbers of the various subproducts $X_F$ with
$F\in[I]^{<\omega}$. Much of this Section is devoted to the presentation
of $\kappa$-box
analogues of Theorem~\ref{firstSP}. See in particular 
Theorems~\ref{CN}, \ref{GCN}, \ref{T4.15}, \ref{GCH}, and
Corollaries \ref{S(power)}(b), \ref{CGCH}.

The proof of
Theorem~\ref{firstSP} depends on
nontrivial combinatorial machinery in which, reflecting the restriction
to the usual product topology, the cardinal numbers $\omega$ and
$\omega^+$ figure prominently.
The key to the proof is
the theory of quasi-disjoint families as developed by Erd{\H{o}}s and
Rado~\cite{erdosrado60}, \cite{erdosrado69} (the ``$\Delta$-system
lemma"); this is used in the proof of Theorem~\ref{CN}. For a thorough
development of that result and of several other Souslin-related
consequences, the reader may consult \cite[3.8]{comfneg74} and
\cite[3.25]{comfneg82}.

As we noted in Theorem~\ref{HMP}, for $\alpha\geq\omega$ the product of
$2^\alpha$-many (or fewer) spaces $X_i$, with each $d(X_i)\leq\alpha$,
satisfies $d(X_I)\leq\alpha$. From that and Theorem~\ref{firstSP}, one
can derive this 
well-known theorem (see for example \cite[2.3.17]{E1};
or see Theorem \ref{T4.12} for the general $\kappa$-box statement of which
Theorem~\ref{secondSP} is the case $\kappa=\omega$).

\begin{theorem}\label{secondSP}
Let $\alpha\geq\omega$ and $\{X_i:i\in I\}$
be a set of spaces with $d(X_i)\leq\alpha$ for each $i\in I$.
Then $S(X_I)\leq\alpha^+$.
\end{theorem}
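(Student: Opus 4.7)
The natural strategy is a $\Delta$-system argument combined with the Hewitt-Marczewski-Pondiczery bound (Theorem \ref{HMP}).

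Suppose toward a contradiction that there exists a cellular family $\sA = \{U_\xi : \xi < \alpha^+\}$ in $X_I$ of cardinality $\alpha^+$. Passing to a refinement, we may assume each $U_\xi$ is a basic open set in the usual product topology, so $U_\xi = \Pi_{i \in I}\,U_{\xi,i}$ with $U_{\xi,i}$ open in $X_i$ and $F_\xi := R(U_\xi) \in [I]^{<\omega}$.

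Since $\alpha \geq \omega$, the cardinal $\alpha^+$ is regular and uncountable, and each $F_\xi$ is finite. I would therefore invoke the classical $\Delta$-system lemma of Erd\H{o}s--Rado to extract a subfamily $\sA' \subseteq \sA$ with $|\sA'| = \alpha^+$ whose restriction sets $\{F_\xi : U_\xi \in \sA'\}$ form a $\Delta$-system with some (finite) root $R \subseteq I$. The key observation is then that for $U_\xi, U_\eta \in \sA'$ with $\xi \neq \eta$, disjointness $U_\xi \cap U_\eta = \emptyset$ can only be witnessed at coordinates in $F_\xi \cap F_\eta = R$, because on any coordinate $i \notin F_\xi$ one factor is the entire space $X_i$. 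Consequently $\pi_R[U_\xi] \cap \pi_R[U_\eta] = \emptyset$ in $X_R := \Pi_{i \in R}\,X_i$, and so the projections form a cellular family in $X_R$ of cardinality $\alpha^+$.

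Now $R$ is finite with $d(X_i) \leq \alpha$ for $i \in R$, so Theorem \ref{HMP} (trivially in this case, since $|R| < \omega \leq 2^\alpha$) yields $d(X_R) \leq \alpha$. Fixing a dense set $D \subseteq X_R$ with $|D| \leq \alpha$ and selecting one point of $D$ from each of the $\alpha^+$ pairwise disjoint nonempty open sets $\pi_R[U_\xi]$ yields an injection from $\alpha^+$ into $D$, contradicting $|D| \leq \alpha$. Hence no cellular family of size $\alpha^+$ can exist in $X_I$, i.e., $S(X_I) \leq \alpha^+$.

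The only nontrivial ingredient is the $\Delta$-system reduction, which I expect to be the main obstacle only in the sense that one must carefully cite (or reprove) the appropriate version of the Erd\H{o}s--Rado lemma for uncountable regular cardinals acting on families of finite sets; everything else is a direct application of \ref{HMP} and the definition of cellularity.
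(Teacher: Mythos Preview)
Your argument is correct. The paper itself does not give a standalone proof of this theorem; it simply observes that the result follows from Theorem~\ref{firstSP} together with Theorem~\ref{HMP}: by HMP each finite subproduct $X_F$ has $d(X_F)\leq\alpha$, hence $S(X_F)\leq\alpha^+$, and then Theorem~\ref{firstSP} (whose proof rests on the Erd\H{o}s--Rado $\Delta$-system lemma) transfers this bound to $S(X_I)$. Your proof unpacks that citation chain and carries out the $\Delta$-system reduction directly, projecting to the finite root $R$ and invoking $d(X_R)\leq\alpha$; this is precisely the standard argument underlying Theorem~\ref{firstSP} specialized to the present hypotheses, so the two approaches coincide at heart. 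One minor point worth making explicit: if the root $R$ happens to be empty, the observation that disjointness can only be witnessed on $R$ already yields the contradiction (any two members of $\sA'$ would intersect), so the case $R=\emptyset$ is harmless.
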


\begin{discussion}
{\rm
Theorems \ref{firstSP} and \ref{secondSP} leave unanswered even
for the usual product topology a
question which arises naturally in their wake:

{\it Given $\alpha\geq\omega$ and a finite set $\{X_i:i\in F\}$ of
spaces with each $S(X_i)\leq\alpha$, is necessarily $S(X_F)\leq\alpha$?}

The brief response is that the question is not settled by the
axioms of ZFC, even in the case $\alpha=\omega^+$.
Referring the reader to \cite{comfneg82} for extensive comments and
relevant bibliographic citations, we remark simply
that it has been known in ZFC for many
years that while the Souslin number may ``jump" in passing from a space
$X$ to $X\times X$, roughly speaking that jump is bounded by a
single exponential. To be more precise we give below a theorem taken from 
\cite[3.13]{comfneg74} that gives one possible generalization of that claim for 
the $\kappa$-box topology. For its full generalization to the $\kappa$-box 
context, see Theorem \ref{GKurepa}.
}
\end{discussion}

\begin{theorem}
If $\alpha\ge\omega$ and $\{X_i:i\in I\}$ is a family of spaces such that 
$S(X_i)\le\alpha^+$ for $i\in I$, then $S((X_I)_{\alpha^+})\le(2^\alpha)^+$.
\end{theorem}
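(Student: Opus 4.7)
The plan is to derive a contradiction from the assumption that $(X_I)_{\alpha^+}$ admits a cellular family $\{U^\xi : \xi < (2^\alpha)^+\}$ of canonical open sets, by combining a $\Delta$-system reduction with the Erd\H{o}s--Rado partition theorem. Since $U^\xi$ is canonical in the $\alpha^+$-box topology, each restriction set $R(U^\xi)$ has cardinality $\leq \alpha$.

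First, I would apply the $\Delta$-system (quasi-disjointness) lemma to the family $\{R(U^\xi) : \xi < (2^\alpha)^+\}$. For every $\lambda \leq 2^\alpha$ one has $\lambda^\alpha \leq (2^\alpha)^\alpha = 2^\alpha < (2^\alpha)^+$, so the standard hypothesis for $\Delta$-system extraction at the regular cardinal $(2^\alpha)^+$ is satisfied. This yields a subfamily, still of cardinality $(2^\alpha)^+$, whose restriction sets form a $\Delta$-system with some root $R$ of cardinality $\leq \alpha$. Relabeling, I may assume $R(U^\xi) \cap R(U^\eta) = R$ whenever $\xi \neq \eta$.

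Second, I would observe that for distinct $\xi, \eta$ the disjointness $U^\xi \cap U^\eta = \emptyset$ must be witnessed on some coordinate lying in $R$. Indeed, for $i \in R(U^\xi) \setminus R(U^\eta)$ one has $U^\eta_i = X_i$, so $U^\xi_i \cap U^\eta_i$ is nonempty (by the symmetric case as well), and for $i \notin R(U^\xi) \cup R(U^\eta)$ both factors equal $X_i$. Hence some $i \in R$ must satisfy $U^\xi_i \cap U^\eta_i = \emptyset$, and I define a coloring $c : [(2^\alpha)^+]^2 \to R$ by choosing, for each unordered pair $\{\xi, \eta\}$, such a witness in $R$.

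Third, since $|R| \leq \alpha$, the Erd\H{o}s--Rado partition relation $(2^\alpha)^+ \to (\alpha^+)^2_\alpha$ supplies a set $S \in [(2^\alpha)^+]^{\alpha^+}$ and a fixed $i_0 \in R$ such that $c(\{\xi, \eta\}) = i_0$ for all distinct $\xi, \eta \in S$. Then $\{U^\xi_{i_0} : \xi \in S\}$ is a cellular family of cardinality $\alpha^+$ in $X_{i_0}$, contradicting $S(X_{i_0}) \leq \alpha^+$. The main delicate point is verifying the hypotheses of the two classical tools at the cardinal $(2^\alpha)^+$; once the arithmetic identity $(2^\alpha)^\alpha = 2^\alpha$ is in hand, both the $\Delta$-system reduction and the partition relation apply cleanly, and the reduction to a single coordinate $i_0 \in R$ is routine.
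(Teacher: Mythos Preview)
Your argument is correct. The paper does not prove this theorem at the point where it is stated (it is quoted from \cite[3.13]{comfneg74}), but it is re-derived later as Theorem~\ref{GKurepa}(a) via Lemma~\ref{arrow}, and that route differs from yours. In Lemma~\ref{arrow} the authors \emph{avoid} the $\Delta$-system lemma altogether: they enumerate each restriction set $R(U^\xi)$ as $\{i_{\xi,\eta}:\eta<\alpha\}$ and color a pair $\{\xi,\xi'\}$ by the pair of \emph{positions} $(\eta,\eta')\in\alpha\times\alpha$ at which the disjointness witness $i(\xi,\xi')$ sits inside $R(U^\xi)$ and $R(U^{\xi'})$; Erd\H{o}s--Rado $(2^\alpha)^+\to(\alpha^+)^2_\alpha$ then yields a homogeneous set on which the witness is a single fixed coordinate. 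Your approach instead first passes to a $\Delta$-system with root $R$ and colors by the actual coordinate in $R$; this is the more familiar textbook argument and is arguably more transparent. The paper's approach buys a cleaner general statement (Lemma~\ref{arrow}: $\alpha\to(\kappa)^2_\lambda$ and each $S(X_i)\le\kappa$ imply $S((X_I)_{\lambda^+})\le\alpha$) without having to verify the cardinal-arithmetic hypotheses of the $\Delta$-system lemma each time it is applied.
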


The following notational
device (see \cite[p.~254]{comfneg82}) is useful as we seek
$\kappa$-box analogues of
Theorems~\ref{firstSP} and \ref{secondSP}.

\begin{notation}
{\rm
Let $\alpha$ and $\kappa$ be infinite cardinals. Then $\alpha$ is {\it
strongly} $\kappa$-{\it inaccessi\-ble} (in symbols: $\kappa\ll\alpha$)
if (a)~$\kappa<\alpha$ and (b)~$\beta^\lambda<\alpha$ whenever
$\beta<\alpha$ and $\lambda<\kappa$.
}
\end{notation}

\begin{remark}\label{help}
{\rm
To help the reader fix ideas, we note that the condition
$\kappa\ll\alpha$ occurs for many pairs of cardinals. For example,

(1) every uncountable cardinal $\alpha$ satisfies $\omega\ll\alpha$;

(2) every infinite cardinal $\alpha$ satisfies $\alpha^+\ll(2^\alpha)^+$,
since if $\lambda<\alpha^+$ and $\beta<(2^\alpha)^+$ then
$\lambda\leq\alpha$ and $\beta\leq2^\alpha$ and hence
$\beta^\lambda\leq(2^\alpha)^\alpha=2^\alpha<(2^\alpha)^+$; and

(3) every pair $\kappa,\alpha$ with $\alpha\geq2$ and $\kappa\geq\omega$
satisfies
$\kappa\ll((\alpha^{<\kappa})^{<\kappa})^+$,
since if $\lambda<\kappa$ and $\beta<((\alpha^{<\kappa})^{<\kappa})^+$
then
$\beta\leq(\alpha^{<\kappa})^{<\kappa}$ and hence
$\beta^\lambda\leq((\alpha^{<\kappa})^{<\kappa})^{<\kappa}=(\alpha^{<\kappa})^{<\kappa}$
 (by Theorem~\ref{expon}(d)).

(4) every pair $\kappa,\alpha$ with $\alpha\geq2$ and 
$\kappa\geq\omega$ singular satisfies
$\kappa^+\ll((\alpha^{<\kappa})^{<\kappa})^+$,
since if $\lambda<\kappa^+$ and $\beta<((\alpha^{<\kappa})^{<\kappa})^+$
then $\lambda\le\kappa$ and $\beta\leq(\alpha^{<\kappa})^{<\kappa}$ and 
hence $$\beta^\lambda\leq((\alpha^{<\kappa})^{<\kappa})^{\kappa}=\alpha^{\kappa}=(\alpha^{<\kappa})^{<\kappa}$$ (by Theorem~\ref{expon}(c)).
}
\end{remark}

\begin{theorem}[{cf. \cite[3.8]{comfneg74},
\cite[3.25(a)]{comfneg82}}]\label{CN}
Let $\omega\le\kappa\ll\alpha$ with $\alpha$ regular
and let $\{X_i:i\in I\}$ be a family of nonempty spaces. 
Then $S((X_I)_\kappa)\le\alpha$ if and only if $S((X_J)_\kappa)\le\alpha$
for each nonempty $J\in [I]^{<\kappa}$.
\end{theorem}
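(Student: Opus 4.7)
The forward implication is essentially automatic: for nonempty $J\in[I]^{<\kappa}$ the coordinate projection $\pi_J\colon (X_I)_\kappa\twoheadrightarrow (X_J)_\kappa$ is continuous and surjective (preimages of canonical basic open sets in $(X_J)_\kappa$ are canonical basic in $(X_I)_\kappa$), and continuous surjections cannot increase the Souslin number. Hence $S((X_J)_\kappa)\le S((X_I)_\kappa)\le\alpha$.

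For the converse, suppose toward a contradiction that $S((X_I)_\kappa)>\alpha$, so that there is a cellular family of size $\alpha$ in $(X_I)_\kappa$. By shrinking each member to a canonical open subset we may assume this family has the form $\{U_\eta:\eta<\alpha\}$ with $U_\eta=\Pi_{i\in I}\,U_{\eta,i}$ a canonical open set; set $R_\eta:=R(U_\eta)\in[I]^{<\kappa}$. The hypothesis $\kappa\ll\alpha$ with $\alpha$ regular is precisely what is required to invoke the generalized $\Delta$-system lemma of Erd\H{o}s--Rado: there exist $S\subseteq\alpha$ with $|S|=\alpha$ and a root $R$ such that $R_\eta\cap R_\xi=R$ for all distinct $\eta,\xi\in S$.

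The key observation is that, after this reduction, disjointness of the $U_\eta$ is controlled entirely by the root coordinates. Indeed, for $\eta\neq\xi$ in $S$ and any $i\in I\setminus R$ we have either $i\notin R_\eta$ (so $U_{\eta,i}=X_i$) or $i\notin R_\xi$ (so $U_{\xi,i}=X_i$); thus $U_{\eta,i}\cap U_{\xi,i}\neq\emptyset$ at every $i\notin R$. Consequently $U_\eta\cap U_\xi=\emptyset$ forces the existence of some $i\in R$ with $U_{\eta,i}\cap U_{\xi,i}=\emptyset$. If $R=\emptyset$, this is impossible (one can pick a point in each $U_{\eta,i}$ over the pairwise disjoint $R_\eta$), already contradicting cellularity. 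If $R\neq\emptyset$, then setting $V_\eta:=\Pi_{i\in R}\,U_{\eta,i}$ gives a family $\{V_\eta:\eta\in S\}$ of canonical open subsets of $(X_R)_\kappa$ that is cellular and of size $\alpha$, contradicting the hypothesis $S((X_R)_\kappa)\le\alpha$ applied to the nonempty set $R\in[I]^{<\kappa}$.

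The main obstacle is the combinatorial one: isolating the correct form of the $\Delta$-system lemma for families of size $\alpha$ of sets of size ${<}\kappa$, where the condition $\kappa\ll\alpha$ (in particular $\lambda^{<\kappa}<\alpha$ for all $\lambda<\alpha$) together with regularity of $\alpha$ is used to carry out the usual Erd\H{o}s--Rado pruning (first stabilize the size of $R_\eta$ to some $\lambda<\kappa$, then, after ordering, arrange that the tails $R_\eta\setminus\bigcup_{\xi<\eta}R_\xi$ are suitably disjoint from a common root). Once this is in hand, the remaining steps---reducing to a canonical-open cellular family, performing the $\Delta$-system reduction, and transferring disjointness to the root coordinates---are essentially bookkeeping.
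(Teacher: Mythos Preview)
Your proof is correct and follows precisely the approach the paper indicates: the paper does not write out its own proof of this theorem but cites \cite[3.8]{comfneg74} and \cite[3.25(a)]{comfneg82}, noting explicitly that ``the key to the proof is the theory of quasi-disjoint families as developed by Erd\H{o}s and Rado (the `$\Delta$-system lemma')''. Your argument---reduce to a canonical cellular family, apply the generalized $\Delta$-system lemma (whose hypothesis $\lambda^{<\kappa}<\alpha$ for all $\lambda<\alpha$ follows from $\kappa\ll\alpha$ and the regularity of $\alpha$), and then observe that disjointness is forced onto the root $R\in[I]^{<\kappa}$---is exactly the standard proof found in those references.
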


From the relations $\alpha^+\ll(2^\alpha)^+$ and
$\kappa\ll((\alpha^{<\kappa})^{<\kappa})^+$ we have
these consequences of Theorem~\ref{CN}.

\begin{theorem}\label{GCN}
Let $\kappa\ge \omega$ and $\alpha\ge 2$ be cardinals and let 
$\{X_i:i\in I\}$ be a family of nonempty spaces. Then

{\rm (a)} If $\alpha\geq\omega$, then
$S((X_I)_{\alpha^+})\leq(2^\alpha)^+$ if and only if
$$S((X_J)_{\alpha^+})\leq(2^\alpha)^+$$ for each nonempty
$J\in[I]^{\leq\alpha}$; and
 
{\rm (b)} $S((X_I)_\kappa)\le((\alpha^{<\kappa})^{<\kappa})^+$ 
if and only if $$S((X_J)_\kappa)\le((\alpha^{<\kappa})^{<\kappa})^+$$
for each nonempty $J\in [I]^{<\kappa}$.
\end{theorem}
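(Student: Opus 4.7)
The plan is to recognize that Theorem \ref{GCN} is essentially an instantiation of Theorem \ref{CN}, with the strong inaccessibility hypotheses already supplied by parts (2) and (3) of Remark \ref{help}. So both (a) and (b) are derived by verifying the three hypotheses of Theorem \ref{CN} -- namely $\omega \le \kappa' \ll \alpha'$ and $\alpha'$ regular -- for the two specific cardinal pairs at issue, and then noting that the condition $[I]^{<\kappa'}$ coincides with the index-family set stated in the conclusion.

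For part (a), I would set $\kappa' := \alpha^+$ and $\alpha' := (2^\alpha)^+$. Then $\alpha' = (2^\alpha)^+$ is a successor cardinal, hence regular; $\omega \le \alpha^+$ holds trivially since $\alpha \ge \omega$; and Remark \ref{help}(2) gives $\alpha^+ \ll (2^\alpha)^+$. Theorem \ref{CN} then applies and yields
\[
S((X_I)_{\alpha^+})\le(2^\alpha)^+ \iff S((X_J)_{\alpha^+})\le(2^\alpha)^+ \text{ for every nonempty } J\in[I]^{<\alpha^+}.
\]
Finally observe that $[I]^{<\alpha^+} = [I]^{\le\alpha}$, which is exactly the family appearing in the statement of (a).

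For part (b), I would set $\kappa' := \kappa$ and $\alpha' := ((\alpha^{<\kappa})^{<\kappa})^+$. Again $\alpha'$ is a successor cardinal, hence regular; the hypothesis $\kappa \ge \omega$ is given; and Remark \ref{help}(3) supplies the relation $\kappa \ll ((\alpha^{<\kappa})^{<\kappa})^+$. Applying Theorem \ref{CN} directly produces the biconditional of (b), with the same index family $[I]^{<\kappa}$ on both sides of the statement.

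There is no real obstacle: the whole point of having isolated Remark \ref{help}(2) and (3) earlier, together with the general reduction principle in Theorem \ref{CN}, is precisely to make this proof a matter of bookkeeping. The only thing that requires a glance is the translation $[I]^{<\alpha^+} = [I]^{\le\alpha}$ in part (a), and the routine verification that successor cardinals are regular so that the regularity hypothesis of Theorem \ref{CN} is met in both invocations.
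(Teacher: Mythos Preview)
Your proposal is correct and matches the paper's approach exactly: the paper simply states that the theorem follows from Theorem~\ref{CN} together with the relations $\alpha^+\ll(2^\alpha)^+$ and $\kappa\ll((\alpha^{<\kappa})^{<\kappa})^+$ from Remark~\ref{help}(2),(3). You have supplied precisely those verifications, including the translation $[I]^{<\alpha^+}=[I]^{\le\alpha}$ that the paper leaves implicit.
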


The following result, another immediate consequence of Theorem~\ref{CN},
furnishes in certain cases an exact
formula for the numbers $S((X_I)_\kappa)$.

\begin{corollary}\label{S=sup}
Let $\kappa\geq\omega$, let $\{X_i:i\in I\}$ be a set of nonempty
spaces, and set
$\alpha:=\sup\{S((X_J)_\kappa):\emptyset\neq J\in[I]^{<\kappa}\}.$
Then

{\rm (a)} {\rm (cf. \cite[3.27]{comfneg82})} If $\alpha$ is regular and 
$\kappa\ll\alpha$ then $S((X_I)_\kappa)=\alpha$;

{\rm (b)} if $\alpha$ is singular and $\kappa\ll\alpha^+$,
then $S((X_I)_\kappa)=\alpha^+$.
\end{corollary}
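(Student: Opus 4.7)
The plan is to deduce both parts of the corollary from Theorem~\ref{CN}, together with the obvious projection monotonicity $S((X_I)_\kappa)\ge S((X_J)_\kappa)$ for $J\subseteq I$, supplemented in part~(b) by the Erd\H{o}s--Tarski fact recalled at the opening of Section~4 that an infinite Souslin number is always regular.

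First I would dispatch the lower bound common to both parts. For any nonempty $J\in[I]^{<\kappa}$ the coordinate projection $\pi_J\colon (X_I)_\kappa\twoheadrightarrow (X_J)_\kappa$ is continuous and surjective (each factor $X_i$ is nonempty), so any cellular family in $(X_J)_\kappa$ pulls back via $\pi_J^{-1}$ to a pairwise disjoint family of nonempty open subsets of $(X_I)_\kappa$ of the same cardinality. Hence $S((X_I)_\kappa)\ge S((X_J)_\kappa)$ for every such $J$, and taking the supremum over $J$ gives $S((X_I)_\kappa)\ge\alpha$.

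For part~(a), the hypotheses $\kappa\ll\alpha$ and $\alpha$ regular place us exactly in the scope of Theorem~\ref{CN}. By the definition of $\alpha$ as a supremum we have $S((X_J)_\kappa)\le\alpha$ for every nonempty $J\in[I]^{<\kappa}$, so Theorem~\ref{CN} yields $S((X_I)_\kappa)\le\alpha$. Combined with the projection lower bound, this is the asserted equality.

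For part~(b), $\alpha^+$ is regular (as the successor of an infinite cardinal) and, by hypothesis, $\kappa\ll\alpha^+$; hence Theorem~\ref{CN} applies with $\alpha^+$ in the role of its ``$\alpha$''. Since $S((X_J)_\kappa)\le\alpha<\alpha^+$ for each nonempty $J\in[I]^{<\kappa}$, the theorem yields $S((X_I)_\kappa)\le\alpha^+$. The projection bound gives $S((X_I)_\kappa)\ge\alpha$, and the only step that needs anything beyond Theorem~\ref{CN} is the upgrade from $\ge\alpha$ to $\ge\alpha^+$: $S((X_I)_\kappa)$ is either finite or an infinite regular cardinal; it is not finite (being $\ge\alpha\ge\omega$); and it cannot equal the singular cardinal $\alpha$. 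Therefore $S((X_I)_\kappa)>\alpha$, so $S((X_I)_\kappa)\ge\alpha^+$, and the two bounds close. The sole ``obstacle'' worth flagging is precisely this invocation of the Erd\H{o}s--Tarski regularity fact, without which one would be stuck between $\alpha$ and $\alpha^+$ in case~(b).
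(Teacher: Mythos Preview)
Your proof is correct and follows exactly the approach the paper intends: the corollary is stated as an immediate consequence of Theorem~\ref{CN}, and you have supplied precisely the details the paper leaves implicit---the projection lower bound $S((X_I)_\kappa)\ge\alpha$, the application of Theorem~\ref{CN} with $\alpha$ (respectively $\alpha^+$) for the upper bound, and in part~(b) the Erd\H{o}s--Tarski regularity of infinite Souslin numbers to exclude the value $\alpha$.
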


The following result, taken from \cite[3.28]{comfneg82}, is given here
for the reader's convenience. Since every infinite cardinal
$\alpha$ satisfies $\omega\ll\alpha^+$ with
$\alpha^+$ regular, the implication (a)~$\Rightarrow$~(b) is
a suitable $\kappa$-box analogue of
Theorem~\ref{secondSP}.

\begin{theorem}\label{iffk<<a}
Let $\omega\leq\kappa<\alpha$ with $\alpha$ regular. Then these
conditions are equivalent.

{\rm (a)} $\kappa\ll\alpha$;

{\rm (b)} if $\{X_i:i\in I\}$ is a set of spaces with each
$d(X_i)<\alpha$, then $S((X_I)_\kappa)\leq\alpha$.
\end{theorem}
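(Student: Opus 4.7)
The plan is to prove the two implications separately, using Theorem~\ref{CN} for the reduction in the forward direction and a discrete counterexample for the reverse.

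For (a)~$\Rightarrow$~(b), assume $\kappa \ll \alpha$ and that each $d(X_i) < \alpha$. By Theorem~\ref{CN} (applied with the regular cardinal $\alpha$), it suffices to show $S((X_J)_\kappa) \leq \alpha$ for every nonempty $J \in [I]^{<\kappa}$. Fix such $J$, put $\lambda := |J| < \kappa$ and $\beta := \sup_{i \in J} d(X_i)$. Since $\lambda < \kappa < \alpha$, each $d(X_i) < \alpha$, and $\alpha$ is regular, we have $\beta < \alpha$. The key observation is that, because $|J| < \kappa$, \emph{every} product $\prod_{i \in J} U_i$ of open sets satisfies $|R(U)| \leq |J| < \kappa$, so the $\kappa$-box topology on $X_J$ is the full box topology; hence if $D_i \subseteq X_i$ is dense with $|D_i| \leq \beta$, then $\prod_{i \in J} D_i$ meets every canonical open set and is dense in $(X_J)_\kappa$. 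Consequently
$$d((X_J)_\kappa) \leq \beta^\lambda < \alpha,$$
the strict inequality coming from $\kappa \ll \alpha$. Using the universal bound $S(Y) \leq d(Y)^+$ together with regularity of $\alpha$, we conclude $S((X_J)_\kappa) \leq (\beta^\lambda)^+ \leq \alpha$.

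For (b)~$\Rightarrow$~(a), I argue by contraposition. If $\kappa \ll \alpha$ fails, then since $\kappa < \alpha$ by hypothesis, there must exist $\beta < \alpha$ and $\lambda < \kappa$ with $\beta^\lambda \geq \alpha$ (necessarily $\beta \geq 2$ and $\lambda \geq \omega$, since $\alpha$ is infinite). Set $I := \lambda$ and $X_i := D(\beta)$ for each $i \in I$, so that $d(X_i) = \beta < \alpha$. Because $|I| = \lambda < \kappa$, every singleton $\{x\} = \prod_{i \in I} \{x_i\}$ is a canonical open set in $(X_I)_\kappa$, so $(X_I)_\kappa$ is a discrete space of cardinality $\beta^\lambda \geq \alpha$. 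Therefore $S((X_I)_\kappa) = (\beta^\lambda)^+ > \alpha$, contradicting (b).

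The only conceptual point that is not entirely routine is the observation that when $|J| < \kappa$ the $\kappa$-box topology on $X_J$ degenerates to the full box topology, so the naive product of dense sets is already dense; after that the density-to-Souslin bound and Theorem~\ref{CN} do all the work. The main obstacle I would anticipate is simply keeping track of the inequalities between $\beta^\lambda$, $\alpha$ and $\kappa$, and invoking the regularity of $\alpha$ at the correct two moments (to secure $\beta < \alpha$ and then $(\beta^\lambda)^+ \leq \alpha$).
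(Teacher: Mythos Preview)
Your proof is correct and follows essentially the same approach as the paper: both directions use Theorem~\ref{CN} to reduce to small subproducts, bound $d((X_J)_\kappa)$ by $\beta^{|J|}<\alpha$ via $\kappa\ll\alpha$, and for the converse exhibit the discrete space $(D(\beta))^\lambda$. Your additional remark that the $\kappa$-box topology on $X_J$ with $|J|<\kappa$ is the full box topology makes explicit what the paper leaves as ``Clearly $D$ is dense in $(X_J)_\kappa$''; otherwise the arguments coincide.
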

\begin{proof} (a)~$\Rightarrow$~(b). According to Theorem~\ref{CN}, it
suffices to show that $S((X_J)_\kappa)\leq\alpha$ whenever
$\emptyset\neq J\in[I]^{<\kappa}$. Fix such $J$, for $i\in J$ let $D_i$
be dense in $X_i$ with $|D_i|=\beta_i<\alpha$, and set $D:=\Pi_{i\in
J}\,D_i$ and $\beta:=\sup_{i\in J}\,\beta_i$. Since
$|J|<\kappa<\alpha=\cf(\alpha)$ we have $\beta<\alpha$, and from
$\kappa\ll\alpha$ follows $|D|\leq\beta^{|J|}<\alpha$.
Clearly $D$ is dense in $(X_J)_\kappa$, and from
$d((X_J)_\kappa)<\alpha$ it then follows that
$S((X_J)_\kappa)\leq\alpha$, as required.

(b)~$\Rightarrow$~(a). Fix $\beta<\alpha$ and $\lambda<\kappa$, and set
$X:=(D(\beta))^\lambda$. Then $(X)_\kappa$ is discrete, and from
$S((X)_\kappa)\leq\alpha$ it follows that
$\beta^\lambda=|X|=|(X)_\kappa|<\alpha$.
\end{proof}

\begin{corollary}\label{S(Xkappa)}
Let $\alpha\geq2$ and $\kappa\geq\omega$, and let $\{X_i:i\in I\}$
be a set of spaces.

{\rm (a)} If $\alpha\geq\omega$ and $d(X_i)\leq2^\alpha$ for each $i\in
I$, then $S((X_I)_{\alpha^+})\leq(2^\alpha)^+$; and

{\rm (b)} if $d(X_i)\leq(\alpha^{<\kappa})^{<\kappa}$ for
each $i\in I$, then
$S((X_I)_\kappa)\leq((\alpha^{<\kappa})^{<\kappa})^+$.
\end{corollary}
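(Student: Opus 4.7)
The plan is to reduce both parts to a single application of Theorem~\ref{iffk<<a}, whose implication (a)~$\Rightarrow$~(b) asserts that if $\omega\leq\kappa<\alpha$ with $\alpha$ regular and $\kappa\ll\alpha$, then $d(X_i)<\alpha$ for each $i\in I$ implies $S((X_I)_\kappa)\leq\alpha$. In each case the target upper bound is the successor of some cardinal $\gamma$, so the regular cardinal playing the role of $\alpha$ in Theorem~\ref{iffk<<a} will be $\gamma^+$, and the density hypothesis $d(X_i)\leq\gamma$ translates immediately into $d(X_i)<\gamma^+$ as required.

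For part~(a), I take $\gamma:=2^\alpha$ and apply Theorem~\ref{iffk<<a} with $\alpha^+$ in the role of $\kappa$ and the regular cardinal $(2^\alpha)^+$ in the role of $\alpha$. The strict inequality $\alpha^+<(2^\alpha)^+$ holds since $\alpha^+\leq 2^\alpha$, and the strong inaccessibility $\alpha^+\ll(2^\alpha)^+$ is exactly Remark~\ref{help}(2). The hypothesis $d(X_i)\leq 2^\alpha<(2^\alpha)^+$ is then in the form needed for Theorem~\ref{iffk<<a}, so $S((X_I)_{\alpha^+})\leq(2^\alpha)^+$.

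For part~(b), I take $\gamma:=(\alpha^{<\kappa})^{<\kappa}$ and apply Theorem~\ref{iffk<<a} with the given $\kappa$ and the regular cardinal $((\alpha^{<\kappa})^{<\kappa})^+$. The inequality $\kappa<((\alpha^{<\kappa})^{<\kappa})^+$ follows from $\kappa\leq\alpha^{<\kappa}\leq(\alpha^{<\kappa})^{<\kappa}$ (using Theorem~\ref{expon}(a)), and the strong inaccessibility $\kappa\ll((\alpha^{<\kappa})^{<\kappa})^+$ is Remark~\ref{help}(3), whose verification relies on the sesquipotent identity of Theorem~\ref{expon}(d). The hypothesis $d(X_i)\leq(\alpha^{<\kappa})^{<\kappa}<((\alpha^{<\kappa})^{<\kappa})^+$ fits Theorem~\ref{iffk<<a}, yielding $S((X_I)_\kappa)\leq((\alpha^{<\kappa})^{<\kappa})^+$.

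There is no real obstacle here: the entire content of the corollary is packaged inside Theorem~\ref{iffk<<a}, and the two items from Remark~\ref{help} provide exactly the strong-inaccessibility hypotheses needed to invoke it. The only thing to be careful about is to confirm, in each case, that the cardinal playing the role of ``$\alpha$'' in Theorem~\ref{iffk<<a} is regular (both $(2^\alpha)^+$ and $((\alpha^{<\kappa})^{<\kappa})^+$ are successors, hence regular) and strictly larger than the one playing the role of ``$\kappa$'', both of which are immediate.
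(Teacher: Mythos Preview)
Your proof is correct and follows essentially the same approach as the paper: both parts are reduced to Theorem~\ref{iffk<<a} via the strong-inaccessibility relations $\alpha^+\ll(2^\alpha)^+$ and $\kappa\ll((\alpha^{<\kappa})^{<\kappa})^+$ from Remark~\ref{help}. Your version is simply more explicit about verifying the side conditions (regularity of the successor, the strict inequality between the two cardinals), which the paper leaves implicit.
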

\begin{proof} As noted in Remark~\ref{help}(3) and \ref{help}(4) we have
$\alpha^+\ll(2^\alpha)^+$ and
$\kappa\ll(\alpha^{<\kappa})^{<\kappa})^+$,
so Theorem~\ref{iffk<<a} applies
(with $(2^\alpha)^+$ replacing $\alpha$ in (a) and
with $((\alpha^{<\kappa})^{<\kappa})^+$ replacing $\alpha$ in (b)).
\end{proof}

The following result, which we are going to use frequently, shows a 
relationship between the Souslin number of product spaces of the type 
$(X_I)_\kappa$ and the Souslin numbers $S(X_i)$, $i\in I$.

\begin{lemma}\label{obs1}
Let $\alpha\geq3$ and $\kappa\ge\omega$ be cardinals,
and let $\{X_i:i\in I\}$ be a set of spaces such 
that $S(X_i)\geq\alpha$ for each $i\in I$. Let $\mu=\min\{\kappa,|I|^+\}$. 
Then $S((X_I)_{\kappa})>\beta^{<\mu}$ for each $\beta<\alpha$.
\end{lemma}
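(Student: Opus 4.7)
The plan is to exhibit, for an arbitrary $\beta<\alpha$, a cellular family in $(X_I)_\kappa$ of cardinality strictly larger than $\beta^{<\mu}$. I will assume $\beta\ge 2$, since the notation $\beta^{<\mu}$ is framed for $\beta\ge 2$ and $\mu\ge\omega$. The hypothesis $\beta<\alpha\le S(X_i)$ supplies, for every $i\in I$, a cellular family $\{U_i(\eta):\eta<\beta\}$ of nonempty open subsets of $X_i$, and I fix one such family for each $i$.

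The first step is the basic horizontal construction. For each $J\in[I]^{<\mu}$ and $f\in\beta^J$ I form
$$V(J,f):=\Bigl(\prod_{i\in J}U_i(f(i))\Bigr)\times\prod_{i\in I\setminus J}X_i,$$
a canonical open subset of $(X_I)_\kappa$ since $|J|<\mu\le\kappa$. For fixed $J$ the family $\{V(J,f):f\in\beta^J\}$ is pairwise disjoint, because any two distinct $f,f'$ must disagree at some $i\in J$, where $U_i(f(i))\cap U_i(f'(i))=\emptyset$. Because $\mu\le|I|^+$, for every $\lambda<\mu$ some $J\in[I]^\lambda$ exists, so this already yields $S((X_I)_\kappa)>\beta^\lambda$ for every $\lambda<\mu$.

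Next I pass from these bounds to $S((X_I)_\kappa)>\beta^{<\mu}$. If $\beta^{<\mu}=\beta^{\lambda_0}$ for some $\lambda_0<\mu$ (which is automatic when $\mu$ is a successor cardinal), the previous step already closes the argument. Otherwise $\beta^\lambda<\beta^{<\mu}$ for every $\lambda<\mu$, so $\beta^{<\mu}$ is a limit cardinal and $\cf(\beta^{<\mu})\le\cf(\mu)\le\mu\le\beta^{<\mu}$. If $\cf(\beta^{<\mu})<\beta^{<\mu}$, then $\beta^{<\mu}$ is singular; combined with $S((X_I)_\kappa)\ge\sup_{\lambda<\mu}\beta^\lambda=\beta^{<\mu}$ and the standing fact that $S((X_I)_\kappa)$ is either finite or a regular infinite cardinal, this forces the strict inequality.

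The remaining subcase is the main obstacle: $\beta^{<\mu}$ regular. The displayed chain then collapses, compelling $\mu=\beta^{<\mu}$ to be weakly inaccessible; since $|I|^+$ is a successor while $\mu$ is now a limit, we must have $\mu=\kappa\le|I|$. To finish, I switch to a nested construction. Fixing an injection $\iota:\mu\to I$, set, for each $\eta<\mu$,
$$V_\eta:=\{x\in X_I:x_{\iota(\xi)}\in U_{\iota(\xi)}(1)\text{ for every }\xi<\eta,\text{ and }x_{\iota(\eta)}\in U_{\iota(\eta)}(0)\}.$$
Each $V_\eta$ restricts $\eta+1<\mu=\kappa$ coordinates (using that $\mu$ is a limit cardinal) and so is canonical open; for $\eta<\zeta<\mu$ the requirements at coordinate $\iota(\eta)$ are incompatible, so $V_\eta\cap V_\zeta=\emptyset$. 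Thus $\{V_\eta:\eta<\mu\}$ is cellular of cardinality $\mu=\beta^{<\mu}$, giving $S((X_I)_\kappa)>\mu=\beta^{<\mu}$.
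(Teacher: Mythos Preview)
Your proof is correct and follows essentially the same approach as the paper's: the horizontal rectangle construction gives $S((X_I)_\kappa)>\beta^\lambda$ for each $\lambda<\mu$, a case split on whether $\beta^{<\mu}$ is attained or singular handles the easy cases, and the residual regular-limit case forces $\mu=\kappa\le|I|$ and is finished by the nested (telescoping) cellular family of length $\mu$. The only cosmetic differences are that the paper performs the nested construction up front and invokes a separate lemma (Lemma~\ref{L4.13}(b)) to conclude $\mu=\beta^{<\mu}$ in Case~3, whereas you derive this directly via the cofinality chain $\cf(\beta^{<\mu})\le\cf(\mu)\le\mu\le\beta^{<\mu}$.
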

\begin{proof}
We show first that if $\kappa\ge|I|$ then $S((X_I)_\kappa)>\kappa$.
Indeed, in this case there is $J\in[I]^\kappa$, say 
$J=\{i_\eta:\eta<\kappa\}$. For $i_\eta\in J$ let $U(i_\eta,0)$ and 
$U(i_\eta,1)$ be nonempty, disjoint open subsets of $X_{i_\eta}$ and for 
$\eta<\kappa$ set $$U(\eta):=(\Pi_{\xi<\eta}\,U(i_\xi,0)\times
(U(i_\eta,1))\times(\Pi_{i\in I\backslash \{i_\xi:\xi\le\eta\}}\,X_i).$$
Then $\sC(\kappa):=\{U(\eta):\eta<\kappa\}$ is cellular in
$(X_I)_\kappa$, so $S((X_I)_\kappa)>|\sC(\kappa)|=\kappa$. 

Now fix $\beta<\alpha$, $\lambda<\mu$ and $J\in[I]^\lambda$. For $i\in J$
let $\{U(i,\eta):\eta<\beta\}$ be a cellular family in $X_i$, and for
$f\in\beta^J$ set $U(f):=(\Pi_{i\in J}\,U(i,f(i)))\times(X_{I\backslash J})$. 
Then $\sC:=\{U(f):f\in\beta^J\}$ is cellular in $(X_I)_{\kappa}$, and
\begin{equation}\label{EqN}
S((X_I)_{\kappa})>|\sC|=|\beta^J|=\beta^\lambda.
\end{equation}
Since $S((X_I)_\kappa)<\beta^\lambda$ for each $\lambda<\kappa$, we have
$$S((X_I)_{\kappa})\ge\beta^{<\mu} \mbox{ for each } \beta<\alpha.$$

To show that $S((X_I)_{\kappa})>\beta^{<\mu}$ for each $\beta<\alpha$ we 
consider three cases.

\underline{Case 1}.
The cardinal number $\beta^{<\mu}$ is singular. 
Then clearly $S((X_I)_{\kappa})>\beta^{<\mu}$. 

\underline{Case 2}.
The cardinal number $\beta^{<\mu}$ is regular
and there is $\nu<\mu$ such that $\beta^\nu = \beta^{<\mu}$. 
Then $S((X_I)_{\kappa})>\beta^\nu=\beta^{<\mu}$ by (\ref{EqN}).

\underline{Case 3}. Cases 1 and 2 fail.
Then, 
according to Lemma \ref{L4.13}(b), $\beta^{<\mu}=\mu$ and $\mu$ is a regular 
strong limit cardinal. Since $\mu=\min\{\kappa,|I|^+\}$, we have $\mu=\kappa$, 
hence $|I|\ge\kappa$ and since in that case $S((X_I)_{\kappa})>\kappa$ we 
conclude that $S((X_I)_{\kappa})>\beta^{<\mu}$.
\end{proof}

\begin{theorem}\label{Obsn}
Let $\kappa\ge\omega$ be a limit cardinal and let
$\{X_i:i\in I\}$ be a set of spaces such that $|I|\geq\kappa$ and
$S(X_i)\ge 3$ for each $i\in I$. Let also
$$\alpha:=\sup\{S((X_I)_\gamma):\gamma<\kappa\} \mbox{ for }\kappa>\omega
\mbox{ and }$$ 
$$\alpha:=\sup\{S(X_J):J\in[I]^{<\kappa}\} \mbox{ for }\kappa=\omega.$$
Then

{\rm (a)} $\kappa\leq\alpha\leq S((X_I)_\kappa)\leq\alpha^+$,
and $\kappa^+\leq S((X_I)_\kappa)$;

{\rm (b)} if $\alpha$ is regular and $\kappa<\alpha$ then
$S((X_I)_\kappa)=\alpha$; and

{\rm (c)} if $\alpha$ is singular or $\kappa=\alpha$
then $S((X_I)_\kappa)=\alpha^+$.
\end{theorem}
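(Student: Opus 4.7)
The proof splits naturally by whether $\kappa=\omega$ or $\kappa>\omega$; the case $\kappa=\omega$ is essentially a restatement of Theorem~\ref{firstSP}, so I would dispose of it briefly, observing first that $\alpha\geq\omega$ (since if $S(X_i)\geq 3$ and $F\in[I]^{<\omega}$ with $|F|=n$, then $X_F$ admits at least $2^n$ pairwise disjoint basic open sets) and then reading off (a), (b) and (c) from Theorem~\ref{firstSP} directly; the piece $\kappa^+\leq S(X_I)$ follows since either $\alpha=\omega$ (whence $S(X_I)=\omega^+$ by Theorem~\ref{firstSP}) or $\alpha>\omega$ (so $S(X_I)\geq\alpha\geq\omega^+$). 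The heart of the argument is the case $\kappa>\omega$, which I now treat.

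For part (a) in the case $\kappa>\omega$, I would verify the four inequalities separately. The bound $\alpha\leq S((X_I)_\kappa)$ is immediate from Remarks~\ref{R1.2}(b)({\it ii}), since the $\gamma$-box topology is coarser than the $\kappa$-box topology for $\gamma<\kappa$. For $\kappa\leq\alpha$, Lemma~\ref{obs1} applied to each infinite $\gamma<\kappa$ (with $\mu=\gamma$, valid because $|I|\geq\kappa>\gamma$) gives $S((X_I)_\gamma)>2^{<\gamma}\geq\gamma$, so $\alpha\geq\sup_{\gamma<\kappa}\gamma^+=\kappa$ (using that $\kappa$ is limit). For $\kappa^+\leq S((X_I)_\kappa)$, apply Lemma~\ref{obs1} directly with $\mu=\min\{\kappa,|I|^+\}=\kappa$ to obtain $S((X_I)_\kappa)>2^{<\kappa}\geq\kappa$, whence $\geq\kappa^+$.

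The principal technical step is the upper bound $S((X_I)_\kappa)\leq\alpha^+$. Suppose for contradiction that $\{U_\xi:\xi<\alpha^+\}$ is a cellular family of canonical open sets in $(X_I)_\kappa$ (we may restrict to canonical sets by passing to subsets). Each $|R(U_\xi)|<\kappa$, and since $\alpha^+$ is regular with $\kappa\leq\alpha<\alpha^+$, a pigeonhole on the $\kappa$-many possible values of $|R(U_\xi)|$ yields $\gamma^*<\kappa$ and $A\in[\alpha^+]^{\alpha^+}$ such that $|R(U_\xi)|=\gamma^*$ for every $\xi\in A$. Since $\kappa$ is a limit cardinal, $(\gamma^*)^+<\kappa$, and the subfamily $\{U_\xi:\xi\in A\}$ is already cellular in $(X_I)_{(\gamma^*)^+}$, witnessing $S((X_I)_{(\gamma^*)^+})>\alpha^+>\alpha$, which contradicts the definition of $\alpha$. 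This establishes (a).

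For (b) and (c), note from (a) that $\alpha\leq S((X_I)_\kappa)\leq\alpha^+$ and that $S((X_I)_\kappa)$ is (infinite) regular by the Erd\H{o}s-Tarski theorem cited at the start of Section~4. In (c), if $\alpha$ is singular then $S((X_I)_\kappa)\neq\alpha$ forces $S((X_I)_\kappa)=\alpha^+$, while if $\kappa=\alpha$ the already-established $\kappa^+\leq S((X_I)_\kappa)$ yields the same conclusion. In (b), to exclude $S((X_I)_\kappa)=\alpha^+$ I would rerun the pigeonhole of the previous paragraph on a cellular family of size $\alpha$: the regularity of $\alpha$ together with $\kappa<\alpha$ again concentrates the family on some $\gamma^*<\kappa$, producing a cellular family of size $\alpha$ in $(X_I)_{(\gamma^*)^+}$ and contradicting $S((X_I)_{(\gamma^*)^+})\leq\alpha$. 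The main obstacle is really a bookkeeping one: ensuring the regularity hypotheses align correctly in the two pigeonhole arguments, since in (a) one uses regularity of $\alpha^+$ (which is automatic), whereas in (b) the regularity of $\alpha$ itself is essential and is precisely what is hypothesized.
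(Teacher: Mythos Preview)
Your proof is correct and follows essentially the same approach as the paper's: both handle $\kappa=\omega$ by appeal to Theorem~\ref{firstSP}, and for $\kappa>\omega$ both derive the lower bounds from Lemma~\ref{obs1} and the upper bounds by a pigeonhole argument on the restriction sets of a putative large cellular family (the paper partitions by $|R(U)|<\gamma$ where you partition by the exact value $|R(U_\xi)|=\gamma^*$ and then pass to $(\gamma^*)^+$, which is an inessential bookkeeping variant). Your treatment of (b) and (c) via the Erd\H{o}s--Tarski regularity of $S((X_I)_\kappa)$ also matches the paper's.
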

\begin{proof}
In each of (a), (b), or (c) the case $\kappa=\omega$ follows from Theorem 
\ref{firstSP}. Therefore below we consider only the case $\kappa>\omega$.

(a) That $\alpha\leq S((X_I)_\kappa)$ is obvious. Let 
$\mu=\min\{\kappa,|I|^+\}$. Since $|I|\ge\kappa$, we have $\mu=\kappa$. 
Then it follows from Lemma \ref{obs1} that 
$S((X_I)_{\kappa})>2^{<\mu}=2^{<\kappa}\ge\kappa$, hence 
$S((X_I)_{\kappa})\ge\kappa^+$. Also, since $\kappa > \omega$,
$S((X_I)_{\gamma^+})>2^{<\gamma^+}=2^{\gamma}\ge\gamma^+$ for every 
$\gamma<\kappa$, hence $\alpha\ge\kappa$. 

To prove $S((X_I)_\kappa)\leq\alpha^+$, suppose there is in $(X_I)_\kappa$ a 
basic cellular family $\sC$ such that $|\sC|=\alpha^+$, and for 
$\gamma<\kappa$ set $C(\gamma):=\{U\in\sC:|R(U)|<\gamma\}$. Then since 
$\alpha^+$ is regular with $\alpha^+>\kappa$, there is $\gamma<\kappa$ such 
that $|\sC(\gamma)|=\alpha^+$ and we have the contradiction $\alpha\geq 
S((X_I)_\gamma)\geq\alpha^{++}$.

(b) A similar argument applies. If in $(X_I)_\kappa$ there is a
basic cellular family $\sC$ such that $|\sC|=\alpha$ then with
$$\sC(\gamma):=\{U\in\sC:|R(U)|<\gamma\} \mbox{ for } \gamma<\kappa$$
we have $\sC=\bigcup_{\gamma<\kappa}\,\sC(\gamma)$ and from
the regularity of $\alpha$ and the relation $\kappa<\alpha$ we have
$|\sC(\gamma)|=\alpha$ for some $\gamma<\kappa$ and then
$$\alpha\geq S((X_I))_\kappa)\geq S((X_I)_\gamma)>\alpha,$$
a contradiction.

(c) Since $S((X_I)_\kappa)$ is regular, this is immediate from (a).
\end{proof}

\begin{theorem}\label{T4.12}
Let $\alpha\geq2$ and $\kappa\geq\omega$ be cardinals and
let $\{X_i:i\in I\}$
be a set of spaces with $d(X_i)\leq\alpha$ for each $i\in I$.
Then $S((X_I)_\kappa)\leq(\alpha^{<\kappa})^+$.
\end{theorem}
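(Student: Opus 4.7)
The plan is to split into cases by whether $\kappa$ is regular or singular. I would dispatch the regular case directly via Theorem~\ref{iffk<<a}, and then reduce the singular case to the regular one via a pigeonhole argument.

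For $\kappa$ regular, I would apply Theorem~\ref{iffk<<a} with $(\alpha^{<\kappa})^+$ in the role of its parameter ``$\alpha$''. That cardinal is regular as a successor, and each $d(X_i)\le\alpha<(\alpha^{<\kappa})^+$, so the only thing to verify is $\kappa\ll(\alpha^{<\kappa})^+$. For $\mu<(\alpha^{<\kappa})^+$ (so $\mu\le\alpha^{<\kappa}$) and $\nu<\kappa$, Theorem~\ref{expon}(b) gives
$$\mu^\nu\le(\alpha^{<\kappa})^\nu\le(\alpha^{<\kappa})^{<\kappa}=\alpha^{<\kappa}<(\alpha^{<\kappa})^+,$$
as required.

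For $\kappa$ singular, I would argue by contradiction. Suppose $\sC$ is a cellular family of canonical open sets in $(X_I)_\kappa$ of size $(\alpha^{<\kappa})^+$. Since $\kappa\le\alpha^{<\kappa}<(\alpha^{<\kappa})^+$ and $(\alpha^{<\kappa})^+$ is regular, the partition $\sC=\bigcup_{\lambda<\kappa}\{U\in\sC:|R(U)|\le\lambda\}$ yields some $\lambda<\kappa$ with $|\sC_\lambda|=(\alpha^{<\kappa})^+$, where $\sC_\lambda:=\{U\in\sC:|R(U)|\le\lambda\}$. Since $\kappa$ is a limit cardinal, $\lambda^+\le\kappa$, and every $U\in\sC_\lambda$ satisfies $|R(U)|<\lambda^+$, so is canonical open in the (coarser) topology $(X_I)_{\lambda^+}$; hence $\sC_\lambda$ is cellular in $(X_I)_{\lambda^+}$. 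Since $\lambda^+$ is regular, the already-established regular case (applied with $\lambda^+$ in place of $\kappa$) gives
$$S((X_I)_{\lambda^+})\le(\alpha^{<\lambda^+})^+=(\alpha^\lambda)^+,$$
whence $|\sC_\lambda|\le\alpha^\lambda\le\alpha^{<\kappa}$, a contradiction.

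The main obstacle I anticipate is exactly this singular case: direct invocation of Theorem~\ref{iffk<<a} is blocked because $\kappa\ll(\alpha^{<\kappa})^+$ can fail when $\kappa$ is singular (by Theorem~\ref{expon}(c), $(\alpha^{<\kappa})^{<\kappa}=\alpha^\kappa$, which may strictly exceed $\alpha^{<\kappa}$). This is precisely why Corollary~\ref{S(Xkappa)}(b) settles for the weaker bound $((\alpha^{<\kappa})^{<\kappa})^+$. The pigeonhole reduction above, exploiting regularity of $(\alpha^{<\kappa})^+$ to concentrate a large subfamily inside a $\lambda^+$-box topology with $\lambda^+$ regular, is what recovers the sharper bound claimed in the theorem.
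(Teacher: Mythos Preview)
Your argument is correct. The split by regularity of $\kappa$, the verification that $\kappa\ll(\alpha^{<\kappa})^+$ via Theorem~\ref{expon}(b) in the regular case, and the pigeonhole reduction in the singular case all go through; the only cosmetic point is that in the singular case you should take $\lambda\geq\omega$ (harmless since $\kappa>\omega$ and the $\sC_\lambda$ are increasing), so that $\lambda^+$ is an infinite regular cardinal and the already-proved case applies.

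Your route differs from the paper's in organization rather than in substance. The paper splits instead on whether $\alpha^{<\kappa}=(\alpha^{<\kappa})^{<\kappa}$, invoking Corollary~\ref{S(Xkappa)} in the affirmative case and, when that fails (forcing $\kappa$ singular), appealing to Theorem~\ref{Obsn}(a) together with the bounds $S((X_I)_{\gamma^+})\leq(\alpha^\gamma)^+$ for $\gamma<\kappa$; it then handles $\alpha<\omega$ separately. Your pigeonhole argument is exactly the mechanism inside Theorem~\ref{Obsn}(a), applied directly, and your regular case is the content of Corollary~\ref{S(Xkappa)} specialized to regular $\kappa$. The gain in your version is uniformity: finite $\alpha$ needs no special treatment, and you avoid the detour through Theorem~\ref{Obsn}. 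The paper's decomposition has the mild advantage that its Case~1 also absorbs those singular $\kappa$ for which some $\nu<\kappa$ already gives $\alpha^\nu=\alpha^{<\kappa}$, but this does not shorten the overall argument.
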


\begin{proof}
We assume first that $\alpha\geq\omega$ and we consider two cases.

\underline{Case 1}. $\alpha^{<\kappa}=(\alpha^{<\kappa})^{<\kappa}$.
The conclusion is immediate from 
Corollary~\ref{S(Xkappa)}
(even with the hypothesis $d(X_i)\leq\alpha$ weakened to
$d(X_i)\leq\alpha^{<\kappa}=(\alpha^{<\kappa})^{<\kappa}$). 

\underline{Case 2}. Case 1 fails. Then $\kappa$ is singular (by
Theorem~\ref{weak<kappa}(a)) and therefore a limit cardinal such that
$\kappa>\omega$. 
If there exists $\gamma<\kappa$ such that 
$S((X_I)_\kappa)=S((X_I)_{\gamma^+})$ then since $\gamma^+$ is regular
it follows from Corollary~\ref{S(Xkappa)}(b) that 
$$S((X_I)_\kappa)=S((X_I)_{\gamma^+})\le(\alpha^{\gamma})^{\gamma}\le\alpha^{<\kappa}<(\alpha^{<\kappa})^+.$$
If there is no $\gamma<\kappa$ such that 
$S((X_I)_\kappa)=S((X_I)_{\gamma^+})$ then
for each $\gamma<\kappa$ we have 
$S((X_I)_{\gamma^+})\le(\alpha^{\gamma})^{\gamma}\le\alpha^{<\kappa}$
and hence $$S((X_I)_\kappa)\leq(\sup_{\gamma<\kappa}S((X_I)_\gamma))^+\le(\alpha^{<\kappa})^+$$
from Theorem~\ref{Obsn}(a).

It remains to consider the case $\alpha<\omega$.
Note that $d(X_i)\leq\omega$ for each $i\in I$. Then if $\kappa=\omega$
we have
$$S((X_I)_\kappa)=S(X_I)\leq\omega^+=(\alpha^{<\kappa})^+$$
from Theorem~\ref{secondSP}, and
if $\kappa>\omega$ then the preceding paragraphs
apply to give
$$S((X_I)_\kappa)\leq(\omega^{<\kappa})^+\leq((2^\omega)^{<\kappa})^+
=(2^{<\kappa})^+\leq(\alpha^{<\kappa})^+.$$
\vskip-18pt
\end{proof}

\begin{remark}
{\rm (a) If the hypothesis $d(X_i)\leq\alpha$ of Theorem~\ref{T4.12} is weakened
to $d(X_i)\leq\alpha^{<\kappa}$, the conclusion can fail. To see that, it
is enough to refer to Discussion~\ref{gitik-shelah}, where we noted that
for every pre-assigned $\alpha\geq\omega$ the choice
$\kappa:=\beth_\lambda(\alpha)$ with $\lambda\leq\cf(\alpha)$ guarantees
that the space $E:=(D(\alpha^{<\kappa}))^I$ with $|I|=\alpha$ has
$E_\kappa$ discrete (since $\kappa>|I|$) and 
$|E_\kappa|=(\alpha^{<\kappa})^\alpha=\kappa^\alpha>\alpha^{<\kappa}$,
hence $S(E_\kappa)=|E_\kappa|^+>(\alpha^{<\kappa})^+$.

(b) With Theorem \ref{T4.12} in hand the implication (a)$\Rightarrow$(b) in 
Theorem \ref{iffk<<a} becomes now a direct corollary. Indeed, if
$\alpha=\beta^+$ in 
Theorem \ref{iffk<<a} then $d(X_i)\le\beta$ and according to 
Theorem \ref{T4.12} we have 
$$S((X_I)_\kappa)\le(\beta^{<\kappa})^+=(\Sigma_{\lambda<\kappa}\,
\beta^\lambda)^+\le(\beta\cdot\kappa)^+=\alpha$$ since $\kappa\ll\alpha$.
And if $\alpha$ is a regular limit cardinal in Theorem \ref{iffk<<a}
then Theorem \ref{T4.12} gives 
$S((X_I)_\kappa)\leq(\alpha^{<\kappa})^+$. But in this case, since $\alpha$ is 
regular and no set in $[\alpha]^{<\kappa}$ is cofinal in $\alpha$ we have 
$$\alpha^{<\kappa}=|[\alpha]^{<\kappa}|\leq\Sigma_{\eta<\alpha}\,
|[\eta]^{<\kappa}|=\Sigma_{\eta<\alpha}\,|\eta|^{<\kappa}\leq\Sigma_{\eta<\alpha}
\,\alpha=\alpha,$$
since $|\eta|^{<\kappa}=\Sigma_{\zeta<\kappa}\,|\eta|^\zeta\le\kappa\cdot\alpha=\alpha$ for every $\eta<\alpha$ whenever 
$\kappa\ll\alpha$.
}
\end{remark}

Theorem \ref{largeS}, using some of those same ideas, strengthens that result.
For use in its proof and frequently thereafter
we adopt henceforth the following notational convention
concerning limit cardinals $\kappa$.
We do not exclude here the possibility that $\kappa$
is regular, but this convention will be invoked chiefly in cases where it is
known that $\cf(\kappa)<\kappa$.

\begin{notation}\label{defkappaeta}
{\rm
Let $\kappa\geq\omega$ be a limit cardinal. Then
$\{\kappa_\eta:\eta<\cf(\kappa)\}$ is a set of cardinals such that

(a) $\kappa_\eta<\kappa_{\eta'}<\kappa \mbox{ when
}\eta<\eta'<\cf(\kappa)$, and
 
(b) $\Sigma_{\eta<\cf(\kappa)}\,\kappa_\eta=\kappa.$
}
\end{notation}

\begin{theorem}\label{largeS}
Let $\alpha\geq2$, let $\kappa>\omega$ be a
(possibly regular) limit cardinal, and let
$\{\kappa_\eta:\eta<\cf(\kappa)\}$ be a family of cardinals as in
Notation~\ref{defkappaeta}. 
For $\eta<\cf(\kappa)$ let $\{X(\eta):\eta<\cf(\kappa)\}$ be a 
(not necessarily faithfully indexed) set of spaces 
such that $S(X(\eta))\geq\alpha^{\kappa_\eta}$ for each $\eta<\cf(\kappa)$, and 
let $X:=\Pi_{\eta<\cf(\kappa)}\,X(\eta)$. Then

{\rm (a)} $S(X_{\cf(\kappa)})\geq\alpha^{<\kappa}$; and

{\rm(b)} if $\alpha^{<\kappa}<(\alpha^{<\kappa})^{<\kappa}$, then
$S(X_{(\cf(\kappa))^+})>\alpha^\kappa\geq(\alpha^{<\kappa})^+$.
\end{theorem}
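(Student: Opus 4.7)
For part (a), I would use that each projection $\pi_\eta : X_{\cf(\kappa)} \twoheadrightarrow X(\eta)$ is continuous: in the $\cf(\kappa)$-box topology the preimage of an open set in $X(\eta)$ has a single nontrivial coordinate, and $1 < \cf(\kappa)$. Given $\beta < \alpha^{<\kappa}$, choose $\mu < \kappa$ with $\beta < \alpha^\mu$, and then, using the cofinality of $\{\kappa_\eta\}$ in $\kappa$, pick $\eta < \cf(\kappa)$ with $\mu \leq \kappa_\eta$. Then $\beta < \alpha^{\kappa_\eta} \leq S(X(\eta))$, so $X(\eta)$ has a cellular family of size $\beta$; pulling it back under $\pi_\eta$ yields a cellular family of the same size in $X_{\cf(\kappa)}$. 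Hence $S(X_{\cf(\kappa)}) > \beta$ for every $\beta < \alpha^{<\kappa}$, giving (a).

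For part (b), the cardinal-arithmetic inequality $\alpha^\kappa \geq (\alpha^{<\kappa})^+$ comes almost for free. The hypothesis $\alpha^{<\kappa} < (\alpha^{<\kappa})^{<\kappa}$ is precisely the negation of the equivalent conditions in Theorem~\ref{weak<kappa}(a), so Theorem~\ref{weak<kappa}(b) gives $\cf(\alpha^{<\kappa}) = \cf(\kappa)$ with $\kappa$ (hence $\alpha^{<\kappa}$) singular. K\"onig's theorem then yields
\[
\cf(\alpha^\kappa) > \kappa \geq \cf(\kappa) = \cf(\alpha^{<\kappa}),
\]
so $\alpha^\kappa$ and $\alpha^{<\kappa}$ have distinct cofinalities, forcing $\alpha^\kappa > \alpha^{<\kappa}$ and thus $\alpha^\kappa \geq (\alpha^{<\kappa})^+$.

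To produce a cellular family of size $\alpha^\kappa$ in $X_{(\cf(\kappa))^+}$, I would exploit the singularity of $\kappa$ to choose cardinals $\lambda_\eta < \kappa_\eta$ for $\eta < \cf(\kappa)$ with $\sup_\eta \lambda_\eta = \kappa$; for instance set $\lambda_\eta := \kappa_{\eta-1}$ at successor ordinals $\eta$ (strictly below $\kappa_\eta$ by Notation~\ref{defkappaeta}), which is already cofinal in $\kappa$, and take any cardinal $<\kappa_\eta$ elsewhere. Then $\sum_\eta \lambda_\eta = \cf(\kappa) \cdot \sup_\eta \lambda_\eta = \cf(\kappa) \cdot \kappa = \kappa$. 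In each $X(\eta)$, since $\alpha^{\lambda_\eta} < \alpha^{\kappa_\eta} \leq S(X(\eta))$, pick a cellular family $\sC_\eta$ of size $\alpha^{\lambda_\eta}$. Because $X$ has only $\cf(\kappa) < (\cf(\kappa))^+$ factors, every product $\prod_\eta U_\eta$ of open sets is canonical open in $X_{(\cf(\kappa))^+}$, so $\{\prod_\eta C_\eta : C_\eta \in \sC_\eta\}$ is cellular in $X_{(\cf(\kappa))^+}$ and has cardinality $\prod_\eta \alpha^{\lambda_\eta} = \alpha^{\sum_\eta \lambda_\eta} = \alpha^\kappa$; this gives $S(X_{(\cf(\kappa))^+}) > \alpha^\kappa$.

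The main delicate point is the selection of the $\lambda_\eta$: we need $\lambda_\eta < \kappa_\eta$ for every $\eta$ yet $\sum_\eta \lambda_\eta = \kappa$. This is possible precisely because the hypothesis in (b), via Theorem~\ref{weak<kappa}, forces $\kappa$ to be singular, so $\cf(\kappa) < \kappa$ and a cofinal subsequence of the $\kappa_\eta$'s supplies witnesses strictly below them whose supremum is still $\kappa$. All remaining ingredients---K\"onig's theorem, the cofinality comparison, and the elementary fact that box products of cellular families are cellular---are routine.
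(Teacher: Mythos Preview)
Your approach to both parts is essentially the paper's, but there is one genuine gap in part~(b). You write ``since $\alpha^{\lambda_\eta} < \alpha^{\kappa_\eta} \leq S(X(\eta))$'', inferring the strict inequality $\alpha^{\lambda_\eta} < \alpha^{\kappa_\eta}$ from $\lambda_\eta < \kappa_\eta$. That inference is invalid: for instance in a model with $2^{\aleph_0}=2^{\aleph_1}$ one has $\omega^{\aleph_0}=\omega^{\aleph_1}$ although $\aleph_0<\aleph_1$. If $\alpha^{\lambda_\eta}=\alpha^{\kappa_\eta}$ and $S(X(\eta))$ happens to equal $\alpha^{\kappa_\eta}$ exactly, then $X(\eta)$ has \emph{no} cellular family of size $\alpha^{\lambda_\eta}$, and your product construction breaks down at that coordinate.

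The paper closes this gap by first re-indexing. Under the hypothesis of (b), Theorem~\ref{weak<kappa}(a)(ii) fails, so $\alpha^\nu<\alpha^{<\kappa}$ for every $\nu<\kappa$; hence $\{\alpha^{\kappa_\eta}:\eta<\cf(\kappa)\}$ is cofinal in $\alpha^{<\kappa}$, and since $\cf(\alpha^{<\kappa})=\cf(\kappa)$ one may pass to a cofinal subsequence of the $\kappa_\eta$'s on which $\eta\mapsto\alpha^{\kappa_\eta}$ is \emph{strictly} increasing. After that reduction, at each successor ordinal $\eta+1$ one has $\alpha^{\kappa_\eta}<\alpha^{\kappa_{\eta+1}}\leq S(X(\eta+1))$, so a cellular family of size $\alpha^{\kappa_\eta}$ exists in $X(\eta+1)$; taking the trivial family $\{X(\eta)\}$ at limit $\eta$, the product family then has size $\geq\Pi_{\eta}\,\alpha^{\kappa_\eta}=\alpha^{\Sigma_\eta\kappa_\eta}=\alpha^\kappa$. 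Your argument becomes correct once you insert this one-line ``without loss of generality'' step; the delicate point you flagged (arranging $\sum_\eta\lambda_\eta=\kappa$) is fine, but it is not the real issue. Everything else---your treatment of (a), the K\"onig-theorem justification of $\alpha^\kappa\geq(\alpha^{<\kappa})^+$ (the paper leaves this implicit; Theorem~\ref{expon}(c) gives it more directly), and the observation that $X_{(\cf(\kappa))^+}$ carries the full box topology---is correct.
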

\begin{proof}
(a) is obvious, since $S(X_{\cf(\kappa)})\geq 
S(X(\eta))\geq\alpha^{\kappa_\eta}$ for each $\eta<\cf(\kappa)$.

(b) The topology of $X_{(\cf(\kappa))^+}$ is the (full) box topology. 
Since $\cf(\alpha^{<\kappa})=\cf(\kappa)<\kappa$ by 
Theorem~\ref{weak<kappa}, 
we may assume without loss of generality that
$\alpha^{\kappa_\eta}<\alpha^{\kappa_{\eta'}}$ for
$\eta<\eta'<\cf(\kappa)$. 
Let $\sC(\eta):=\{X(\eta)\}$ for limit ordinals $\eta<\cf(\kappa)$, and
for $\eta<\cf(\kappa)$ let $\sC(\eta+1)$ be cellular in
$X(\eta+1)$ with $|\sC(\eta+1)|\ge\alpha^{\kappa_{\eta}}$. Then
$\sC:=\{\Pi_{\eta<\cf(\kappa)}\,C_\eta:C_\eta\in\sC(\eta)\}$
is cellular in $X_{(\cf(\kappa))^+}$, with
$$|\sC|=\Pi_{\eta<\cf(\kappa)}\,|\sC(\eta)|
\geq\Pi_{\eta<\cf(\kappa)}\,\alpha^{\kappa_\eta}
=\alpha^{\Sigma_{\eta<\cf(\kappa)}\,\kappa_\eta}
=\alpha^\kappa,$$
\noindent so $S(X_{(\cf(\kappa))^+})>\alpha^\kappa\geq(\alpha^{<\kappa})^+$.
\end{proof}

The following simple lemma, strictly set-theoretic (non-topological) in
nature, is one of several preliminaries required for the proof of
Theorem~\ref{T4.15}.

\begin{lemma}\label{L4.13}
Let $\alpha\geq2$ and $\kappa\geq\omega$ be cardinals.

{\rm (a)} If $\alpha^{<\kappa}$ is a successor cardinal then there is 
$\nu<\kappa$ such that $\alpha^\nu = \alpha^{<\kappa}$.

{\rm (b)} If $\alpha^{<\kappa}$ is a regular cardinal and there is no 
$\nu<\kappa$ such that $\alpha^\nu = \alpha^{<\kappa}$ then 
$\alpha^{<\kappa}=\kappa$ and $\kappa$ is a regular strong limit 
cardinal.
\end{lemma}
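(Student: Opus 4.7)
The plan is to argue (a) by contrapositive and then bootstrap (b) from (a), using nothing beyond Theorem~\ref{expon}(a) and the definition of $\alpha^{<\kappa}$ as $\Sigma_{\lambda<\kappa}\,\alpha^\lambda$.

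For part (a), I would prove the contrapositive: assume that no $\nu<\kappa$ satisfies $\alpha^\nu=\alpha^{<\kappa}$, and show $\alpha^{<\kappa}$ is a limit cardinal. Under this assumption, $\alpha^\nu<\alpha^{<\kappa}$ for every $\nu<\kappa$, so the family $\{\alpha^\nu:\nu<\kappa\}$ is cofinal in $\alpha^{<\kappa}$ with all entries strictly below $\alpha^{<\kappa}$. Given any $\gamma<\alpha^{<\kappa}$, pick $\nu<\kappa$ with $\gamma\leq\alpha^\nu<\alpha^{<\kappa}$; then $(\alpha^\nu)^+\leq\alpha^{<\kappa}$ and $\gamma^+\leq(\alpha^\nu)^+<\alpha^{<\kappa}$, which rules out $\alpha^{<\kappa}$ being a successor. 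Thus $\alpha^{<\kappa}$ is a limit cardinal, which is the contrapositive of (a).

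For part (b), assume $\alpha^{<\kappa}$ is regular and no $\nu<\kappa$ satisfies $\alpha^\nu=\alpha^{<\kappa}$. The argument in (a) already shows that $\{\alpha^\nu:\nu<\kappa\}$ is cofinal in $\alpha^{<\kappa}$ with all terms strictly smaller, so $\cf(\alpha^{<\kappa})\leq\kappa$. Regularity of $\alpha^{<\kappa}$ then gives $\alpha^{<\kappa}=\cf(\alpha^{<\kappa})\leq\kappa$, and combining with the inequality $\kappa\leq\alpha^{<\kappa}$ of Theorem~\ref{expon}(a) yields $\alpha^{<\kappa}=\kappa$. In particular $\kappa$ itself is regular. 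To see $\kappa$ is a strong limit cardinal, fix any $\lambda<\kappa$; since $\alpha\geq 2$ we have $2^\lambda\leq\alpha^\lambda$, and by hypothesis $\alpha^\lambda<\alpha^{<\kappa}=\kappa$, so $2^\lambda<\kappa$. This completes (b).

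There is really no difficult step here; the only place one has to be alert is in making certain that ``$\alpha^{<\kappa}$ limit'' follows from the non-existence of $\nu<\kappa$ with $\alpha^\nu=\alpha^{<\kappa}$, rather than merely the existence of arbitrarily large $\alpha^\nu$ below $\alpha^{<\kappa}$. Once that subtlety is handled in (a), the rest of (b) amounts to combining regularity with Theorem~\ref{expon}(a) and reading off that $2^\lambda\leq\alpha^\lambda$.
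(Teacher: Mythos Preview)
Your argument is essentially correct and close in spirit to the paper's proof; the paper proves (a) directly (writing $\alpha^{<\kappa}=\lambda^+$ and splitting into the cases $\kappa=\alpha^{<\kappa}$ and $\kappa<\alpha^{<\kappa}$) while you argue the contrapositive, and your treatment of (b) is virtually identical to the paper's (the paper sharpens your bound $\cf(\alpha^{<\kappa})\leq\kappa$ to $\cf(\alpha^{<\kappa})\leq\cf(\kappa)$, but this makes no difference to the conclusion).

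There is, however, a small slip in your proof of (a). Having chosen $\nu<\kappa$ with $\gamma\leq\alpha^\nu<\alpha^{<\kappa}$, you assert $(\alpha^\nu)^+<\alpha^{<\kappa}$; but from $\alpha^\nu<\alpha^{<\kappa}$ one obtains only $(\alpha^\nu)^+\leq\alpha^{<\kappa}$, so your chain yields merely $\gamma^+\leq\alpha^{<\kappa}$, which does not yet rule out $\alpha^{<\kappa}=\gamma^+$. The fix is immediate: since under your hypothesis the supremum $\alpha^{<\kappa}=\sup_{\nu<\kappa}\alpha^\nu$ is not attained, for each $\gamma<\alpha^{<\kappa}$ you may choose $\nu$ with the \emph{strict} inequality $\gamma<\alpha^\nu$, and then $\gamma^+\leq\alpha^\nu<\alpha^{<\kappa}$ gives what you need directly, without passing through $(\alpha^\nu)^+$.
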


\begin{proof}
(a) Let $\alpha^{<\kappa}=\lambda^+$. 

If $\kappa=\alpha^{<\kappa}$ then $\alpha^{<\kappa}=\alpha^\lambda$ (with
$\lambda<\kappa$).

If $\kappa<\alpha^{<\kappa}$ and $\alpha^\nu<\alpha^{<\kappa}$ for each
$\nu<\kappa$, then we have the contradiction
$$\alpha^{<\kappa}=\Sigma_{\nu<\kappa}\,\alpha^\nu\leq\lambda\cdot\kappa=
\lambda<\lambda^+=\alpha^{<\kappa}.$$

(b) It follows from (a) that if (b) fails and there is no
$\nu<\kappa$ such that $\alpha^\nu=\alpha^{<\kappa}$, then
$\alpha^{<\kappa}$ is a (regular) limit cardinal and we have
$$\alpha^{<\kappa}=\cf(\alpha^{<\kappa})\leq\cf(\kappa)
\leq\kappa\leq\alpha^{<\kappa}.$$
Hence $\alpha^{<\kappa}=\kappa$, and for each $\nu<\kappa$ we
have
$$2^\nu\leq\alpha^\nu<\alpha^{<\kappa}=\kappa,$$
as required.
\end{proof}

\begin{remark}\label{nonvac}
{\rm
It is not difficult to show, as in \cite[3.12]{comfneg74}, that for every
uncountable regular cardinal $\alpha$ there is a product space $X_I$
such that $S(X_I)=S((X_I)_\omega)=\alpha$; indeed, as noted there, with
$Y:=\Pi_{\beta<\alpha}\,D(\beta)$ one has $S(Y^I)=\alpha$ for all
nonempty sets $I$. Thus the instance $S(X_I)=\alpha$ allowed by
Theorem~\ref{firstSP} does in fact arise in non-trivial circumstances,
provided that uncountable regular limit cardinals $\alpha$ do exist.
In any case it is immediate from Theorem~\ref{firstSP} that for every
infinite cardinal $\alpha$ of the form $\alpha=\beta^+$ one has
$S((D(\beta))^I)=\alpha$ for all nonempty sets $I$.
The $\kappa$-box analogue of these statements
holds for suitable regular cardinals $\alpha$
(see Theorem~\ref{T4.15}(a) and Remark~\ref{condsi--iv}(b) below),
but the full analogue fails consistently (see Remark~\ref{G-Smodels}).

We continue with results preparatory to the proof of
Theorem~\ref{T4.15}.
}
\end{remark}

\begin{theorem}\label{T4.14}
Let $\alpha\ge 3$ and $\kappa\ge\omega$ be cardinals, and let
$\{X_i:i\in I\}$ be a set of spaces such that $|I|^+\geq\kappa$ and
$\alpha\leq S(X_i)$ for each $i\in I$. Then 

{\rm (a)} $\alpha^{<\kappa}\leq S((X_I)_\kappa);$ and

{\rm (b)} if in addition $\alpha<\kappa$ then 
$(2^{<\kappa})^+=(\alpha^{<\kappa})^+\le S((X_I)_\kappa).$
\end{theorem}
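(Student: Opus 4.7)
The plan is to dispose of (b) first and then build (a) on top of it via a case analysis on $\lambda<\kappa$.

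For (b), the hypothesis $\alpha<\kappa$ gives the routine cardinal-arithmetic identity $\alpha^{<\kappa}=2^{<\kappa}$: each $\alpha^\nu$ with $\nu<\kappa$ is bounded by $2^{\max(\log\alpha,\nu)}\le 2^{<\kappa}$ (using $\log\alpha\le\alpha<\kappa$ when $\alpha\ge\omega$, with the $\alpha<\omega$ case similar), while the reverse inequality is trivial. Then Lemma~\ref{obs1} applied with $\beta=2$ (legitimate because $\alpha\ge 3$) and with $\mu=\min\{\kappa,|I|^+\}=\kappa$ (using $|I|^+\ge\kappa$) yields $S((X_I)_\kappa)>2^{<\kappa}$, which upgrades to $S((X_I)_\kappa)\ge (2^{<\kappa})^+=(\alpha^{<\kappa})^+$ since Souslin numbers are cardinals.

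For (a), the case $\alpha<\kappa$ is subsumed by (b), so assume $\alpha\ge\kappa$. I would fix $\lambda<\kappa$ and aim to show $S((X_I)_\kappa)\ge\alpha^\lambda$, handling two cases. In the \emph{bounded case} ($\alpha$ a successor, or $\alpha$ a limit with $\lambda<\cf(\alpha)$), every $f\colon\lambda\to\alpha$ has bounded range, so $\alpha^\lambda=\max(\alpha,\sup_{\beta<\alpha}\beta^\lambda)$; Lemma~\ref{obs1} supplies $S((X_I)_\kappa)>\beta^\lambda$ for each $\beta<\alpha$, giving both $S((X_I)_\kappa)\ge\sup_{\beta<\alpha}\beta^\lambda$ and $S((X_I)_\kappa)\ge\alpha$, hence $S((X_I)_\kappa)\ge\alpha^\lambda$.

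The harder \emph{unbounded case} is $\alpha$ a limit with $\cf(\alpha)\le\lambda$: here $\alpha^\lambda$ can strictly exceed $\sup_{\beta<\alpha}\beta^\lambda$ (the canonical example being $\aleph_\omega^\omega>\sup_n\aleph_n^\omega$ under GCH), so Lemma~\ref{obs1} is insufficient and one must build a cellular family by hand. Choose a cofinal sequence $\langle\beta_\eta:\eta<\cf(\alpha)\rangle$ in $\alpha$ with each $\beta_\eta<\alpha$; pick $J\in[I]^\lambda$ (possible since $\lambda<\kappa\le|I|^+$); partition $J=\bigsqcup_{\eta<\cf(\alpha)}J_\eta$ with each $|J_\eta|=\lambda$ (using $\cf(\alpha)\cdot\lambda=\lambda$, since $\omega\le\cf(\alpha)\le\lambda$); and for each $i\in J_\eta$ pick a cellular family $\sC_i$ in $X_i$ of size $\beta_\eta$ (possible since $\beta_\eta<\alpha\le S(X_i)$). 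The products $\Pi_{i\in J}V(i)\times\Pi_{i\in I\setminus J}X_i$ with $V(i)\in\sC_i$ are canonical open in $(X_I)_\kappa$ (as $|J|=\lambda<\kappa$) and pairwise disjoint, giving a cellular family of cardinality $\Pi_\eta\beta_\eta^\lambda$. The identity $\Pi_\eta\beta_\eta^\lambda=\alpha^\lambda$ is the delicate point: the upper bound $\Pi_\eta\beta_\eta^\lambda\le(\alpha^\lambda)^{\cf(\alpha)}=\alpha^\lambda$ is trivial, and for the lower bound one uses the injection $\alpha^\lambda\hookrightarrow\Pi_\eta\beta_\eta^\lambda$ sending $f$ to $(g_\eta)$ with $g_\eta(\xi)=f(\xi)+1$ when $\eta$ is least with $f(\xi)<\beta_\eta$ and $g_\eta(\xi)=0$ otherwise. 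This yields $S((X_I)_\kappa)>\alpha^\lambda$; verifying this product identity and arranging the partition of $J$ is the main obstacle.

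Combining the two cases, $S((X_I)_\kappa)\ge\alpha^\lambda$ for every $\lambda<\kappa$. Since $\alpha\ge\kappa$ forces each $\alpha^\lambda\ge\alpha\ge\kappa$, the sum $\alpha^{<\kappa}=\sum_{\lambda<\kappa}\alpha^\lambda$ collapses to $\sup_{\lambda<\kappa}\alpha^\lambda$, and $S((X_I)_\kappa)\ge\alpha^{<\kappa}$ follows at once.
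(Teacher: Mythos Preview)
Your proof is correct, but it diverges from the paper's in an instructive way.

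For part~(b) your argument is actually cleaner than the paper's: you invoke Lemma~\ref{obs1} once with $\beta=2$ and $\mu=\kappa$ to get the strict inequality $S((X_I)_\kappa)>2^{<\kappa}$ directly, whereas the paper in effect re-runs the case analysis from the proof of Lemma~\ref{obs1} (splitting on whether some $\nu<\kappa$ achieves $\alpha^\nu=\alpha^{<\kappa}$, and then on regularity of $\alpha^{<\kappa}$). Your route gets the same conclusion with less bookkeeping.

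For part~(a) the situation is reversed. The paper splits on whether $\alpha$ is regular or singular, and for singular $\alpha$ it exploits the Erd\H{o}s--Tarski fact (stated at the opening of the section) that infinite Souslin numbers are regular: hence $S(X_i)\geq\alpha$ forces $S(X_i)\geq\alpha^+$, and then Lemma~\ref{obs1} applied with $\alpha^+$ in place of $\alpha$ and $\beta=\alpha$ gives $S((X_I)_\kappa)>\alpha^{<\kappa}$ in one stroke. Your ``unbounded case'' handles exactly this singular-$\alpha$ situation (since $\cf(\alpha)\le\lambda<\kappa\le\alpha$ forces $\alpha$ singular), but you bypass Erd\H{o}s--Tarski and instead build the cellular family by hand via the partition $J=\bigsqcup_\eta J_\eta$ and the product identity $\prod_\eta\beta_\eta^\lambda=\alpha^\lambda$. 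That construction and the injection you sketch are both correct, but they constitute real work that the paper avoids entirely with a one-line appeal to the regularity of $S(X_i)$. What your approach buys is independence from the Erd\H{o}s--Tarski theorem; what the paper's approach buys is brevity.
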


\begin{proof} 
(a) We consider two cases.

\underline{Case 1}. $\alpha$ is singular. Then for each $i\in I$ we have
$S(X_i)\ge\alpha^+$ and it follows from Lemma \ref{obs1} (with
$\alpha^+$ now replacing $\alpha$)
that for each $\lambda<\kappa$ we have $S((X_I)_\kappa)>\alpha^\lambda$. Thus
$$S((X_I)_\kappa)\geq\sup_{\lambda<\kappa}\,(\alpha^\lambda)^+\ge
\Sigma_{\lambda<\kappa}\,\alpha^\lambda=\alpha^{<\kappa}.$$

\underline{Case 2}. $\alpha$ is regular. (We consider here only the case 
$\alpha\ge\kappa$ since the case $\alpha<\kappa$ is considered in (b).) 
Fix $\beta<\alpha$. Since $S(X_i)>\beta$ for each $i\in I$, it follows from  
Lemma \ref{obs1} that $S((X_I)_\kappa)\ge(\beta^\lambda)^+$ for every 
$\lambda<\kappa$. Therefore 
\begin{equation}\label{Eq7}
S((X_I)_\kappa)\geq\sup_{\beta<\alpha}\,(\beta^\lambda)^+
\ge\Sigma_{\beta<\alpha}\,\beta^\lambda.
\end{equation}
Since $\alpha$ is regular and $\lambda<\kappa\leq\alpha$, for each
$A\in[\alpha]^\lambda$ there is $\xi<\alpha$ such that $A\subseteq\xi$ (with
$|\xi|<\alpha$), so $\alpha^\lambda=\Sigma_{\beta<\alpha}\,\beta^\lambda$. 
It follows from (\ref{Eq7}) that $S((X_I)_\kappa)\geq\alpha^\lambda$ for each
$\lambda<\kappa$. Hence $S((X_I)_\kappa)\geq\alpha^{<\kappa}$, as required.

(b) Since $$2^{<\kappa}\leq\alpha^{<\kappa}\leq(2^\alpha)^{<\kappa}=
\Sigma_{\lambda<\kappa}\,(2^\alpha)^\lambda=
\Sigma_{\lambda<\kappa}\,2^\lambda=2^{<\kappa},$$
we have $$2^{<\kappa}=\alpha^{<\kappa}.$$

Now, fix $\lambda<\kappa$. Since $S(X_i)>2$ for each $i\in I$, it follows from 
Lemma \ref{obs1} that 
\begin{equation}\label{Eq8new}
S((X_I)_\kappa)\geq S((X_I)_{\lambda^+})\geq(2^\lambda)^+.
\end{equation}

\underline{Case 1}. There exists $\nu < \kappa$ such that 
$\alpha^\nu = \alpha^{<\kappa}$. Since $\alpha<\kappa$, without 
loss of generality, we can assume that $\nu\ge \alpha$. Then 
$\alpha^\nu=2^\nu$ and from (\ref{Eq8new}) we get
$$S((X_I)_\kappa)\geq (2^\nu)^+ > 2^\nu=\alpha^{<\kappa}.$$ 

\underline{Case 2}. Case 1 fails. If $\alpha^{<\kappa}$ is regular 
then it follows from Lemma \ref{L4.13}(b) that $\kappa=\alpha^{<\kappa}$ 
and $\kappa$ is a regular strong limit cardinal.
If $\kappa=\omega$ then surely $S((X_I)_\kappa)\geq\kappa^+$, and if
$\kappa>\omega$ then
Theorem \ref{Obsn}(a) applies to give
$$S((X_I)_\kappa)\ge\kappa^+=(\alpha^{<\kappa})^+.$$
Now let $\alpha^{<\kappa}$ be singular. Since (\ref{Eq8new}) holds for 
every $\lambda<\kappa$ we have 
$$S((X_I)_\kappa)\geq\sup_{\lambda<\kappa}\,(2^\lambda)^+
\ge\Sigma_{\lambda<\kappa}\,2^\lambda=\alpha^{<\kappa}$$ and since 
$\alpha^{<\kappa}$ is singular we have 
$S((X_I)_\kappa)\geq(\alpha^{<\kappa})^+$, as required.
\end{proof}

\begin{corollary}
Let $\alpha$, $\beta$ and $\kappa$ be cardinals with $\alpha\ge 3$ and $\kappa\ge\omega$, and let
$\{X_i:i\in I\}$ be a set of spaces such that $|I|^+\geq\kappa$, and
$d(X_i)\leq \beta$ and $\alpha\leq S(X_i)$ for each $i\in I$. Then
$\alpha^{<\kappa}\leq S((X_I)_\kappa)\leq(\beta^{<\kappa})^+.$
\end{corollary}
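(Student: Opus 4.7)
The proof is essentially a concatenation of two results already established in the excerpt, so the plan is simply to invoke each in turn and observe that their hypotheses are met.

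For the lower bound, I would invoke Theorem~\ref{T4.14}(a), which states that if $|I|^+\geq\kappa$ and $\alpha\leq S(X_i)$ for each $i\in I$ (with $\alpha\geq3$ and $\kappa\geq\omega$), then $\alpha^{<\kappa}\leq S((X_I)_\kappa)$. All these hypotheses are given verbatim in the corollary, so this direction is immediate.

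For the upper bound, I would invoke Theorem~\ref{T4.12}, which asserts that if $d(X_i)\leq\beta$ for every $i\in I$ (and $\beta\geq2$, $\kappa\geq\omega$), then $S((X_I)_\kappa)\leq(\beta^{<\kappa})^+$. Again the hypothesis $d(X_i)\leq\beta$ is precisely what is assumed in the corollary. (If $\beta<2$ then the hypothesis forces every $X_i$ to be a singleton, contradicting $S(X_i)\geq\alpha\geq 3$, so we may tacitly assume $\beta\geq2$.)

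Combining the two inequalities gives $\alpha^{<\kappa}\leq S((X_I)_\kappa)\leq(\beta^{<\kappa})^+$, as desired. There is no real obstacle here; the corollary is a packaging of the two main theorems of the section, recording that the lower and upper bounds proved separately (under separation-free hypotheses on $S$ and $d$ respectively) coexist whenever both sets of hypotheses hold simultaneously.
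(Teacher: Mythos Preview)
Your proposal is correct and matches the paper's own proof, which simply reads ``Follows directly from Theorem~\ref{T4.12} and Theorem~\ref{T4.14}.'' Your additional remark that $\beta\geq2$ is forced by $S(X_i)\geq\alpha\geq3$ is a harmless clarification ensuring the hypothesis of Theorem~\ref{T4.12} is met.
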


\begin{proof}
Follows directly from Theorem~\ref{T4.12} and Theorem~\ref{T4.14}.
\end{proof}

\begin{corollary}\label{L4.14}
Let $\alpha\geq3$ and $\kappa\geq\omega$ be cardinals, and let
$\{X_i:i\in I\}$ be a set of spaces such that $|I|^+\geq\kappa$ and
$d(X_i)\leq\alpha\leq S(X_i)$ for each $i\in I$. Then
$\alpha^{<\kappa}\leq S((X_I)_\kappa)\leq(\alpha^{<\kappa})^+.$
\end{corollary}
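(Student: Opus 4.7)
The plan is to observe that Corollary~\ref{L4.14} is simply the specialization of the unnamed corollary immediately preceding it to the case $\beta=\alpha$; all the work has already been done. Thus I would not prove anything new, but merely combine the two bounds in a single line.

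First I would invoke Theorem~\ref{T4.12} for the upper bound. Since the hypothesis $d(X_i)\leq\alpha$ holds for each $i\in I$, that theorem applies and delivers
\[
S((X_I)_\kappa)\leq(\alpha^{<\kappa})^+,
\]
without any use of the hypotheses $|I|^+\geq\kappa$ or $\alpha\leq S(X_i)$.

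Next I would invoke Theorem~\ref{T4.14}(a) for the lower bound. Its hypotheses are exactly $\alpha\geq 3$, $\kappa\geq\omega$, $|I|^+\geq\kappa$, and $\alpha\leq S(X_i)$ for each $i\in I$, all of which are assumed here, so its conclusion
\[
\alpha^{<\kappa}\leq S((X_I)_\kappa)
\]
is immediate.

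Concatenating these two inequalities finishes the proof. There is no real obstacle; the only subtlety is recognizing that the density hypothesis feeds only the upper-bound theorem while the Souslin-number hypothesis feeds only the lower-bound theorem, and that the two bounds sandwich $S((X_I)_\kappa)$ within a single successor step of $\alpha^{<\kappa}$. One could equivalently cite the preceding unnamed corollary with $\beta=\alpha$ and be done in one line.
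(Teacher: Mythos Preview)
Your proposal is correct and matches the paper's approach exactly: the paper gives no separate proof for Corollary~\ref{L4.14}, since it is immediate from the preceding corollary (which in turn cites Theorems~\ref{T4.12} and \ref{T4.14}) upon setting $\beta=\alpha$.
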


Corollary~\ref{L4.14} provides tight parameters, but leaves undetermined
the question exactly when it is that the value of $S((X_I)_\kappa)$ is
$\alpha^{<\kappa}$ and when it is $(\alpha^{<\kappa})^+$. In the following 
theorem we settle that matter completely.

\begin{theorem}\label{T4.15}
Let $\alpha\geq3$ and $\kappa\geq\omega$ be cardinals, and let
$\{X_i:i\in I\}$ be a set of spaces such that $|I|^+\geq\kappa$ and  
$d(X_i)\leq\alpha\leq S(X_i)$ for each $i\in I$. Consider these conditions: {\rm (i)}~$\alpha$ is
regular; {\rm (ii)}~$\alpha=\alpha^{<\kappa}$; {\rm (iii)}
$\kappa\ll\alpha$; {\rm (iv)}~$S((X_J)_\kappa)=\alpha$ for all nonempty
$J\in[I]^{<\kappa}$. Then:

{\rm (a)} if conditions {\rm (i), (ii), (iii)} and {\rm (iv)} hold, then
$S((X_I)_\kappa)=\alpha^{<\kappa}=\alpha$; and

{\rm (b)} if one (or more) of conditions {\rm (i), (ii), (iii)} or
{\rm (iv)} fails, then
$S((X_I)_\kappa)=(\alpha^{<\kappa})^+$.
\end{theorem}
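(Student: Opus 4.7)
The plan is to leverage the tight two-sided bound $\alpha^{<\kappa}\leq S((X_I)_\kappa)\leq(\alpha^{<\kappa})^+$ already supplied by Corollary~\ref{L4.14}; each part of the theorem then reduces to deciding which of these two values $S((X_I)_\kappa)$ assumes.

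For (a), hypotheses (i) and (iii) place us in the setting of Theorem~\ref{CN} with $\alpha$ itself as the regular cardinal there, so $S((X_I)_\kappa)\leq\alpha$ if and only if $S((X_J)_\kappa)\leq\alpha$ for every nonempty $J\in[I]^{<\kappa}$; condition (iv) supplies this. Combining with the lower bound from Corollary~\ref{L4.14} and the identity $\alpha=\alpha^{<\kappa}$ of (ii) yields $S((X_I)_\kappa)=\alpha^{<\kappa}=\alpha$.

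For (b) the strategy is to show $S((X_I)_\kappa)>\alpha^{<\kappa}$ whenever some condition fails; the upper bound $(\alpha^{<\kappa})^+$ then forces equality. A preliminary observation trims the case-work: if $\alpha$ is regular and $\kappa\ll\alpha$, then for every $\lambda<\kappa$ every $\lambda$-subset of $\alpha$ is bounded (by regularity of $\alpha$) and each $|\eta|^\lambda<\alpha$ (by $\kappa\ll\alpha$), so $\alpha^\lambda=\alpha$ and hence $\alpha^{<\kappa}=\alpha$; that is, (i)$+$(iii) implies (ii). So failure of any single condition reduces to failure of (i), (iii), or (iv). When (i) fails (so $\alpha$ is infinite singular), regularity of Souslin numbers gives $S(X_i)\geq\alpha^+$ for each $i$, and Lemma~\ref{obs1} applied with $\alpha^+$ in place of $\alpha$ (with $\mu=\kappa$ since $|I|^+\geq\kappa$) yields $S((X_I)_\kappa)>\beta^{<\kappa}$ for every $\beta<\alpha^+$; taking $\beta=\alpha$ closes this case. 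When (iii) fails via $\alpha<\kappa$ --- which includes every finite $\alpha\geq 3$ --- Theorem~\ref{T4.14}(b) supplies $S((X_I)_\kappa)\geq(\alpha^{<\kappa})^+$ directly. When (i), (ii), (iii) all hold but (iv) fails, the contrapositive of Theorem~\ref{CN} gives $S((X_I)_\kappa)>\alpha=\alpha^{<\kappa}$.

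It remains to handle the scenarios in which $\alpha$ is infinite regular and (iii) fails with $\kappa\leq\alpha$; this is the technical heart. If $\kappa=\alpha$, a short cardinal-arithmetic check shows $\alpha^{<\alpha}=2^{<\alpha}$ for every infinite regular $\alpha$ (using $\alpha^\lambda=\alpha\cdot 2^\lambda$ for $\lambda<\alpha$ together with $2^{<\alpha}\geq\alpha$ when $\alpha$ is not strong limit, and $\alpha^{<\alpha}=\alpha=2^{<\alpha}$ when it is), so Lemma~\ref{obs1} with $\beta=2$ immediately delivers $S((X_I)_\kappa)>2^{<\alpha}=\alpha^{<\kappa}$. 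If instead $\kappa<\alpha$ with some $\beta<\alpha$, $\lambda<\kappa$ satisfying $\beta^\lambda\geq\alpha$, then for every $\lambda'<\kappa$ with $\lambda'\geq\lambda$ we have $\alpha\leq\beta^\lambda\leq\beta^{\lambda'}$ and hence $\alpha^{\lambda'}\leq(\beta^{\lambda'})^{\lambda'}=\beta^{\lambda'}$, so $\alpha^{<\kappa}\leq\beta^{<\kappa}$; Lemma~\ref{obs1} then gives $S((X_I)_\kappa)>\beta^{<\kappa}\geq\alpha^{<\kappa}$. I expect the main obstacle to be exactly this cardinal-arithmetic reduction --- matching the quantities controlled by Lemma~\ref{obs1} to $\alpha^{<\kappa}$ uniformly across the failure configurations --- with the observation that (i) and (iii) together imply (ii) being the key organizational simplification that keeps the case list finite.
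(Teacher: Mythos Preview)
Your argument is correct and reaches the same conclusion, but the organization of part~(b) differs from the paper's. The paper argues by contrapositive: it assumes $S((X_I)_\kappa)=\alpha^{<\kappa}$ and then, splitting into two cases according to whether some $\nu<\kappa$ satisfies $\alpha^\nu=\alpha^{<\kappa}$ (invoking Lemma~\ref{L4.13} for the complementary case), derives conditions (i)--(iv) in turn. You instead argue directly, assuming a specific condition fails and showing $S((X_I)_\kappa)>\alpha^{<\kappa}$; your key organizing observation that (i)$+$(iii)$\Rightarrow$(ii) is correct and genuinely shortens the case list, and your use of Theorem~\ref{T4.14}(b) for the sub-case $\alpha<\kappa$ is cleaner than re-deriving it. Both routes rest on the same tools (Lemma~\ref{obs1}, Theorem~\ref{CN}, Theorem~\ref{Obsn}); the paper's split via Lemma~\ref{L4.13} gives a more uniform treatment, while your direct case analysis makes each failure mode more transparent.

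One small repair is needed in your $\kappa=\alpha$ sub-case: the intermediate formula $\alpha^\lambda=\alpha\cdot2^\lambda$ (for regular $\alpha$ and $\lambda<\alpha$) is not generally valid---for instance with $\alpha=\aleph_{\omega+1}$ and $\lambda=\omega$ one has $\alpha^\lambda=\aleph_{\omega+1}\cdot\aleph_\omega^{\,\omega}$, which can exceed $\aleph_{\omega+1}\cdot2^\omega$. Your target identity $\alpha^{<\alpha}=2^{<\alpha}$ for infinite regular $\alpha$ is nevertheless true: when $\alpha=\gamma^+$ both sides equal $2^\gamma$; when $\alpha$ is weakly but not strongly inaccessible, pick $\lambda_0<\alpha$ with $2^{\lambda_0}\geq\alpha$ and note $\alpha^{\lambda}=2^{\lambda}$ for $\lambda\geq\lambda_0$; and when $\alpha$ is strongly inaccessible both sides equal $\alpha$ (Lemma~\ref{SLC}(b)). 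With that correction your proof stands.
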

\begin{proof} (a) is immediate from Theorem~\ref{CN}, since
$S(X_i)=\alpha=\alpha^{<\kappa}$ for each $i\in I$ under the present
hypotheses.

(b) It suffices, according to Corollary~\ref{L4.14}, to
assume that $S((X_I)_\kappa)=\alpha^{<\kappa}$
and to show that conditions (i), (ii), (iii) and (iv) must hold. We consider
two cases.

\underline{Case 1}. There is $\nu<\kappa$ such that
$\alpha^\nu=\alpha^{<\kappa}$. We fix such $\nu$.

If (i) fails then $S(X_i)=\alpha^+$ and from Lemma \ref{obs1}
(with $\alpha^+$ and $\nu$ in the roles of $\alpha$ and $\mu$,
respectively) we
have $S((X_I)_\kappa)\geq S((X_I)_{\nu^+})>\alpha^\nu=\alpha^{<\kappa}$, a 
contradiction. Thus (i) holds.

To see that (ii) holds, suppose first that there are
$\beta<\alpha$ and $\lambda<\kappa$ such that $\beta^\lambda\geq\alpha$.
Then
$$\alpha^{<\kappa}=\alpha^\nu\leq\beta^{\lambda\cdot\nu}\leq\alpha^{\lambda\cdot\nu}\leq\alpha^{<\kappa}$$
so from Lemma~\ref{obs1} we conclude that 
$S((X_I)_\kappa)>\beta^{<\kappa}\geq\beta^{\lambda\cdot\nu}=\alpha^{<\kappa}$, 
a contradiction. Thus
\begin{equation}\label{Eq8}
\beta^\lambda<\alpha \mbox{ for all } \beta<\alpha, \lambda<\kappa.
\end{equation}
It follows that $\kappa\leq\alpha$.
Then each $\lambda<\kappa$ satisfies $\lambda<\alpha$ and from the regularity
of $\alpha$ we have
$[\alpha]^\lambda=\bigcup_{\beta<\alpha}\,[\beta]^\lambda$
for each such $\lambda$. Thus (\ref{Eq8}) gives
$$\alpha^{<\kappa}=\Sigma_{\lambda<\kappa}\,\alpha^\lambda\leq
\Sigma_{\lambda<\kappa}[\Sigma_{\beta<\alpha}\,\beta^\lambda]
\leq\kappa\cdot\alpha\cdot\alpha=\alpha\leq\alpha^{<\kappa},$$
and (ii) is proved. To show (iii) we need only show
$\kappa<\alpha$, since (\ref{Eq8}) then gives $\kappa\ll\alpha$. Suppose then 
that $\kappa=\alpha$. Then (\ref{Eq8}) shows that $\kappa$ is a (regular, 
strong) limit cardinal, so from Theorem \ref{Obsn} we have the contradiction
$S((X_I)_\kappa)>\kappa=\alpha=\alpha^{<\kappa}$.
Thus $\kappa<\alpha$ and the proof of (iii) is complete. To prove
(iv), it suffices to note that if
$S((X_J)_\kappa)>\alpha$ for some nonempty
$J\in[I]^{<\kappa}$, then we have the contradiction
$S((X_I)_\kappa)>\alpha=\alpha^{<\kappa}$.

\underline{Case 2}. There is no $\nu<\kappa$ such that
$\alpha^\nu=\alpha^{<\kappa}$. If $\alpha^{<\kappa}$ is singular then
$S((X_I)_\kappa)=\alpha^{<\kappa}$ is impossible, so $\alpha^{<\kappa}$
is regular and
Lemma~\ref{L4.13}(b) applies to show that $\alpha^{<\kappa}=\kappa$ is a
(regular, strong) limit cardinal; from Theorem \ref{Obsn} we again have
the contradiction $S((X_I)_\kappa)>\kappa=\alpha^{<\kappa}$.
\end{proof}
Although every infinite Souslin number is regular and uncountable, hence
is either a successor cardinal or an uncountable regular limit cardinal,
it is perhaps not clear from Theorem~\ref{T4.15} exactly which
uncountable regular cardinals occur in the form $S((X_I)_\kappa)$ with
$\{X_i:i\in I\}$ constrained as in Theorem~\ref{T4.15}. Is part (a) of that
theorem potentially vacuous? Can every successor cardinal
$\beta^+$ occur as $\beta^+=\alpha$ in Theorem~\ref{T4.15}(a)? For each
$\kappa$, can some $\beta^+=\alpha$ so occur? Do there exist, for every
regular limit cardinal $\alpha$, infinite $\kappa\ll\alpha$ and spaces
$\{X_i:i\in I\}$ such that $S((X_I)_\kappa)=\alpha$? We address these
questions in \ref{condsi--iv}---\ref{S=lim} below.
\begin{remarks}\label{condsi--iv}
{\rm
(a) Let $\beta$ be a singular cardinal and set $\alpha:=\beta^+$. Let
$I$ be an uncountable set and for $i\in I$ set $X_i:=D(\beta)$. Clearly
(i), (ii) and (iii) are satisfied with $\kappa=\omega$; also (iv) is
satisfied with $\kappa=\omega$, since if $J\in[I]^{<\omega}$ then
$X_J=X^J$ is discrete with $|X_J|=\beta$, so
$S(X_I)=S((X_I)_\omega)=\beta^+=\alpha$. Thus
$S(X_I)=S((X_I)_\omega)=\alpha$ by Theorem~\ref{T4.15}(a). The same
conclusion is available from Theorem~\ref{T4.15}(b) by replacing
$\alpha$ everywhere in the statement of Theorem~\ref{T4.15} by $\beta$.
In this case both (i) and (iv) fail for $\beta$, so
$S(X_I)=S((X_I)_\omega)=\beta^+=\alpha$ by Theorem~\ref{T4.15}(b).

(b) Similar examples exist in ZFC for every
uncountable regular cardinal $\kappa$.
Indeed, given such $\kappa$ let
$\gamma\geq2$ be arbitrary
and set $\beta:=\beth_\kappa(\gamma)$ and $\alpha:=\beta^+$.
For $\lambda<\kappa$ we have
$$[\beta]^\lambda=\bigcup_{\delta<\beta}\,[\delta]^\lambda \mbox{ and }
[\alpha]^\lambda=\bigcup_{\xi<\alpha}\,[\xi]^\lambda,$$
so $$\beta^\lambda\leq\Sigma_{\delta<\beta}\,\delta^\lambda
\leq\Sigma_{\delta<\beta}\,2^\delta\cdot2^\lambda=\beta$$
and $$\alpha^\lambda\le\alpha\cdot\beta^\lambda\leq\alpha\cdot\beta=\alpha.$$
Conditions (i), (ii) and (iii) are then clear
and again, as in (a), if $|I|>\kappa$ and $X_i:=D(\beta)$ for each $i\in
I$, then each space $(X^J)_\kappa=(X_J)_\kappa$ is discrete (when
$|J|<\kappa$) with
$|X_J|=\beta$, so $S((X_J)_\kappa)=\beta^+=\alpha$
and (iv) holds by Theorem~\ref{T4.15}(a).
Also as in (a) above the same
conclusion is available from Theorem~\ref{T4.15}(b) by replacing
$\alpha$ everywhere in the statement of Theorem~\ref{T4.15} by $\beta$.
In this case both (i) and (iv) fail for $\beta$, so
$S((X_I)_\kappa)=\beta^+=\alpha$ by Theorem~\ref{T4.15}(b).

(c) (a) and (b) above indicate that in all models of ZFC conditions (i),
(ii), (iii) and (iv) of Theorem~\ref{T4.15}(a) are satisfied by suitably
chosen cardinals and spaces, so part (a) of Theorem~\ref{T4.15} is not
vacuous. Those examples depend, however, on choosing for $\alpha$ a
regular cardinal of the form $\alpha=\beta^+$.
Part (a) of Theorem~\ref{T4.20} shows exactly which successor cardinals
$\gamma^+$ arise as $S((X_I)_\kappa)$ in Theorem~\ref{T4.15}, and part
(b) indicates when it can occur that $S((X_I)_\kappa)$ is a limit cardinal.
}
\end{remarks}
\begin{theorem}\label{T4.20}
Let $\alpha\geq3$ and $\kappa\geq\omega$ be cardinals, and let
$\{X_i:i\in I\}$ be a set of spaces such that $|I|^+\geq\kappa$ and
$d(X_i)\leq\alpha\leq S(X_i)$ for each $i\in I$. 

{\rm (a)} If $S((X_I)_\kappa)$ is a successor cardinal---say
$S((X_I)_\kappa)=\gamma^+$---then either conditions {\rm (i), (ii), (iii)} 
and {\rm (iv)} of Theorem \ref{T4.15} hold and
$\gamma=\gamma^{<\kappa}$, or at least one of those
conditions fails and $\gamma=\alpha^{<\kappa}$.

{\rm (b)} If $S((X_I)_\kappa)$ is a (regular) limit cardinal, then
$S((X_I)_\kappa)=\alpha$ and $d(X_i)=S(X_i)=\alpha$ for each
$i\in I$.
\end{theorem}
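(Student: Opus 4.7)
The plan is to reduce both parts to the dichotomy given by Theorem~\ref{T4.15}: under the standing hypotheses, either conditions (i)--(iv) of that theorem all hold and $S((X_I)_\kappa)=\alpha^{<\kappa}=\alpha$, or else at least one of those conditions fails and $S((X_I)_\kappa)=(\alpha^{<\kappa})^+$. Everything we need follows by matching these two possible outputs against the cardinal arithmetic of $\gamma$ and $\alpha$.

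For part~(a), assume $S((X_I)_\kappa)=\gamma^+$ and split along the dichotomy. In the first branch, $\gamma^+=\alpha^{<\kappa}=\alpha$; here $\alpha$ must be infinite, since otherwise $\alpha^{<\kappa}\geq\alpha^\omega\geq 2^\omega>\alpha$ (when $\kappa>\omega$), or $\alpha^{<\omega}=\omega>\alpha$ (when $\kappa=\omega$), contradicting condition~(ii). Thus $\gamma$ is an infinite cardinal, and condition~(iii) $\kappa\ll\alpha=\gamma^+$ yields both $\kappa\leq\gamma$ and $\gamma^\mu<\gamma^+$ (so $\gamma^\mu\leq\gamma$) for every $\mu<\kappa$; summing gives $\gamma^{<\kappa}\leq\kappa\cdot\gamma=\gamma$, so $\gamma=\gamma^{<\kappa}$. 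In the second branch, $\gamma^+=(\alpha^{<\kappa})^+$ immediately forces $\gamma=\alpha^{<\kappa}$.

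For part~(b), suppose $S((X_I)_\kappa)$ is a regular limit cardinal. The second branch of the dichotomy produces the successor cardinal $(\alpha^{<\kappa})^+$, so it is excluded; hence the first branch applies and $S((X_I)_\kappa)=\alpha=\alpha^{<\kappa}$, with $\alpha$ itself a regular limit cardinal. Since the projection $\pi_i\colon(X_I)_\kappa\twoheadrightarrow X_i$ is continuous and surjective, preimages of cellular families in $X_i$ are cellular in $(X_I)_\kappa$, giving $S(X_i)\leq S((X_I)_\kappa)=\alpha$; together with $\alpha\leq S(X_i)$ this produces $S(X_i)=\alpha$. Finally, the standard bound $S(Y)\leq d(Y)^+$ implies that $d(X_i)<\alpha$ would force $d(X_i)^+<\alpha$ (because $\alpha$ is a limit), contradicting $\alpha=S(X_i)\leq d(X_i)^+$; combined with the hypothesis $d(X_i)\leq\alpha$ this yields $d(X_i)=\alpha$ as well.

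The only delicate point that needs attention is the identity $\gamma=\gamma^{<\kappa}$ in the first branch of part~(a): one must first rule out finite $\alpha$ before applying the infinite cardinal arithmetic that collapses $\kappa\cdot\gamma$ to $\gamma$. Everything else is a routine reading of Theorem~\ref{T4.15} together with the elementary monotonicity facts $S(X_i)\leq S((X_I)_\kappa)$ (via the projection) and $S(X)\leq d(X)^+$ (dense sets meet every cellular family).
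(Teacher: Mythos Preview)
Your proof is correct and follows essentially the same route as the paper's: both parts are read off directly from the dichotomy in Theorem~\ref{T4.15}, and the verification $\gamma=\gamma^{<\kappa}$ in the first branch of~(a) is identical. The only cosmetic differences are that the paper obtains $S(X_i)=\alpha$ in~(b) by invoking condition~(iv) with $J=\{i\}$ rather than via the projection, and it skips your finiteness check on $\alpha$ (which is in any case immediate from condition~(iii), since $\kappa\ll\alpha$ entails $\omega\leq\kappa<\alpha$).
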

\begin{proof}
(a) If $\gamma^+=S((X_I)_\kappa)\neq(\alpha^{<\kappa})^+$
then by Theorem~\ref{T4.15}
the indicated conditions (i), (ii), (iii) and (iv) all hold and
$\gamma^+=S((X_I)_\kappa)=\alpha=\alpha^{<\kappa}$. Since
$\kappa\ll\alpha=\gamma^+$ we have $\gamma^\lambda=\gamma$ for all
$\lambda<\kappa$ and hence
$\gamma^{<\kappa}\leq\kappa\cdot\gamma=\gamma$,
so $\gamma=\gamma^{<\kappa}$ as asserted.

(b) If $S((X_I)_\kappa)$ is a regular limit cardinal then conditions
(i), (ii), (iii) and (iv) of Theorem~\ref{T4.15} all hold, so
$S((X_I)_\kappa)=\alpha$ by
Theorem~\ref{T4.15}(a); further for each $i\in I$ we have
$S(X_i)=\alpha$ by condition (iv). If there is $i\in I$ such
that $d(X_i)<\alpha$ then $(d(X_I))^+<\alpha=S(X_i)$, which is impossible.
\end{proof}

\begin{remark}\label{S=lim}
{\rm
It is well known that the
existence of an uncountable regular strong limit cardinal cannot be
established in ZFC \cite[12.12]{jech}, but it
should be noted that in case such a cardinal $\alpha$ exists then 
there are cardinals $\kappa$ and spaces $X_i$ to which
Theorem~\ref{T4.15}(a) and Theorem~\ref{T4.20} apply. Indeed,
let $\alpha$ be a regular limit cardinal and suppose that $\kappa$
satisfies $\omega\leq\kappa\ll\alpha=\alpha^{<\kappa}$. (These latter
conditions are satisfied by every infinite $\kappa<\alpha$, in case
$\alpha$ is in addition assumed to be a strong limit cardinal.) Let $I$
be a nonempty set and for $\beta<\alpha$ and $i\in I$ set
$D(\beta,i):=D(\beta)$. Then $\{D(\beta,i):\beta<\alpha,i\in I\}$ is
a set of spaces with each $d(D(\beta,i))=\beta<\alpha$, so by
Theorem~\ref{iffk<<a} the space
$Y:=\Pi_{\beta<\alpha, i\in I}\,D(\beta,i)$ satisfies
$S((Y)_\kappa)=\alpha$. As a set we have
$Y=X^I$ with $X:=\Pi_{\beta<\alpha}\,D(\beta)$, and the topology of the
space $Y_\kappa$ is finer than the topology of the space
$(X^I)_\kappa$, so also the power space $X^I$ satisfies
$S((X^I)_\kappa)=\alpha$, where $\alpha$ is a (regular strong) limit cardinal. 
Clearly $d(X)=\alpha$ and $S(X)=\alpha$ and 
therefore $(X^I)_\kappa$ is an example of a product space that satisfies all 
the hypotheses and the conclusion of Theorem \ref{T4.15}(a).
}
\end{remark}

\begin{lemma}\label{SLC}
Let $\kappa$ be a strong limit cardinal.

{\rm (a)} If $2\leq\alpha<\kappa$ then $\alpha^{<\kappa}=\kappa$.

{\rm (b)} If $\kappa$ is regular then $\kappa^{<\kappa}=\kappa$.

{\rm (c)} If $\kappa$ is singular then $\kappa^{<\kappa}=2^\kappa$.
\end{lemma}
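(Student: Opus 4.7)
The plan is to push through each clause by the standard reduction $\alpha^\lambda\le 2^{\max(\alpha,\lambda)}$ combined with the strong-limit hypothesis, supplemented in (c) by the classical Bukovsk\'y-type identity $\kappa^{\cf(\kappa)}=2^\kappa$ for singular strong limit $\kappa$.

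For part (a), I would first observe that if $2\le\alpha<\kappa$ and $\lambda<\kappa$, then setting $\mu:=\max(\alpha,\lambda)<\kappa$ we have $\alpha^\lambda\le 2^{\alpha\cdot\lambda}=2^\mu<\kappa$ (strong-limit). Summing, $\alpha^{<\kappa}=\Sigma_{\lambda<\kappa}\,\alpha^\lambda\le\kappa\cdot\kappa=\kappa$, while $\kappa\le\alpha^{<\kappa}$ comes from Theorem~\ref{expon}(a). Hence equality.

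For part (b), the added ingredient is regularity: if $\lambda<\kappa=\cf(\kappa)$, every function $\lambda\to\kappa$ has image bounded in $\kappa$, so $\kappa^\lambda=\bigcup_{\beta<\kappa}\,\beta^\lambda$ in cardinality; by (a) each $\beta^\lambda<\kappa$ (since $\beta<\kappa$), and the union has $\le\kappa$ many pieces, so $\kappa^\lambda\le\kappa$. Summing over $\lambda<\kappa$ gives $\kappa^{<\kappa}\le\kappa\cdot\kappa=\kappa$; the reverse inequality is again Theorem~\ref{expon}(a).

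For part (c), the upper bound is immediate: $\kappa^{<\kappa}\le\kappa^\kappa=2^\kappa$ (with the last equality following from $2\le\kappa\le 2^\kappa$ and standard arithmetic). For the lower bound I would invoke the notation of \ref{defkappaeta}, writing $\kappa=\Sigma_{\eta<\cf(\kappa)}\,\kappa_\eta$ with each $\kappa_\eta<\kappa$. Since $\kappa$ is strong limit, $2^{\kappa_\eta}<\kappa$, so
\[
2^\kappa=2^{\Sigma_{\eta<\cf(\kappa)}\,\kappa_\eta}=\Pi_{\eta<\cf(\kappa)}\,2^{\kappa_\eta}\le\Pi_{\eta<\cf(\kappa)}\,\kappa=\kappa^{\cf(\kappa)}.
\]
Because $\kappa$ is singular, $\cf(\kappa)<\kappa$, whence $\kappa^{\cf(\kappa)}\le\kappa^{<\kappa}$. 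Combining with the upper bound yields $\kappa^{<\kappa}=2^\kappa$.

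The only step requiring any care is (c), specifically the interchange $2^{\Sigma\kappa_\eta}=\Pi 2^{\kappa_\eta}$ and the subsequent bound by $\kappa$; but both are routine once one uses the strong-limit hypothesis to guarantee $2^{\kappa_\eta}<\kappa$. No novel machinery is needed, and nothing from later in the paper is invoked.
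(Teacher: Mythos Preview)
Your proof is correct and follows essentially the same route as the paper's. For (a) and (c) your argument is virtually identical to the authors' (the same chain $2^\kappa=\Pi_{\eta<\cf(\kappa)}2^{\kappa_\eta}\le\kappa^{\cf(\kappa)}\le\kappa^{<\kappa}\le2^\kappa$ appears verbatim), and for (b) the paper defers to its earlier Lemma~\ref{regularSLC}(a), whose proof uses exactly your ``bounded image under regularity, then strong-limit'' reduction, phrased in terms of $[\kappa]^{<\kappa}\subseteq\bigcup_{\eta<\kappa}\sP(\eta)$ rather than functions.
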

\begin{proof} (a) We have
$$\kappa\leq\alpha^{<\kappa}\leq(2^\alpha)^{<\kappa}=\Sigma_{\lambda<\kappa}\,(2^\alpha)^\lambda
=\Sigma_{\lambda<\kappa}\,2^\lambda\leq\kappa\cdot\kappa=\kappa.$$

(b) This is proved in Lemma~\ref{regularSLC}(a).

(c) With $\{\kappa_\eta:\eta<\cf(\kappa)\}$ chosen as in 
Notation~\ref{defkappaeta} we have
$$2^\kappa=2^{\Sigma_{\eta<\cf(\kappa)}\,\kappa_\eta}=\Pi_{\eta<\cf(\kappa)}\,2^{\kappa_\eta}
\leq\kappa^{cf(\kappa)}\leq\kappa^{<\kappa}\leq2^\kappa.$$
\vskip-18pt
\end{proof}

\begin{theorem}\label{Spower}
Let $\kappa$ be a strong limit cardinal and $I$ be an index set with 
$|I|\ge\kappa$.

{\rm (a)} If $2\leq\alpha<\kappa$ then 
$S(((D(\alpha))^I)_\kappa)=\kappa^+$.

{\rm (b)} If $\kappa$ is regular then
$S(((D(\kappa))^I)_\kappa)=\kappa^+$.

{\rm (c)} If $\kappa$ is singular then
$S(((D(\kappa))^I)_\kappa)=(2^\kappa)^+$.
\end{theorem}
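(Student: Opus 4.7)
The plan is to reduce each part to a single application of Theorem~\ref{T4.15}(b), identifying in each case which of the four conditions (i)--(iv) fails, and then to invoke Lemma~\ref{SLC} to compute $(\alpha^{<\kappa})^+$. In all three cases the spaces $X_i$ are discrete, so $d(X_i)=|X_i|$ and $S(X_i)=|X_i|^+$, and the hypothesis $|I|^+\geq\kappa$ of Theorem~\ref{T4.15} follows at once from $|I|\geq\kappa$.

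For (a) I would set $\alpha':=\max\{\alpha,3\}$ and apply Theorem~\ref{T4.15} with $\alpha'$ in the role of $\alpha$; the required chain $d(X_i)=\alpha\leq\alpha'\leq\alpha^+=S(X_i)$ is immediate. Condition (iv) fails: for any nonempty $J\in[I]^{<\kappa}$ the product $(D(\alpha)^J)_\kappa$ is discrete (since $|J|<\kappa$), so $S((X_J)_\kappa)=(\alpha^{|J|})^+$, and taking $|J|=2$ when $\alpha=2$, or $|J|=1$ when $\alpha\geq3$, yields $S((X_J)_\kappa)>\alpha'$. Thus Theorem~\ref{T4.15}(b) gives $S(((D(\alpha))^I)_\kappa)=((\alpha')^{<\kappa})^+$, and Lemma~\ref{SLC}(a) (applicable since $\alpha'<\kappa$) identifies $(\alpha')^{<\kappa}=\kappa$, producing $\kappa^+$.

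For (b) and (c) I would take $\alpha':=\kappa$ in Theorem~\ref{T4.15}; then $d(X_i)=\kappa=\alpha'\leq\kappa^+=S(X_i)$. In case (b), condition (iii) $\kappa\ll\alpha'=\kappa$ fails, since by definition $\kappa\ll\alpha'$ demands $\kappa<\alpha'$; so Theorem~\ref{T4.15}(b) yields $S(((D(\kappa))^I)_\kappa)=(\kappa^{<\kappa})^+=\kappa^+$ by Lemma~\ref{SLC}(b). In case (c), condition (i) fails since $\alpha'=\kappa$ is singular; Theorem~\ref{T4.15}(b) together with Lemma~\ref{SLC}(c) then gives $S(((D(\kappa))^I)_\kappa)=(\kappa^{<\kappa})^+=(2^\kappa)^+$.

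There is no real obstacle beyond a bit of bookkeeping. The only delicate point is the choice of $\alpha'$: in (a) the theorem demands $\alpha'\geq 3$, which forces the shift $\alpha\mapsto\max\{\alpha,3\}$ when $\alpha=2$, while in (b) and (c) the natural choice $\alpha'=\kappa$ immediately sabotages conditions (iii) and (i) respectively. The cardinal arithmetic of $\kappa^{<\kappa}$ and $\alpha^{<\kappa}$ for $\alpha<\kappa$, which would otherwise be the computational heart of the proof, is entirely packaged into Lemma~\ref{SLC}.
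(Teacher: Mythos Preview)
Your proposal is correct and follows essentially the same approach as the paper: apply Theorem~\ref{T4.15}(b) in each case and then evaluate $(\alpha^{<\kappa})^+$ via Lemma~\ref{SLC}. The paper streamlines the case analysis by noting that condition~(iii) fails uniformly in all three parts (since $\alpha'\leq\kappa$ forces $\kappa\not<\alpha'$, hence $\kappa\not\ll\alpha'$), which spares your separate verifications that (iv) fails in~(a) and (i) fails in~(c); on the other hand, your explicit handling of the case $\alpha=2$ via $\alpha':=\max\{\alpha,3\}$ is a point the paper's one-line proof leaves implicit.
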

\begin{proof}
In each case, condition (iii) of
Theorem~\ref{T4.15} fails, so
parts (a), (b) and (c) follow from Theorem~\ref{T4.15}(b) and
from parts (a), (b) and (c) of Lemma~\ref{SLC}, respectively.
\end{proof}

\begin{remark}\label{S(G-S)}
{\rm
As we noted in Discussion \ref{gitik-shelah}(f) the value
of $d((\two^{\aleph_\omega})_{\aleph_\omega}$ depends on the model of 
{\rm ZFC}, while the findings we have enunciated here are sufficiently 
powerful that the weight and Souslin number of such spaces as 
$(\two^{\aleph_\omega})_{\aleph_1}$ and
$(\two^{\aleph_\omega})_{\aleph_\omega}$ in $\VV_1$ and $\VV_2$
now emerge painlessly. To make those computations,
recall that $2^{\aleph_n}=\aleph_{n+1}$ and
$2^{\aleph_\omega}=\aleph_\omega^\omega=\aleph_{\omega+2}$ there, so we have
from Theorem~\ref{T2.7}(b)
$$w((\two^{\aleph_\omega})_{\aleph_1})=\aleph_\omega^\omega=\aleph_{\omega+2},$$ also
$$w((\two^{\aleph_\omega})_{\aleph_\omega})=(\aleph_\omega)^{<\aleph_\omega}=\aleph_{\omega+2}$$
in both those models.

Concerning the Souslin number, it is clear that
$S((\two^{\aleph_0})_{\aleph_1})=(2^{\aleph_0})^+=\cc^+$ in
ZFC, so from Corollary~\ref{S(Xkappa)} we have for each nonempty set $I$ that 
$$\cc^+=S((\two^{\aleph_0})_{\aleph_1})\leq
S((\two^I)_{\aleph_1})\leq\cc^+,$$
hence $S((\two^{\aleph_\omega})_{\aleph_1})=\cc^+$ in ZFC
(with $\cc^+=\aleph_2$ in the models $\VV_1$ and $\VV_2$).

Finally, from Theorem~\ref{Spower}(a) (or from Theorem \ref{T4.15}(b)) we 
have 
$$S((\two^{\aleph_\omega})_{\aleph_\omega})=(\aleph_\omega)^+=\aleph_{\omega+1}$$ in $\VV_1$ and $\VV_2$.
}
\end{remark}

In Theorem \ref{GKurepa} below we give an upper bound for the Souslin number 
of a product space with the $\kappa$-box topology that depends only on the 
Souslin numbers of its coordinate spaces, rather than (as in 
Theorem \ref{GCN}) on the Souslin numbers of its ``small" sub-products. For 
that we need the following notation (see \cite{comfneg74}, \cite{comfneg82}, 
\cite{juh}).

\begin{notation}
{\rm
Let $\alpha$, $\beta$, $\kappa$ and $\lambda$ be cardinals. The 
\emph{arrow notation} $\alpha\rightarrow(\kappa)^\beta_\lambda$ denotes the 
following partition relation: if $[\alpha]^\beta=\cup_{i<\lambda}\,P_i$ then 
there are $A\subseteq\alpha$ and $i<\lambda$ such that $|A|=\kappa$ and 
$[A]^\beta\subseteq P_i$.
}
\end{notation}

Preliminary to Theorem~\ref{GKurepa} we give a combinatorial
lemma which makes plain the relevance of the arrow
relation
$\alpha\rightarrow(\kappa)^2_\lambda$ to numbers of the form
$S((X_I)_{\lambda^+})$. The (general) proof we give is as anticipated
in \cite[page~73]{comfneg74}; it parallels in all its
essentials that of the special case treated in \cite[3.13]{comfneg74}.

We remark that results significantly stronger than that of 
Lemma~\ref{arrow}, which have perhaps not received the attention or the 
recognition they deserve, were developed by Negrepontis and his school 
in Athens in the 1970's. It is shown in \cite[5.17]{comfneg82} for 
example, using the hypothesis
$\omega\leq\lambda<\kappa\ll\alpha$ with $\kappa$ and $\alpha$ regular,
that if each $S(X_i)\leq\kappa$ then not only is
$S((X_I)_{\lambda^+})\leq\alpha$, as in Lemma~\ref{arrow}, but in fact
of every $\alpha$-many nonempty open subsets of $(X_I)_{\lambda^+}$
some $\alpha$-many have the finite intersection property.

\begin{lemma}\label{arrow}
Let $\alpha$, $\kappa$ and $\lambda$ be infinite cardinals such that
$\alpha\rightarrow(\kappa)^2_\lambda$, and let $\{X_i:i\in I\}$ be a set
of spaces such that $S(X_i)\leq\kappa$ for each $i\in I$. Then
$S((X_I)_{\lambda^+})\leq\alpha$.\end{lemma}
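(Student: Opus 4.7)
To prove the lemma, I would argue by contradiction, assuming the existence of a cellular family $\sC=\{U_\xi:\xi<\alpha\}$ in $(X_I)_{\lambda^+}$. Since the canonical open sets form a base, I may take each $U_\xi=\Pi_{i\in I}U_\xi(i)$ canonical, so $|R(U_\xi)|\leq\lambda$. The overall strategy is to convert $\sC$ into a coloring of $[\alpha]^2$ by at most $\lambda$ colors in such a way that any monochromatic $\kappa$-subset yields a cellular family of cardinality $\kappa$ in a single coordinate space $X_{i_0}$, in violation of $S(X_{i_0})\leq\kappa$.

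The first step, and the principal technical obstacle, is a quasi-disjoint (that is, $\Delta$-system) reduction: I would pass from $\sC$ to a sub-family still indexed by a set of cardinality $\alpha$ whose restriction sets $\{R(U_\xi)\}$ form a $\Delta$-system with some common root $R$; necessarily $|R|\leq\lambda$. In the classical special case \cite[3.13]{comfneg74} this is done by the Erd\H{o}s--Rado quasi-disjoint families theorem. The subtlety is that the standard $\Delta$-system lemma for $\lambda$-sized sets requires cardinal-arithmetic prerequisites on $\alpha$ (notably that $\alpha$ is regular and that $\theta^\lambda<\alpha$ for every $\theta<\alpha$); I would verify that the partition hypothesis $\alpha\rightarrow(\kappa)^2_\lambda$, together with the basic facts collected in Theorem~\ref{expon}, forces these properties of $\alpha$, and hence that a $\Delta$-subfamily of full size $\alpha$ is available.

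Having performed this reduction and re-indexed, for $\xi\neq\eta$ in $\alpha$ the disjointness $U_\xi\cap U_\eta=\emptyset$ forces some index $i$ with $U_\xi(i)\cap U_\eta(i)=\emptyset$; at any coordinate $i\notin R$ at most one of $U_\xi(i),U_\eta(i)$ is a proper subset of $X_i$, so such a witnessing $i$ must lie in $R(U_\xi)\cap R(U_\eta)=R$. Choosing one such $i$ for each pair defines a coloring $c:[\alpha]^2\to R$ with $|R|\leq\lambda$. Applying $\alpha\rightarrow(\kappa)^2_\lambda$ produces $A\in[\alpha]^\kappa$ and a fixed $i_0\in R$ with $c(\{\xi,\eta\})=i_0$ for every $\{\xi,\eta\}\in[A]^2$. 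Then $\{U_\xi(i_0):\xi\in A\}$ is a family of $\kappa$ pairwise disjoint nonempty open subsets of $X_{i_0}$, contradicting $S(X_{i_0})\leq\kappa$ and completing the argument.
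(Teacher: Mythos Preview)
Your $\Delta$-system reduction is not justified, and the claim that the partition hypothesis forces the needed cardinal arithmetic is false. The relation $\alpha\rightarrow(\kappa)^2_\lambda$ is upward monotone in $\alpha$, so once the Erd\H{o}s--Rado theorem gives $(2^{\aleph_0})^+\rightarrow(\aleph_1)^2_{\aleph_0}$ it follows (assuming $2^{\aleph_0}<\aleph_\omega$) that $\aleph_\omega\rightarrow(\aleph_1)^2_{\aleph_0}$ with $\aleph_\omega$ singular; and even for regular $\alpha$ the strong-inaccessibility condition can fail, e.g.\ $\aleph_{\omega+1}\rightarrow(\aleph_1)^2_{\aleph_0}$ while K\"onig's theorem gives $\aleph_\omega^{\aleph_0}\geq\aleph_{\omega+1}$. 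Thus in general no $\Delta$-subfamily of size $\alpha$ is available, and your coloring by a fixed root $R$ of size $\leq\lambda$ cannot be constructed.

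The paper sidesteps the $\Delta$-system lemma entirely. Having chosen for each pair $\{\xi,\xi'\}$ a disjointness witness $i(\xi,\xi')\in R(U^\xi)\cap R(U^{\xi'})$, one observes that the set $I(\xi):=\{i(\xi,\xi'):\xi'\neq\xi\}\subseteq R(U^\xi)$ already has size $\leq\lambda$, so it can be enumerated as $\{i_{\xi,\eta}:\eta<\lambda\}$ \emph{separately for each} $\xi$. The pair $\{\xi,\xi'\}$ (with $\xi<\xi'$) is then colored by the pair of positions $(\eta,\eta')\in\lambda\times\lambda$ for which $i(\xi,\xi')=i_{\xi,\eta}=i_{\xi',\eta'}$; this uses $|\lambda\times\lambda|=\lambda$ colors. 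On a monochromatic $\kappa$-set $A$ with color $(\overline\eta,\overline{\eta'})$, the witness $i(\xi,\xi')=i_{\xi,\overline\eta}=i_{\xi',\overline{\eta'}}$ depends only on the smaller element and only on the larger element, hence is constant, and one obtains a cellular family of size $\kappa$ in a single $X_{\overline i}$.
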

\begin{proof}
Suppose that there is a faithfully indexed cellular family 
$\{U^\xi:\xi<\alpha\}$ of basic open subsets of $(X_I)_{\lambda^+}$, and 
for $\{\xi,\xi'\}\in[\alpha]^2$
let $i(\xi,\xi')\in I$ be such that 
$U^\xi_{i(\xi,\xi')}\cap U^{\xi'}_{i(\xi,\xi')}=\emptyset$. 
For $\xi<\alpha$ we define
$$I(\xi):=\{i(\xi,\xi'):\xi'<\alpha \mbox{ and }\xi\ne\xi'\}.$$
Since $i(\xi,\xi')\in R(U^\xi)\cap R(U^{\xi'})$ for $\{\xi,\xi'\}\in[\alpha]^2$ 
we have $|I(\xi)|\le|R(U^\xi)|\leq\lambda$ for $\xi<\alpha$. Let 
$\{i_{\xi,\eta}:\eta<\lambda\}$ be an indexing of $I(\xi)$ for 
$\xi<\alpha$, and for
$(\eta,\eta')\in \lambda\times\lambda$ set
$$P_{\eta,\eta'}:=\{\{\xi,\xi'\}\in[\alpha]^2:\xi<\xi'\mbox{ and }
i_{\xi,\eta}=i_{\xi',\eta'}\}$$
(some of the sets $P_{\eta,\eta'}$ might be empty). Since
$$[\alpha]^2=\bigcup_{(\eta,\eta')\in\lambda\times\lambda}\,P_{\eta,\eta'}$$
and $\alpha\rightarrow(\kappa)^2_\lambda$, there are
$A\in[\alpha]^\kappa$ and
$(\overline{\eta},\overline{\eta'})\in\lambda\times\lambda$ such that
$[A]^2\subseteq P_{\overline{\eta},\overline{\eta'}}$.
Thus there is $\overline{i}\in I$ such that if $\{\xi,\xi'\}\in[A]^2$ and 
$\xi<\xi'$ then
$i(\xi,\xi')=i_{\xi,\overline{\eta}}=i_{\xi',\overline{\eta'}}=\overline{i}$ 
and hence $U^\xi_{\overline{i}}\cap U^{\xi'}_{\overline{i}}=\emptyset$.
It follows that $\{U^\xi_{\overline{i}}:\xi\in A\}$ is cellular in
$X_{\overline{i}}$ and we have the contradiction
$S(X_{\overline{i}})>|A|=\kappa$.
\end{proof}

The following theorem is \cite[Theorem 1.5(a)]{comfneg82}. It is noted in
\cite{comfneg82} that preliminary formulations of Theorem~\ref{CNT1.5}
appear (with different hypotheses) in Erd{\H{o}}s and
Rado~\cite[39(iii)]{erdosrado56} and Kurepa~\cite{kurepa59a}. The
important and motivating special case
$(2^\alpha)^+\rightarrow(\alpha^+)^2_\alpha$ of Theorem~\ref{CNT1.5}
appeared as early as 1942~\cite{erdos42}, while the seminal instance
$\kappa=\omega$, $\alpha=\omega^+$ of Theorem~\ref{GKurepa}(a) was
given by Kurepa~\cite{kurepa62} (see also \cite[Theorem 3.13
and remark on pp.~73-74]{comfneg74}).

\begin{theorem}\label{CNT1.5}
If $\omega\le \kappa \ll \alpha$ with $\alpha$ and $\kappa$ regular,  
then $\alpha\rightarrow(\kappa)^2_\lambda$ for all $\lambda < \kappa$.
\end{theorem}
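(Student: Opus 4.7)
The plan is to prove the theorem via an elementary-submodel argument, which is cleaner than the tree-of-types approach and avoids the combinatorial headache of showing that decreasing intersections of size-$\alpha$ sets remain of size $\alpha$ at limit stages.

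First I would record two preparatory facts. Since $\kappa\ll\alpha$, we have $\lambda^\mu<\alpha$ for every $\mu<\kappa$, and therefore
$$\lambda^{<\kappa}=\Sigma_{\mu<\kappa}\,\lambda^\mu$$
is a sum of $\kappa<\alpha$ cardinals each strictly below the regular cardinal $\alpha$, so $\lambda^{<\kappa}<\alpha$. Second, because $\kappa$ is regular, $(\lambda^{<\kappa})^{<\kappa}=\lambda^{<\kappa}$ by Theorem~\ref{expon}(b); this ``sesquipotent'' idempotence will let me keep the size of the Skolem hull under control.

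Given a coloring $c:[\alpha]^2\to\lambda$, I would pick a regular cardinal $\theta$ much larger than $\alpha$ and, by the usual iterated-closure construction, obtain an elementary submodel $M\prec H(\theta)$ with $\{c,\lambda,\kappa,\alpha\}\cup(\lambda+1)\subseteq M$, $|M|=\lambda^{<\kappa}$, and $M^{<\kappa}\subseteq M$ (the idempotence above bounds the cardinality at each stage of the closure). Since $|M|<\alpha$ and $\alpha$ is regular, $\delta:=\sup(M\cap\alpha)<\alpha$; pick any $\xi^*\in\alpha\setminus(\delta+1)$. I would then construct recursively a strictly increasing sequence $\langle\xi_\eta:\eta<\kappa\rangle$ in $M\cap\alpha$ so that $c(\xi_{\eta'},\xi_\eta)=c(\xi_{\eta'},\xi^*)$ for all $\eta'<\eta$. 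At stage $\eta$, the sequences $\langle\xi_{\eta'}:\eta'<\eta\rangle$ and $\langle c(\xi_{\eta'},\xi^*):\eta'<\eta\rangle$ (the latter taking values in $\lambda\subseteq M$) both belong to $M$ by $<\kappa$-closure, so the set
$$Y_\eta:=\{\xi\in\alpha:\xi>{\textstyle\sup_{\eta'<\eta}\xi_{\eta'}}\text{ and }c(\xi_{\eta'},\xi)=c(\xi_{\eta'},\xi^*)\text{ for every }\eta'<\eta\}$$
is definable from parameters in $M$. In $H(\theta)$, $Y_\eta$ is nonempty because $\xi^*\in Y_\eta$; by elementarity, $Y_\eta\cap M\neq\emptyset$, and I take $\xi_\eta$ from this intersection.

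Finally, setting $i_{\eta'}:=c(\xi_{\eta'},\xi^*)\in\lambda$ produces a function $\eta'\mapsto i_{\eta'}$ from $\kappa$ into $\lambda$; since $\lambda<\kappa$ and $\kappa$ is regular, the pigeonhole principle gives $i^*\in\lambda$ and $B\in[\kappa]^\kappa$ with $i_{\eta'}=i^*$ for all $\eta'\in B$, so that $\{\xi_\eta:\eta\in B\}$ is a monochromatic set of color $i^*$ and cardinality $\kappa$, witnessing $\alpha\to(\kappa)^2_\lambda$. The main obstacle I expect is simply verifying that $M$ can simultaneously have all three required properties (size $\lambda^{<\kappa}$, containing all needed parameters, and closed under $<\kappa$-sequences); both the strict inequality $\lambda^{<\kappa}<\alpha$ and the identity $(\lambda^{<\kappa})^{<\kappa}=\lambda^{<\kappa}$ from Theorem~\ref{expon} are essential there, and both rely on the joint hypotheses $\kappa\ll\alpha$ and the regularity of $\kappa$ and $\alpha$.
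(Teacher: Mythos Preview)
The paper does not supply its own proof of this theorem; it merely quotes the result as \cite[Theorem~1.5(a)]{comfneg82}, with historical attributions to Erd{\H{o}}s--Rado and Kurepa. Your elementary-submodel argument is correct: the two crucial cardinal-arithmetic facts you isolate---$\lambda^{<\kappa}<\alpha$ (using $\kappa\ll\alpha$ and the regularity of $\alpha$) and $(\lambda^{<\kappa})^{<\kappa}=\lambda^{<\kappa}$ (using Theorem~\ref{expon}(b) and the regularity of $\kappa$)---are exactly what is needed to build a $<\kappa$-closed $M\prec H(\theta)$ of size strictly below $\alpha$, and the recursion then goes through as you describe.

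For comparison, the classical proof in \cite{comfneg82} proceeds by ramification: one builds a $\lambda$-branching tree of height $\kappa$ whose nodes at level $\eta$ are subsets of $\alpha$ of size $\alpha$, splitting at each stage according to the color assigned to a chosen representative; the hypothesis $\kappa\ll\alpha$ is used to bound the number of nodes at each level below $\alpha$ so that (by regularity of $\alpha$) some node at the next level still has size $\alpha$, and regularity of $\kappa$ pushes the construction through limit levels. Your approach trades that explicit tree bookkeeping for a single elementary submodel and a clean one-line recursion; the ``end-homogeneous'' sequence you produce (with colors determined by the smaller index) is exactly what a branch through the ramification tree would yield, and the final pigeonhole step is identical. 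Neither approach is more general here, but yours is shorter and avoids the nuisance of checking that intersections of $<\kappa$-many size-$\alpha$ sets along a branch remain of size $\alpha$.
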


\begin{theorem}\label{GKurepa}
Let $\alpha\geq2$ and $\kappa\geq\omega$ be cardinals and let $\{X_i:i\in I\}$ 
be a family of nonempty spaces such that $S(X_i)\le\alpha^+$ for each $i\in I$.

{\rm (a)} If $\alpha^+\ge\kappa$ then 
$S((X_I)_\kappa)\le(2^\alpha)^+$; and

{\rm (b)} if $\alpha^+\leq \kappa$ then 
$S((X_I)_\kappa)\le((\alpha^{<\kappa})^{<\kappa})^+$.
\end{theorem}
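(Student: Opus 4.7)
The natural plan is to derive both parts from Lemma~\ref{arrow} and Theorem~\ref{CNT1.5}, using in part~(a) the inaccessibility relation $\alpha^+\ll(2^\alpha)^+$ from Remark~\ref{help}(2), and in part~(b) the reduction of Theorem~\ref{GCN}(b) combined with part~(a) applied to each subproduct.

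For part~(a), I first note that if $\alpha<\omega$ then $\alpha^+$ is finite and the hypothesis $\alpha^+\ge\kappa\ge\omega$ is unsatisfiable, so I may assume $\alpha\ge\omega$. Since $\kappa\le\alpha^+$ forces the $\kappa$-box topology to be coarser than the $\alpha^+$-box topology, one has $S((X_I)_\kappa)\le S((X_I)_{\alpha^+})$, and it suffices to prove the latter is at most $(2^\alpha)^+$. By Remark~\ref{help}(2), $\alpha^+\ll(2^\alpha)^+$ with both cardinals regular, so Theorem~\ref{CNT1.5} supplies the partition relation $(2^\alpha)^+\rightarrow(\alpha^+)^2_\alpha$; combined with the hypothesis $S(X_i)\le\alpha^+$, Lemma~\ref{arrow} (with $(2^\alpha)^+$, $\alpha^+$, $\alpha$ playing the roles of $\alpha$, $\kappa$, $\lambda$ there) immediately delivers $S((X_I)_{\alpha^+})\le(2^\alpha)^+$, as desired.

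For part~(b), I plan to apply Theorem~\ref{GCN}(b) to reduce the claim to the assertion that $S((X_J)_\kappa)\le((\alpha^{<\kappa})^{<\kappa})^+$ for every nonempty $J\in[I]^{<\kappa}$. Fix such a $J$; since $|J|<\kappa$, every basic open rectangle in $X_J$ already has restriction set of size below $\kappa$, so on $X_J$ the $\kappa$-box topology coincides with the full box topology, i.e., the $|J|^+$-box topology. I then set $\alpha':=\max\{|J|,\alpha,\omega\}$ and invoke part~(a) on the family $\{X_i:i\in J\}$, using $\alpha'$ and $|J|^+$ in the roles of $\alpha$ and $\kappa$ when $|J|$ is infinite, and using $\alpha$ and $\alpha^+$ when $|J|$ is finite but $\alpha\ge\omega$ (in which case $(X_J)_\kappa$ is already the usual product topology on a finite product). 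Both applications yield $S((X_J)_\kappa)\le(2^{\alpha'})^+$. The residual case in which both $|J|$ and $\alpha$ are finite (which includes every $J$ when $\kappa=\omega$ and $\alpha$ is finite) is disposed of by Theorem~\ref{firstSP} together with the elementary fact that a finite product of spaces of finite Souslin number has finite Souslin number, so $S(X_J)<\omega\le((\alpha^{<\kappa})^{<\kappa})^+$. In the non-degenerate cases, $\alpha<\kappa$ (from $\alpha^+\le\kappa$) and $|J|<\kappa$ force $\alpha'<\kappa$, so Theorem~\ref{expon}(a) gives $2^{\alpha'}\le 2^{<\kappa}\le\alpha^{<\kappa}\le(\alpha^{<\kappa})^{<\kappa}$, and hence $(2^{\alpha'})^+\le((\alpha^{<\kappa})^{<\kappa})^+$, completing the reduction.

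The conceptual core lies entirely in part~(a); the principal obstacle in part~(b) is really just bookkeeping---choosing $\alpha'$ infinite enough to satisfy the hypothesis of part~(a) yet small enough that $2^{\alpha'}\le(\alpha^{<\kappa})^{<\kappa}$, and isolating the small-$|J|$, small-$\alpha$ edge cases (especially $\kappa=\omega$), which must be peeled off and handled by Theorem~\ref{firstSP} rather than by a circular appeal to part~(a).
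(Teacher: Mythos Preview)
Your proof of part~(a) is identical to the paper's. For part~(b), however, you take a genuinely different route. The paper argues directly, splitting into three cases according to whether $\kappa$ is singular, a successor, or a regular limit cardinal; in each case it produces a suitable partition relation from Theorem~\ref{CNT1.5} and feeds it into Lemma~\ref{arrow} (the regular-limit case additionally needing a pigeonhole argument on a hypothetical large cellular family, since no single $\lambda<\kappa$ suffices). You instead invoke Theorem~\ref{GCN}(b) to reduce to subproducts $X_J$ with $|J|<\kappa$, observe that on such $X_J$ the $\kappa$-box topology is the full box topology, and then apply the already-established part~(a) with an inflated $\alpha'=\max\{|J|,\alpha,\omega\}$, finishing via $2^{\alpha'}\le 2^{<\kappa}\le(\alpha^{<\kappa})^{<\kappa}$.

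Your argument is correct and more uniform in that it avoids the three-way case split on $\kappa$; the price is that it draws on both the $\Delta$-system machinery (underlying Theorem~\ref{CN}, hence Theorem~\ref{GCN}(b)) and the Erd\H{o}s--Rado relation (via part~(a)), whereas the paper's proof of~(b) uses only the latter. Your approach also pushes the combinatorial work into edge cases: the ``residual'' case $|J|,\alpha<\omega$ still needs finite Ramsey to conclude that a finite product of spaces of finite Souslin number has finite Souslin number---true, but not quite as elementary as you suggest, and worth a one-line justification.
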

\begin{proof}
(a) Clearly $\alpha\geq\omega$ here, so 
Remark~\ref{help}(2) applies to give $\alpha^+\ll(2^\alpha)^+$. We then have
$$(2^\alpha)^+\rightarrow(\alpha^+)^2_\alpha$$ by Theorem~\ref{CNT1.5},
so $S((X_I)_{\alpha^+})\leq(2^\alpha)^+$ by Lemma~\ref{arrow} (with
$(2^\alpha)^+$, $\alpha^+$, and $\alpha$ in the role of $\alpha$,
$\kappa$, and $\lambda$ there). Thus surely
$S((X_I)_\kappa)\leq(2^\alpha)^+$ if $\kappa\leq\alpha^+$.

(b) We consider three cases.

\underline{Case 1}. $\kappa$ is singular. Then
$\kappa^+\ll((\alpha^{<\kappa})^{<\kappa})^+$
by Remark~\ref{help}(4), hence we have from Theorem~\ref{CNT1.5} that
$$((\alpha^{<\kappa})^{<\kappa})^+\rightarrow(\kappa^+)^2_\kappa.$$
Since $\alpha\leq\kappa$ we have $S(X_i)\leq\kappa^+$ for each $i\in I$,
so in fact even
$$S((X_I)_\kappa)\leq S((X_I)_{\kappa^+})\leq((\alpha^{<\kappa})^{<\kappa})^+$$
by Lemma~\ref{arrow} (with $((\alpha^{<\kappa})^{<\kappa})^+$,
$\kappa^+$, and $\kappa$ in the role of $\alpha$, $\kappa$, and $\lambda$
there). 

\underline{Case 2}. $\kappa$ is a successor cardinal, say
$\kappa=\lambda^+$. Since
$\lambda^+\ll(\alpha^{\lambda})^+$, by Theorem~\ref{CNT1.5} we have 
$$(\alpha^\lambda)^+\rightarrow(\lambda^+)^2_\lambda,$$  
so $$S((X_I)_{\kappa})\leq(\alpha^\lambda)^+=((\alpha^{<\kappa})^{<\kappa})^+$$
by Lemma~\ref{arrow} (with
$(\alpha^\lambda)^+$, $\lambda^+$, and $\lambda$ in the role of $\alpha$,
$\kappa$, and $\lambda$ there).

\underline{Case 3}. $\kappa$ is regular limit cardinal. Then it follows
from Remark \ref{help}(3) that
$\omega\le \kappa \ll ((\alpha^{<\kappa})^{<\kappa})^+$ and 
therefore, according to Theorem \ref{CNT1.5}, 
$((\alpha^{<\kappa})^{<\kappa})^+\rightarrow(\kappa)^2_\lambda$
for all $\lambda<\kappa$. Then since $S(X_i)\leq\kappa$
for each $i\in I$ we have
\begin{equation}\label{Eq9}
S((X_I)_{\lambda^+})\le ((\alpha^{<\kappa})^{<\kappa})^+ \mbox{ for all } \lambda<\kappa
\end{equation}
\noindent from Lemma~\ref{arrow} (with
$((\alpha^{<\kappa})^{<\kappa})^+$ in the role of $\alpha$ there).

Now suppose that $\sC$ is a cellular family in $(X_I)_\kappa$ of canonical
open sets such that $|\sC|=((\alpha^{<\kappa})^{<\kappa})^+$,
and for $\lambda<\kappa$ let
$\sC(\lambda):=\{U\in\sC:|R(U)|<\lambda\}$. Then
$\sC=\bigcup_{\lambda<\kappa}\,\sC(\lambda)$ with
$$\cf(((\alpha^{<\kappa})^{<\kappa})^+)=((\alpha^{<\kappa})^{<\kappa})^+\geq\kappa^+>\kappa,$$
so there is $\lambda<\kappa$ such that
$|\sC(\lambda)|=((\alpha^{<\kappa})^{<\kappa})^+$.
Then
$$S((X_I)_{\lambda^+})>|\sC(\lambda)|=((\alpha^{<\kappa})^{<\kappa})^+,$$
contrary to (\ref{Eq9}).
\end{proof}
\begin{remark}
{\rm
We note that in those cases of Theorem~\ref{GKurepa} to which both (a) 
and (b) apply, namely when $\kappa=\alpha^+$, the
upper bounds provided by the estimates in (a) and (b) coincide.
Indeed with $\kappa=\alpha^+$ we have
$$((\alpha^{<\kappa})^{<\kappa})^+=((\alpha^\alpha)^\alpha)^+=(2^\alpha)^+.$$
}
\end{remark}

Combining Theorem \ref{T4.14} and Theorem \ref{GKurepa} we obtain the 
following corollary.

\begin{corollary}\label{C4.37}
Let $\alpha\geq3$ and $\kappa\geq\omega$ be cardinals, and let
$\{X_i:i\in I\}$ be a set of spaces such that $|I|^+\geq\kappa$ and
$\alpha\le S(X_i)\le \alpha^+$ for each $i\in I$. 

{\rm (a)} If $\alpha^+\ge\kappa$ then 
$\alpha^{<\kappa}\le S((X_I)_\kappa)\le(2^\alpha)^+$; and

{\rm (b)} if $\alpha^+\leq \kappa$ then 
$(2^{<\kappa})^+\le S((X_I)_\kappa)\le((2^{<\kappa})^{<\kappa})^+$.
\end{corollary}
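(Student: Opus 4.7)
My plan is to read off both parts of Corollary~\ref{C4.37} as straightforward combinations of Theorem~\ref{T4.14} and Theorem~\ref{GKurepa}. Together these two theorems bracket $S((X_I)_\kappa)$ from below and above under the present hypotheses $\alpha \le S(X_i) \le \alpha^+$ and $|I|^+ \ge \kappa$, and the only thing one has to do is match the appropriate parameters and note a small cardinal-arithmetic equality in case (b).

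For part (a), assume $\alpha^+\ge\kappa$. The lower bound $\alpha^{<\kappa}\le S((X_I)_\kappa)$ is immediate from Theorem~\ref{T4.14}(a), which uses only $\alpha\le S(X_i)$ and $|I|^+\ge\kappa$. For the upper bound, the hypothesis $S(X_i)\le\alpha^+$ together with the size condition $\alpha^+\ge\kappa$ matches exactly the hypothesis of Theorem~\ref{GKurepa}(a), yielding $S((X_I)_\kappa)\le(2^\alpha)^+$.

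For part (b), assume $\alpha^+\le\kappa$, so in particular $\alpha<\kappa$. Since $S(X_i)\ge\alpha$ and $\alpha<\kappa$, Theorem~\ref{T4.14}(b) gives $(2^{<\kappa})^+=(\alpha^{<\kappa})^+\le S((X_I)_\kappa)$; the equality $2^{<\kappa}=\alpha^{<\kappa}$ is recorded inside the proof of that theorem and is also immediate from
$$2^{<\kappa}\le\alpha^{<\kappa}\le(2^\alpha)^{<\kappa}=\Sigma_{\lambda<\kappa}\,2^{\max(\alpha,\lambda)}=2^{<\kappa}.$$
For the upper bound, $S(X_i)\le\alpha^+$ together with $\alpha^+\le\kappa$ fits the hypothesis of Theorem~\ref{GKurepa}(b), which produces $S((X_I)_\kappa)\le((\alpha^{<\kappa})^{<\kappa})^+=((2^{<\kappa})^{<\kappa})^+$, using the same equality.

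There is essentially no obstacle here: the corollary is assembled by pattern-matching the hypotheses against those of the two prior theorems, and the only non-citation step is the trivial confirmation that $\alpha^{<\kappa}=2^{<\kappa}$ under $\alpha<\kappa$, which has already appeared in the paper.
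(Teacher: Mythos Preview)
Your proof is correct and matches the paper's approach exactly: the paper simply states that the corollary follows by combining Theorem~\ref{T4.14} and Theorem~\ref{GKurepa}, and you have unpacked precisely that combination, supplying the one needed arithmetic identity $\alpha^{<\kappa}=2^{<\kappa}$ for $\alpha<\kappa$.
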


\begin{corollary}\label{C4.40}
Let $\alpha\geq3$ and $\kappa\geq\omega$ be cardinals, and let
$\{X_i:i\in I\}$ be a set of spaces such that $|I|^+\geq\kappa$ and
$3\le S(X_i)\le \alpha^+$ for each $i\in I$. If $\alpha^+\le\kappa$ then
$S((X_I)_\kappa)=(2^{<\kappa})^+$.
\end{corollary}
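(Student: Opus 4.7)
For the lower bound $(2^{<\kappa})^+ \le S((X_I)_\kappa)$, I would apply Theorem \ref{T4.14}(b) with $3$ playing the role of $\alpha$. The hypotheses $3 \le S(X_i)$ and $3 < \kappa$ are immediate, so the theorem gives $(3^{<\kappa})^+ \le S((X_I)_\kappa)$; and one checks $3^{<\kappa} = 2^{<\kappa}$ since $3^\lambda = 2^\lambda$ for every infinite $\lambda$.

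For the upper bound $S((X_I)_\kappa) \le (2^{<\kappa})^+$, note first that $\alpha^+ \le \kappa$ forces $\alpha < \kappa$, so $\alpha^{<\kappa} = 2^{<\kappa}$. Applying Theorem \ref{GKurepa}(b) directly then yields $S((X_I)_\kappa) \le ((\alpha^{<\kappa})^{<\kappa})^+ = ((2^{<\kappa})^{<\kappa})^+$. If $\kappa$ is regular, Theorem \ref{expon}(b) collapses $(2^{<\kappa})^{<\kappa}$ to $2^{<\kappa}$ and we are done. The hard case is $\kappa$ singular, for then Theorem \ref{expon}(c) gives $(2^{<\kappa})^{<\kappa} = 2^\kappa$, which may strictly exceed $2^{<\kappa}$. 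To close this gap I would argue by contradiction: suppose $\mathcal{C}$ is a cellular family of canonical open sets in $(X_I)_\kappa$ with $|\mathcal{C}| = (2^{<\kappa})^+$, and for each infinite cardinal $\lambda < \kappa$ set $\mathcal{C}(\lambda) := \{U \in \mathcal{C} : |R(U)| \le \lambda\}$. Since $\kappa$ is a limit, $\mathcal{C} = \bigcup_\lambda \mathcal{C}(\lambda)$; because $(2^{<\kappa})^+$ is regular and exceeds $\kappa$, some $\mathcal{C}(\lambda_0)$ has size $(2^{<\kappa})^+$. Then $\lambda_0^+ < \kappa$ (since $\kappa$ is a limit), and $\mathcal{C}(\lambda_0)$ is cellular in the coarser topology $(X_I)_{\lambda_0^+}$. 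Now apply Theorem \ref{GKurepa} with $\lambda_0^+$ in place of $\kappa$: if $\alpha \ge \lambda_0$, part (a) gives $S((X_I)_{\lambda_0^+}) \le (2^\alpha)^+$; if $\alpha \le \lambda_0$, part (b) gives $S((X_I)_{\lambda_0^+}) \le ((\alpha^{<\lambda_0^+})^{<\lambda_0^+})^+ = (\alpha^{\lambda_0})^+ = (2^{\lambda_0})^+$ (using $\alpha \le \lambda_0$ to identify $\alpha^{\lambda_0}$ with $2^{\lambda_0}$). In either case the bound is at most $(2^{<\kappa})^+$ since $\alpha, \lambda_0 < \kappa$, contradicting $|\mathcal{C}(\lambda_0)| = (2^{<\kappa})^+$.

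The chief obstacle is precisely the singular-$\kappa$ case of the upper bound: the naive sandwich from Corollary \ref{C4.37}(b) produces only $((2^{<\kappa})^{<\kappa})^+ = (2^\kappa)^+$, which can be strictly larger than $(2^{<\kappa})^+$ in models where $2^\kappa > 2^{<\kappa}$. The pigeonhole reduction above is what sharpens the bound, exploiting the fact that every canonical open set in the $\kappa$-box topology uses fewer than $\kappa$ coordinates, allowing us to descend to a successor-cardinal box topology $(X_I)_{\lambda_0^+}$ where the iterated exponential in Theorem \ref{GKurepa}(b) collapses to a single exponential and yields the sharp estimate.
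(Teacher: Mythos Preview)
Your argument is correct and follows essentially the same route as the paper's own proof: Theorem~\ref{T4.14}(b) for the lower bound, and for the upper bound a pigeonhole descent from the $\kappa$-box topology to a $\lambda_0^+$-box topology where Theorem~\ref{GKurepa} gives a sharp estimate. The only organizational differences are cosmetic. The paper first disposes of all cases with $(2^{<\kappa})^{<\kappa}=2^{<\kappa}$ (regular $\kappa$, or singular $\kappa$ with some $\nu<\kappa$ satisfying $2^\nu=2^{<\kappa}$) via Corollary~\ref{C4.37}(b), and then runs the pigeonhole only in the residual singular case; you instead run the pigeonhole for every singular $\kappa$, which is just as good. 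Second, after locating $\lambda_0$, the paper passes up to a larger $\kappa_{\eta'}$ with $\alpha<\kappa_{\eta'}$ so that only part~(b) of Theorem~\ref{GKurepa} is needed, whereas you keep $\lambda_0$ and split into the two cases $\alpha\geq\lambda_0$ and $\alpha\leq\lambda_0$; both maneuvers yield a bound of the form $(2^\gamma)^+$ with $\gamma<\kappa$, hence at most $(2^{<\kappa})^+$, contradicting the existence of $\mathcal{C}(\lambda_0)$.
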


\begin{proof}
The case $3=\alpha<\kappa$ of Theorem~\ref{T4.14}(b) gives
$S((X_I)_\kappa\geq(2^{<\kappa})^+$.
 
If $\kappa$ is regular or there is $\nu<\kappa$ such that $2^\nu=2^{<\kappa}$
then $2^{<\kappa}=(2^{<\kappa})^{<\kappa}$ by Theorem~\ref{weak<kappa}(a) and 
the statement is immediate from Corollary \ref{C4.37}(b). 

Now we assume that $\kappa$ is singular and that $2^\nu<2^{<\kappa}$ for each 
$\nu<\kappa$ and we let $\{\kappa_\eta:\eta<\cf(\kappa)\}$ be a 
set of cardinals as in Notation~\ref{defkappaeta}. 
Suppose that $S((X_I)_\kappa)>(2^{<\kappa})^+$, and let $\sC$ be a
cellular family of basic
open sets in $(X_I)_\kappa$ with $|\sC|=(2^{<\kappa})^+$.
Then with
$$\sC(\eta):=\{C:C\in \sC \mbox{ and } C \mbox{ is open in }
(X_I)_{\kappa_\eta}\}$$
for $\eta<\cf(\kappa)$ we have
$\sC=\cup_{\eta<\cf(\kappa)}\,\sC(\eta)$, and since
$\cf((2^{<\kappa})^+)>\cf(\kappa)$ there is
$\eta<\cf(\kappa)$ such that $|\sC(\eta)| = (2^{<\kappa})^+$; for this
$\eta$ we have
\begin{equation}\label{Eq8'}
S((X_I)_{\kappa_\eta})>(2^{<\kappa})^+.
\end{equation}
Since $\kappa$ is singular we have from $\alpha^+\leq\kappa$ that
$\alpha<\kappa$, so there is $\eta'<\cf(\kappa)$ such that
$\alpha<\kappa_{\eta'}$; we take $\eta'\geq\eta$. Then from
Theorem~\ref{GKurepa}(b) with $\kappa_{\eta'}^+$ replacing $\kappa$ we have
$$S((X_I)_{\kappa_\eta})\leq
S((X_I)_{\kappa_{\eta'}^+})\leq(\alpha^{\kappa_{\eta'}})^+
\leq((2^\alpha)^{\kappa_{\eta'}})^+=(2^{\kappa_{\eta'}})^+<2^{<\kappa},$$ 
which contradicts (\ref{Eq8'}).
\end{proof}
\begin{remarks}\label{G-Smodels}
{\rm
(a) We noted in Remark~\ref{condsi--iv} that the conditions given in
Theorem~\ref{T4.15} are satisfied by many pairs $\kappa$, $\alpha$ of
cardinals and for many sets $\{X_i:i\in I\}$ of spaces; in particular
(see condition (iv) of Theorem~\ref{T4.15}) for
$\kappa\ll\alpha=\alpha^{<\kappa}$ there are spaces $X$ such that
$S((X^I)_\kappa)=\alpha$ for all nonempty index sets $I$. We note now
that consistently there are (regular) $\alpha$ and $\kappa$ for which
the relation $S((X^I)_\kappa)=\alpha$ holds for no space $X$ and
infinite index set $I$.
Indeed, let $\VV$ be one of the Gitik-Shelah models whose salient
cardinality properties are given in
Discussion \ref{gitik-shelah}(d) and let $\kappa=\aleph_1$ and 
$\alpha=\aleph_{\omega+1}$. Suppose there is a space $X$ 
such that $S(X)=\alpha$ and $S((X^I)_\kappa)=\alpha$ for some infinite set 
$I$. Then $S(X)=\alpha=\aleph_{\omega+1}>\aleph_{\omega}$, and it follows 
from Lemma~\ref{obs1} that
$S((X^I)_\kappa)>(\aleph_{\omega})^\omega=\aleph_{\omega+2}$, a contradiction.

(b) The upper bound $S(X\times X)=(2^\alpha)^+$ for spaces $X$ such that
$S(X)=\alpha^+$, allowed by Theorem~\ref{GKurepa}(a), is in fact
achieved for many $\alpha$ and $X$. This was first shown by Galvin and
Laver (cf.~\cite{galvin}) assuming $\alpha^+=2^\alpha$ (see
\cite[7.13]{comfneg82} for a treatment of the construction)
and by examples in ZFC by Todor\v{c}evi\'{c} \cite{todor85},
\cite{todori}, \cite{todorii}.
When $\alpha^+=2^\alpha$ this strict increase from
$S(X)$ to $S(X\times X)$ is minimal in the sense that
$$S(X\times X)=(2^\alpha)^+=(\alpha^+)^+=(S(X))^+.$$
\noindent We note in contrast that
in the models discussed in (a) there 
are spaces $X$
such that $S(X^I)\ge (S(X))^{++}$, for every infinite set $I$. 
For example, for any space $X$ in those models satisfying
$S(X)=\alpha=\aleph_{\omega+1}$ and with $\kappa=\aleph_1$  we have
$$((\aleph_{\omega+1})^+)^+=\aleph_{\omega+3}\leq S((X^I)_\kappa)
\le(2^\alpha)^+=(2^{\aleph_{\omega+1}})^+$$
in these models. Similarly Fleissner~\cite[Section~5]{fleissner78},
in suitably defined Cohen models of ZFC, constructs spaces $X$ for which
$S(X)=\omega^+=\aleph_1$ and 
$S(X\times X)=\aleph_{\omega+2}>\cc=\aleph_{\omega+1}$.
}
\end{remarks}

The rest of this section is devoted to seeking definitive relations
between and among the cardinals
$S((X_I)_\kappa)$, $S((X_J)_\kappa)$ with $J\in[I]^{<\kappa}$, and
$(\alpha^{<\kappa})^+$. Our success, though substantial, is only
partial, since we have been unable to give a fully satisfactory
answer to Question~\ref{Q1} in ZFC.

\begin{theorem}\label{T4.16}
Let $\alpha\geq2$ and $\kappa\geq\omega$ be cardinals such that
$\alpha^{<\kappa}<(\alpha^{<\kappa})^{<\kappa}$. If $\{X_i:i\in I\}$ is
a set of nonempty spaces such that
$S((X_I)_\kappa)>(\alpha^{<\kappa})^+$, then there are
a cardinal $\lambda<\kappa$ and $J\in[I]^{<\lambda}$ such that
$S((X_J)_{\lambda})\geq(\alpha^{<\kappa})^+$.
\end{theorem}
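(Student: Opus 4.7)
From $\alpha^{<\kappa}<(\alpha^{<\kappa})^{<\kappa}$ and Theorem~\ref{weak<kappa}(b) I deduce that $\kappa$ and $\alpha^{<\kappa}$ are singular with $\cf(\alpha^{<\kappa})=\cf(\kappa)<\kappa\le\alpha^{<\kappa}$; in particular $(\alpha^{<\kappa})^+$ is regular with cofinality exceeding $\cf(\kappa)$, and $(\alpha^{<\kappa})^\mu=\alpha^{<\kappa}$ for every $\mu<\cf(\kappa)$ (any map $\mu\to\alpha^{<\kappa}$ is bounded by cofinality, and $\sum_{\nu<\kappa}(\alpha^\nu)^\mu\le\alpha^{<\kappa}$). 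The degenerate case $|I|<\kappa$ is immediate: take $\lambda:=|I|^+<\kappa$ and $J:=I$, using that the $\lambda$-box and $\kappa$-box topologies on $X_I$ agree when $\lambda>|I|$. So I assume $|I|\ge\kappa$.

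Fix a cellular family $\sC$ of basic open sets in $(X_I)_\kappa$ with $|\sC|=(\alpha^{<\kappa})^+$, and set $\sC_\lambda:=\{U\in\sC:|R(U)|<\lambda\}$. Writing $\sC=\bigcup_{\eta<\cf(\kappa)}\sC_{\kappa_\eta}$ for a cofinal sequence $\{\kappa_\eta\}$ as in Notation~\ref{defkappaeta}, and exploiting that $\cf((\alpha^{<\kappa})^+)>\cf(\kappa)$, pick $\lambda^*:=\min\{\lambda<\kappa:|\sC_\lambda|=(\alpha^{<\kappa})^+\}$. This $\lambda^*$ cannot be a limit cardinal, else $\sC_{\lambda^*}=\bigcup_{\lambda<\lambda^*}\sC_\lambda$ would express $(\alpha^{<\kappa})^+$ as a union of $\le\alpha^{<\kappa}$ sets of size $\le\alpha^{<\kappa}$; hence $\lambda^*=\mu^+$ for some $\mu<\kappa$ with $|\sC_\mu|\le\alpha^{<\kappa}$, and the subfamily $\sC^*:=\{U\in\sC:|R(U)|=\mu\}$ has cardinality $(\alpha^{<\kappa})^+$. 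Now split on $\sR:=\{R(U):U\in\sC^*\}\subseteq[I]^\mu$. If $|\sR|\le\alpha^{<\kappa}$, then regularity of $(\alpha^{<\kappa})^+$ produces $R\in\sR$ whose fibre already has size $(\alpha^{<\kappa})^+$; projecting those $U$'s via $\pi_R$ yields a cellular family of that size in $X_R$ (two meeting projections would lift to meeting $U$'s by filling coordinates in $I\setminus R$ freely), and $J:=R$, $\lambda:=\mu^+<\kappa$ (possible since $\kappa$ is limit) finishes the proof. If instead $|\sR|=(\alpha^{<\kappa})^+$, I invoke the Erd{\H o}s--Rado $\Delta$-system lemma on $\sR$; applicability requires $(\alpha^{<\kappa})^{<\mu}\le\alpha^{<\kappa}$, which by the arithmetic above holds precisely when $\mu\le\cf(\kappa)$. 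When applicable, the lemma yields a quasi-disjoint refinement of size $(\alpha^{<\kappa})^+$ with root $R\in[I]^{<\mu}$, and the analogous projection argument (now using the $\Delta$-system disjointness of $R(U)\setminus R$ and $R(U')\setminus R$ to extend common points) produces a cellular family of that size in $X_R$.

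The hard part is the residual subcase $\mu>\cf(\kappa)$ inside the $\Delta$-system branch, where K{\"o}nig's inequality gives $(\alpha^{<\kappa})^\mu\ge(\alpha^{<\kappa})^{\cf(\kappa)}>\alpha^{<\kappa}$ and the Erd{\H o}s--Rado arithmetic collapses. I plan to rule this subcase out by a recursive application of Theorem~\ref{Obsn}(a) at limit cardinals $\gamma<\mu^+$: the minimality of $\lambda^*$ forces $S((X_I)_\gamma)\le\alpha^{<\kappa}$ for every $\gamma<\mu^+$, and combining this bound with the cofinality identity $\cf(\alpha^{<\kappa})=\cf(\kappa)$, applied to the partition of $\sC^*$ induced by $\{\kappa_\eta\}$, should force the large subfamily to concentrate on restriction sets of size below $\cf(\kappa)$, contradicting $\mu>\cf(\kappa)$. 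Making this reduction rigorous---using only the hypothesised arithmetic $\alpha^{<\kappa}<(\alpha^{<\kappa})^{<\kappa}$---is where the argument demands its deepest care.
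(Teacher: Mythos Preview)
Your argument has a genuine gap in the ``hard subcase'' $\mu\ge\cf(\kappa)$, and the plan you sketch for it does not work. The inference ``the minimality of $\lambda^*$ forces $S((X_I)_\gamma)\le\alpha^{<\kappa}$ for every $\gamma<\mu^+$'' is simply false: minimality of $\lambda^*$ tells you only that the \emph{particular} subfamily $\sC_\gamma$ of your chosen $\sC$ has size at most $\alpha^{<\kappa}$; it gives no upper bound whatsoever on $S((X_I)_\gamma)$, since other cellular families in $(X_I)_\gamma$ could well be larger. Without that bound the rest of your sketch collapses, and I do not see how to repair it along the lines you indicate. (There is also a minor slip: the $\Delta$-system lemma for sets of size exactly $\mu$ needs $(\alpha^{<\kappa})^{\mu}\le\alpha^{<\kappa}$, which holds for $\mu<\cf(\kappa)$ but already fails at $\mu=\cf(\kappa)$ by K\"onig.)

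The paper's proof sidesteps this difficulty entirely, and the idea is worth absorbing. Rather than attempting the $\Delta$-system lemma at the singular level $(\alpha^{<\kappa})^+$, one fixes the single $\eta$ with $|\sC(\kappa_\eta)|=(\alpha^{<\kappa})^+$ and then, for every $\eta'>\eta$, applies Theorem~\ref{CN} at the \emph{smaller} regular level $(\alpha^{\kappa_{\eta'}})^+$, where the needed hypothesis $\kappa_\eta^+\ll(\alpha^{\kappa_{\eta'}})^+$ holds trivially (since $\beta^{\kappa_\eta}\le(\alpha^{\kappa_{\eta'}})^{\kappa_\eta}=\alpha^{\kappa_{\eta'}}$ for $\beta\le\alpha^{\kappa_{\eta'}}$). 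This yields sets $J(\eta')\in[I]^{\le\kappa_\eta}$ with $S((X_{J(\eta')})_{\kappa_\eta^+})>(\alpha^{\kappa_{\eta'}})^+$; their union $J$ has $|J|\le\kappa_\eta\cdot\cf(\kappa)<\kappa$, and $S((X_J)_{\kappa_\eta^+})$ dominates every $(\alpha^{\kappa_{\eta'}})^+$, hence dominates the singular cardinal $\alpha^{<\kappa}$, hence is at least $(\alpha^{<\kappa})^+$. The point is that the obstructive arithmetic vanishes once you work below the singular supremum and only take the union at the end.
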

\begin{proof}
Let $\sC$ be a cellular family of basic open subsets of $(X_I)_\kappa$
such that $|\sC|=(\alpha^{<\kappa})^+$. Let
$\{\kappa_\eta:\eta<\cf(\kappa)\}$ be as in Notation~\ref{defkappaeta}
and for $\eta<\cf(\kappa)$ set
$\sC(\eta):=\{U\in\sC:|R(U)|<\kappa_\eta\}$.
Since $|\sC|=\bigcup_{\eta<\cf(\kappa)}\,\sC(\eta)$ and
$\cf((\alpha^{<\kappa})^+)=(\alpha^{<\kappa})^+\geq\kappa^+>\cf(\kappa)$,
there is $\eta<\cf(\kappa)$ (henceforth fixed) such that
$|\sC(\eta)|=(\alpha^{<\kappa})^+$. Then $\sC(\eta)$ is cellular in
$(X_I)_{\kappa_\eta}$,
hence in $(X_I)_{\kappa_\eta^+}$, and for $\eta<\eta'<\cf(\kappa)$ we
have
$$S((X_I)_{\kappa_\eta^+})>|\sC(\eta)|
=(\alpha^{<\kappa})^+>(\alpha^{\kappa_{\eta'}})^+.$$
Then since $\kappa_\eta^+\ll(\alpha^{\kappa_{\eta'}})^+$ there is, by
Theorem~\ref{CN} (with $\kappa_\eta^+$ and
$(\alpha^{\kappa_{\eta'}})^+$ in the roles of $\kappa$ and $\alpha$
respectively), a set $J(\eta')\in[I]^{<\kappa_\eta^+}$ such
that
$S((X_{J(\eta')})_{\kappa_\eta^+})>(\alpha^{\kappa_{\eta'}})^+$.
Then with $J:=\bigcup_{\eta<\eta'<\cf(\kappa)}\,J(\eta')$ we have
$|J|\leq\kappa_\eta\cdot\cf(\kappa)<\kappa$, and
$$S((X_J)_{\kappa_\eta^+})>(\alpha^{\kappa_{\eta'}})^+
 \mbox{ when }\eta<\eta'<\cf(\kappa),$$
hence $$S((X_J)_{\kappa_\eta^+})\ge\sup_{\eta<\eta'<\cf(\kappa)}(\alpha^{\kappa_{\eta'}})^+=\Sigma_{\eta<\eta'<\cf(\kappa)}\,
(\alpha^{\kappa_{\eta'}})^+=\alpha^{<\kappa}.$$ Since in our case 
$\alpha^{<\kappa}$ is singular (Theorem \ref{weak<kappa}(b)) we have 
$$S((X_J)_{\kappa_\eta^+})\geq(\alpha^{<\kappa})^+,$$ so the 
conclusion holds with $\lambda:=\max\{\kappa_\eta^+,|J|^+\}$.
\end{proof}
We continue in Corollary~\ref{GCNan} with a consequence of
Theorem~\ref{T4.16} for which Lemma~\ref{newl} is preparatory.
\begin{lemma}\label{newl}
Let $\kappa\ge\omega$ be a limit cardinal, $\alpha\geq2$ be a cardinal and 
$\{X_i:i\in I\}$ be a set of nonempty spaces. Then 
$S((X_J)_\lambda)<\alpha$ for 
each $\lambda<\kappa$ and each nonempty $J\in[I]^{<\lambda}$ if and only if 
$S((X_J)_\kappa)<\alpha$ for each nonempty $J\in[I]^{<\kappa}$.
\end{lemma}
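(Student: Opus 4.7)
The plan is to reduce both implications to a single elementary observation: if $|J| < \mu$, then every subset of $J$ has size strictly less than $\mu$, so every canonical open subset $U = \Pi_{i \in J}\,U_i$ of $(X_J)_\mu$ automatically satisfies $|R(U)| \le |J| < \mu$. Consequently the $\mu$-box topology on $X_J$ agrees with the full box topology on $X_J$, and in particular it agrees with the $\mu'$-box topology for every $\mu'$ with $|J| < \mu' \le \mu$. This observation will be the only non-trivial ingredient, and the use of the hypothesis that $\kappa$ is a limit cardinal enters precisely to guarantee the availability of such intermediate $\mu'$.

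For the $(\Leftarrow)$ direction, suppose $S((X_J)_\kappa) < \alpha$ for every nonempty $J \in [I]^{<\kappa}$. Fix $\lambda < \kappa$ and nonempty $J \in [I]^{<\lambda}$. Then $J \in [I]^{<\kappa}$ as well, and since $|J| < \lambda < \kappa$ the observation above shows that the topologies of $(X_J)_\lambda$ and $(X_J)_\kappa$ coincide, whence $S((X_J)_\lambda) = S((X_J)_\kappa) < \alpha$.

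For the $(\Rightarrow)$ direction, suppose $S((X_J)_\lambda) < \alpha$ for every $\lambda < \kappa$ and every nonempty $J \in [I]^{<\lambda}$. Fix nonempty $J \in [I]^{<\kappa}$. Because $\kappa$ is a limit cardinal, $|J|^+ < \kappa$, so we may set $\lambda := |J|^+$ to obtain $|J| < \lambda < \kappa$ and hence $J \in [I]^{<\lambda}$. By the key observation the topologies on $(X_J)_\lambda$ and $(X_J)_\kappa$ coincide, so $S((X_J)_\kappa) = S((X_J)_\lambda) < \alpha$ by hypothesis.

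The only potential subtlety—and the place one must be careful—is invoking the limit-cardinal hypothesis on $\kappa$ in the forward direction: if $\kappa$ were a successor $\kappa = \mu^+$, the argument would still succeed because any $J \in [I]^{<\kappa}$ satisfies $|J| \le \mu < \kappa$, but when $\kappa$ is a limit cardinal the passage $|J| \mapsto |J|^+ < \kappa$ is immediate. No non-trivial combinatorics, no appeal to the earlier machinery on $\alpha^{<\kappa}$, and no separation assumptions are required, which is why I expect this lemma to serve as a convenient technical bridge rather than as a deep step.
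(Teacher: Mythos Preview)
Your proof is correct and follows essentially the same path as the paper's: both directions rest on the observation that when $|J|<\mu$ the $\mu$-box topology on $X_J$ is the full box topology, and the limit hypothesis on $\kappa$ supplies an intermediate $\lambda$ with $|J|<\lambda<\kappa$. The paper simply says ``there exists $\lambda<\kappa$ such that $|J_0|<\lambda$'' where you specify $\lambda=|J|^+$, but the content is identical.

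One small correction to your closing commentary (which does not affect the proof proper): the claim that the $(\Rightarrow)$ argument ``would still succeed'' for a successor $\kappa=\mu^+$ is not right. If $|J|=\mu$ there is no cardinal $\lambda$ with $\mu<\lambda<\mu^+$, so one cannot place $J$ in any $[I]^{<\lambda}$ with $\lambda<\kappa$; this is exactly why the limit hypothesis is needed in that direction.
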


\begin{proof}
Let $S((X_J)_\kappa)<\alpha$ for each nonempty
$J\in[I]^{<\kappa}$ and let $\lambda < \kappa$
and $\emptyset\neq J_0\in[I]^{<\lambda}$. Then
$S((X_{J_0})_\lambda)=S((X_{J_0})_\kappa)<\alpha$ since the 
spaces $(X_{J_0})_\lambda$ and $(X_{J_0})_\kappa$ have the full box topology 
and therefore coincide.

For the converse, let $S((X_J)_\lambda)<\alpha$ for 
each $\lambda<\kappa$ and each nonempty $J\in[I]^{<\lambda}$ and let 
$\emptyset\neq J_0\in[I]^{<\kappa}$. Since $|J_0|<\kappa$ and $\kappa$ is 
a limit cardinal, there exists $\lambda<\kappa$ such that $|J_0|<\lambda$.
Then $S((X_{J_0})_\kappa)=S((X_{J_0})_\lambda)<\alpha$ since the 
spaces $(X_{J_0})_\kappa$ and $(X_{J_0})_\lambda$ have the full box topology 
and therefore coincide.
\end{proof}

\begin{corollary}\label{GCNan}
Let $\alpha\geq2$ and $\kappa\ge\omega$ be cardinals
such that $\alpha^{<\kappa}<(\alpha^{<\kappa})^{<\kappa}$
and let $\{X_i:i\in I\}$ 
be a set  of nonempty spaces such that
$S((X_J)_\kappa)<(\alpha^{<\kappa})^+$ for each nonempty $J\in[I]^{<\kappa}$. 
Then $S((X_I)_\kappa)\le(\alpha^{<\kappa})^+$.
\end{corollary}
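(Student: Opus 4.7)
The plan is to argue by contradiction and apply Theorem~\ref{T4.16}, invoking Lemma~\ref{newl} to bridge the gap between $\lambda$-box and $\kappa$-box topologies on small sub-products.

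First I would record a structural consequence of the hypothesis $\alpha^{<\kappa}<(\alpha^{<\kappa})^{<\kappa}$: by Theorem~\ref{weak<kappa}(b) the cardinal $\kappa$ is singular, and in particular $\kappa$ is a limit cardinal. This is what makes the hypothesis of Theorem~\ref{T4.16} (and the ``limit'' clause of Lemma~\ref{newl}) available.

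Next, suppose toward a contradiction that $S((X_I)_\kappa) > (\alpha^{<\kappa})^+$. Then Theorem~\ref{T4.16} produces a cardinal $\lambda<\kappa$ and a set $J\in[I]^{<\lambda}$ with $S((X_J)_\lambda)\geq(\alpha^{<\kappa})^+$. Because $|J|<\lambda$, every subset of $J$ has cardinality strictly less than $\lambda$, so $(X_J)_\lambda$ carries the full box topology on $X_J$; likewise $(X_J)_\kappa$ does, since $|J|<\lambda<\kappa$. Therefore $(X_J)_\lambda=(X_J)_\kappa$ as topological spaces, yielding $S((X_J)_\kappa)\geq(\alpha^{<\kappa})^+$. (This is exactly the content of Lemma~\ref{newl} applied to the single set $J$.) Since $J\in[I]^{<\kappa}$ is nonempty, this directly contradicts the standing hypothesis that $S((X_J)_\kappa)<(\alpha^{<\kappa})^+$ for every nonempty $J\in[I]^{<\kappa}$, completing the proof.

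There is essentially no obstacle here beyond citing the correct prior results: the real work was done in Theorem~\ref{T4.16}, which extracts, from a large cellular family in $(X_I)_\kappa$, a ``localized'' cellular family witnessing high Souslin number in some $(X_J)_{\lambda'}$ with $|J|<\kappa$. The only mild subtlety is observing that the $\lambda$ produced by Theorem~\ref{T4.16} satisfies $|J|<\lambda$, so that the full-box identification $(X_J)_\lambda=(X_J)_\kappa$ is automatic; this is why Lemma~\ref{newl} was isolated just before the corollary.
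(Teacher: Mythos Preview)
Your proposal is correct and follows essentially the same route as the paper's proof: both note via Theorem~\ref{weak<kappa}(b) that $\kappa$ is singular (hence a limit cardinal), then combine Theorem~\ref{T4.16} with the full-box identification underlying Lemma~\ref{newl}. The only cosmetic difference is that the paper applies Lemma~\ref{newl} first (to convert the $\kappa$-box hypothesis on small sub-products into a $\lambda$-box statement for all $\lambda<\kappa$) and then invokes the contrapositive of Theorem~\ref{T4.16}, whereas you argue by contradiction, apply Theorem~\ref{T4.16} to obtain a specific $J$ and $\lambda$, and then use the identification $(X_J)_\lambda=(X_J)_\kappa$; these are logically the same argument.
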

\begin{proof}
The cardinal $\kappa$ is singular by Theorem~\ref{weak<kappa}(b),
hence is a limit cardinal. Then by Lemma~\ref{newl}
(with $\alpha$ there replaced by $(\alpha^{<\kappa})^+)$ we have
$S((X_J)_\lambda)<(\alpha^{<\kappa})^+$ whenever $\lambda<\kappa$ and
$J\in[I]^{<\lambda}$. Then $S((X_I)_\kappa)\leq(\alpha^{<\kappa})^+$ by
Theorem~\ref{T4.16}.
\end{proof}

Theorem~\ref{GCNab}, like Corollary~\ref{S(power)}, is a 
miscellaneous stand-alone result based on the homeomorphisms developed in 
Lemma~\ref{homeos}. To see that those results are (consistently) nonvacuous, 
we need a model of ZFC where $\alpha^{<\kappa}<(\alpha^{<\kappa})^{<\kappa}$ 
and $\alpha^\kappa>(\alpha^{<\kappa})^+$. For that, see Remark~\ref{nonvac2}.
In \ref{homeos}--\ref{S(power)}, given a set $\{X_i:i\in I\}$ of spaces, for
$i\in I$ we write 
$$\widetilde{i}:=\{j\in I:X_i=_h X_j\}.$$
We note that if $\kappa\geq\omega$ and $|\widetilde{i}|\geq\cf(\kappa)$
for each $i\in I$, then there is a partition
$\{I(\eta):\eta<\cf(\kappa)\}$ of $I$ such that
$|I(\eta)\cap\widetilde{i}|=|\widetilde{i}|$ for each $i\in I$. Indeed,
it is enough for each $i\in I$ to choose a partition
$\{A(\widetilde{i},\eta):\eta<\cf(\kappa)\}$ of $\widetilde{i}$ with
each
$|A(\widetilde{i},\eta)|=|\widetilde{i}|$ and to take
$I(\eta):=\bigcup_{i\in I}\,A(\widetilde{i},\eta)$. 

\begin{lemma}\label{homeos}
Let $\kappa\geq\omega$ and $\cf(\kappa)\le \lambda\le \kappa$ with
$\lambda$ regular, and let $\{X_i:i\in I\}$ be a set of spaces with each
$|\widetilde{i}|\geq\cf(\kappa)$. Let $\{I(\eta):\eta<\cf(\kappa)\}$ be
a partition of $I$ such that $|I(\eta)\cap\widetilde{i}|=|\widetilde{i}|$
for each $i\in I$. Then

{\rm (a)} $(X_I)_\lambda=_h(X_{I(\eta)})_\lambda$ for each
$\eta<\cf(\kappa)$;

{\rm (b)}
$(X_I)_\lambda=_h(\Pi_{\eta<\cf(\kappa)}\,(X_{I(\eta)})_\lambda)_\lambda$;
and

{\rm (c)} $(X_I)_\lambda=_h(((X_I)_\lambda)^{\cf(\kappa)})_\lambda$.
\end{lemma}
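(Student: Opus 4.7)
My plan is to prove (a) via a product-preserving bijection of index sets, prove (b) by matching canonical bases of the two $\lambda$-box topologies (where regularity of $\lambda$ plays the decisive role), and then derive (c) by iterating (a) inside (b).

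For (a), fix $\eta<\cf(\kappa)$. The hypothesis $|I(\eta)\cap\widetilde{i}|=|\widetilde{i}|$ for every $i\in I$ says that within each homeomorphism class $\widetilde{i}\subseteq I$ the portion landing in $I(\eta)$ has the same cardinality as the whole class; thus, pairing up indices class by class, we obtain a bijection $\sigma:I\to I(\eta)$ with $X_i=_h X_{\sigma(i)}$ for every $i\in I$. Choose homeomorphisms $h_i:X_i\to X_{\sigma(i)}$ and assemble them coordinatewise into $\Phi:X_I\to X_{I(\eta)}$. Because the $\lambda$-box topology on a product depends only on the factor topologies and the size constraint on restriction sets, $\Phi$ sends canonical open sets to canonical open sets and is a homeomorphism $(X_I)_\lambda\to(X_{I(\eta)})_\lambda$.

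For (b), the underlying-set bijection $X_I\leftrightarrow\Pi_{\eta<\cf(\kappa)}X_{I(\eta)}$ is induced directly by the partition of $I$; what must be checked is that the two $\lambda$-box topologies agree. In the forward direction, let $U=\Pi_{i\in I}U_i$ be canonical open in $(X_I)_\lambda$ with $|R(U)|<\lambda$. Setting $U_\eta:=\Pi_{i\in I(\eta)}U_i$, each $U_\eta$ is canonical open in $(X_{I(\eta)})_\lambda$ since $R(U_\eta)=R(U)\cap I(\eta)$ has cardinality at most $|R(U)|<\lambda$; moreover the outer restriction set $\{\eta:U_\eta\neq X_{I(\eta)}\}$ has cardinality at most $|R(U)|<\lambda$, so $\Pi_\eta U_\eta$ is canonical open in the iterated $\lambda$-box product. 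Conversely, given a canonical open set $\Pi_\eta V_\eta$ in that iterated product, fewer than $\lambda$ of the coordinates $V_\eta$ differ from $X_{I(\eta)}$, and each such $V_\eta$ itself has restriction set of size $<\lambda$ inside $I(\eta)$. The corresponding basic set in $X_I$ therefore has restriction set equal to a union of fewer than $\lambda$ subsets of $I$ each of cardinality less than $\lambda$; this union has cardinality $<\lambda$ precisely because $\lambda$ is regular. This regularity step is the main obstacle of the proof; without it the two topologies can fail to coincide.

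Part (c) now follows by combining the two previous statements: by (b), $(X_I)_\lambda=_h(\Pi_{\eta<\cf(\kappa)}(X_{I(\eta)})_\lambda)_\lambda$, and by (a) each factor $(X_{I(\eta)})_\lambda$ is homeomorphic to $(X_I)_\lambda$, yielding $(X_I)_\lambda=_h(((X_I)_\lambda)^{\cf(\kappa)})_\lambda$. Apart from the regularity argument highlighted in (b), everything reduces to routine index bookkeeping and the coordinatewise definition of the $\lambda$-box topology.
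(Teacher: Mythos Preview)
Your proof is correct and follows essentially the same approach as the paper's: part (a) via a class-preserving bijection of index sets, part (b) via the natural set-level identification together with a check that canonical bases match (using regularity of $\lambda$), and part (c) by combining (a) and (b). Your treatment of (b) is in fact more explicit than the paper's, which states the equivalence ``$|R(U)|<\lambda$ iff each $|R(U(\eta))|<\lambda$'' rather tersely; you correctly separate the outer restriction-set condition from the inner ones and pinpoint exactly where regularity of $\lambda$ is used.
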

\begin{proof} (a) Given $\eta<\cf(\kappa)$,
let $\phi:I\twoheadrightarrow I(\eta)$ be a bijection such
that $\phi[\widetilde{i}]=\widetilde{i}\cap I(\eta)$ for each $i\in I$.
Then the map $\Phi:X_I\twoheadrightarrow X_{I(\eta)}$ given by
$\Phi(x_i)=x_{\phi(i)}\in X_{I(\eta)}$ is a homeomorphism, with
$\phi[R(A)]=R(\Phi[A])$ for each generalized rectangle $A=\Pi_{i\in
I}\,A_i\subseteq X_I$.

(b) We show that the natural map from 
$\Pi_{\eta<\cf(\kappa)}\,X_{I(\eta)}$
onto $X_I$ is a homeomorphism from
$(\Pi_{\eta<\cf(\kappa)}\,(X_{I(\eta)})_\lambda)_\lambda$ onto
$(X_I)_\lambda$ when $\lambda$ is regular. Indeed, an (open) generalized
rectangle $U=\Pi_{\eta<\cf(\kappa)}\,U(\eta)$
in $\Pi_{\eta<\cf(\kappa)}\,X_{I(\eta)}$ with
$U(\eta)=\Pi_{i\in I(\eta)}\,U(\eta,i)$ satisfies
$R(U)=\bigcup_{\eta<\cf(\kappa)}\,R(U(\eta))$, so $|R(U)|<\lambda$
if and only if each $|R(U(\eta))|<\lambda$.

(c) follows immediately from (a) and (b).
\end{proof}

\begin{theorem}\label{GCNab}
Let $\alpha\geq2$ and $\kappa\ge\omega$ be cardinals
and let $\{X_i:i\in I\}$ 
be a set  of nonempty spaces.
Suppose that $\alpha^{<\kappa}<(\alpha^{<\kappa})^{<\kappa}$
and that $|\widetilde{i}|\geq\cf(\kappa)$ for each $i\in I$. If
$S((X_I)_\kappa)>(\alpha^{<\kappa})^+$ then
$S((X_I)_\kappa)>\alpha^\kappa$.
\end{theorem}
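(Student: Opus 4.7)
The plan is to convert the hypothesized cellular family of size $(\alpha^{<\kappa})^+$ in $(X_I)_\kappa$ into one of size $\alpha^\kappa$ by combining the sorting trick from Theorem~\ref{T4.16} with the self-similarity homeomorphism of Lemma~\ref{homeos}(c). First, since $\alpha^{<\kappa}<(\alpha^{<\kappa})^{<\kappa}$, Theorem~\ref{weak<kappa}(b) forces $\kappa$ to be singular, so $\cf(\kappa)<\kappa$; fix $\{\kappa_\eta:\eta<\cf(\kappa)\}$ as in Notation~\ref{defkappaeta}, and by discarding initial indices assume $\cf(\kappa)\leq\kappa_\eta<\kappa$ throughout.

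Take a cellular family $\sC$ of canonical open sets in $(X_I)_\kappa$ with $|\sC|=(\alpha^{<\kappa})^+$ and set $\sC(\eta):=\{U\in\sC:|R(U)|<\kappa_\eta\}$. Since $(\alpha^{<\kappa})^+$ is regular and strictly larger than $\cf(\kappa)$ (using $\kappa\leq\alpha^{<\kappa}$ from Theorem~\ref{expon}(a)), some $\eta$ yields $|\sC(\eta)|=(\alpha^{<\kappa})^+$; fix such $\eta$. Then $\sC(\eta)$ is cellular already in $(X_I)_{\kappa_\eta^+}$, so
\[
S\bigl((X_I)_{\kappa_\eta^+}\bigr)>(\alpha^{<\kappa})^+.
\]

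Now write $Y:=(X_I)_{\kappa_\eta^+}$. The cardinal $\kappa_\eta^+$ is regular with $\cf(\kappa)\leq\kappa_\eta^+\leq\kappa$, and by hypothesis $|\widetilde{i}|\geq\cf(\kappa)$ for every $i\in I$, so Lemma~\ref{homeos}(c) gives
\[
Y\;=_h\;\bigl(Y^{\cf(\kappa)}\bigr)_{\kappa_\eta^+}.
\]
Because $\cf(\kappa)<\kappa_\eta^+$, the $\kappa_\eta^+$-box topology on $Y^{\cf(\kappa)}$ coincides with the full box topology, and hence the family $\{\prod_{\xi<\cf(\kappa)}U_\xi:U_\xi\in\sC(\eta)\text{ for every }\xi\}$ is cellular in $(Y^{\cf(\kappa)})_{\kappa_\eta^+}$. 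Its cardinality is
\[
\bigl((\alpha^{<\kappa})^+\bigr)^{\cf(\kappa)}\;\geq\;(\alpha^{<\kappa})^{\cf(\kappa)}\;=\;\alpha^\kappa,
\]
the last equality coming from $\alpha^\kappa=\prod_{\eta}\alpha^{\kappa_\eta}\leq(\alpha^{<\kappa})^{\cf(\kappa)}\leq(\alpha^\kappa)^{\cf(\kappa)}=\alpha^\kappa$. Thus $S(Y)>\alpha^\kappa$, and since $\kappa_\eta^+\leq\kappa$ makes the $\kappa_\eta^+$-box topology coarser than the $\kappa$-box topology (Remarks~\ref{R1.2}(b)({\it ii})),
\[
S((X_I)_\kappa)\;\geq\;S(Y)\;>\;\alpha^\kappa,
\]
as desired.

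The main conceptual obstacle is recognizing that Lemma~\ref{homeos}(c) allows us to trade a single "large cellular family" in $Y$ for the $\cf(\kappa)$-fold product construction which pushes the size up to $(\alpha^{<\kappa})^{\cf(\kappa)}=\alpha^\kappa$; the singularity of $\kappa$ forced by the hypothesis $\alpha^{<\kappa}<(\alpha^{<\kappa})^{<\kappa}$ is precisely what makes both the sorting step and the final cardinal arithmetic cooperate.
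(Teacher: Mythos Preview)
Your proof is correct and follows essentially the same strategy as the paper's: push the large cellular family down to $(X_I)_\lambda$ for some regular $\lambda<\kappa$ with $\lambda\geq\cf(\kappa)$, invoke the self-similarity of Lemma~\ref{homeos} to view $(X_I)_\lambda$ as a $\cf(\kappa)$-fold box power of itself, and multiply cellular families to reach size $(\alpha^{<\kappa})^{\cf(\kappa)}=\alpha^\kappa$. The only cosmetic differences are that the paper obtains the regular $\lambda$ by citing Theorem~\ref{T4.16} (whose output $J$ is not actually needed beyond guaranteeing $S((X_I)_\lambda)\geq(\alpha^{<\kappa})^+$), whereas you do the sorting argument inline; and the paper uses Lemma~\ref{homeos}(a) and (b) separately to build cellular families in each block $X_{I(\eta)}$, whereas you use part~(c) directly and take the full product of copies of a single cellular family.
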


\begin{proof}
By Theorem~\ref{T4.16} there are $J\in[I]^{<\kappa}$ and a regular
cardinal $\lambda<\kappa$ such that
$S((X_J)_\lambda)\geq(\alpha^{<\kappa})^+$. Let
$\{I(\eta):\eta<\cf(\kappa)\}$ be a partition of $I$ as in
Lemma~\ref{homeos}, and for $\eta<\cf(\kappa)$ let $\sC(\eta)$ be a
cellular family in $X_{I(\eta)}$ such that
$|\sC(\eta)|=\alpha^{<\kappa}$. Then
$$\sC:=\Pi_{\eta<\cf(\kappa)}\,\sC(\eta)=\{\Pi_{\eta<\cf(\kappa)}\,C(\eta):C(\eta)\in\sC(\eta)\}$$
is cellular in 
$(\Pi_{\eta<\cf(\kappa)}\,(X_{I(\eta)})_\lambda)_\lambda=_h(X_I)_\lambda$,
so
$$S((X_I)_\kappa)\geq
S((X_I)_\lambda)>|\sC|=(\alpha^{<\kappa})^{\cf(\kappa)}=\alpha^\kappa.$$
\vskip-18pt
\end{proof}

\begin{corollary}\label{S(power)}
Let $\alpha\geq2$ and $\kappa\geq\omega$ be cardinals, and let $X$ be a
space and $I$ a set.

{\rm (a)} Suppose that $\alpha^{<\kappa}=(\alpha^{<\kappa})^{<\kappa}$.
Then $S((X^I)_\kappa)\leq(\alpha^{<\kappa})^+$ if and only if
$S((X^J)_\kappa)\leq(\alpha^{<\kappa})^+$ for every nonempty
$J\in[I]^{<\kappa}$.

{\rm (b)} Suppose that $\alpha^{<\kappa}<(\alpha^{<\kappa})^{<\kappa}$.
If $S((X^I)_\kappa)>(\alpha^{<\kappa})^+$ then

\begin{itemize}
\item[{\rm (1)}] there is nonempty $J\in[I]^{<\kappa}$ such that
$S((X^J)_\kappa)\geq(\alpha^{<\kappa})^+$; and
\item[{\rm (2)}] if $|I|\geq\cf(\kappa)$, then $S((X^I)_\kappa)>\alpha^\kappa$.
\end{itemize}
\end{corollary}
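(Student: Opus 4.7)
The plan is to obtain each of the three assertions as an almost immediate consequence of earlier results in this section, with only a small amount of cardinal-arithmetic bookkeeping.

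For part (a), the forward implication is routine monotonicity: given a cellular family $\sC$ in $(X^J)_\kappa$, the family $\{C \times X^{I \setminus J} : C \in \sC\}$ is cellular in $(X^I)_\kappa$ of the same cardinality (each set is canonical open since $|R(C \times X^{I\setminus J})| = |R(C)| < \kappa$), so $S((X^J)_\kappa) \leq S((X^I)_\kappa)$. For the converse I would apply Theorem \ref{CN} with $(\alpha^{<\kappa})^+$ in the role of $\alpha$ there. That cardinal is regular, so the only hypothesis to verify is $\kappa \ll (\alpha^{<\kappa})^+$: for $\lambda < \kappa$ and $\beta < (\alpha^{<\kappa})^+$ we have $\beta \leq \alpha^{<\kappa}$ and, invoking the standing hypothesis $\alpha^{<\kappa} = (\alpha^{<\kappa})^{<\kappa}$,
\[
\beta^\lambda \leq (\alpha^{<\kappa})^\lambda \leq (\alpha^{<\kappa})^{<\kappa} = \alpha^{<\kappa} < (\alpha^{<\kappa})^+.
\]
Theorem \ref{CN} then yields the equivalence in (a).

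For (b)(1) I would simply quote Theorem \ref{T4.16} applied to the constant family $X_i := X$. It produces a regular cardinal $\lambda < \kappa$ and some $J \in [I]^{<\lambda} \subseteq [I]^{<\kappa}$ with $S((X^J)_\lambda) \geq (\alpha^{<\kappa})^+$. Since the $\lambda$-box topology is coarser than the $\kappa$-box topology on $X^J$, every cellular family for the former is cellular for the latter, so the inequality promotes to $S((X^J)_\kappa) \geq (\alpha^{<\kappa})^+$ as required. For (b)(2), the product $X^I$ is the special case of $X_I$ in which the equivalence class $\widetilde{i} := \{j \in I : X_j =_h X_i\}$ is all of $I$ for every $i$, so the hypothesis $|\widetilde{i}| \geq \cf(\kappa)$ of Theorem \ref{GCNab} reduces to $|I| \geq \cf(\kappa)$. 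Under that assumption Theorem \ref{GCNab} immediately delivers $S((X^I)_\kappa) > \alpha^\kappa$.

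In summary, the only step that requires more than a citation is the elementary check that $\kappa \ll (\alpha^{<\kappa})^+$ under the hypothesis of (a), and this is entirely straightforward once $\alpha^{<\kappa} = (\alpha^{<\kappa})^{<\kappa}$ is in force. I do not anticipate any genuine obstacle, since the structural content is carried entirely by Theorems \ref{CN}, \ref{T4.16}, and \ref{GCNab}.
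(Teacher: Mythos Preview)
Your proposal is correct and follows essentially the same route as the paper. The paper's proof is a one-liner: it cites Theorem~\ref{GCN} for (a) (which is just Theorem~\ref{CN} with the check $\kappa\ll((\alpha^{<\kappa})^{<\kappa})^+$ already packaged in), Theorem~\ref{T4.16} for (b)(1), and Theorem~\ref{GCNab} for (b)(2) via $\widetilde{i}=I$---exactly your ingredients. One small remark: Theorem~\ref{T4.16} does not assert that $\lambda$ is regular (though the $\lambda$ constructed in its proof happens to be a successor), but you do not actually use regularity, and in fact since $|J|<\lambda<\kappa$ the $\lambda$-box and $\kappa$-box topologies on $X^J$ coincide (both are the full box topology), so the promotion is even simpler than you state.
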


\begin{proof} In view of Theorem~\ref{GCN} and Theorem~\ref{T4.16}, only (b)(2) 
requires attention. This follows from Theorem~\ref{GCNab}, since now
$\widetilde{i}=I$ for each $i\in I$.
\end{proof}

\begin{remarks}\label{nonvac2}
{\rm
(a) It is easy to see that in many models of ZFC, for example under GCH, the
equality $\alpha^\kappa=(\alpha^{<\kappa})^+$ holds for all cardinals
$\alpha$ and $\kappa$ for which
$\alpha^{<\kappa}<(\alpha^{<\kappa})^{<\kappa}$. In such models,
Theorem~\ref{GCNab} and Corollary~\ref{S(power)}(b)(2) become
tautologies. To see that Theorem~\ref{GCNab} and 
Corollary~\ref{S(power)}(b)(2) are not vacuous in every setting, it is enough 
to refer to the models $\VV_1$ and $\VV_2$ of Gitik and Shelah described in
Discussion~\ref{gitik-shelah}(d), taking now $\alpha=2$ and
$\kappa=\aleph_\omega$. In those models we have
$$\alpha^{<\kappa}=2^{<\aleph_\omega}=\aleph_\omega,$$
while (using Theorem~\ref{expon}(c), for example)
$$\alpha^\kappa=(\alpha^{<\kappa})^{<\kappa}
=2^{\aleph_\omega}=\aleph_{\omega+2}>\aleph_{\omega+1}=(\alpha^{<\kappa})^+.$$

(b) It is a consequence of Theorem~\ref{GCNab} and 
Corollary~\ref{S(power)}(b)(2) that under the hypotheses
there the relation $S((X_I)_\kappa)=\alpha^\kappa$ is impossible (even
when $\alpha^\kappa$ is regular). 
}
\end{remarks}

Now we consider two questions. The first of these arises naturally
from Corollary~\ref{GCN}(b) and Corollary \ref{GCNan}, and a version of the 
second, attributed to Argyros and Negrepontis, appears in \cite{comfneg82}. 
Theorem~\ref{GCH} shows a relation between these. For what we do and do not not 
know about the status of these questions in ZFC and in augmented systems, see 
Remarks \ref{R4.50}((a) and (b)).

\begin{question}\label{Q1}
{\rm
Let $\alpha\geq2$, $\kappa\geq\omega$, and let $\{X_i:i\in I\}$ be a
set of spaces such that $S((X_J)_\kappa)\leq(\alpha^{<\kappa})^+$ for
each nonempty $J\in[I]^{<\kappa}$. Is then necessarily
$S((X_I)_\kappa)\leq(\alpha^{<\kappa})^+$?
}
\end{question}

\begin{question}[{\cite[7.15(a)]{comfneg82}}]\label{Q}
{\rm
Are there spaces $X$ and $Y$ with $S(X\times Y)>S(X)>S(Y)$?
}
\end{question}

\begin{remark}\label{Qa}
{\rm
By Theorem~\ref{GCN}(b), the answer to Question~\ref{Q1} is
affirmative in case $\alpha^{<\kappa}=(\alpha^{<\kappa})^{<\kappa}$.
}
\end{remark}

\begin{theorem}\label{GCH}
Let $\alpha\geq2$, $\kappa\geq\omega$, and $\{X_i:i\in I\}$ 
witness a negative answer to Question \ref{Q1}. 
If $\alpha<\kappa$ then the answer to Question~\ref{Q} is positive.
\end{theorem}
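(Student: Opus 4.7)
Set $\tau:=(\alpha^{<\kappa})^+$. Since $\{X_i:i\in I\}$ witnesses a negative answer to Question~\ref{Q1}, $S((X_I)_\kappa)>\tau$ while $S((X_J)_\kappa)\le\tau$ for every $J\in[I]^{<\kappa}$. By Remark~\ref{Qa}, $\alpha^{<\kappa}<(\alpha^{<\kappa})^{<\kappa}$, so Theorem~\ref{weak<kappa}(b) forces $\kappa$ to be singular, and Theorem~\ref{T4.16} then furnishes a regular cardinal $\lambda<\kappa$ and a set $J_0\in[I]^{<\lambda}$ with $S((X_{J_0})_\lambda)\ge\tau$. Since $|J_0|<\lambda$ the two topologies $(X_{J_0})_\lambda$ and $(X_{J_0})_\kappa$ coincide (both being the full box topology), so the witness hypothesis pins down $S((X_{J_0})_\kappa)=\tau$ exactly.

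My plan is to work with the natural partition $(X_I)_\kappa\cong(X_{J_0})_\kappa\times(X_{I\setminus J_0})_\kappa$ (the product topology on the right agrees with the $\kappa$-box topology on the left because $|R(U)|+|R(V)|<\kappa$ whenever each summand is, and the index sets are disjoint). By Lemma~\ref{obs1}, $S((X_{I\setminus J_0})_\kappa)\ge\tau$, since $|I\setminus J_0|\ge\kappa$ (else $|I|<\kappa$ and the witness hypothesis collapses immediately). In the most favourable case---namely $\tau<S((X_{I\setminus J_0})_\kappa)<S((X_I)_\kappa)$---I set $X:=(X_{I\setminus J_0})_\kappa$ and $Y:=(X_{J_0})_\kappa$. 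Then $S(X)>\tau=S(Y)$, and $X\times Y\cong(X_I)_\kappa$ yields $S(X\times Y)=S((X_I)_\kappa)>S(X)$, so $X,Y$ answer Question~\ref{Q} affirmatively.

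The main obstacle lies in the two residual cases. The first is $S((X_{I\setminus J_0})_\kappa)=\tau$, where both partition blocks carry Souslin $\tau$ and the strict inequality $S(X)>S(Y)$ fails; here I would instead enlarge the would-be $X$ by forming $(X_{I\setminus J_0})_\kappa\times D(\mu)$ for a discrete factor of cardinality $\mu$ chosen with $\tau<\mu^+<S((X_I)_\kappa)$, so that $S$ of the enlarged $X$ is lifted into the gap $(\tau,S((X_I)_\kappa))$ without disturbing the product. The second is $S((X_I)_\kappa)=S((X_{I\setminus J_0})_\kappa)$, where the $J_0$-block contributes no extra Souslin; here I would iterate Theorem~\ref{T4.16} on $\{X_i:i\in I\setminus J_0\}$ (which still witnesses a negative answer to Question~\ref{Q1} whenever $S((X_{I\setminus J_0})_\kappa)>\tau$) to extract an asymmetric partition in which one factor strictly dominates. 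The hypothesis $\alpha<\kappa$ enters essentially through the identity $2^{<\kappa}=\alpha^{<\kappa}$ and through the cofinal decomposition $\{\kappa_\eta:\eta<\cf(\kappa)\}$ of the singular $\kappa$ (Notation~\ref{defkappaeta}), both of which are needed to rule out that the auxiliary gap $(\tau,S((X_I)_\kappa))$ is empty; in particular one needs to verify that $S((X_I)_\kappa)\ge\tau^{++}$ in the model under consideration, which is the delicate arithmetic step.
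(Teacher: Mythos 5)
Your ``favourable case'' is fine, but the two residual cases are exactly where the difficulty of the theorem lies, and neither of your patches can be completed. Write $\tau:=(\alpha^{<\kappa})^+$. In the case $S((X_{I\setminus J_0})_\kappa)=\tau$ your repair needs a cardinal $\mu$ with $\tau<\mu^+<S((X_I)_\kappa)$, i.e.\ it needs $S((X_I)_\kappa)\geq(\alpha^{<\kappa})^{+++}$; but the witness hypothesis gives only $S((X_I)_\kappa)>\tau$, i.e.\ $S((X_I)_\kappa)\geq(\alpha^{<\kappa})^{++}$, and nothing in the hypotheses excludes equality, so the ``gap'' you want to place $D(\mu)$ into may simply be empty --- this is not a ``delicate arithmetic step'' that can be verified, it is just not available. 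In the case $S((X_{I\setminus J_0})_\kappa)=S((X_I)_\kappa)$, ``iterate Theorem~\ref{T4.16}'' is not an argument: each application only deletes another set of size $<\kappa$, and it is entirely consistent with your hypotheses that every co-small subproduct has the same Souslin number as $(X_I)_\kappa$ while every small subproduct has Souslin number $\leq\tau$, so no such iteration ever produces factors with the strict chain $S(X\times Y)>S(X)>S(Y)$; you give no termination or progress measure. (A smaller point: to apply Lemma~\ref{obs1} to $X_{I\setminus J_0}$ you must first discard, or argue away, indices with $S(X_i)<3$, and you need $\alpha<\kappa$ to convert the bound $2^{<\kappa}$ into $\alpha^{<\kappa}$.)

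The idea you are missing is that one should not split $I$ along an arbitrary small set coming from Theorem~\ref{T4.16}, nor insist on exhibiting the example at the level of the $\kappa$-box topology. The paper partitions $I$ by the coordinate Souslin numbers, $L:=\{i\in I:S(X_i)=(\alpha^{<\kappa})^+\}$ and $M:=\{i\in I:S(X_i)<(\alpha^{<\kappa})^+\}$, uses Theorem~\ref{largeS}(b) to show $|L|<\cf(\kappa)$ and that $S(X_i)\leq\alpha^{\kappa_\eta}$ for all $i\in M$ and some fixed $\eta<\cf(\kappa)$, and then moves to $\lambda^+$-box topologies for suitable $\lambda<\kappa$: Theorem~\ref{GKurepa}(b) (here $\alpha<\kappa$ enters, giving $(\alpha^\lambda)^+<\alpha^{<\kappa}$) yields $S((X_M)_{\lambda^+})<\alpha^{<\kappa}$ for $\kappa_\eta<\lambda<\kappa$; the witness hypothesis together with Lemma~\ref{obs1} and Theorem~\ref{Obsn} yields $S((X_L)_{\lambda^+})=S((X_L)_\kappa)=(\alpha^{<\kappa})^+$; and Theorem~\ref{Obsn}(b) forces the existence of some $\lambda<\kappa$ with $S((X_I)_{\lambda^+})>(\alpha^{<\kappa})^+$. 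Then $X:=(X_L)_{\lambda^+}$ and $Y:=(X_M)_{\lambda^+}$ answer Question~\ref{Q}, since $S(X\times Y)=S((X_I)_{\lambda^+})>S(X)=(\alpha^{<\kappa})^+>S(Y)$. The freedom to lower the box parameter from $\kappa$ to $\lambda^+$ and the choice of the splitting by the values $S(X_i)$ are precisely what eliminate your two bad cases; without them the argument cannot be closed.
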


\begin{proof}
Set
\begin{center}
$L:=\{i\in I:S(X_i)=(\alpha^{<\kappa})^+\}$ and
$M:=\{i\in I:S(X_i)<(\alpha^{<\kappa})^+\}$.
\end{center}

Note first from Remark \ref{Qa} that 
$\alpha^{<\kappa}<(\alpha^{<\kappa})^{<\kappa}$, so $\kappa>\omega$, and 
$\kappa$ and $\alpha^{<\kappa}$ are singular by 
Theorem~\ref{weak<kappa}. Further,
it follows directly from 
our hypothesis that $|I|\ge\kappa$. Clearly we may assume
without loss of
generality that $S(X_i)\ge 3$ for every $i\in I$.

For each infinite cardinal $\lambda$ we have
$S((X_I)_{\lambda^+})=S((X_L)_{\lambda^+}\times(X_M)_{\lambda^+})$. Thus
to prove the theorem it suffices to show that there exists
an infinite cardinal  $\lambda<\kappa$ such that
\begin{itemize}
\item[(i)] $S((X_L)_{\lambda^+})=(\alpha^{<\kappa})^+$,
\item[(ii)] $S((X_M)_{\lambda^+})<\alpha^{<\kappa}$, and
\item[(iii)] $S((X_I)_{\lambda^+})>(\alpha^{<\kappa})^+.$
\end{itemize}

Let $\{\kappa_\eta:\eta<\cf(\kappa)\}$ be a family 
of cardinals as in Notation \ref{defkappaeta}.

We note that
\begin{equation}\label{Eq15}
|L|<\cf(\kappa),
\end{equation}
and
\begin{equation}\label{Eq16}
\mbox{there is }\eta<\cf(\kappa)\mbox{ such that }
S(X_i)\leq\alpha^{\kappa_\eta} \mbox{ for each }i\in M.
\end{equation}
\noindent (Indeed if (\ref{Eq15}) [resp., (\ref{Eq16})] fails then by 
Theorem~\ref{largeS}(b) 
there is $J\in[L]^{\cf(\kappa)}$ [resp., $J\in[M]^{\cf(\kappa)}$] such that
$$S((X_J)_\kappa)\geq S((X_J)_{(\cf(\kappa))^+})>(\alpha^{<\kappa})^+,$$
a contradiction since $|J|=\cf(\kappa)<\kappa$.)

It follows from (\ref{Eq15}) that $|M|=|I|\ge \kappa$; further,  
according to Lemma \ref{obs1} (with $M$, $\gamma^+$, $2$ and
$(\alpha^{<\kappa})^+$ in place of $I$, $\kappa$, $\beta$ and $\alpha$,
respectively), we have 
\begin{equation}\label{Eq14}
S((X_M)_{\gamma^+})> 2^\gamma \mbox{ for every infinite }\gamma<\kappa.
\end{equation}
We claim that 
\begin{equation}\label{Eq17}
S((X_L)_\kappa)=S((X_M)_\kappa)=(\alpha^{<\kappa})^+.
\end{equation}
To see that, fix $\eta$ as in (\ref{Eq16}) and let $\gamma$ be such that
$\kappa_\eta<\gamma<\kappa$. Since $\alpha<\kappa$, we have
$(\alpha^\gamma)^+<\alpha^{<\kappa}$;
then 
\begin{equation}\label{Eq18}
S((X_M)_{\gamma^+})\leq(((\alpha^{\kappa_\eta})^\gamma)^\gamma)^+=
(\alpha^\gamma)^+<\alpha^{<\kappa}
\end{equation}
by (\ref{Eq16}) and Theorem~\ref{GKurepa}(b) (with $\alpha$, $\kappa$ and $I$ 
replaced by $\alpha^{\kappa_\eta}$, $\gamma^+$ and $M$, respectively). 
It then follows from (\ref{Eq14}) and (\ref{Eq18}) that
$$\sup\{S((X_M)_\gamma):\gamma<\kappa\}=\alpha^{<\kappa},$$ 
hence from Theorem \ref{Obsn}(c) we have 
$$S((X_M)_{\kappa})=(\alpha^{<\kappa})^+.$$
Since
$S((X_M)_\kappa)=(\alpha^{<\kappa})^+<S((X_I)_\kappa)$
we have $M\neq I$, so $L\neq\emptyset$. Clearly then
$S((X_L)_\kappa)\geq(\alpha^{<\kappa})^+$, 
while $S((X_L)_\kappa)\leq(\alpha^{<\kappa})^+$
follows from (\ref{Eq15}) and our hypothesis.
Therefore $S((X_L)_\kappa)=(\alpha^{<\kappa})^+$ 
and claim (\ref{Eq17}) is proved.

From (\ref{Eq17}) we have
$$(\alpha^{<\kappa})^+\leq
S((X_L)_{\lambda^+})\leq S((X_L)_\kappa)=(\alpha^{<\kappa})^+$$
for each infinite $\lambda$, so (i) holds (for all infinite
$\lambda$). That (ii) holds for all $\lambda$ such that
$\kappa_\eta<\lambda<\kappa$ is given by (\ref{Eq18}).
It follows that there is $\lambda$ such that 
$\kappa_\eta<\lambda<\kappa$ and
$S((X_I)_{\lambda^+})>(\alpha^{<\kappa})^+$, since otherwise we have
$$\sup\{S((X_I)_{\lambda^+}):\lambda<\kappa\}=\sup\{S((X_I)_\lambda):\lambda<\kappa\}=(\alpha^{<\kappa})^+$$
and Theorem \ref{Obsn}(b) gives the contradiction 
$S((X_I)_\kappa)=(\alpha^{<\kappa})^+$. Then (iii) holds for that
specific $\lambda$.
\end{proof}

\begin{corollary}\label{CGCH}
Let $\MM$ be a model of ZFC in which every singular cardinal is a strong 
limit cardinal (e.g. $\MM$ is a model of ZFC+GCH). If the answer to 
Question \ref{Q1} is negative in $\MM$ then the answer to Question~\ref{Q} is 
positive in $\MM$.
\end{corollary}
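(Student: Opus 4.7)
The plan is to reduce the statement to Theorem~\ref{GCH} by showing that, in any such model $\MM$, every witness $(\alpha,\kappa,\{X_i\}_{i\in I})$ to a negative answer of Question~\ref{Q1} must automatically satisfy $\alpha<\kappa$. Once that reduction is in place, Theorem~\ref{GCH} immediately delivers a positive answer to Question~\ref{Q} in $\MM$.

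First I would unpack what being a witness forces. By Remark~\ref{Qa}, the witness condition gives $\alpha^{<\kappa}<(\alpha^{<\kappa})^{<\kappa}$, so Theorem~\ref{weak<kappa}(b) makes both $\kappa$ and $\alpha^{<\kappa}$ singular; by the hypothesis on $\MM$ they are then strong limit cardinals. The equivalent formulation in Theorem~\ref{weak<kappa}(a) also rules out the existence of any $\nu<\kappa$ with $\alpha^\nu=\alpha^{<\kappa}$; in particular, taking $\nu=1$, we get $\alpha<\alpha^{<\kappa}$.

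The key step is then to exclude $\alpha\geq\kappa$. Suppose toward contradiction that $\alpha\geq\kappa$. For every $\nu<\kappa\leq\alpha$ we have $\alpha^\nu\leq(2^\alpha)^\nu=2^\alpha$, hence
$$\alpha^{<\kappa}\;=\;\sum_{\nu<\kappa}\alpha^\nu\;\leq\;\kappa\cdot 2^\alpha\;=\;2^\alpha.$$
On the other hand, since $\alpha^{<\kappa}$ is a strong limit cardinal strictly greater than $\alpha$, the very definition of strong limit gives $2^\alpha<\alpha^{<\kappa}$. These two inequalities are incompatible, so the assumption $\alpha\geq\kappa$ is untenable. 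Therefore $\alpha<\kappa$, and Theorem~\ref{GCH} applies to give a positive answer to Question~\ref{Q} in $\MM$.

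The main (rather mild) obstacle is assembling the sandwich $\alpha<\alpha^{<\kappa}\leq 2^\alpha$; this is precisely where the hypothesis on $\MM$ does its work, since strong-limitness of the singular cardinal $\alpha^{<\kappa}$ collapses this sandwich whenever $\kappa\leq\alpha$. Everything else is bookkeeping drawn from Remark~\ref{Qa}, Theorem~\ref{weak<kappa}, and Theorem~\ref{GCH}.
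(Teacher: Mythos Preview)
Your proposal is correct and follows essentially the same route as the paper: both reduce to Theorem~\ref{GCH} by showing $\alpha<\kappa$, first obtaining $\alpha<\alpha^{<\kappa}$ and then deriving, under the assumption $\kappa\leq\alpha$, the sandwich $\alpha<\alpha^{<\kappa}\leq2^\alpha$ which contradicts that the singular cardinal $\alpha^{<\kappa}$ is strong limit. The only cosmetic differences are that the paper gets $\alpha<\alpha^{<\kappa}$ by noting $\alpha=\alpha^{<\kappa}$ would force $(\alpha^{<\kappa})^{<\kappa}=\alpha^{<\kappa}$, and bounds $\alpha^{<\kappa}$ by $2^\alpha$ via Theorem~\ref{expon}(c) ($(\alpha^{<\kappa})^{<\kappa}=\alpha^\kappa\leq\alpha^\alpha=2^\alpha$), whereas you do both steps directly; the content is the same.
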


\begin{proof}
With $\alpha$, $\kappa$ and $\{X_i:i\in I\}$ chosen as in Theorem \ref{GCH}
it suffices to show that $\alpha<\kappa$. 

Since $\alpha^{<\kappa}<(\alpha^{<\kappa})^{<\kappa}$ by Remark \ref{Qa},
by Theorem \ref{weak<kappa}(b), both $\kappa$ and 
$\alpha^{<\kappa}$ are singular. If $\alpha=\alpha^{<\kappa}$ then
$\alpha=\alpha^{<\kappa}=(\alpha^{<\kappa})^{<\kappa}$, a 
contradiction; therefore $\alpha<\alpha^{<\kappa}$. Suppose now that
$\kappa\leq\alpha$. Then from
Theorem~\ref{expon}(c) we have 
$(\alpha^{<\kappa})^{<\kappa}=\alpha^\kappa\le\alpha^\alpha=2^\alpha$, and the
relation $\alpha<\alpha^{<\kappa}<2^\alpha$ contradicts the hypothesis
that $\alpha^{<\kappa}$ is a strong limit cardinal.
\end{proof}

\begin{remarks}\label{R4.50}
{\rm (a) The proof of the previous corollary does not need the full hypothesis 
that every singular cardinal in $\MM$ is strong limit. It is enough to know just 
that $\alpha^{<\kappa}$ is strong limit.

(b) ZFC-consistent examples of spaces as requested in Question~\ref{Q} are 
available in the literature. 

~~(1) In the Cohen models of Fleissner \cite{fleissner78} (see Remark~\ref{G-Smodels}(b))
there are spaces $X$ and $Y$ such that 
$$S(Y\times Y)=\aleph_{\omega+2}>\cc=\aleph_{\omega+1}>\aleph_1=S(Y),$$ 
and then with $X$ the ``disjoint union" of $D(\aleph_1)$ and $Y$ we have 
$S(X\times Y)>S(X)>S(Y)$.

~~(2) It is shown by Shelah \cite[4.4]{shelah} that if $\kappa$ is a singular strong limit 
cardinal such that $\lambda:=\kappa^+=2^\kappa$ then there are spaces $X$ and $Y$ such that 
$$S(X\times Y)\ge \lambda^{++}>\lambda^+=S(X)>\lambda>\kappa>(2^{\cf(\kappa)})^{++}\ge S(Y).$$

(c) We do not know if there are models of ZFC in which no spaces as in 
Question~\ref{Q} exist. We do not know if the answer to Question~\ref{Q1} is 
absolutely or consistently ``Yes", absolutely or consistently ``No".
}
\end{remarks}

\end{document}